\newcommand{\on}{\operatorname}
\DeclareMathOperator{\crit}{crit}
\DeclareMathOperator{\QCoh}{QCoh}
\DeclareMathOperator{\Op}{Op}
\DeclareMathOperator{\Whit}{Whit}
\DeclareMathOperator{\Gr}{Gr}
\DeclareMathOperator{\Sym}{Sym}
\DeclareMathOperator{\Hom}{Hom}
\DeclareMathOperator{\sHom}{\sH om}
\DeclareMathOperator{\Spec}{Spec}
\DeclareMathOperator{\gen}{-gen}
\DeclareMathOperator{\IndCoh}{IndCoh}
\DeclareMathOperator{\Hecke}{Hecke}
\DeclareMathOperator{\Heckenaive}{Hecke,naive}
\DeclareMathOperator{\reg}{reg}
\DeclareMathOperator{\regnaive}{reg,naive}
\newcommand{\xar}[1]{\xrightarrow{#1}}
\newcommand{\DGCat}{\mathsf{DGCat}}
\renewcommand{\mod}{\text{\textendash}\on{mod}}
\newcommand{\bD}{{\mathbb D}}
\newcommand{\sA}{{\EuScript A}}
\newcommand{\sB}{{\EuScript B}}
\newcommand{\sC}{{\EuScript C}}
\newcommand{\sD}{{\EuScript D}}
\newcommand{\sE}{{\EuScript E}}
\newcommand{\sF}{{\EuScript F}}
\newcommand{\sG}{{\EuScript G}}
\newcommand{\sH}{{\EuScript H}}
\newcommand{\sM}{{\EuScript M}}
\newcommand{\sO}{{\EuScript O}}
\newcommand{\sP}{{\EuScript P}}
\newcommand{\sS}{{\EuScript S}}
\newcommand{\fZ}{{\mathfrak Z}}
\newcommand{\fg}{{\mathfrak g}}
\newcommand{\fh}{{\mathfrak h}}
\newcommand{\fk}{{\mathfrak k}}
\newcommand{\fn}{{\mathfrak n}}
\newcommand{\ft}{{\mathfrak t}}
\newcommand{\fz}{{\mathfrak z}}
\newcommand\rightthreearrow{%
        \mathrel{\vcenter{\mathsurround0pt
                \ialign{##\crcr
                        \noalign{\nointerlineskip}$\rightarrow$\crcr
                        \noalign{\nointerlineskip}$\rightarrow$\crcr
                        \noalign{\nointerlineskip}$\rightarrow$\crcr
                }%
        }}%
}
\newtheorem{theorem}{Theorem}[section]
\newtheorem{lemma}[theorem]{Lemma}
\newtheorem{proposition}[theorem]{Proposition}
\newtheorem{corollary}[theorem]{Corollary}
\newtheorem{definition}[theorem]{Definition}
\theoremstyle{remark}
\newtheorem{example}[theorem]{Example}
\newtheorem{remark}[theorem]{Remark}
\title{Affine Beilinson-Bernstein localization at the critical level}
\author{Sam Raskin}
\address{The University of Texas at Austin,
Department of Mathematics,
PMA 8.100, 2515 Speedway Stop C1200,
Austin, TX 78712}
\email{sraskin@math.utexas.edu}
\author{David Yang}
\address{Harvard University,
Department of Mathematics,
1 Oxford St,
Cambridge, MA 02138}
\email{dyang@math.harvard.edu}
\begin{document}

\maketitle

\begin{abstract}

We prove the Frenkel-Gaitsgory localization conjecture describing regular
Kac-Moody representations at critical level via eigensheaves
on the affine Grassmannian using categorical Moy-Prasad theory. 
This extends previous work of the authors.

\end{abstract}

\tableofcontents

\section{Introduction}

\subsection{Overview}

In \cite{FG2}, Frenkel and Gaitsgory formulated a conjectural analogue of
Beilinson-Bernstein localization for the affine Grassmannian using
critical level Kac-Moody algebras and geometric Satake. In this paper,
we prove their conjecture.

We highlight \cite{FGi0} and \cite{raskinbeiber} as previous works
in this area. The former proves the affine localization conjecture
after passing to Iwahori equivariant objects on both sides. 
The latter, due to the first author, proves the full conjecture 
for rank $1$ groups $G$.

We refer to the introductions of \cite{FG2}, \cite{FGi0}, and \cite{raskinbeiber}
for motivation and background on the subject. We particularly 
refer to \cite{raskinbeiber}, which is close in spirit in many ways
to the present paper.

\subsection{Methods}

Our methods have some overlap with \cite{raskinbeiber}, but are
different in their core. Specifically, as discussed in 
\cite{raskinbeiber} Section 1.14, we use a recent general
technique for categories with $G((t))$-actions and then specify
to Kac-Moody representations: \emph{categorical Moy-Prasad
theory}. This subject was developed by the second author in \cite{dhy},
where it was already used to study critical level Kac-Moody representations.

Roughly speaking, the major outstanding problem has been to show
essential surjectivity and $t$-exactness of the global sections functor
\[
\Gamma^{\Hecke}:D^{\Hecke_{\fz}}_{\crit}(\Gr_G)\rightarrow\widehat{\fg}_{\crit}\mod_{\reg},
\]
 
\noindent see Section \ref{ss:gamma-hecke}. The idea is that 
\cite{dhy} \emph{almost} shows that $\widehat{\fg}_{\crit}\mod_{\reg}$ is of
depth $0$. We would then be reduced to checking essential surjectivity
on Iwahori invariants, where it was treated by Frenkel-Gaitsgory. 

The issue is in the \emph{almost}. Specifically, 
\cite{dhy} showed that the subcategory 
$\widehat{\fg}_{\crit}\mod_{\widehat{\reg}} \subset 
\widehat{\fg}_{\crit}\mod$ generated by $\widehat{\fg}_{\crit}\mod_{\reg}$
under colimits has depth $0$. But $\widehat{\fg}_{\crit}\mod_{\reg}$
itself is finicky: for instance, the forgetful functor 
$\widehat{\fg}_{\crit}\mod_{\reg} \to \widehat{\fg}_{\crit}\mod$ is
not actually conservative. Moreover, it is 
not a priori clear that $\widehat{\fg}_{\crit}\mod_{\reg}$ carries a
$G((t))$-action, so it is not clear that depth is a meaningful notion 
for it. We refer to \cite{raskinbeiber} Section 1.22 for related discussion.

This paper is dedicated to solving those problems. At the same time,
we also address the $t$-exactness, which is intimately related 
to the above subtleties. 

This overall strategy is in contrast to \cite{raskinbeiber}, which relied 
heavily on Whittaker techniques in place of Moy-Prasad theory.

\subsection{Notation and Background}

Throughout this paper, we fix a field $k$ of characteristic $0$. We also fix a split reductive group $G$ over $k$ and a Borel subgroup $B$ of $G$. The unipotent radical of $B$ will be denoted by $N$, and the Cartan will be denoted by $T$. The weight and coweight lattices will be denoted by $X^*(T)$ and $X_*(T)$, respectively. The Lie algebras of $G,B$, etc.\ will be denoted by $\fg,\mathfrak{b}$, etc.

We use \emph{category} to mean \emph{$\infty$-category} and \emph{DG category}
to mean \emph{presentable stable category over $k$}. 
Similarly, all t-structures will be assumed to be right complete and 
compatible with filtered colimits (in particular, accessible).
We let $\DGCat$ denote Lurie's symmetric monoidal category of DG categories; 
with denote the binary operation of its tensor product by $\otimes$.

We will frequently invoke the theory of D-modules on infinite-dimensional spaces developed in \cite{dmodinf}. Occasionally, we will also use the theory of ind-coherent sheaves on such spaces, for which the reader is referred to Section 6 of \cite{semiinf}.

We will be interested in group actions on categories, both weak and strong. 
We understand group actions on categories to always imply actions
on \emph{DG} categories. 
This theory has been developed in \cite{Beraldo} and \cite{semiinf}. In general, when we speak of a group action without further specification, we are referring to a strong action. We will also need the following notation for convolution. If $\sC$ is a category acted on by $G$ (or by some other group), and $K\subseteq G$ is some subgroup, then we have a convolution functor $D(G/K)\otimes \sC^K\rightarrow \sC$. This functor will be denoted by $\overset{K}{\star}$. The overset $K$ will often be omitted when the subgroup is clear from context.

\subsection{Acknowledgements}

We thank Dima Arinkin, Sasha Beilinson, Dario Beraldo, David Ben- Zvi, Roman Bezrukavnikov, Justin Campbell, Kylin Chen, Vladimir Drinfeld, Gurbir Dhillon, Yuchen Fu, Dennis Gaitsgory, and Ivan Mirkovic for useful discussions related to this work.

\section{Moy-Prasad Preliminaries}

\subsection{Categorical depth filtration}

This subsection and the next give a quick overview of the results in Sections 2 and 3 of \cite{dhy}. We recommend looking there for a more detailed exposition.

First, we need to define Moy-Prasad subgroups. Let $\fg=\oplus\fg_{\alpha}$ be the weight decomposition of $\fg$, and fix a point $x\in X_*(T)\otimes\mathbb{R}$ as well as a nonnegative real number $r$. Then $K_{x,r}$ and $K_{x,r+}$ are defined as the exponentiations of the Lie subalgebras
\[
\fk_{x,r}=\displaystyle\bigoplus_{\langle\alpha,x\rangle+i\geq r}\fg_{\alpha}t^i
\]
and
\[
\fk_{x,r+}=\displaystyle\bigoplus_{\langle\alpha,x\rangle+i> r}\fg_{\alpha}t^i.
\]

As shorthand, we write $P_x$ for $K_{x,0}$. This is always a parabolic subgroup of $G((t))$, justifying the notation. We also write $L_x$ for the reductive quotient $K_{x,0}/K_{x,0+}$. 

Let $\sC$ be a $G((t))$-category, i.e., a category equipped with a strong $G((t))$-action. Then Lemma 2.2 of \cite{dhy} (which in turn is based on the main theorem of \cite{BGO}) states:

\begin{lemma}\label{bgolemma}
Let $\sC$ be a category acted on by $G((t))$. Then there is a natural $G((t))$-equivariant fully faithful embedding
\[
a_{x,r}:D(G((t))/K_{x,r+})\underset{D(K_{x,r+}\backslash G((t))/K_{x,r+})}{\otimes} \sC^{K_{x,r+}}\rightarrow \sC
\]
with a continuous $G((t))$-equivariant right adjoint $a_{x,r}^R$.
\end{lemma}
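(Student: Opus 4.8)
The functor $a_{x,r}$ will be the convolution functor $\sF\otimes c\mapsto\sF\overset{K_{x,r+}}{\star}c$; equivalently, writing $\sH_{x,r+}:=D(K_{x,r+}\backslash G((t))/K_{x,r+})$ and
\[
\on{ind}\colon\sH_{x,r+}\mod\longrightarrow G((t))\mod,\qquad \sM\longmapsto D(G((t))/K_{x,r+})\underset{\sH_{x,r+}}{\otimes}\sM,
\]
it is the counit, evaluated at $\sC$, of the adjunction $\on{ind}\dashv(-)^{K_{x,r+}}$. Everything then hinges on one structural input: for $r\geq 0$ the group $K_{x,r+}$ is pro-unipotent --- it is the pro-unipotent radical of the parahoric $P_x$ when $r=0$, and a congruence-type subgroup when $r>0$, in either case an inverse limit along surjections with affine-space kernels of finite-dimensional unipotent groups. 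Combined with the formalism of D-modules on placid ind-schemes of \cite{dmodinf}, this yields two facts. First, every functor above is continuous and the ambient categories are compactly generated, so $a_{x,r}$ admits a continuous right adjoint $a_{x,r}^R$, and $a_{x,r}^R$ is $G((t))$-equivariant because $a_{x,r}$ is. Second, $D(BK_{x,r+})\simeq\on{Vect}$ (equivalently $H^\bullet_{\mathrm{dR}}(K_{x,r+})\simeq k$), so that for every $K_{x,r+}$-category $\sC$ the forgetful functor $\on{oblv}\colon\sC^{K_{x,r+}}\to\sC$ is fully faithful, with continuous right adjoint $\on{Av}^{K_{x,r+}}_*$ satisfying $\on{Av}^{K_{x,r+}}_*\circ\on{oblv}\simeq\on{id}$.

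Granting this, the plan is a formal two-categorical descent. I would first check that $\on{ind}$ is fully faithful: using continuity of invariants and that they commute with tensoring by a plain DG category,
\[
\on{ind}(\sM)^{K_{x,r+}}\simeq\bigl(D(G((t))/K_{x,r+})\bigr)^{K_{x,r+}}\underset{\sH_{x,r+}}{\otimes}\sM\simeq\sH_{x,r+}\underset{\sH_{x,r+}}{\otimes}\sM\simeq\sM,
\]
and this isomorphism is the adjunction unit. Hence $\on{ind}$ is fully faithful at the level of two-categories, i.e.\ $\on{Fun}_{G((t))}(\on{ind}\sM,\on{ind}\sM')\simeq\on{Fun}_{\sH_{x,r+}}(\sM,\sM')$; in particular the $G((t))$-equivariant endofunctor $a_{x,r}^R\circ a_{x,r}$ of $\on{ind}\bigl(\sC^{K_{x,r+}}\bigr)$ is $\on{ind}(\phi)$ for a unique $\sH_{x,r+}$-linear endofunctor $\phi$ of $\sC^{K_{x,r+}}$. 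It then remains to identify $\phi$ with $\on{id}$, and for this I would use that $G((t))$-equivariance pins down the ``$eK_{x,r+}$-fiber'' of $a_{x,r}^R$ as $\on{Av}^{K_{x,r+}}_*$ --- this being, after transport through the isomorphism above, the adjunction $\Hom_\sC(\on{oblv}(c'),c)\simeq\Hom_{\sC^{K_{x,r+}}}(c',\on{Av}^{K_{x,r+}}_*(c))$ --- so that $\phi\simeq\on{Av}^{K_{x,r+}}_*\circ\on{oblv}\simeq\on{id}$ by the second input. Then the unit $\on{id}\to a_{x,r}^R a_{x,r}$ is an equivalence, which is exactly fully faithfulness of $a_{x,r}$; its essential image is the smallest $G((t))$-stable, colimit-closed full subcategory of $\sC$ containing $\on{oblv}(\sC^{K_{x,r+}})$.

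The main obstacle is not this formal skeleton but the infinite-dimensional analysis underneath it, which is precisely the content of \cite{dhy} Lemma 2.2 and, upstream, of the main theorem of \cite{BGO}: verifying that $K_{x,r+}$ is pro-unipotent in the relevant placid sense; that for such groups $\on{Av}^{K_{x,r+}}_*$ is continuous and $\on{oblv}$ is fully faithful; and that the base-change isomorphisms invoked above --- invariants commuting with $\underset{\sH_{x,r+}}{\otimes}(-)$, and the compatibility of $a_{x,r}^R$ with $\on{Av}^{K_{x,r+}}_*$ --- genuinely hold at the $\infty$-categorical level, and not merely on homotopy categories. For the statement itself I would simply invoke \cite{dhy} (equivalently \cite{BGO}) and devote attention to the applications.
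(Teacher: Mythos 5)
The paper offers no proof of this lemma at all: it is stated verbatim as Lemma 2.2 of \cite{dhy} (which in turn rests on the main theorem of \cite{BGO}), which is exactly the citation you fall back on at the end. Your sketch of the underlying argument --- pro-unipotence of $K_{x,r+}$, 2-categorical fully faithfulness of induction via $\on{ind}(\sM)^{K_{x,r+}}\simeq\sM$, and identification of the counit's right adjoint on invariants with $\on{Av}_*^{K_{x,r+}}$ --- is a correct outline of how the cited result is established, so the proposal is consistent with the paper's treatment.
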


We denote the essential image of $a_{x,r}$ by $\sC_{K_{x,r+}\gen}$. In the case where this is all of $\sC$, we say that $\sC$ is $K_{x,r+}$-generated. The following gives an equivalent characterization of $\sC_{K_x,r+\gen}$.

\begin{lemma}\label{smallestsubcat}
The subcategory $\sC_{K_{x,r+}\gen}\subseteq \sC$ is the smallest full subcategory of $\sC$ which both contains $\sC^{K_{x,r+}}$ and inherits a $G((t))$-action from $\sC$.
\end{lemma}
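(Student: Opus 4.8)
The plan is to show the two inclusions that characterize the smallest subcategory. First, observe that $\sC_{K_{x,r+}\gen}$ is a $G((t))$-stable full subcategory: this is essentially the content of Lemma \ref{bgolemma}, since $a_{x,r}$ is $G((t))$-equivariant and fully faithful, so its essential image is closed under the $G((t))$-action. Next, to see that $\sC_{K_{x,r+}\gen}$ contains $\sC^{K_{x,r+}}$, we exhibit a section: the unit object $\delta_{e} \in D(G((t))/K_{x,r+})$ (the delta D-module at the base point, or rather its image under the relevant functor) together with an object $c \in \sC^{K_{x,r+}}$ defines an object of the tensor product, and $a_{x,r}$ applied to $\delta_e \otimes c$ recovers $c \overset{K_{x,r+}}{\star} \delta_e$, which under the adjunction unwinds to $c$ itself. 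Concretely, this amounts to saying that the composite $\sC^{K_{x,r+}} \to D(G((t))/K_{x,r+}) \otimes_{D(K_{x,r+}\backslash G((t))/K_{x,r+})} \sC^{K_{x,r+}} \xar{a_{x,r}} \sC$ agrees with the forgetful functor, which can be checked by a direct computation with the convolution action, or by noting that $a_{x,r}^R$ restricted to $\sC^{K_{x,r+}}$ is the identity on the $K_{x,r+}$-invariants factor.

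For the reverse inclusion, suppose $\sD \subseteq \sC$ is any full subcategory containing $\sC^{K_{x,r+}}$ and stable under the $G((t))$-action. We must show $\sC_{K_{x,r+}\gen} \subseteq \sD$. Every object of $\sC_{K_{x,r+}\gen}$ is in the essential image of $a_{x,r}$, hence is a colimit of objects of the form $f \overset{K_{x,r+}}{\star} c$ where $f \in D(G((t))/K_{x,r+})$ and $c \in \sC^{K_{x,r+}}$; indeed $D(G((t))/K_{x,r+})$ is generated under colimits and shifts by the D-modules pushed forward from finite-dimensional Schubert cells, i.e.\ by objects of the form $p_!(\omega)$ for $p$ a locally closed embedding of a finite-dimensional subscheme, and these are in turn built from the $G((t))$-translates of the unit via the convolution action. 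Since $c \in \sC^{K_{x,r+}} \subseteq \sD$ and $\sD$ is $G((t))$-stable and closed under colimits (being a DG subcategory preserved by a presentable subcategory inclusion — here I use that the essential image of $a_{x,r}$ is a colimit-closed subcategory, so it suffices to take $\sD$ closed under colimits as well; one should be slightly careful here and argue that $a_{x,r}^R$ being continuous and $G((t))$-equivariant forces $\sC_{K_{x,r+}\gen}$ to be the colimit-closure of the $G((t))$-orbit of $\sC^{K_{x,r+}}$), it follows that $f \overset{K_{x,r+}}{\star} c \in \sD$, and therefore $\sC_{K_{x,r+}\gen} \subseteq \sD$.

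The main obstacle, I expect, is making precise the claim that $\sC_{K_{x,r+}\gen}$ is generated under colimits by the $G((t))$-action on $\sC^{K_{x,r+}}$ — that is, reconciling the "monadic" description coming from $a_{x,r}$ and its right adjoint with the more hands-on "colimit-closure of orbits" description. This requires knowing that the relative tensor product $D(G((t))/K_{x,r+}) \otimes_{D(K_{x,r+}\backslash G((t))/K_{x,r+})} \sC^{K_{x,r+}}$ is itself generated under colimits by the images of $f \otimes c$, which follows from the fact that $D(G((t))/K_{x,r+})$ is compactly generated by translates of the unit object together with the bar-resolution computing the relative tensor product. A clean way to package this is to note that $a_{x,r}^R$ is conservative on its source and continuous, so $\sC_{K_{x,r+}\gen}$ is the smallest colimit-closed subcategory containing the image of any set of generators of the tensor product; choosing the generators $f \overset{K_{x,r+}}{\star} c$ as above, and using $G((t))$-equivariance of $a_{x,r}$ to absorb the $D(G((t))/K_{x,r+})$ factor into the group action, gives exactly the desired characterization. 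Once this is in place, both inclusions are formal, and the proof is complete.
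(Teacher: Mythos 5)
Your proposal is correct in substance, but it takes a genuinely different and considerably more laborious route than the paper. The paper's proof is a two-line functoriality argument: if $\sD\subseteq\sC$ is any full $G((t))$-subcategory containing $\sC^{K_{x,r+}}$, then automatically $\sD^{K_{x,r+}}=\sC^{K_{x,r+}}$ (one inclusion because $\sD$ is full in $\sC$, the other because $\sC^{K_{x,r+}}\subseteq\sD$), and since the construction $\sD\mapsto\sD_{K_{x,r+}\gen}$ of Lemma \ref{bgolemma} depends only on $\sD^{K_{x,r+}}$ as a module over the Hecke category, one gets $\sC_{K_{x,r+}\gen}=\sD_{K_{x,r+}\gen}\subseteq\sD$. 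This completely sidesteps the point you correctly identify as the main obstacle, namely reconciling the relative tensor product description with a hands-on ``colimit-closure of $G((t))$-translates of $\sC^{K_{x,r+}}$'' description. Your route does work --- the bar resolution does show that the relative tensor product is generated under colimits by pure tensors, and prounipotence of $K_{x,r+}$ lets you absorb the $D(G((t))/K_{x,r+})$ factor into the $G((t))$-action --- and it buys a more concrete picture of what $\sC_{K_{x,r+}\gen}$ looks like object by object, which can be useful elsewhere. But for the lemma as stated it is unnecessary machinery: the abstract argument needs only that $\sC_{K_{x,r+}\gen}$ is a $G((t))$-subcategory containing $\sC^{K_{x,r+}}$ (your first paragraph) plus the observation that $(-)_{K_{x,r+}\gen}$ is monotone and insensitive to passing to a full $G((t))$-subcategory with the same $K_{x,r+}$-invariants. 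If you keep your version, tighten the generation-by-pure-tensors step (via the bar construction, or via conservativity and continuity of the right adjoint to $\sA\otimes\sB\to\sA\otimes_{\sM}\sB$) rather than leaving it as an expectation.
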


\begin{proof}
Let $\sD$ be another full $G((t))$-subcategory of $\sC$ which contains $\sC^{K_{x,r+}}$. Then we must necessarily have $\sC^{K_{x,r+}}=\sD^{K_{x,r+}}$, so $\sC_{K_{x,r+}\gen}=\sD_{K_{x,r+}\gen}\subseteq \sD$, as desired.
\end{proof}

\begin{definition}\label{depthdefinition}
Let $\sC$ be a category acted on by $G((t))$. The \emph{depth filtration} on $\sC$ is the $\mathbb{R}_{\geq 0}$-indexed filtration with $\sC_{\leq r}$ the smallest full DG subcategory of $\sC$ containing $\sC_{K_{x,r+}\gen}$ for every $x\in X_*(T)\otimes\mathbb{R}$.
\end{definition}

Let us list some properties of the depth filtration.

\begin{proposition}\label{depthproperties}
\begin{enumerate}
\item For a fixed $G$, there is a discrete set of rational numbers such that, for any $G((t))$-category $\sC$, the depth filtration on $\sC$ can only jump at those numbers.
\item The depth filtration is exhaustive, i.e., we have
\[
\on{colim} \sC_{\leq r}\cong \sC.
\]
\item Assume that $\sC_{\leq r}=\sC$. Then for any other $G((t))$-category $\sD$ with $\sD_{\leq r}=0$, we have $\sC\otimes_{G((t))}\sD\cong 0$.
\end{enumerate}
\end{proposition}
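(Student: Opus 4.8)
The plan is to handle the three parts separately --- all three are proved in \cite{dhy}, Sections 2--3, and I would follow that treatment --- after recording two structural facts. First, if $K'\subseteq K$ are among the groups $K_{x,r+}$, then the forgetful functor $\sC^{K}\to\sC$ factors through $\sC^{K'}\to\sC$, so $\sC_{K\gen}\subseteq\sC_{K'\gen}$; since $K_{x,r'+}\subseteq K_{x,r+}$ for $r\le r'$, this makes both $\sC_{K_{x,r+}\gen}$ (for fixed $x$) and $\sC_{\le r}$ increasing in $r$, so the colimit in (2) is that of an increasing family. Second, for $r\ge 0$ the group $K_{x,r+}$ lies in the pro-unipotent radical $K_{x,0+}$ of the parahoric $P_x$, hence is pro-unipotent; so for any $G((t))$-category invariants and coinvariants under $K_{x,r+}$ agree, and $D(K_{x,r+}\backslash G((t)))\otimes_{G((t))}(-)\cong(-)^{K_{x,r+}}$ (up to the usual left/right conventions).

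For part (1), the point is that the statement does not involve $\sC$. The subgroup $K_{x,r+}\subseteq G((t))$ depends on $(x,r)$ only through the finite list of inequalities $\langle\alpha,x\rangle+i>r$ ($\alpha$ a root or $0$, $i\in\bZ$), so it is locally constant off a locally finite union of rational affine hyperplanes. Conjugation by $t^{\lambda}$ for $\lambda\in X_*(T)$, and by $N_G(T)$, identifies the $K_{x,r+}$-invariant subcategory with the $K_{x',r+}$-invariant one whenever $x'$ lies in the affine Weyl orbit of $x$; so I could restrict $x$ to a bounded fundamental domain, on which there are, for each $r$, only finitely many subgroups $K_{x,r+}$, and the values of $r$ at which this finite collection changes form a discrete set of rational numbers depending only on the root datum of $G$. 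That last assertion is exactly the combinatorial input of Moy--Prasad theory, and I would quote it from \cite{dhy}. Since $\sC_{\le r}$ is generated by the $\sC_{K_{x,r+}\gen}$, it can jump only at those values of $r$.

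For part (2), by the monotonicity above it suffices to show $\bigcup_r\sC_{\le r}=\sC$, and since $\sC_{K_{0,r+}\gen}\subseteq\sC_{\le r}$ it is enough to show $\bigcup_r\sC_{K_{0,r+}\gen}=\sC$. Taking $x=0$, the groups $K_{0,r+}$ run as $r\to\infty$ through a cofinal family of congruence subgroups, so this reduces to the standard smoothness of categories with a strong $G((t))$-action: the smallest $G((t))$-stable subcategory of $\sC$ containing the invariants under all deep congruence subgroups is $\sC$ itself. By Lemma \ref{smallestsubcat} that subcategory is $\bigcup_n\sC_{G^{(n)}\gen}$, and I would cite smoothness rather than reprove it.

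For part (3), the idea is that $-\otimes_{G((t))}\sD$ preserves colimits and, by the bar resolution, $\sC\otimes_{G((t))}\sD$ is generated under colimits by the images of the objects $c\boxtimes d$; since $\sC_{\le r}=\sC$ each $c$ is a colimit of objects lying in the subcategories $\sC_{K_{x,r+}\gen}$, so $\sC\otimes_{G((t))}\sD$ is generated under colimits by the images of the functors $\sC_{K_{x,r+}\gen}\otimes_{G((t))}\sD\to\sC\otimes_{G((t))}\sD$, and it suffices to show each source vanishes. Unwinding $\sC_{K_{x,r+}\gen}$ via Lemma \ref{bgolemma} and rebracketing the balanced tensor products --- pairing the $G((t))$-action on $D(G((t))/K_{x,r+})$ with $\sD$ and the $\sH:=D(K_{x,r+}\backslash G((t))/K_{x,r+})$-action with $\sC^{K_{x,r+}}$, then using pro-unipotence of $K_{x,r+}$ --- gives
\[
\sC_{K_{x,r+}\gen}\otimes_{G((t))}\sD\;\cong\;\sC^{K_{x,r+}}\otimes_{\sH}\sD^{K_{x,r+}}.
\]
Finally $\sD_{\le r}=0$ forces $\sD_{K_{x,r+}\gen}=0$ for every $x$; since taking $K_{x,r+}$-invariants recovers $\sD^{K_{x,r+}}$ from $\sD_{K_{x,r+}\gen}$ by Lemma \ref{bgolemma} (together with $(D(G((t))/K_{x,r+}))^{K_{x,r+}}\cong\sH$), this gives $\sD^{K_{x,r+}}=0$, so the category above is $0$. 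The step I expect to be the real obstacle is part (1): it is the one place where the fine structure of the Moy--Prasad subgroups is needed rather than formal manipulation of $D(G((t)))$-modules, and it is why the argument rests on Moy--Prasad theory; parts (2) and (3) amount to bookkeeping with invariants, coinvariants, and balanced tensor products over $D(G((t)))$.
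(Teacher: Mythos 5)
Your proposal is fine: the paper's own proof of this proposition consists solely of citations to \cite{dhy} (Lemma 2.6 for part (1), the discussion after Definition 2.5 for part (2), and Lemma 2.9 for part (3)), so you are taking the same route, merely sketching the arguments behind those citations. Nothing in your sketch conflicts with the paper, and the reductions you perform --- monotonicity of $\sC_{\leq r}$ in $r$, reduction of (2) to the congruence subgroups $K_{0,r+}$ together with smoothness of the action, and the rebracketing $\sC_{K_{x,r+}\gen}\otimes_{G((t))}\sD\cong\sC^{K_{x,r+}}\otimes_{\sH}\sD^{K_{x,r+}}$ with $\sD^{K_{x,r+}}=0$ in (3) --- are the standard ones.
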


\begin{proof}
\begin{enumerate}
\item This is Lemma 2.6 of \cite{dhy}.
\item This is proven in the discussion after Definition 2.5 of \cite{dhy}.
\item This is Lemma 2.9 of \cite{dhy}.
\end{enumerate}
\end{proof}

\subsection{Moy-Prasad generators}

The main technical tool of \cite{dhy} is a construction of 2-categorical generators for each depth. Let us rephrase this result in a form amenable to our purposes.

Fix some depth $r>0$. For any $G((t))$-category $\sC$, we have an action of $D(K_{x,r}/K_{x,r+})$ (with the convolution monoidal structure) on $\sC^{K_{x,r+}}$. A priori, $K_{x,r}/K_{x,r+}$ has only the structure of a group, but from unwinding the definitions we see that in fact it has a $k$-vector space structure. Thus, the Fourier-Deligne transform of D-modules (see e.g., Section 5.1 of \cite{Beraldo}) identifies the monoidal category $D(K_{x,r}/K_{x,r+})$ with the monoidal category $D((K_{x,r}/K_{x,r+})^*)$.

The vector space $K_{x,r}/K_{x,r+}$ is naturally equipped with an $L_x$ action, and hence so is the dual vector space $(K_{x,r}/K_{x,r+})^*$. Let $(K_{x,r}/K_{x,r+})^{*,\circ}$ denote the locus of GIT-semistable elements, i.e., elements whose $L_x$ orbit does not contain zero in its closure. Equivalently, $(K_{x,r}/K_{x,r+})^{*,\circ}$ is the complement of the preimage of zero along $(K_{x,r}/K_{x,r+})^*\rightarrow (K_{x,r}/K_{x,r+})^*//L_x$. This is an open subvariety of $(K_{x,r}/K_{x,r+})^*$. We define
\[
\sC^{K_{x,r+},\circ}\cong \sC^{K_{x,r+}}\underset{D((K_{x,r}/K_{x,r+})^*)}{\otimes}
D((K_{x,r}/K_{x,r+})^{*,\circ}).
\]
Note that the operation of $(K_{x,r+},\circ)$-invariants commutes with both limits and colimits. 

The complement of $(K_{x,r}/K_{x,r+})^{*,\circ}$, i.e., the closed locus of GIT-unstable elements, will be denoted by $(K_{x,r}/K_{x,r+})^{*,us}$. In analogy with the definition of $\sC^{K_{x,r+},\circ}$, we define
\[
\sC^{K_{x,r+},us}\cong \sC^{K_{x,r+}}\underset{D((K_{x,r}/K_{x,r+})^*)}{\otimes}D((K_{x,r}/K_{x,r+})^{*,us}).
\]
The relevance of these definitions is explained by the following theorem.

\begin{theorem}[\cite{dhy}]\label{unrefined}
Let $\sC$ be a $G((t))$-category. Then the subcategories $\sC_{<r}$ and $\sC_{\leq r}$ coincide if and only if we have, for all $x\in X_*(T)\otimes\mathbb{R}$,
\[
\sC^{K_{x,r+},\circ}\cong 0.
\]
\end{theorem}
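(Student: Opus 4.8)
The plan is to fix a point $x\in X_*(T)\otimes\bR$, describe the generated subcategory $\sC_{K_{x,r+}\gen}$ using the $k$-vector space structure on $V:=K_{x,r}/K_{x,r+}$ together with the $L_x$-equivariant GIT geometry of $V^*$, and then assemble the answer over all $x$. First I would reduce to a statement about a single $x$. Since $\sC_{<r}$ is a full $G((t))$-stable subcategory and, by Definition \ref{depthdefinition}, $\sC_{\leq r}$ is the smallest full DG subcategory containing $\sC_{K_{x,r+}\gen}$ for every $x$, the evident inclusion $\sC_{<r}\subseteq\sC_{\leq r}$ is an equality if and only if $\sC_{K_{x,r+}\gen}\subseteq\sC_{<r}$ for every $x$. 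So it suffices to show, for each fixed $x$, that $\sC_{K_{x,r+}\gen}\subseteq\sC_{<r}$ if and only if $\sC^{K_{x,r+},\circ}\cong 0$.

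Next I would transport the open--closed decomposition $V^{*,\circ}\hookrightarrow V^*\hookleftarrow V^{*,us}$ to the category side. By Lemma \ref{bgolemma} the subcategory $\sC_{K_{x,r+}\gen}$ is equivalent to $D(G((t))/K_{x,r+})\otimes_{D(K_{x,r+}\backslash G((t))/K_{x,r+})}\sC^{K_{x,r+}}$; after Fourier--Deligne transform the $D(V)$-action on $\sC^{K_{x,r+}}$ becomes a $D(V^*)$-action, and since the decomposition of $V^*$ is $L_x$-stable it induces a recollement of $\sC_{K_{x,r+}\gen}$ whose open and closed pieces are generated --- under colimits and the $G((t))$-action --- by $a_{x,r}$ applied to $\sC^{K_{x,r+},\circ}$ and to $\sC^{K_{x,r+},us}$ respectively. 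In particular $\sC_{K_{x,r+}\gen}\subseteq\sC_{<r}$ if and only if both of these pieces lie in $\sC_{<r}$.

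The heart of the matter is the unstable piece. I would show that $a_{x,r}(\sC^{K_{x,r+},us})$ always lies in $\sC_{<r}$, by stratifying the unstable locus $V^{*,us}$ into its Hesselink--Kirwan strata $S_\lambda$, indexed by the (rational) destabilizing cocharacters $\lambda$ of $L_x$. Each such $\lambda$ determines a point $x_\lambda\in X_*(T)\otimes\bR$ --- a small perturbation of $x$ in the $\lambda$-direction --- with the property that the part of $\sC^{K_{x,r+}}$ supported on $S_\lambda$, once induced up, is absorbed into $\sC_{K_{x_\lambda,r_\lambda+}\gen}$ for some depth $r_\lambda<r$; concretely, moving $x$ toward a destabilizing direction pushes the relevant weight spaces strictly below depth $r$, so the corresponding objects are generated from a shallower Moy--Prasad subgroup. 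Taking the (finite) union over strata and using $\sC_{\leq r_\lambda}\subseteq\sC_{<r}$ gives the claim, and reduces the problem to: $\sC_{K_{x,r+}\gen}\subseteq\sC_{<r}$ if and only if $a_{x,r}(\sC^{K_{x,r+},\circ})\subseteq\sC_{<r}$. For the remaining implication I would show that if $\sC^{K_{x,r+},\circ}$ is nonzero then $a_{x,r}(\sC^{K_{x,r+},\circ})$ escapes $\sC_{<r}$, by testing against a ``depth-$r$ detector.'' For a semistable $\xi$ in the support of a nonzero object, construct a $G((t))$-category $\sD_\xi$ --- a Moy--Prasad induction model attached to the character of $V$ determined by $\xi$ --- with (i) $(\sD_\xi)_{\leq r'}\cong 0$ for all $r'<r$, which is exactly where semistability of $\xi$ is used, since then $\xi$ is \emph{not} reachable from a smaller depth and the mechanism of the previous step fails; and (ii) the pairing $\sC\otimes_{G((t))}\sD_\xi$ still detects the $\circ$-part. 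Then Proposition \ref{depthproperties}(2)--(3) give $\sC_{<r}\otimes_{G((t))}\sD_\xi\cong\on{colim}_{r'<r}(\sC_{\leq r'}\otimes_{G((t))}\sD_\xi)\cong 0$, so $a_{x,r}(\sC^{K_{x,r+},\circ})\not\subseteq\sC_{<r}$, which completes the proof together with the first three steps.

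I expect the two geometric inputs just described to be the main obstacles: (a) matching the $L_x$-GIT stratification of $V^*$ with the combinatorics of Moy--Prasad subgroups so that ``GIT-unstable'' literally becomes ``generated at strictly smaller depth'' --- making precise the perturbation $x\mapsto x_\lambda$ and the required absorption of $S_\lambda$-supported objects; and (b) constructing the detector categories $\sD_\xi$ and verifying that semistability forces $(\sD_\xi)_{\leq r'}\cong 0$ for $r'<r$. The formal parts --- the reduction to a single $x$ and the transport of the recollement through Lemma \ref{bgolemma} and the Fourier transform --- should be comparatively routine, and (b) is really the crux of why the vanishing of $\sC^{K_{x,r+},\circ}$ is exactly the right criterion.
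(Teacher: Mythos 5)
Your overall skeleton is right, but the proposal has a genuine gap: the two steps you yourself flag as ``main obstacles'' --- (a) that the unstable piece $\sC^{K_{x,r+},us}$, once induced up, always lands in $\sC_{<r}$, and (b) that a nonzero semistable piece $\sC^{K_{x,r+},\circ}$ cannot land in $\sC_{<r}$ --- are not loose ends to be filled in later; together they \emph{are} the theorem, and neither is carried out. The paper does not prove them here either: it quotes Theorem 3.2 of \cite{dhy}, which packages both at once as the identification $D(G((t)))^{K_{x,r+},\circ}\cong D(G((t))/K_{x,r+})_{\geq r}$ (equivalently: the unstable part of the universal case is exactly the depth-$<r$ part). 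Given that input, both directions are formal consequences of the depth-orthogonality statement, Proposition \ref{depthproperties}(3): one first computes $\sC_{<r}^{K_{x,r+},\circ}\cong \sC_{<r}\otimes_{G((t))}D(G((t))/K_{x,r+})_{\geq r}\cong 0$, quotients by $\sC_{<r}$, and then observes that in the quotient (where $\sC_{<r}\cong 0$ and $\sC=\sC_{\leq r}$) nontriviality of $\sC$ forces some $\sC^{K_{x,r+}}\neq 0$ while $\sC^{K_{x,r+},us}\cong\sC\otimes_{G((t))}D(G((t)))^{K_{x,r+},us}\cong 0$, whence $\sC^{K_{x,r+},\circ}\cong\sC^{K_{x,r+}}\neq 0$. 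Since the theorem is itself attributed to \cite{dhy}, citing this is legitimate; your plan amounts to reproving the cited input from scratch.

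On the substance of your two steps: (a) is sound in outline and is exactly what Lemma \ref{geometricinput} provides --- the unstable locus is $L_x\cdot\bigcup_y(K_{y,r}/K_{x,r+})^{\perp}$, and $\sC^{K_{x,r+},(K_{y,r}/K_{x,r+})^{\perp}}\cong\sC^{K_{y,r}}=\sC^{K_{y,(r-\epsilon)+}}$ is generated at strictly smaller depth; this computation appears verbatim later in the paper (proofs of Lemmas \ref{exactnessreductionstep} and \ref{boundednessreductionstep}). Step (b) is where your route does not close. Constructing the detector $\sD_\xi$ and proving your property (i) is precisely the assertion that semistable characters are not reachable from shallower Moy--Prasad subgroups, i.e.\ the hard half of Theorem 3.2 of \cite{dhy}; and property (ii), that the pairing $\sC\otimes_{G((t))}\sD_\xi$ ``still detects the $\circ$-part,'' is asserted with no mechanism --- a nonzero category can easily pair to zero. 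Note also that no pointwise detector is needed: once the universal statement for $\sC=D(G((t)))$ is known, the ``only if'' direction is the two-line computation $\sC_{<r}^{K_{x,r+},\circ}\cong 0$ above. So either cite Theorem 3.2 of \cite{dhy} and give the formal deduction, or accept that (b) remains an unproved claim on which the whole argument rests.
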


\begin{proof}
Without loss of generality, we can assume that $\sC_{\leq r}=\sC$. By Theorem 3.2 of \cite{dhy}, we have $D(G((t)))^{K_{x,r+},\circ}\cong D(G((t))/K_{x,r+})_{\geq r}$. In particular, part (3) of Proposition \ref{depthproperties} implies that
\[
\sC_{<r}^{K_{x,r+},\circ}\cong \sC_{<r}\underset{G((t))}{\otimes}D(G((t))/K_{x,r+})_{\geq r}\cong 0.
\]
We can therefore quotient $\sC$ by $\sC_{<r}$ and reduce to the case where $\sC_{<r}\cong 0$. It remains to show, under our assumptions, that $\sC$ itself is trivial if and only if all the categories $\sC^{K_{x,r+},\circ}$ are trivial.

The only if direction is now obvious, so let us treat the if direction. Assume $\sC$ is not trivial. Then some $\sC^{K_{x,r+}}$ is nontrivial. We claim that $\sC^{K_{x,r+},\circ}\cong \sC^{K_{x,r+}}$, or equivalently, that
\[
\sC^{K_{x,r+},us}\cong \sC\underset{G((t))}{\otimes}D(G((t)))^{K_{x,r+},us}\cong 0.
\]
This again follows from part (3) of Proposition \ref{depthproperties} and Theorem 3.2 of \cite{dhy}.
\end{proof}

Let us give some properties of the unstable locus. They will come in handy later when we need variations of Theorem \ref{unrefined}. We note that Lemma \ref{geometricinput} is a key geometric input for Theorem 3.2 of \cite{dhy} (and hence for Theorem \ref{unrefined} as well.)

\begin{lemma}\label{geometricinput}
There is an equality of subvarieties
\[
(K_{x,r}/K_{x,r+})^{*,us}=L_x\cdot(\displaystyle\bigcup_{y\mid K_{x,r+}\subseteq K_{y,r}\subseteq K_{x,r}}(K_{y,r}/K_{x,r+})^{\perp}).
\]
\end{lemma}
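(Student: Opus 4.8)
The plan is to compute both sides via the Hilbert--Mumford criterion for the (connected) reductive group $L_x$ acting on the finite-dimensional vector space $V := K_{x,r}/K_{x,r+} = \bigoplus_{\langle\alpha,x\rangle + i = r}\fg_\alpha t^i$ and on its dual $V^*$; recall that by definition $(K_{x,r}/K_{x,r+})^{*,us}$ is exactly the null-cone $\{\xi \in V^* : 0 \in \overline{L_x\cdot\xi}\}$, which is $L_x$-stable. The first ingredient is the torus action: the constant-loop torus $T \subseteq G \subseteq G((t))$ maps isomorphically onto a maximal torus of $L_x$ (immediate from the definitions, since $\ft$ sits in $\fk_{x,0}$ but not in $\fk_{x,0+}$, and $\operatorname{rank} L_x = \operatorname{rank} G$), and it acts on the summand $\fg_\alpha t^i$ of $V$ through the character $\alpha$, hence on $(\fg_\alpha t^i)^* \subseteq V^*$ through $-\alpha$. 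Therefore, for $\lambda \in X_*(T)$ and $\xi \in V^*$, the limit $\lim_{s\to 0}\lambda(s)\cdot\xi$ exists and equals $0$ precisely when $\xi$ lies in the span of those $(\fg_\alpha t^i)^*$ (with $\langle\alpha,x\rangle + i = r$) for which $\langle\alpha,\lambda\rangle < 0$.

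The second ingredient is a discreteness observation that bridges the two sides: the real numbers $\langle\alpha,x\rangle + i$, with $\alpha$ over the roots (and $0$) and $i$ over $\bZ$, form a discrete subset of $\bR$, so there is a gap $\delta > 0$ with no such value in $(r - \delta, r) \cup (r, r + \delta)$. Using this, for any $\mu \in X_*(T)$ and any sufficiently small $\epsilon > 0$, the point $y := x + \epsilon\mu \in X_*(T)\otimes\bR$ satisfies $K_{x,r+} \subseteq K_{y,r} \subseteq K_{x,r}$, and moreover $(K_{y,r}/K_{x,r+})^\perp$ is exactly the span of those $(\fg_\alpha t^i)^*$ with $\langle\alpha,x\rangle + i = r$ and $\langle\alpha,\mu\rangle < 0$. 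Both claims are a direct check against the definitions: the sandwiching uses the gap $\delta$, so that shifting $x$ by $\epsilon\mu$ cannot move any value $\langle\alpha,x\rangle + i \ne r$ across $r$, while the description of the annihilator uses that the summands surviving in the subquotient $K_{x,r}/K_{x,r+}$ are exactly those with $\langle\alpha,x\rangle + i = r$, and that $\fg_\alpha t^i \subseteq \fk_{y,r}$ there iff $\langle\alpha,\mu\rangle \ge 0$.

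Granting these, both inclusions are short. For $\supseteq$: let $y$ satisfy $K_{x,r+} \subseteq K_{y,r} \subseteq K_{x,r}$. Unwinding, $(K_{y,r}/K_{x,r+})^\perp$ is spanned by the $(\fg_\alpha t^i)^*$ with $\langle\alpha,x\rangle + i = r$ and $\langle\alpha, y - x\rangle < 0$; the finitely many characters $\alpha$ occurring all lie in the open half-space $\{\langle\,\cdot\,, y - x\rangle < 0\}$, so by density of $X_*(T)\otimes\bQ$ in $X_*(T)\otimes\bR$ and then clearing denominators there is an integral $\lambda \in X_*(T)$ with $\langle\alpha,\lambda\rangle < 0$ for all of them. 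By the first ingredient $\lambda$ simultaneously sends every vector of $(K_{y,r}/K_{x,r+})^\perp$ to $0$, so that subspace is contained in $V^{*,us}$; as $V^{*,us}$ is $L_x$-stable, so is its $L_x$-saturation, and taking the union over $y$ gives $\supseteq$. For $\subseteq$: if $\xi \in V^{*,us}$ then Hilbert--Mumford produces a one-parameter subgroup of $L_x$ destabilizing it; since all maximal tori of $L_x$ are conjugate, after replacing $\xi$ by an $L_x$-translate (harmless, as we re-saturate by $L_x$ at the end) this one-parameter subgroup may be taken to be $\mu \in X_*(T)$. By the first ingredient $\xi$ is then supported on the $(\fg_\alpha t^i)^*$ with $\langle\alpha,\mu\rangle < 0$, and by the second ingredient this is $(K_{y,r}/K_{x,r+})^\perp$ for $y = x + \epsilon\mu$, so $\xi$ lies in the right-hand side.

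The main obstacle I expect is bookkeeping rather than anything conceptual: the sandwiching $K_{x,r+} \subseteq K_{y,r} \subseteq K_{x,r}$ genuinely needs the gap argument --- a naive ``take $\epsilon$ small'' fails for values $\langle\alpha,x\rangle + i$ that approach $r$ without equaling it --- and one must carefully track which root spaces $\fg_\alpha t^i$ survive in $K_{x,r}/K_{x,r+}$ versus which survive in $K_{y,r}/K_{x,r+}$. A secondary point to handle cleanly is the reduction, in the $\subseteq$ direction, from an arbitrary one-parameter subgroup of $L_x$ to one valued in the constant-loop torus, which rests on conjugacy of maximal tori in $L_x$ together with the fact that $T \hookrightarrow L_x$ is maximal.
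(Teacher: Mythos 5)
Your proof is correct and follows essentially the same route as the paper's: the Hilbert--Mumford criterion, conjugation of the destabilizing one-parameter subgroup into the constant torus $T\subseteq L_x$, and the perturbation $y=x+\epsilon\mu$ identifying the contracted weight spaces with $(K_{y,r}/K_{x,r+})^{\perp}$. The only difference is that you also spell out the $\supseteq$ inclusion and the discreteness/gap justification for ``sufficiently small $\epsilon$,'' which the paper leaves implicit.
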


\begin{proof}
Let $p$ be a point of $(K_{x,r}/K_{x,r+})^{*,us}$. By the Hilbert-Mumford criterion, there is a one-parameter subgroup $\mathbb{G}_m\rightarrow L_x$ which contracts $p$ to the origin. This subgroup is necessarily a conjugate of a one-parameter subgroup of $T$ corresponding to some $\beta\in X_*(T)$. Equivalently, there is some point $q\in L_x\cdot p$ which is contracted by $\beta$, so there is an inclusion
\[
q\in \displaystyle\bigoplus_{\langle \beta,\alpha\rangle > 0}\fg_{-\alpha}^*t^{i_{\alpha}}\bigoplus.
\]

Then for sufficiently small $\epsilon > 0$, if we take $y=x+\epsilon\beta$, we have $K_{x,r+}\subseteq K_{y,r}\subseteq K_{x,r}$ and
\[
q\in (K_{y,r}/K_{x,r+})^{\perp}.
\]
This implies the inclusion $p\in L_x\cdot(K_{y,r}/K_{x,r+})^{\perp}$, as desired.
\end{proof}

\begin{lemma}\label{finiteorbits}
The variety $(K_{x,r}/K_{x,r+})^{*,us}$ is the union of a finite number of $L_x$-orbits.
\end{lemma}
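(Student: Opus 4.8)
The plan is to recognize $(K_{x,r}/K_{x,r+})^{*,us}$ as the nullcone of a $\theta$-representation and to invoke finiteness of nilpotent orbits for such representations. To set up: writing $V := K_{x,r}/K_{x,r+}$, the variety in question is by construction a closed $L_x$-stable cone in $V^{*}$ — it is the fiber over the cone point of the GIT quotient $V^{*}\to V^{*}/\!\!/L_x$ — so it is automatically a union of $L_x$-orbits, and the whole content of the lemma is finiteness. Using the residue pairing on $\fg((t))$ one identifies $V^{*}$, as an $L_x$-module, with another graded component $\fk_{x,s}/\fk_{x,s+}$ of the loop algebra; thus $(L_x,V^{*})$ is exactly the pair attached to one graded piece of the $\bZ$-grading of $\fg[t,t^{-1}]$ induced by $x$ — a (possibly twisted) affine analogue of a $\theta$-representation in Vinberg's sense.

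Concretely, I would make the reduction via Lemma~\ref{geometricinput}. By the Hilbert--Mumford criterion $(K_{x,r}/K_{x,r+})^{*,us}$ is the nullcone $\mathcal N_{L_x}(V^{*})$, and Lemma~\ref{geometricinput} writes it as $L_x\cdot\bigcup_y W_y$ with $W_y := (K_{y,r}/K_{x,r+})^{\perp}$; this is a finite union, since the subgroups $K_{y,r}$ with $K_{x,r+}\subseteq K_{y,r}\subseteq K_{x,r}$ are determined by $T$-stable subspaces of the finite-dimensional $V$, of which there are finitely many. So it suffices to show each $L_x\cdot W_y$ is a finite union of $L_x$-orbits. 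As in the proof of Lemma~\ref{geometricinput}, $W_y$ is the contracting subspace $(V^{*})^{\beta_y}_{>0}$ for a cocharacter $\beta_y\in X_*(T)$; it is stable under the parabolic $P_{\beta_y}\subseteq L_x$, the action map $L_x\times^{P_{\beta_y}}W_y\to V^{*}$ factors as a closed immersion into $L_x/P_{\beta_y}\times V^{*}$ followed by the (proper) projection to $V^{*}$, and the $L_x$-saturation of a single $P_{\beta_y}$-orbit is a single $L_x$-orbit. Hence it is enough that each $W_y$ be a finite union of $P_{\beta_y}$-orbits. Writing $P_{\beta_y}=M_{\beta_y}\ltimes U_{\beta_y}$ with $M_{\beta_y}=Z_{L_x}(\beta_y)$, the space $W_y$ is graded by $\beta_y$-weight with $M_{\beta_y}$ preserving and $U_{\beta_y}$ raising the grading, and one checks that each graded piece is, as an $M_{\beta_y}$-module, again a graded piece of the loop algebra for a nearby point $y=x+\epsilon\beta_y$, of strictly smaller dimension unless $W_y=V^{*}$ outright. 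An iterated fibration argument along the $\beta_y$-grading then reduces finiteness of $P_{\beta_y}$-orbits on $W_y$ to the same statement for these smaller Moy-Prasad pairs, giving an induction on $\dim V$.

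The induction bottoms out precisely in the case $W_y=V^{*}$, i.e.\ $\mathcal N_{L_x}(V^{*})=V^{*}$ (no nonconstant $L_x$-invariants on $V^{*}$), and this is where the genuine content lives. The main obstacle is therefore exactly the finiteness of nilpotent $L_x$-orbits in a Moy-Prasad quotient: the assertion that $(L_x,V^{*})$ is a \emph{visible} representation. This is not elementary — the nullcone of a general reductive-group representation is not a finite union of orbits — and what saves us is the rigidity of the $x$-grading on $\fg((t))$; it is the affine Kac--Moody generalization of Vinberg's theorem on $\bZ$-gradings of reductive Lie algebras, and in this Moy--Prasad form it is available in the literature on epipelagic representations and in \cite{dhy}. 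I would cite this input and treat everything else as formal bookkeeping around Lemma~\ref{geometricinput}.
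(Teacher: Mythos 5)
You land on the same essential input as the paper --- Vinberg's theory of graded Lie algebras and the visibility (finiteness of orbits in the nullcone) of the resulting representations --- but you reach it by a detour that is both unnecessary and, in one step, unjustified. The paper's proof is a direct identification: writing $r=p/q$ in lowest terms, the point $x$ induces a $\bZ/q\bZ$-grading on the \emph{finite-dimensional} Lie algebra $\fg$ (sorting the $\fg_{\alpha}t^i$ by the class of $\langle x,\alpha\rangle+i$ modulo $1$), under which $K_{x,r}/K_{x,r+}$ becomes $\fg_1$ and $L_x$ the exponentiation of $\fg_0$; Theorem 4 of \cite{Vinberg} then gives the finiteness outright. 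In particular no ``affine Kac--Moody generalization'' of Vinberg is needed --- the grading lives on $\fg$ itself, which is exactly the observation underlying the epipelagic literature you allude to. Two concrete criticisms of your write-up. First, the visibility statement you isolate as ``the main obstacle'' --- that the nullcone of $(L_x,V^*)$ is a finite union of $L_x$-orbits --- \emph{is} the lemma verbatim, so once you grant it your entire reduction via Lemma \ref{geometricinput} and the induction on dimension is superfluous; your argument is organized as if the base case were a special, more rigid situation, but it is the general one. Second, if the reduction were actually needed, the ``iterated fibration argument along the $\beta_y$-grading'' would require real justification: finiteness of $M_{\beta_y}$-orbits on each graded piece of $W_y$ does not formally imply finiteness of $P_{\beta_y}$-orbits on $W_y$ (a unipotent radical can act with positive-dimensional families of orbits on an extension even when the associated graded has finitely many orbits), so as written this step is a gap. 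Neither issue is fatal, because the direct appeal to Vinberg short-circuits both.
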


\begin{proof}
We rewrite the statement in terms of graded Lie algebras. Express $r$ as a fraction $\frac{p}{q}$ in lowest terms. Then we have a $\mathbb{Z}/q\mathbb{Z}$ grading on $\fg$, defined by
\[
\fg_i=\displaystyle\bigoplus_{\langle x,\alpha\rangle\equiv\frac{pi}{q}\pmod{1}}\fg_{\alpha}.
\]
This grading is compatible with the Lie algebra structure in the sense that $[\fg_i,\fg_j]\subseteq\fg_{i+j}$.

In this language, the vector space $K_{x,r}/K_{x,r+}$ can be identified with $\fg_1$, and $L_x$ can be identified with the exponentiation of $\fg_0$. The lemma now follows from Theorem 4 of \cite{Vinberg}.
\end{proof}

The rest of this section will be devoted to the case of $r=0$. The naive analogue of Theorem \ref{unrefined} would say that $\sC_{\leq 0}$ is trivial if and only if $\sC^{K_{x,0+}}$ is trivial for all $x$. This is easy to prove, but we will need a more refined statement.

\subsection{Categorical representations of reductive groups}

First we need to study the categorical representation theory of (finite type) reductive groups. Let $H$ be a reductive algebraic group over $k$. (We use the letter $H$ here to emphasize that it may differ from the group $G$ fixed in the rest of the paper.) The Borel, Cartan, etc. subgroups of $H$ will be denoted by $B_H$, $T_H$, etc.

Fix a $H$-category $\sC$. Our main lemma relates the categories $\sC^{H,w}\cong \sC\otimes_{H}\fh\mod$ and $(\sC^{N_H})^{T_H,w}\cong \sC\otimes_{H}D(H/N_H)^{T_H,w}$. The Harish-Chandra isomorphism gives an action of $\QCoh(\ft_{\fh}^*//W_H)$ on $\sC^{H,w}$, where the $W_H$ action on $\ft_{\fh}^*$ is the dot action. On the other hand, the tangent vector fields for the right action of $T_H$ on $H/N_H$ give an action of $\QCoh(\ft_{\fh}^*)$ on $(\sC^{N_H})^{T_H,w}$.

Let $S$ be the collection of elements $\check{\alpha}+\langle\rho,\check{\alpha}\rangle+n\in\Sym\ft_{\fh}$ where $\check{\alpha}$ ranges over positive coroots of $H$ and $n$ ranges over positive integers. Then we define an affine scheme $\ft_{\fh}^{*,\circ}$ as $\Spec$ of the localization $\Sym\ft_{\fh}[S^{-1}]$. 

\begin{lemma}\label{finitebeiber}
There are adjoint functors
\[
\on{Loc}:\sC^{H,w}\underset{\QCoh(\ft_{\fh}^*//W_H)}{\otimes}\QCoh(\ft_{\fh}^{*,\circ})\rightleftarrows(\sC^{N_H})^{T_H,w}\underset{\QCoh(\ft_{\fh}^*)}{\otimes}\QCoh(\ft_{\fh}^{*,\circ}):\Gamma
\]
with $\on{Loc}$ fully faithful. Furthermore, if $\sC$ has a t-structure compatible with the $H$-action, then $\Gamma$ is t-exact for the t-structures induced by Lemma \ref{flattstructure}.
\end{lemma}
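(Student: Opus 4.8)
The plan is to reduce the assertion to the universal case $\sC = D(H)$, where it becomes the classical Beilinson--Bernstein localization theorem in its family-over-the-weight-space form, and then to propagate the conclusion along colimits of $H$-categories. Write $\sB$ for the $H$-category $\fh\mod\otimes_{\QCoh(\ft_\fh^*//W_H)}\QCoh(\ft_\fh^{*,\circ})$ and $\sA$ for the $H$-category $D(H/N_H)^{T_H,w}\otimes_{\QCoh(\ft_\fh^*)}\QCoh(\ft_\fh^{*,\circ})$. By the identifications $\sC^{H,w}\cong\sC\otimes_H\fh\mod$ and $(\sC^{N_H})^{T_H,w}\cong\sC\otimes_H D(H/N_H)^{T_H,w}$ recalled above (and the fact that the $\QCoh(\ft_\fh^{*,\circ})$-base changes commute with $\sC\otimes_H(-)$), the two sides of the statement are $\sC\otimes_H\sB$ and $\sC\otimes_H\sA$. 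I would take $\on{Loc} = \on{id}_\sC\otimes_H\on{Loc}_0$, where $\on{Loc}_0\colon\sB\to\sA$ is the universal localization functor --- the categorical avatar of $M\mapsto\sD_{H/N_H}\otimes_{U(\fh)}M$, which is manifestly $H$-equivariant, $\QCoh(\ft_\fh^{*,\circ})$-linear, and cocontinuous --- and let $\Gamma$ be its right adjoint. By the general existence of equivariant right adjoints underlying the formalism (cf.\ Lemma \ref{bgolemma} and \cite{dhy}), the right adjoint $\Gamma_0 := \on{Loc}_0^R$ is again $H$-equivariant, so $\Gamma = \on{id}_\sC\otimes_H\Gamma_0$; in particular $\Gamma$ is $\QCoh(\ft_\fh^{*,\circ})$-linear.

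\emph{Reduction to $\sC = D(H)$.} The functors $\sC\mapsto\sC\otimes_H\sB$ and $\sC\mapsto\sC\otimes_H\sA$ are $\DGCat$-linear and preserve colimits in $\sC$: weak $H$- and $T_H$-invariants and strong invariants under the unipotent group $N_H$ all commute with colimits, as does every $\QCoh$-base change. The functor $\on{Loc}$ is cocontinuous, and $\Gamma$ becomes cocontinuous once we know it is exact, which we import from the universal case. The full faithfulness of $\on{Loc}$ is the assertion that the unit $\on{id}\to\Gamma\on{Loc}$ is invertible; this is a morphism of cocontinuous functors of $\sC$, so it suffices to verify it on a collection of generators of the category of $H$-categories under colimits, e.g.\ on $\sC = D(H)\otimes V$ with $V\in\DGCat$, and then by $\DGCat$-linearity on $\sC = D(H)$. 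Likewise, since $\Gamma = \on{id}_\sC\otimes_H\Gamma_0$ and the t-structures of Lemma \ref{flattstructure} are set up to be compatible with $\DGCat$-tensoring, the t-exactness of $\Gamma$ for an arbitrary $\sC$ carrying a compatible t-structure reduces to the t-exactness of $\Gamma_0$, i.e.\ again to the case $\sC = D(H)$.

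\emph{The universal case.} For $\sC = D(H)$ the category $\sC\otimes_H\sB$ is $\fh\mod\otimes_{\QCoh(\ft_\fh^*//W_H)}\QCoh(\ft_\fh^{*,\circ})$ --- modules over $U(\fh)$ with Harish-Chandra parameter ranging over $\ft_\fh^{*,\circ}$ --- while $\sC\otimes_H\sA$ is modules over the sheaf $\widetilde{\sD}_{H/B_H} = (\pi_*\sD_{H/N_H})^{T_H}$ of universal twisted differential operators on the flag variety $H/B_H$, base-changed to $\ft_\fh^{*,\circ}$, and $\on{Loc}_0$, $\Gamma_0$ become ordinary localization and de Rham global sections. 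Now $\ft_\fh^{*,\circ}$, being the complement of the vanishing loci of the elements $\check\alpha+\langle\rho,\check\alpha\rangle+n$ for $\check\alpha$ a positive coroot and $n\geq 1$, is precisely the locus of weights $\lambda$ with $\langle\lambda+\rho,\check\alpha\rangle\notin\mathbb{Z}_{<0}$ for all positive coroots $\check\alpha$ --- the dominance condition. Over this locus it is classical that $R\Gamma(H/B_H,\widetilde{\sD}_{H/B_H})$ is $U(\fh)\otimes_{\QCoh(\ft_\fh^*//W_H)}\QCoh(\ft_\fh^{*,\circ})$ in cohomological degree $0$ and that the higher de Rham cohomology of every module over $\widetilde{\sD}_{H/B_H}|_{\ft_\fh^{*,\circ}}$ vanishes; hence $\Gamma_0$ is exact --- so t-exact for the evident t-structures, which one checks coincide with those of Lemma \ref{flattstructure} --- and $\Gamma_0\circ\on{Loc}_0\cong\on{id}$, i.e.\ $\on{Loc}_0$ is fully faithful. (Since the ``regular'' elements, the case $n=0$, are \emph{not} inverted, $\on{Loc}_0$ need not be essentially surjective, consistently with the statement asserting only full faithfulness.) Passing from the abelian to the derived statement is routine, as is checking that the abstractly constructed adjunction agrees with the geometric localization/global-sections adjunction when $\sC = D(H)$.

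\emph{Main obstacle.} The genuinely substantive input --- exactness of global sections in the dominant range together with $\Gamma(\widetilde{\sD}) = \widetilde{U}$ --- is classical. The real work is in the first two steps: organizing the adjunction so that it is natural and cocontinuous in $\sC$ (the cocontinuity of $\Gamma$ being tantamount to the exactness we are after, so that it must be extracted from the universal case rather than assumed), and, for the t-exactness clause, arranging that the $S$-localized t-structures of Lemma \ref{flattstructure} are compatible enough with $\DGCat$-tensoring that t-exactness genuinely descends from $\sC = D(H)$. It is this bookkeeping, rather than any new geometry, where I expect the care to be needed.
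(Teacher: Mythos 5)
Your overall strategy---reduce to the universal case $\sC = D(H)$, where the adjunction is classical Beilinson--Bernstein over the non-antidominant locus $\ft_{\fh}^{*,\circ}$, and then tensor the adjunction up along $\sC\otimes_H(-)$---is exactly the paper's, and your treatment of full faithfulness (the unit $\on{id}\to\Gamma\circ\on{Loc}$ is an isomorphism in the universal case, hence after tensoring) matches the paper's one-line argument. Your identification of $\ft_{\fh}^{*,\circ}$ with the dominance locus, and the observation that regularity is not imposed so only full faithfulness (not essential surjectivity) can be expected, are also correct.

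The gap is in the t-exactness clause, and it is precisely the point you defer to ``bookkeeping'': you assert that t-exactness of $\Gamma$ descends from the universal case because the t-structures of Lemma \ref{flattstructure} are ``compatible with $\DGCat$-tensoring,'' but left t-exactness is \emph{not} in general preserved under tensoring with a category---this failure is the entire reason the paper's appendix develops Lemma \ref{adjointtensor} and Propositions \ref{cgtensor1} and \ref{cgtensor2} as separate sufficient criteria, none of which applies verbatim here (the relevant tensor product is relative over $D(H)$, and $\sC$ need not have a compactly generated t-structure). The paper closes this gap with a short formal trick that sidesteps the problem entirely: \emph{right} t-exactness is always preserved under tensoring (Lemma \ref{rightexacttensor}, combined with Lemma \ref{compatibletensor} to handle the relative tensor product), so both $\on{Loc}$ and $\Gamma$ are right t-exact for general $\sC$; since $\Gamma$ has the right t-exact left adjoint $\on{Loc}$, it is automatically left t-exact, hence t-exact. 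If you replace your appeal to generic compatibility with this adjunction argument, your proof is complete and coincides with the paper's.
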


\begin{proof}
To construct $\Gamma$ and $\on{Loc}$, it suffices to consider the universal case of $\sC\cong D(G)$. In this case, we are looking for an adjoint pair
\[
\on{Loc}:\fh\mod\underset{\QCoh(\ft_{\fh}^*//W_H)}{\otimes}\QCoh(\ft_{\fh}^{*,\circ})\rightleftarrows(D(H/N_H))^{T_H,w}\underset{\QCoh(\ft_{\fh}^*)}{\otimes}\QCoh(\ft_{\fh}^{*,\circ}):\Gamma.
\]
These are provided by the global sections and localization functors of \cite{BB}. By \emph{loc. cit}., in this case $\on{Loc}$ is fully faithful and $\Gamma$ is t-exact.

For general $\sC$, we must still have $\Gamma\circ\on{Loc}\cong\on{id}$ and so $\on{Loc}$ is still fully faithful. By Lemma \ref{rightexacttensor} and Lemma \ref{compatibletensor}, $\on{Loc}$ and $\Gamma$ are right t-exact. As $\Gamma$ has left adjoint $\on{Loc}$, we see that $\Gamma$ must also be left t-exact, as desired.
\end{proof}

\begin{corollary}\label{n-generation}
The functor
\[
D(H/N_H)\underset{D(N_H\backslash H/N_H)}{\otimes}\sC^{N_H}\rightarrow \sC
\]
is an equivalence.
\end{corollary}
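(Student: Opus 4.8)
The plan is to deduce this from Lemma \ref{finitebeiber} together with the depth-$0$ formalism, by showing that the "failure of $N_H$-generation" is measured by a category that Lemma \ref{finitebeiber} forces to vanish. First I would recall that the functor in question is, by Lemma \ref{bgolemma} (applied with $G$ replaced by $H$ and $K_{x,r+}$ replaced by $N_H$, which makes sense since $N_H$ is the unipotent radical of the parabolic $B_H$), a fully faithful embedding $a:D(H/N_H)\otimes_{D(N_H\backslash H/N_H)}\sC^{N_H}\to\sC$ with continuous $H$-equivariant right adjoint $a^R$, whose essential image $\sC_{N_H\gen}$ is — by Lemma \ref{smallestsubcat} — the smallest full $H$-subcategory of $\sC$ containing $\sC^{N_H}$. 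So it suffices to prove $\sC_{N_H\gen}=\sC$, equivalently that the $H$-category $\sC':=\sC/\sC_{N_H\gen}$ (the cofiber of $a$, which carries an $H$-action since $\sC_{N_H\gen}$ is an $H$-subcategory) is zero; and since $(-)^{N_H}$ kills $\sC_{N_H\gen}{}^{N_H}=\sC^{N_H}$ by construction, we have $(\sC')^{N_H}\cong 0$.

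Next I would argue that an $H$-category $\sC'$ with $(\sC')^{N_H}\cong 0$ must itself be zero. By conservativity of the forgetful functor $\sC'\to(\sC')^{T_H,w}$ followed by... more carefully: since $(\sC')^{N_H}\cong 0$, certainly $(\sC')^{B_H}\cong((\sC')^{N_H})^{T_H}\cong 0$, and a fortiori $((\sC')^{N_H})^{T_H,w}\cong 0$. Now apply Lemma \ref{finitebeiber} to $\sC'$: its right-hand side $((\sC')^{N_H})^{T_H,w}\otimes_{\QCoh(\ft_{\fh}^*)}\QCoh(\ft_{\fh}^{*,\circ})$ vanishes, and since $\on{Loc}$ is fully faithful the left-hand side $(\sC')^{H,w}\otimes_{\QCoh(\ft_{\fh}^*//W_H)}\QCoh(\ft_{\fh}^{*,\circ})$ also vanishes. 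The point is then that $(\sC')^{H,w}$ is supported over the regular locus in an appropriate sense — more precisely, the inclusion of the open $\ft_{\fh}^{*,\circ}$ is chosen so that after the further localization the category $(\sC')^{H,w}$ can be recovered, or rather so that $(\sC')^{H,w}\cong 0$ follows. Here I would invoke that $\sC'$, being a quotient of $\sC$, and the fact (to be checked, or cited from \cite{dhy}/the surrounding setup) that every nonzero $H$-category has nonzero weak $H$-invariants supported over the chosen locus; combined with essential surjectivity of $(-)^{N_H}$-generation for the universal category $D(H)$, this gives $(\sC')^{H,w}\cong 0$ hence $\sC'\cong 0$.

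The cleanest route, and the one I would actually write, replaces the last paragraph's hand-waving: test $\sC'$ against the universal case. Since $\sC'\otimes_H D(H)\cong\sC'$ (the regular representation is the unit for the relative tensor over $H$ in the appropriate sense), and since Corollary \ref{n-generation} for $\sC=D(H)$ is exactly the classical statement that $D(H/N_H)\otimes_{D(N_H\backslash H/N_H)}D(H)^{N_H}\xrightarrow{\sim}D(H)$ — which holds because $N_H\subset H$ being the unipotent radical of a parabolic means $H/N_H\to H/B_H$ is an affine bundle and one has the Bruhat/Beilinson-Bernstein description directly — one reduces to: for any $H$-category $\sC'$, vanishing of $(\sC')^{N_H}$ implies vanishing of $\sC'$. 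And that implication is precisely what Lemma \ref{finitebeiber} buys, via the fact that $\on{Loc}$ is fully faithful and its target $((\sC')^{N_H})^{T_H,w}\otimes_{\QCoh(\ft_{\fh}^*)}\QCoh(\ft_{\fh}^{*,\circ})$ vanishes whenever $(\sC')^{N_H}$ does, together with the observation that an $H$-category with vanishing $(-)^{H,w}\otimes\QCoh(\ft_{\fh}^{*,\circ})$ and vanishing... — at this point one needs that weak $H$-invariants detect vanishing, i.e. $\sC'\neq 0\Rightarrow(\sC')^{H,w}\neq 0$, which is standard, and that the support of $(\sC')^{H,w}$ over $\ft_{\fh}^*//W_H$ meets the image of $\ft_{\fh}^{*,\circ}$; the latter fails in general, so in fact one must use $N_H$-invariants more directly. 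I therefore expect the main obstacle to be exactly this last point: bridging from "$(\sC')^{N_H}\cong 0$" to "$(\sC')^{H,w}\cong 0$", which is not formal because $\sC^{N_H}\to\sC$ need not be conservative; the resolution is to run the argument of Theorem \ref{unrefined} in the $r=0$ case, using $B_H$ in place of $K_{x,0+}$ and Lemma \ref{finitebeiber} (rather than the Vinberg-theoretic input of Lemma \ref{finiteorbits}) as the replacement for Theorem 3.2 of \cite{dhy}, and I would present the corollary as this specialization.
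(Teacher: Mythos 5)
Your skeleton matches the paper's: reduce to showing the quotient category $\sD$ (with $\sD^{N_H}\cong 0$) vanishes, use fully faithfulness of $\on{Loc}$ from Lemma \ref{finitebeiber} to conclude that $\sD^{H,w}\otimes_{\QCoh(\ft_{\fh}^*//W_H)}\QCoh(\ft_{\fh}^{*,\circ})$ vanishes, and finish with $1$-affineness of $BH$. But you correctly identify, and then fail to close, the one nontrivial step: passing from vanishing of $\sD^{H,w}\otimes_{\QCoh(\ft_{\fh}^*//W_H)}\QCoh(\ft_{\fh}^{*,\circ})$ to vanishing of $\sD^{H,w}$. Your claim that the needed support statement ``fails in general'' is wrong, and your fallback --- running the depth-zero argument of Theorem \ref{unrefineddepthzero} with $B_H$ in place of $K_{x,0+}$ --- is circular, since that theorem is itself proved by invoking this corollary.

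The missing idea is that the base-change functor $\sD^{H,w}\rightarrow \sD^{H,w}\otimes_{\QCoh(\ft_{\fh}^*//W_H)}\QCoh(\ft_{\fh}^{*,\circ})$ is \emph{conservative}. The point is that $f:\ft_{\fh}^{*,\circ}\rightarrow\ft_{\fh}^*//W_H$ is faithfully flat: flat because $\Sym\ft_{\fh}[S^{-1}]$ is a localization, and surjective because every $W_H$-dot-orbit in $\ft_{\fh}^*$ contains a point avoiding all the hyperplanes $\check{\alpha}+\langle\rho,\check{\alpha}\rangle+n=0$ (this is exactly the antidominance condition of Beilinson--Bernstein, and is why $\ft_{\fh}^{*,\circ}$ was defined the way it was). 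Using self-duality of $\QCoh(\ft_{\fh}^{*,\circ})$ as a $\QCoh(\ft_{\fh}^*//W_H)$-module, conservativity of base change reduces to the statement that $f_*$ generates $\QCoh(\ft_{\fh}^*//W_H)$ under colimits; this holds because surjectivity of $f$ puts a skyscraper at every geometric point in the image of $f_*$, and smoothness of $\ft_{\fh}^*//W_H$ ensures skyscrapers generate. With this in hand $\sD^{H,w}\cong 0$, and $1$-affineness of $BH$ finishes the proof as you indicated. Without this conservativity argument your proof does not go through.
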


\begin{proof}
By \cite{BGO}, this functor is fully faithful. Denote the quotient category by $\sD$. We need to show that $\sD$ is trivial.

A short calculation shows that $\sD^{N_H}\cong 0$. Thus, fully faithfulness of $\on{Loc}$ in Lemma \ref{finitebeiber} implies that $\sD^{H,w}\otimes_{\QCoh(\ft_{\fh}^*//W_H)}\QCoh(\ft_{\fh}^{*,\circ})$ is trivial. We claim that the functor
\[
\sD^{H,w}\rightarrow \sD^{H,w}\underset{\QCoh(\ft_{\fh}^*//W_H)}{\otimes} \QCoh(\ft_{\fh}^{*,\circ})
\]
is conservative. As $\QCoh(\ft_{\fh}^{*,\circ})$ is self dual as a $\QCoh(\ft_{\fh}^*//W_H)$-module, this functor can be rewritten as
\[
\on{Hom}_{\QCoh(\ft_{\fh}^*//W_H)}(\QCoh(\ft_{\fh}^*//W_H),\sD^{H,w})\rightarrow\on{Hom}_{\QCoh(\ft_{\fh}^*//W_H)}(\QCoh(\ft_{\fh}^{*,\circ}),\sD^{H,w}),
\]
which is conservative if the pushforward functor $f_*:\QCoh(\ft_{\fh}^{*,\circ})\rightarrow\QCoh(\ft_{\fh}^*//W_H)$ generates under colimits. As $f:\ft_{\fh}^{*,\circ}\rightarrow\ft_{\fh}^*//W_H$ is faithfully flat, the image of $f_*$ contains a skyscraper sheaf at each geometric point. And because $\ft_{\fh}^*//W_H$ is smooth, $\QCoh(\ft_{\fh}^*//W_H)$ is generated under colimits by skyscraper sheaves, giving the conservativeness.

In our case, this shows that $\sD^{H,w}$ is trivial. By 1-affineness of $BH$ (Theorem 2.2.2 of \cite{1affine}), this implies that $\sD$ is trivial, as desired.
\end{proof}

The t-structures appearing in Lemma \ref{finitebeiber} have the following source.

\begin{lemma}\label{flattstructure}
Let $f:\Spec B\rightarrow\Spec A$ be a flat morphism of affine schemes. Let $\sC$ be a category equipped with a t-structure and an action of $A{\mod}$. Then there is a unique t-structure on ${\sC\otimes_{A\mod}B\mod}$ for which the pullback and pushforward functors
\[
f_{\sC}^*:\sC\rightleftarrows \sC\underset{A\mod}{\otimes} B\mod:f_{\sC,*}
\]
are both t-exact. Also, $f_{\sC,*}$ is conservative.
\end{lemma}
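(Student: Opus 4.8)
The plan is to realize $\sD:=\sC\underset{A\mod}{\otimes}B\mod$ as the category of $B$-module objects in $\sC$, where $B$ is viewed as a commutative algebra object of $A\mod$ acting on $\sC$ through the given action. This is the standard compatibility of relative tensor products with module categories, $B\mod\underset{A\mod}{\otimes}\sC\cong B\mod(\sC)$, and under it $f_{\sC,*}=\on{id}_\sC\otimes f_*$ becomes the forgetful functor and $f_{\sC}^*=\on{id}_\sC\otimes f^*$ the free-module functor. In particular $f_{\sC,*}$ is conservative (forgetful functors always are), cocontinuous, and exact, and the monad $T:=f_{\sC,*}f_{\sC}^*$ is the functor $c\mapsto B\otimes_A c$ given by the action of $B\in A\mod$ on $\sC$. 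Since $f_{\sC,*}$ is conservative and cocontinuous it is monadic, so every $M\in\sD$ is recovered as the geometric realization $M\cong\underset{[n]\in\Delta^{\mathrm{op}}}{\on{colim}}f_{\sC}^*\big(T^{\circ n}(f_{\sC,*}M)\big)$; this resolution is the main tool in what follows.

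The crucial input is that $T$ is t-exact on $\sC$. By Lazard's theorem the flat module $B$ is a filtered colimit $\underset{i}{\on{colim}}\,A^{\oplus n_i}$ of finite free $A$-modules, and since the $A\mod$-action on $\sC$ is given by cocontinuous additive functors with $A$ acting as the identity, $T(c)\cong\underset{i}{\on{colim}}\,c^{\oplus n_i}$. Finite direct sums and arbitrary colimits preserve $\sC^{\leq 0}$, while finite direct sums and filtered colimits preserve $\sC^{\geq 0}$ (the latter by our standing compatibility of t-structures with filtered colimits), so $T$ preserves both, hence is t-exact. Now set $\sD^{\leq 0}:=f_{\sC,*}^{-1}(\sC^{\leq 0})$; it is a presentable subcategory closed under colimits and extensions (and the appropriate shift), so by the standard criterion it is the connective part of a unique accessible t-structure on $\sD$, with $\sD^{\geq 1}:=(\sD^{\leq 0})^{\perp}$. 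By construction $f_{\sC,*}$ is right t-exact. It is left t-exact because for $M\in\sD^{\geq 1}$ and $Y\in\sC^{\leq 0}$ we have $\Hom_\sC(Y,f_{\sC,*}M)=\Hom_\sD(f_{\sC}^*Y,M)=0$, using that $f_{\sC,*}f_{\sC}^*Y=T(Y)\in\sC^{\leq 0}$ so that $f_{\sC}^*Y\in\sD^{\leq 0}$; hence $f_{\sC,*}M\in\sC^{\geq 1}$. The identity $f_{\sC,*}f_{\sC}^*Y=T(Y)$ likewise shows $f_{\sC}^*$ is right t-exact. For left t-exactness of $f_{\sC}^*$, given $Y\in\sC^{\geq 0}$ and $M'\in\sD^{\leq -1}$, the bar resolution of $M'$ together with adjunction gives $\Hom_\sD(M',f_{\sC}^*Y)\cong\underset{[n]\in\Delta}{\lim}\Hom_\sC\big(T^{\circ n}(f_{\sC,*}M'),T(Y)\big)$, and each term vanishes since $T^{\circ n}(f_{\sC,*}M')\in\sC^{\leq -1}$ while $T(Y)\in\sC^{\geq 0}$ (t-exactness of $T$ is used in both variables here); so $f_{\sC}^*Y\in\sD^{\geq 0}$.

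For uniqueness, let any t-structure on $\sD$ be given making both functors t-exact. Right t-exactness of $f_{\sC,*}$ forces $\sD^{\leq 0}\subseteq f_{\sC,*}^{-1}(\sC^{\leq 0})$. Conversely, if $f_{\sC,*}M\in\sC^{\leq 0}$, then each $T^{\circ n}(f_{\sC,*}M)\in\sC^{\leq 0}$ by t-exactness of $T$, hence each $f_{\sC}^*\big(T^{\circ n}(f_{\sC,*}M)\big)$ is connective by right t-exactness of $f_{\sC}^*$, and the bar resolution exhibits $M$ as a colimit of connective objects, so $M$ is connective. Thus the connective part is forced to equal $f_{\sC,*}^{-1}(\sC^{\leq 0})$ and the t-structure is uniquely determined. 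The main obstacle is the bookkeeping around monadicity and the bar resolution — making sure $f_{\sC,*}$ is genuinely monadic so the resolution converges, and that $\sD^{\leq 0}$ as defined really generates a t-structure (presentability and the relevant closure properties) — together with the left t-exactness of $f_{\sC}^*$, which is the one point where the flatness of $f$, via t-exactness of $T$ on coconnectives, is essential.
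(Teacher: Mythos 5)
Your proof is correct and follows essentially the same route as the paper: conservativity and Barr--Beck monadicity of $f_{\sC,*}$, identification of the monad with $-\otimes_A B$, and t-exactness of that monad via Lazard's theorem. You additionally spell out the details the paper leaves implicit (the explicit description $\sD^{\leq 0}=f_{\sC,*}^{-1}(\sC^{\leq 0})$, the bar-resolution verifications, and uniqueness), all of which check out.
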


\begin{proof}
To show conservativeness, note that this property of a right adjoint is preserved under tensor product. Thus, it suffices to check that $B\mod\rightarrow A\mod$ is conservative, which is obvious. Furthermore, by Barr-Beck-Lurie, the functor $f_{\sC,*}$ is monadic.

The existence of the desired t-structure on will thus follow from t-exactness of the monad $f_{\sC,*}\circ f_{\sC}^*$. This monad can be identified with the tensor product functor $-\otimes_A B: \sC\rightarrow \sC$, where $B$ is treated as an $A$-module. By Lazard's theorem, $B$ is a filtered colimit of finite free $A$-modules. Tensoring by a finite free $A$-module is clearly t-exact, so $-\otimes_A B$ is also t-exact, as desired.
\end{proof}

\subsection{Depth zero}

Now we can analyze the $r=0$ case more fully. Recall that we denote the Iwahori subgroup by $I$ and its unipotent radical by $I_0$.

\begin{theorem}\label{unrefineddepthzero}
Let $\sC$ be a $G((t))$-category. We have an equality of subcategories
\[
\sC_{\leq 0} = \sC_{I_0\gen} \subset \sC.
\]
In particular, $\sC_{\leq 0}$ is trivial if and only if $\sC^{I_0}$ is trivial.
\end{theorem}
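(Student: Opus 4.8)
The plan is to prove the two inclusions $\sC_{\leq 0} \subseteq \sC_{I_0\gen}$ and $\sC_{I_0\gen} \subseteq \sC_{\leq 0}$ separately, using that $I_0$ is one of the groups $K_{x,0+}$ (taking $x$ in the interior of the standard alcove, so that $\langle \alpha, x\rangle \in (0,1)$ for all positive roots $\alpha$, hence $K_{x,0} = I$ and $K_{x,0+} = I_0$). The inclusion $\sC_{I_0\gen} \subseteq \sC_{\leq 0}$ is then immediate from Definition \ref{depthdefinition}, since $\sC_{\leq 0}$ contains $\sC_{K_{x,0+}\gen}$ for every $x$, in particular for our chosen $x$. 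So the content is the reverse inclusion: $\sC_{K_{y,0+}\gen} \subseteq \sC_{I_0\gen}$ for every $y \in X_*(T)\otimes \bR$.

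By Lemma \ref{smallestsubcat}, $\sC_{I_0\gen}$ is the smallest full $G((t))$-subcategory of $\sC$ containing $\sC^{I_0}$; thus it suffices to show that $\sC^{K_{y,0+}}$ is contained in $\sC_{I_0\gen}$ for every $y$. Equivalently, writing $\sD$ for the quotient $G((t))$-category $\sC/\sC_{I_0\gen}$ (the Verdier quotient, which inherits a $G((t))$-action since $\sC_{I_0\gen}$ is a $G((t))$-subcategory), we must show $\sD^{K_{y,0+}} \cong 0$ for all $y$. Since $\sC^{I_0} \to \sD^{I_0}$ is essentially surjective and $\sC^{I_0}$ maps to zero, we have $\sD^{I_0} \cong 0$. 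Now fix $y$; after conjugating we may assume $y$ lies in the closure of the standard alcove, so that $P_y = K_{y,0} \supseteq I$ and $L_y := K_{y,0}/K_{y,0+}$ is a reductive group containing the image of $I$ as a Borel $B_{L_y}$ with unipotent radical $N_{L_y}$ the image of $I_0$. The category $\sD^{K_{y,0+}}$ carries an action of $L_y$, and its $N_{L_y}$-invariants compute $\sD^{I_0} \cong 0$. By Corollary \ref{n-generation} applied to the reductive group $H = L_y$ and the $L_y$-category $\sD^{K_{y,0+}}$, the functor $D(L_y/N_{L_y}) \otimes_{D(N_{L_y}\backslash L_y / N_{L_y})} (\sD^{K_{y,0+}})^{N_{L_y}} \to \sD^{K_{y,0+}}$ is an equivalence; since the source vanishes (as $(\sD^{K_{y,0+}})^{N_{L_y}} \cong \sD^{I_0} \cong 0$), we conclude $\sD^{K_{y,0+}} \cong 0$, as desired. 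This gives $\sC_{\leq 0} = \sC_{I_0\gen}$, and the final sentence follows since $\sC_{I_0\gen} \cong 0$ iff $\sC^{I_0} \cong 0$ by Lemma \ref{bgolemma} (the fully faithful embedding $a_{x,0}$ has $\sC^{I_0}$ as a "building block", so trivial $\sC^{I_0}$ forces trivial image).

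The main obstacle I anticipate is the bookkeeping identifying, for an arbitrary $y$, the pair $(K_{y,0}, K_{y,0+})$ with a reductive group $L_y$ and its unipotent radical in a way compatible with the inclusions $I_0 \subseteq K_{y,0+}$ (possibly after conjugation) and with the induced group actions on invariants — i.e., checking that $(\sD^{K_{y,0+}})^{N_{L_y}} \cong \sD^{I_0}$ as $T_{L_y}$-categories and that the $L_y$-action on $\sD^{K_{y,0+}}$ is the correct one to invoke Corollary \ref{n-generation}. One subtlety is that not every $y$ has $K_{y,0+} \supseteq I_0$ on the nose; one must first move $y$ into the closure of the standard alcove by the affine Weyl group, which is harmless since the depth filtration and the subcategory $\sC_{I_0\gen}$ are $G((t))$-equivariantly defined (conjugation by a lift of an affine Weyl group element carries $\sC^{I_0}$ to $\sC^{I_0'}$ for a conjugate Iwahori, and these generate the same subcategory). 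Once this reduction is in place, the argument is a clean application of Corollary \ref{n-generation} fiberwise over the poset of Moy-Prasad subgroups at depth $0$.
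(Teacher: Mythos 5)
Your proposal is correct and follows essentially the same route as the paper: both reduce via Lemma \ref{smallestsubcat} to showing $\sC^{K_{y,0+}}\subseteq \sC_{I_0\gen}$ for each $y$, identify the image of (a conjugate of) $I_0$ as the maximal unipotent $N_{L_y}\subset L_y$, and conclude with Corollary \ref{n-generation}. The only cosmetic differences are that you phrase the conclusion via vanishing of the quotient category $\sD^{K_{y,0+}}$ (equivalent by Lemma \ref{quotientcriterion}) and handle general $y$ by conjugating into the closed alcove, whereas the paper avoids that bookkeeping by perturbing $x\mapsto x+\epsilon\check{\rho}$ so that $K_{x+\epsilon\check{\rho},0+}$ is directly a conjugate of $I_0$.
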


\begin{proof}
As $I_0$ is a subgroup of the form $K_{x,0+}$, the inclusion $\sC_{I_0\gen}\subseteq \sC_{\leq 0}$ is clear. By Lemma \ref{smallestsubcat}, it suffices to show that, for all $x\in X_*(T)$, $\sC^{K_{x,0+}}\subseteq \sC_{I_0\gen}$.

Note that for sufficiently small $\epsilon > 0$, the subgroup $K_{x+\epsilon\check{\rho},0+}$ does not depend on $\epsilon$. (Recall that $\check{\rho}$ denotes the half-sum of positive coroots.) Indeed, unwinding the definitions, we have an equality of Lie algebras
\begin{align*}
\fk_{x+\epsilon\check{\rho},0+}&=\displaystyle\bigoplus_{\langle\alpha,x+\epsilon\check{\rho},\rangle+i >0}\fg_{\alpha}t^i\\
&=\displaystyle\bigoplus_{\langle\alpha,x\rangle+i>0}\fg_{\alpha}t^i\displaystyle+\bigoplus_{\substack{\langle\alpha,x\rangle+i\geq 0 \\ \langle\alpha,\check{\rho}\rangle>0}}\fg_{\alpha}t^i\\
&=\fk_{x,0+}+(\fk_{x,0}\cap\fn((t)))
\end{align*}
giving the corresponding identity of subgroups
\[
K_{x+\epsilon\check{\rho},0+}=K_{x,0+}(P_x\cap N((t))).
\]

First, we show that $\sC^{K_{x+\epsilon\check{\rho},0+}}$ is a subcategory of $\sC_{I_0\gen}$.  Indeed, $K_{x+\epsilon\check{\rho},0}$ will be a conjugate $gIg^{-1}$ of the Iwahori subgroup, and $K_{x+\epsilon\check{\rho},0+}$ will be the corresponding conjugate $gI_0g^{-1}$ of $I_0$. We then have
\begin{align*}
\sC^{K_{x+\epsilon\check{\rho},0+}}&=g\sC^{I_0}\\
&\subseteq \sC_{I_0\gen}
\end{align*}
as desired.

To go from $K_{x+\epsilon\check{\rho},0+}$-invariants to $K_{x,0+}$-invariants, recall that $L_x\cong K_{x,0}/K_{x,0+}$ acts on $\sC^{K_{x,0+}}$. Let $N_x$ denote the subgroup $(K_{x,0+}(P_x\cap N((t))))/K_{x,0+}$ of $L_x$. As $L_x$ is a reductive group with maximal unipotent subgroup $N_x$, Corollary \ref{n-generation} is applicable. Thus, we have
\begin{align*}
\sC^{K_{x,0+}}&\cong (\sC^{K_{x,0+}})^{N_x}\underset{D(N_x\backslash L_x/N_x)}{\otimes} D(L_x/N_x)\\
&\cong \sC^{K_{x+\epsilon\check{\rho},0+}}\underset{D(N_x\backslash L_x/N_x)}{\otimes} D(P_x/K_{x+\epsilon\check{\rho},0+})\\
&\subseteq \sC_{I_0\gen}
\end{align*}
giving the theorem.
\end{proof}

\begin{corollary}\label{vanishingcriterion}
Let $\sC$ be a $G((t))$-category. Assume that, for all $x\in X_*(T)\otimes\mathbb{R}$ and $r>0$,
\[
\sC^{K_{x,r+},\circ}\cong 0
\]
and also that
\[
\sC^{I_0}\cong 0.
\]
Then $\sC$ must be trivial.
\end{corollary}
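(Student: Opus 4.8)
The plan is to feed the two hypotheses into the machinery of the depth filtration. Recall (Proposition \ref{depthproperties}(2)) that the depth filtration is exhaustive, so $\sC \cong \on{colim}_{r} \sC_{\leq r}$; hence it is enough to show $\sC_{\leq r} \cong 0$ for every $r \geq 0$. The hypotheses are exactly calibrated to kill the two ways $\sC_{\leq r}$ can be nonzero: by Theorem \ref{unrefineddepthzero}, the assumption $\sC^{I_0} \cong 0$ gives $\sC_{\leq 0} \cong 0$; and by Theorem \ref{unrefined}, for each fixed $r > 0$ the assumption that $\sC^{K_{x,r+},\circ} \cong 0$ for all $x$ gives $\sC_{<r} = \sC_{\leq r}$, i.e.\ the filtration has no jump at $r$. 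Here I write $\sC_{<r}$ for $\on{colim}_{s<r}\sC_{\leq s}$ (equivalently, the full subcategory generated by the $\sC_{\leq s}$, $s<r$).

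I would then propagate triviality up the filtration by induction along its jump locus. By Proposition \ref{depthproperties}(1) there is a discrete set $R \subset \mathbb{Q}_{\geq 0}$, depending only on $G$, off of which $r \mapsto \sC_{\leq r}$ is locally constant; enlarging $R$ if necessary we may assume $0 \in R$. A discrete subset of $\mathbb{R}_{\geq 0}$ is either finite or of order type $\omega$, so list $R = \{0 = r_0 < r_1 < r_2 < \cdots\}$. The base case $\sC_{\leq r_0} = \sC_{\leq 0} \cong 0$ is Theorem \ref{unrefineddepthzero} together with $\sC^{I_0} \cong 0$. For the inductive step, assume $\sC_{\leq r_i} \cong 0$. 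Since $(r_i, r_{i+1})$ meets $R$ trivially, local constancy gives $\sC_{\leq s} = \sC_{\leq r_i} \cong 0$ for $s \in [r_i, r_{i+1})$, while $\sC_{\leq s} \subseteq \sC_{\leq r_i} \cong 0$ for $s \leq r_i$; taking the colimit, $\sC_{<r_{i+1}} \cong 0$. Because $r_{i+1} > 0$, Theorem \ref{unrefined} and the hypothesis give $\sC_{\leq r_{i+1}} = \sC_{<r_{i+1}} \cong 0$, completing the induction.

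It remains to conclude for arbitrary $r \geq 0$: there is a largest $r_i \in R$ with $r_i \leq r$, and since $(r_i, r]$ meets $R$ trivially, local constancy gives $\sC_{\leq r} = \sC_{\leq r_i} \cong 0$. Hence $\sC_{\leq r} \cong 0$ for all $r$, and exhaustiveness (Proposition \ref{depthproperties}(2)) yields $\sC \cong \on{colim}_r \sC_{\leq r} \cong 0$.

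The point worth flagging is that there is no real obstacle here beyond bookkeeping: all the substance lives in Theorems \ref{unrefined} and \ref{unrefineddepthzero} and in Proposition \ref{depthproperties}, which we take as given. The only thing needing care is making the passage from ``no jumps at any $r > 0$ and triviality at $r = 0$'' to ``triviality at every $r$'' rigorous across an $\mathbb{R}_{\geq 0}$-indexed filtration --- hence the induction along the discrete jump set $R$ and the appeal to exhaustiveness at the end, rather than a naive pointwise argument in $r$.
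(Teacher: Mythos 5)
Your proof is correct and follows exactly the paper's argument: Theorem \ref{unrefineddepthzero} kills $\sC_{\leq 0}$, Theorem \ref{unrefined} rules out jumps at each $r>0$, and discreteness of the jump locus plus exhaustiveness finish the job. The only difference is that you spell out the induction along the jump set, which the paper leaves implicit.
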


\begin{proof}
By Theorem \ref{unrefineddepthzero}, we have $\sC_{\leq 0}\cong 0$. For $r>0$, Theorem \ref{unrefined} tells us that $\sC_{<r}\cong \sC_{\leq r}$. As the depth filtration only jumps at a discrete set (part 1 of Theorem \ref{depthproperties}), this implies that $\sC_{\leq r}\cong 0$ for all $r$. And as the depth filtration is exhaustive (part 2 of Theorem \ref{depthproperties}), we see that $\sC$ itself must be trivial, as desired.
\end{proof}

\section{Kac-Moody categories}

To state the main theorem of this paper, we will need to define various categories of representations of the affine Lie algebra $\widehat{\fg}$. Recall that $\widehat{\fg}$ is defined as a central extension of the loop algebra $\fg((t))$, and that central characters are in bijection with invariant bilinear forms on $\fg$. For the rest of this paper, we will only be concerned with critical level representations, i.e., those $\widehat{\fg}$-modules where the center acts via the character corresponding to negative one half times the Killing form. Notationally, this will be reflected by a subscript, e.g., $\widehat{\fg}_{\crit}$.

This section is effectively a quick overview of the results of Sections 6-7 of \cite{raskinbeiber}, and we will often refer there for proofs.

\subsection{The category $\widehat{\fg}_{\crit}\mod$}

Our starting point is the category $\widehat{\fg}_{\crit}\mod$ constructed in, e.g., \cite{semiinf}. As justification for its name, its heart $\widehat{\fg}_{\crit}\mod^{\heartsuit}$ is the abelian category of discrete critical $\widehat{\fg}$-representations. However, we warn the reader that $\widehat{\fg}_{\crit}\mod$ does not coincide with the derived category of its heart; rather, it is a ``renormalization" of this derived category.

More precisely, consider the subcategory $\widehat{\fg}_{\crit}\mod^c\subseteq D(\widehat{\fg}_{\crit}\mod^{\heartsuit})$ generated under finite limits by modules of the form $\on{ind}_{\fk}^{\widehat{\fg}}\mathbb{C}$, where $\fk$ is the Lie subalgebra corresponding to a pro-unipotent compact open subgroup $K\subset G((t))$. Then $\widehat{\fg}_{\crit}\mod$ is defined\footnote{Actually, in \cite{semiinf}, a different definition is given which is easier to work with. The two definitions are equivalent by Lemma 9.13.1 of \emph{loc. cit}.} to be the ind-completion of $\widehat{\fg}_{\crit}\mod^c$. Kan extension gives a functor
\[
\widehat{\fg}_{\crit}\mod\rightarrow D(\widehat{\fg}_{\crit}\mod^{\heartsuit})
\]
which induces an equivalence on bounded below subcategories. However, $\widehat{\fg}_{\crit}\mod$ contains objects in cohomological degree $-\infty$ which are sent to the zero object in $D(\widehat{\fg}_{\crit}\mod^{\heartsuit})$.

The advantage of this renormalization is that $\widehat{\fg}_{\crit}\mod$ admits a strong action of $G((t))$. There is also an additional structure which we will need. Let $\fZ$ denote the center of the universal enveloping algebra $U(\widehat{\fg}_{\crit})$. This is canonically a pro-vector space, and the Feigin-Frenkel theorem identifies it with functions on an ind-affine ind-scheme $\Op_{\check{G}}$, which parametrizes $\check{G}$-opers on the punctured formal disc. (For more on opers and Feigin-Frenkel, the reader is referred to Sections 1 and 5 of \cite{FG2}, respectively.)

There is a canonical action of $\fZ$ on any $\widehat{\fg}_{\crit}$-module $M$. Equivalently, there is a canonical ind-coherent sheaf on $\Op_{\check{G}}$ whose space of global sections is isomorphic to $M$. The correct categorical enhancement of this statement was identified in \cite{semiinf}:

\begin{theorem}[\cite{semiinf} Theorem 11.18.1]
There is a canonical coaction of $\on{IndCoh}^*(\Op_{\check{G}})$ on $\widehat{\fg}_{\crit}\mod$. Furthermore, this coaction is compatible with the action of $G((t))$.
\end{theorem}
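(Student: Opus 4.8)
The plan is to extract the coaction from the Feigin--Frenkel description of the center, the only substantial work being the matching of renormalizations. First I would recall that the center $\fZ$ of $U(\widehat{\fg}_{\crit})$ is, via Feigin--Frenkel, canonically the algebra of global functions on the ind-affine ind-scheme $\Op_{\check G}$, and that this identification intertwines the tautological action of $\fZ$ on an arbitrary critical module with the oper-theoretic structure. Since it is an identification of \emph{centers}, the $\fZ$-action on $\widehat{\fg}_{\crit}\mod$ automatically commutes with the strong $G((t))$-action, so everything below can be organized internally to the category of $G((t))$-categories, and the desired $G((t))$-compatibility comes for free. Writing $\Op_{\check G}=\on{colim}_i\Op_i$ as a filtered colimit of affine schemes along closed embeddings, the $\fZ$-action exhibits $\widehat{\fg}_{\crit}\mod$ as a module category over $\QCoh(\Op_{\check G})$.

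The remaining point is to refine this to a coaction of the \emph{renormalized} category $\on{IndCoh}^*(\Op_{\check G})$, which carries a counital coalgebra structure in $\DGCat$: pushforward along the diagonal, together with a K\"unneth identification of $\on{IndCoh}^*$ of the square with the tensor square (valid because $\Op_{\check G}$ is built from placid pieces). I would verify the refined structure on the compact generators $\on{ind}_{\fk}^{\widehat{\fg}}\mathbb C$ of $\widehat{\fg}_{\crit}\mod^c$: each, viewed through the $\fZ$-action as a sheaf on $\Op_{\check G}$, should be shown to lie in the renormalized subcategory $\on{IndCoh}^*(\Op_{\check G})$ --- this is exactly where the Feigin--Frenkel description of $\fZ$ as functions on a reasonable ind-scheme, and the theory of ind-coherent sheaves on infinite-dimensional spaces from \cite{semiinf}, enter. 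Passing to ind-completions then promotes the $\QCoh(\Op_{\check G})$-linearity to an $\on{IndCoh}^*(\Op_{\check G})$-comodule structure; coassociativity and counitality are checked on generators, reducing in the usual way to the universal case in which $\widehat{\fg}_{\crit}\mod$ is modelled by D-modules on a loop-group space and the coaction is tautological.

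The hard part is precisely this renormalization bookkeeping: one must reconcile the renormalization defining $\widehat{\fg}_{\crit}\mod$ as the ind-completion of a category built from finite limits of induced modules with the renormalization implicit in $\on{IndCoh}^*$ of the infinite-type ind-scheme $\Op_{\check G}$ --- morally, the cohomological-degree-$-\infty$ phenomena on the Kac--Moody side must be matched against the discrepancy between $\on{IndCoh}^*(\Op_{\check G})$ and $\QCoh(\Op_{\check G})$. Once this is done and the correct $\infty$-categorical framework (the formalism of D-modules and ind-coherent sheaves on infinite-dimensional spaces) is in place, the verification of the coherences, and in particular of the $G((t))$-equivariance, is comparatively formal.
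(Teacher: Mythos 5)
First, a point of orientation: the paper does not prove this statement at all. It is quoted verbatim, with attribution, as Theorem 11.18.1 of \cite{semiinf}, and its (long) proof lives there; there is no internal argument to compare yours against. Judged on its own terms, your sketch has the right silhouette --- Feigin--Frenkel supplies the $\fZ$-action, the comonoidal structure on $\IndCoh^*(\Op_{\check{G}})$ comes from diagonal pushforward plus K\"unneth, and the real work is reconciling the renormalization of $\widehat{\fg}_{\crit}\mod$ with that of $\IndCoh^*$ of an ind-infinite-type ind-scheme --- but the two places where you declare things formal are exactly where the theorem lives.

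Concretely: (1) the claim that $G((t))$-compatibility ``comes for free'' because $\fZ$ is the center is not an argument at the categorical level. That central elements commute with the $\fg((t))$-action on each module is a statement about objects; the theorem asserts that the coaction functor is a morphism of $G((t))$-categories, i.e.\ an infinite hierarchy of coherence data that must be constructed, not observed. (2) An action of $\QCoh(\Op_{\check{G}})$ does not formally induce a coaction of $\IndCoh^*(\Op_{\check{G}})$: a module over an algebra and a comodule over a coalgebra are different structures over genuinely different objects of $\DGCat$ here, and ``passing to ind-completions'' does not convert one into the other; verifying that the compact generators $\on{ind}_{\fk}^{\widehat{\fg}}\bC$ give ind-coherent (rather than merely quasi-coherent) sheaves on $\Op_{\check{G}}$ is necessary but nowhere near sufficient to produce the coherent comodule datum. (3) The closing reduction ``to the universal case in which $\widehat{\fg}_{\crit}\mod$ is modelled by D-modules on a loop-group space and the coaction is tautological'' does not exist: unlike statements purely about the $G((t))$-action (where such base-change-to-$D(G((t)))$ arguments are standard and are used elsewhere in the paper), the $\fZ$-coaction is not pulled back from any universal $G((t))$-category, and the Feigin--Frenkel structure is a non-formal input. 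If you need this statement, cite \cite{semiinf}; reproving it is a separate project, not a lemma-sized verification.
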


We remark that the coaction map
\[
\widehat{\fg}_{\crit}\mod\rightarrow \widehat{\fg}_{\crit}\mod\otimes\on{IndCoh}^*(\Op_{\check{G}})
\]
corresponds to restriction of modules along the algebra homomorphism $\fZ\times U(\widehat{\fg}_{\crit})\rightarrow U(\widehat{\fg}_{\crit})$.

\subsection{The category $\widehat{\fg}_{\crit}\mod_{\reg}$}

There is a natural closed subscheme $\Op_{\check{G}}^{\reg}$ of $\Op_{\check{G}}$, corresponding to regular opers, i.e., those that extend to the formal disc. We are interested in studying categories of $\widehat{\fg}$-representations for which the action of $\fZ$ factors through $\sO(\Op_{\check{G}}^{\reg})$. The most famous example of such a representation is the vacuum module $\mathbb{V}_{\crit}$, defined as the critical level induction from $\fg[[t]]$ of the trivial representation.

The abelian category of such representations is easy to define. However, as in the previous subsection, there are subtleties of renormalization in defining the derived category. We will define two categories, denoted by $\widehat{\fg}_{\crit}\mod_{\reg}$ and $\widehat{\fg}_{\crit}\mod_{\regnaive}$. Both of these have t-structures with the desired heart, but they will differ at $-\infty$. As suggested by the notation, $\widehat{\fg}_{\crit}\mod_{\reg}$ is the better behaved category and the real target of our investigation, while $\widehat{\fg}_{\crit}\mod_{\regnaive}$ appears as a useful technical intermediate.

First we will define $\widehat{\fg}_{\crit}\mod_{\regnaive}$. Let us recall the notion of a cotensor product. Let $\sA$ be a comonoidal category, and take a right $\sA$-comodule $\sB$ and a left $\sA$-comodule $\sC$. Then the cotensor product $\sB\overset{\sA}{\otimes}\sC$ is defined as the totalization of the cosimplicial category
\[
\sB\otimes \sC\rightrightarrows \sB\otimes \sA\otimes \sC \rightthreearrow \sB\otimes \sA\otimes \sA\otimes \sC\cdots.
\]

Then we define
\[
\widehat{\fg}_{\crit}\mod_{\regnaive}\cong\widehat{\fg}_{\crit}\mod\overset{\IndCoh^*(\Op_{\check{G}})}{\otimes}\IndCoh^*(\Op_{\check{G}}^{\reg}).
\]

Denote the closed embedding $\Op_{\check{G}}^{\reg}\rightarrow\Op_{\check{G}}$ by $i$. By Lemmas 6.17.1-2 of \cite{semiinf}, we have a pair of adjoint functors
\[
\IndCoh^*(\Op_{\check{G}}^{\reg})\rightleftarrows\IndCoh^*(\Op_{\check{G}}).
\]
Tensoring, we get another adjoint pair
\[
\widehat{\fg}_{\crit}\mod_{\regnaive}\rightleftarrows\widehat{\fg}_{\crit}\mod.
\]
We will abuse notation and denote both adjoint pairs by $(i_*,i^!)$.

\begin{theorem}
The functor $i_*:\widehat{\fg}_{\crit}\mod_{\regnaive}\rightarrow\widehat{\fg}_{\crit}\mod$ is comonadic. Thus, $\widehat{\fg}_{\crit}\mod_{\regnaive}$ admits a unique t-structure making $i_*$ t-exact.
\end{theorem}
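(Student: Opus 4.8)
The plan is to prove comonadicity with the comonadic form of the Barr--Beck--Lurie theorem and then to deduce the $t$-structure formally. Since $i_*$ admits the continuous right adjoint $i^!$, it will be comonadic once we check (i) that $i_*$ is conservative and (ii) that $\widehat{\fg}_{\crit}\mod_{\regnaive}$ admits totalizations of $i_*$-split cosimplicial objects and $i_*$ preserves them. Totalizations exist because both categories are presentable, so the real content is conservativity together with preservation of these particular limits.

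The key observation is that $i_*$ is built levelwise from a single geometric functor. Unwinding the cotensor product, $i_*$ is the totalization of a cosimplicial diagram of functors whose $n$-th term is $\on{id}\otimes i_*$, obtained by tensoring the closed-embedding pushforward $i_*:\IndCoh^*(\Op_{\check G}^{\reg})\to\IndCoh^*(\Op_{\check G})$ with the fixed category $\widehat{\fg}_{\crit}\mod\otimes\IndCoh^*(\Op_{\check G})^{\otimes n}$. For (i): pushforward along the closed embedding $i$ is conservative, equivalently the essential image of $i^!$ generates $\IndCoh^*(\Op_{\check G}^{\reg})$ under colimits; tensoring with a fixed category preserves this generation, and the evaluation functors out of a totalization of categories are jointly conservative, so $i_*$ is conservative. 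For (ii), I would reduce to the analogous -- already available -- preservation property of the geometric $i_*:\IndCoh^*(\Op_{\check G}^{\reg})\to\IndCoh^*(\Op_{\check G})$, which is comonadic by Lemmas 6.17.1--2 of \cite{semiinf}. This reduction is the crux, since comonadicity is not preserved for free either under tensoring with a fixed category or under passage to the totalization of the cobar diagrams defining the cotensor product; it uses the compatibility of the $\IndCoh^*(\Op_{\check G})$-coaction on $\widehat{\fg}_{\crit}\mod$ with those bar constructions, so that any $i_*$-split cosimplicial object can be analyzed in the geometric situation cosimplicial-degreewise.

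Granting comonadicity, we obtain $\widehat{\fg}_{\crit}\mod_{\regnaive}\simeq\on{comod}_{i_*i^!}(\widehat{\fg}_{\crit}\mod)$, under which $i_*$ becomes the forgetful functor. The comonad $i_*i^!$ on $\widehat{\fg}_{\crit}\mod$ preserves coconnective objects: this already holds for the geometric comonad $i_*i^!$ on $\IndCoh^*(\Op_{\check G})$, since $i_*$ is $t$-exact (pushforward along a closed embedding) and hence its right adjoint $i^!$ is left $t$-exact, and it is inherited after cotensoring because the coaction is compatible with $t$-structures. A comonad preserving coconnective objects induces a unique $t$-structure on its category of comodules making the forgetful functor $t$-exact -- this is dual to the standard fact that a connective (i.e.\ right $t$-exact) monad does so on modules -- which yields the asserted $t$-structure. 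Uniqueness is automatic: a $t$-structure is determined by its aisles, and $t$-exactness together with conservativity of $i_*$ force $\widehat{\fg}_{\crit}\mod_{\regnaive}^{\leq 0}=i_*^{-1}(\widehat{\fg}_{\crit}\mod^{\leq 0})$ and likewise for the coconnective aisle.

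The step I expect to be the main obstacle is (ii): propagating comonadicity of the geometric functor on $\IndCoh^*$ through the two operations -- tensoring with $\widehat{\fg}_{\crit}\mod$ and totalizing the cobar complex -- that together produce $i_*$. Both require genuine input (respectively, that $\otimes$ preserves comonadicity of a morphism admitting a continuous right adjoint, and a compatibility of the coaction with the cosimplicial resolutions), and it is here that the detailed analysis of \cite{semiinf} and Sections 6--7 of \cite{raskinbeiber} is used; we refer there for the full argument.
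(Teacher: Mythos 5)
The paper does not argue this statement internally: its entire proof is a citation to Proposition 6.6.1 of \cite{raskinbeiber}. Your outline --- comonadic Barr--Beck--Lurie for $i_*$, followed by the standard construction of a t-structure on comodules over a left t-exact comonad, with uniqueness forced by t-exactness plus conservativity of $i_*$ --- is the expected shape of that cited argument, and the t-structure half of your sketch is essentially complete and correct.

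As a proof, however, the proposal has two genuine gaps. First, the conservativity step rests on a false equivalence: for an adjunction $L\dashv R$ of presentable stable categories, conservativity of $L$ is the vanishing of the \emph{left} orthogonal ${}^{\perp}(\mathrm{ess.im}\,R)$, whereas generation of the source under colimits by the essential image of $R$ is the vanishing of the \emph{right} orthogonal; these conditions are not equivalent in general, and in any case the preservation of such generation under $\on{id}_{\sE}\otimes-$ is itself not automatic. The correct (and simpler) route is that $i_*:\IndCoh^*(\Op_{\check G}^{\reg})\to\IndCoh^*(\Op_{\check G})$ is fully faithful, so $i^!i_*\simeq\on{id}$; this retraction is manifestly stable under tensoring with a fixed category and under passage to totalizations, which gives conservativity of the levelwise functors and hence (by joint conservativity of the projections out of a limit of categories) of $i_*$ on the cotensor product. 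Second, and more seriously, the step you yourself flag as the crux --- that $i_*$ preserves totalizations of $i_*$-split cosimplicial objects, propagated from the geometric comonadicity of Lemmas 6.17.1--2 of \cite{semiinf} through the tensoring and the cobar totalization --- is not carried out but only described; that propagation is precisely the content of the cited Proposition 6.6.1 of \cite{raskinbeiber}, so deferring it leaves the proposal as a strategy rather than a proof.
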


\begin{proof}
This is part of Proposition 6.6.1. of \cite{raskinbeiber}.
\end{proof}

Let $\widehat{\fg}_{\crit}\mod_{\reg}^c\subseteq\widehat{\fg}_{\crit}\mod_{\regnaive}$ denote the full subcategory of objects $M$ such that $i_*M$ is compact. We define $\widehat{\fg}_{\crit}\mod_{\reg}$ as the ind-completion of $\widehat{\fg}_{\crit}\mod_{\reg}^c$. It comes with a natural t-structure and a functor $p:\widehat{\fg}_{\crit}\mod_{\reg}\rightarrow\widehat{\fg}_{\crit}\mod_{\regnaive}$.

\begin{theorem}\label{eventuallyequivalence}
The functor $p$ is t-exact and induces an equivalence on eventually coconnective subcategories.
\end{theorem}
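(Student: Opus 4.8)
The plan is to first dispatch right t-exactness and then reduce everything else to a general renormalization principle. The functor $p$ is the ind-extension of the inclusion $\widehat{\fg}_{\crit}\mod_{\reg}^c\hookrightarrow\widehat{\fg}_{\crit}\mod_{\regnaive}$, so it preserves all colimits and admits a continuous right adjoint $p^R$. By definition the connective part of the natural t-structure on $\widehat{\fg}_{\crit}\mod_{\reg}$ is generated under colimits by the connective objects of $\widehat{\fg}_{\crit}\mod_{\reg}^c$, which $p$ sends to connective objects; since $p$ preserves colimits it is right t-exact, and dually $p^R$ is left t-exact. It remains to show that $p$ restricts to an equivalence $(\widehat{\fg}_{\crit}\mod_{\reg})^+\xrightarrow{\sim}(\widehat{\fg}_{\crit}\mod_{\regnaive})^+$ and that this restriction is t-exact; given this, left t-exactness of $p$ is immediate (a coconnective object is in particular eventually coconnective), and one moreover obtains that $p^R$ is the inverse equivalence on eventually coconnective subcategories.

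The core mechanism is the following renormalization principle, which I would isolate as a lemma and then specialize. Let $\sD$ be a DG category with a t-structure compatible with filtered colimits whose eventually coconnective part $\sD^+$ is left complete, and let $\sD_0\subseteq\sD$ be a full, stable, idempotent-complete subcategory consisting of uniformly bounded-below objects, closed under the truncation functors of $\sD$, such that $\sD_0\cap\sD^{\heartsuit}$ generates $\sD^{\heartsuit}$ under filtered colimits and $\sD_0$ generates $\sD^+$ under colimits and extensions. Then $\on{Ind}(\sD_0)$, equipped with the t-structure generated under colimits by $\sD_0\cap\sD^{\leq 0}$, maps to $\sD$ by a t-exact functor that is an equivalence on eventually coconnective subcategories, with the right adjoint as inverse there. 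The proof is a bootstrap: closure of $\sD_0$ under truncation forces the hearts of $\on{Ind}(\sD_0)$ and $\sD$ to agree; one then shows by induction on cohomological amplitude, using the fiber sequences linking consecutive truncations, that the functor is a t-exact equivalence on each $\sD^{[a,b]}$; and one concludes by passing to the limit over $b$, using left completeness on both sides. I would apply this with $\sD=\widehat{\fg}_{\crit}\mod_{\regnaive}$ and $\sD_0=\widehat{\fg}_{\crit}\mod_{\reg}^c$.

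It remains to verify the hypotheses, and all but one are routine. The heart $\widehat{\fg}_{\crit}\mod_{\reg}^{\heartsuit}$, which is common to $\widehat{\fg}_{\crit}\mod_{\regnaive}$ and $\widehat{\fg}_{\crit}\mod_{\reg}$, is generated under filtered colimits by modules $M$ with $i_*M$ compact: the vacuum module $\bV_{\crit}$ is already compact in $\widehat{\fg}_{\crit}\mod$, as are its twists by finite-dimensional representations and its inductions to deeper congruence levels, and these generate. Left completeness of $\sD^+$ is inherited from $\widehat{\fg}_{\crit}\mod$ along the t-exact conservative functor $i_*$, using the identification recalled in the previous subsection of $(\widehat{\fg}_{\crit}\mod)^+$ with $D^+(\widehat{\fg}_{\crit}\mod^{\heartsuit})$. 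The one substantive point — and the step I expect to be the main obstacle — is that $\widehat{\fg}_{\crit}\mod_{\reg}^c=i_*^{-1}(\widehat{\fg}_{\crit}\mod^c)$ is closed under the truncation functors of $\widehat{\fg}_{\crit}\mod_{\regnaive}$; as $i_*$ is t-exact, this is precisely a coherence statement — truncations of the relevant compact objects are again compact — which fails for a general non-regular abelian category and so genuinely needs proof. I would attack it by showing that the abelian category of discrete critical representations, or at least its regular part, is locally coherent, reducing this to Noetherianity and flatness properties of $\Op_{\check{G}}^{\reg}$ together with finiteness of the relevant $\fg[[t]]$-cohomology, so that the compacts are the bounded complexes of finitely presented modules — a class visibly closed under truncation. (Alternatively, one can replace $\widehat{\fg}_{\crit}\mod_{\reg}^c$ by its truncation-closure before ind-completing and check separately that the ind-completion is unchanged; but some form of this coherence input appears unavoidable.) Granting it, the renormalization principle applies and delivers both the t-exactness of $p$ and the equivalence on eventually coconnective subcategories.
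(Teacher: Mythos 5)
First, a point of comparison: the paper does not actually prove this statement internally --- its ``proof'' is a citation to Lemma 6.9.3 of \cite{raskinbeiber}. That said, your skeleton (right t-exactness is formal from how the t-structure on the ind-completion is generated; everything else reduces to a renormalization lemma whose decisive hypothesis is that $\widehat{\fg}_{\crit}\mod_{\reg}^c$ is closed under truncation and generates the heart) is the standard route and is consistent in spirit with the cited argument. You have correctly located the crux.

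The problem is that the crux is exactly where your proposal stops, so there is a genuine gap. All of the mathematical content of the theorem lives in two claims: (i) truncations of objects of $\widehat{\fg}_{\crit}\mod_{\reg}^c$ again lie in $\widehat{\fg}_{\crit}\mod_{\reg}^c$ (equivalently, truncations of the relevant compact objects of $\widehat{\fg}_{\crit}\mod$ remain compact), and (ii) $\widehat{\fg}_{\crit}\mod_{\reg}^c\cap\widehat{\fg}_{\crit}\mod_{\regnaive}^{\heartsuit}$ generates the heart under filtered colimits. Your proposal proves neither, and the sketch offered for (i) is essentially a restatement of the problem (``show local coherence'') whose suggested inputs are off: $\sO(\Op_{\check{G}}^{\reg})$ is a polynomial algebra in infinitely many variables, so Noetherianity is unavailable; and the objects you propose as generators for (ii) --- inductions $\on{ind}_{\fk}^{\widehat{\fg}}\bC$ ``to deeper congruence levels'' --- do not have regular central character at all (the center acts on them through functions on opers with singularities), so they are not objects of $\widehat{\fg}_{\crit}\mod_{\reg}$. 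Their replacements, the $!$-restrictions $i^!$ of compacts to the regular locus, raise the same difficulty you are trying to resolve, since $i_*i^!$ is computed by a Koszul-type complex on infinitely many generators and a priori need not be bounded, which is precisely why $\widehat{\fg}_{\crit}\mod_{\reg}$ and $\widehat{\fg}_{\crit}\mod_{\regnaive}$ can differ at $-\infty$. Closing the gap requires genuine critical-level input (the Feigin--Frenkel description of $\fZ$, flatness of $\bV_{\crit}$ over $\sO(\Op_{\check{G}}^{\reg})$, and the coherence/almost-compactness analysis of Section 6 of \cite{raskinbeiber}), none of which your proposal supplies.
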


\begin{proof}
This is Lemma 6.9.3 of \cite{raskinbeiber}.
\end{proof}

\begin{remark}
One technical problem with the category $\widehat{\fg}_{\crit}\mod_{\reg}$ is that it is a priori unclear that it admits a $G((t))$-action. Once our Theorem \ref{main} is known, there is a clear such action. But for the proof of Theorem \ref{main}, we will need to juggle this category with its naive version $\widehat{\fg}_{\crit}\mod_{\regnaive}$, which has a $G((t))$ action by virtue of its construction.
\end{remark}

\subsection{The functor $\Gamma^{\Hecke}$}\label{ss:gamma-hecke}

Now we would like to relate these categories of Kac-Moody representations to geometric categories. Let $\Gr_G$ denote the affine Grassmannian $G((t))/G[[t]]$. To start, note that
\[
\on{Hom}_{G((t))}(D_{\crit}(\Gr_G),\widehat{\fg}_{\crit}\mod_{\regnaive})\cong \widehat{\fg}_{\crit}\mod_{\regnaive}^{G[[t]]},
\]
so the vacuum module $\mathbb{V}_{\crit}$ defines a map $D_{\crit}(\Gr_G)\rightarrow\widehat{\fg}_{\crit}\mod_{\regnaive}$. Combining this with the $\QCoh(\Op_{\check{G}}^{\reg})$ action constructed in the previous subsection, we get a map
\begin{equation}\label{tensormap}
D_{\crit}(\Gr_G)\otimes\QCoh(\Op_{\check{G}}^{\reg})\rightarrow\widehat{\fg}_{\crit}\mod_{\regnaive}.
\end{equation}

Recall that geometric Satake (see \cite{MV}) gives a morphism of monoidal categories
\[
\on{Rep}(\check{G})\rightarrow D_{\crit}(\Gr_G)^{G[[t]]}\cong\on{End}_{G((t))}(D_{\crit}(\Gr_G)),
\]
so we get a right action of $\on{Rep}(\check{G})$ on $D_{\crit}(\Gr_G)$.

There is also a natural action of $\on{Rep}(\check{G})$ on $\QCoh(\Op_{\check{G}}^{\reg})$, coming from a natural map $\Op_{\check{G}}^{\reg}\rightarrow\mathbb{B}\check{G}$. Such a map is equivalent to the data of a $\check{G}$-torsor on $\Op_{\check{G}}^{\reg}$. By definition, there is a canonical family of opers (and hence a canonical $\check{G}$-torsor) on $\Spec(\sO(\Op_{\check{G}}^{\reg})[[t]])$; restricting to $\Op_{\check{G}}^{\reg}$ gives the desired torsor.

We now claim that the map (\ref{tensormap}) naturally factors as
\[
D_{\crit}(\Gr_G)\otimes\QCoh(\Op_{\check{G}}^{\reg})\rightarrow D_{\crit}(\Gr_G)\underset{\on{Rep}(\check{G})}{\otimes}\QCoh(\Op_{\check{G}}^{\reg})\rightarrow\widehat{\fg}_{\crit}\mod_{\regnaive},
\]
or equivalently, that the map
\[
\QCoh(\Op_{\check{G}}^{\reg})\rightarrow\on{Hom}_{G((t))}(D_{\crit}(\Gr_G),\widehat{\fg}_{\crit}\mod_{\regnaive})\cong\widehat{\fg}_{\crit}\mod_{\regnaive}^{G[[t]]}
\]
can naturally be lifted to a $\on{Rep}(\check{G})$-equivariant map.

This is proven rigorously in Sections 7.9-7.11 of \cite{raskinbeiber}. The key tool is the following theorem of Beilinson-Drinfeld, which immediately implies the above claim at the level of objects.

\begin{theorem}[\cite{quantization}]
Let $V$ be a finite dimensional object in $\on{Rep}(\check{G})^{\heartsuit}$. Denote by $\sS_V$ and $\sP_V$ the corresponding objects of $D(\Gr_G)^{G[[t]]}$ and $\QCoh(\Op_{\check{G}}^{\reg})$, respectively. Then
\[
\sS_V\star\mathbb{V}_{\crit}\cong\sP_V\otimes\mathbb{V}_{\crit},
\]
where $\star$ denotes the convolution action of $D(\Gr_G)^{G[[t]]}$ on $\widehat{\fg}_{\crit}\mod_{\regnaive}$.
\end{theorem}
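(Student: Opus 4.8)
The plan is to follow Beilinson--Drinfeld, reducing the statement to the Feigin--Frenkel description of the centre together with the behaviour of Hecke modifications in the classical (Hitchin) limit. First, note that both sides of the claimed isomorphism are naturally $\QCoh(\Op_{\check G}^{\reg})$-module objects of $\widehat\fg_{\crit}\mod^{G[[t]]}$: for $\mathbb{V}_{\crit}$ this is the assertion that the $\IndCoh^*(\Op_{\check G})$-coaction is scheme-theoretically supported on $\Op_{\check G}^{\reg}$, while for $\sS_V\star\mathbb{V}_{\crit}$ it follows because convolution by $\sS_V\in D(\Gr_G)^{G[[t]]}$ is $\QCoh(\Op_{\check G}^{\reg})$-linear --- the coaction of $\IndCoh^*(\Op_{\check G})$ being compatible with the $G((t))$-action. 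One then produces a comparison morphism $\sP_V\otimes\mathbb{V}_{\crit}\to\sS_V\star\mathbb{V}_{\crit}$ of such objects: the tautological family of opers over $\Spec(\sO(\Op_{\check G}^{\reg})[[t]])$ gives the tautological $\check G$-bundle on $\Op_{\check G}^{\reg}$, hence equips $V\mapsto\sP_V\otimes\mathbb{V}_{\crit}$ with a $\on{Rep}(\check G)$-equivariant structure, which one matches against the lax monoidal assignment $V\mapsto\sS_V\star\mathbb{V}_{\crit}$ using that $\mathbb{V}_{\crit}$ is the image of the unit. It then remains to show this map is an isomorphism.

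For that, I would work with the PBW filtration on $\mathbb{V}_{\crit}$ and the induced identification of the associated graded of the centre with functions on the classical space of $\check{\fg}$-valued fields on the disc. In this classical limit the operation $\sS_V\star(-)$ degenerates to the Hecke correspondence on that space, which on functions is exactly tensoring by the associated graded of $\sP_V$; hence the comparison map is an isomorphism at the associated-graded level, and by a standard argument using that the PBW filtration is exhaustive and the objects are bounded below in the t-structure (invoking Theorem \ref{eventuallyequivalence} if one needs to pass between $\widehat\fg_{\crit}\mod_{\reg}$ and $\widehat\fg_{\crit}\mod_{\regnaive}$), it is an isomorphism. The part of this that genuinely pins down the $\QCoh(\Op_{\check G}^{\reg})$-action --- that the central action on $\sS_V\star\mathbb{V}_{\crit}$ is through the tautological bundle rather than some a priori $\check G$-twist of it --- is best handled via the Wakimoto realization of the centre, or equivalently by applying the positive Drinfeld--Sokolov functor $\on{DS}=H^{\frac{\infty}{2}+\bullet}(\fn((t)),\fn[[t]];(-)\otimes\psi)$, under which $\mathbb{V}_{\crit}\mapsto\sO(\Op_{\check G}^{\reg})$ by Feigin--Frenkel and $\sS_V\star\mathbb{V}_{\crit}\mapsto\sP_V$ by compatibility of $\on{DS}$ with the spherical Hecke action.

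The main obstacle is precisely this last bookkeeping together with the flatness statement needed to descend from the associated graded: a priori $\sS_V\star\mathbb{V}_{\crit}$ could fail to be $\sO(\Op_{\check G}^{\reg})$-flat of rank $\dim V$, or be flat but trivialized by the wrong torsor, and excluding this is the heart of \cite{quantization}, where it is carried out by a classical-limit/flatness argument. I would import that computation rather than reprove it, using the structural reductions above only to package the Beilinson--Drinfeld result in the $\on{Rep}(\check G)$-equivariant form that the subsequent sections require.
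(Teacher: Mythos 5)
Your proposal is consistent with the paper's treatment: the paper gives no proof of this statement at all, but simply cites \cite{quantization}, and your sketch is a fair account of the Beilinson--Drinfeld argument (classical/PBW limit plus flatness over $\sO(\Op_{\check{G}}^{\reg})$, with the identification of the $\check{G}$-torsor via Drinfeld--Sokolov), ending with the same decision to import the computation rather than reprove it. Your remarks about packaging the statement $\on{Rep}(\check{G})$-equivariantly are also aligned with the paper, which defers exactly that enhancement to Sections 7.9--7.11 of \cite{raskinbeiber}.
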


As shorthand, we will denote $D_{\crit}(\Gr_G)\otimes_{\on{Rep}(\check{G})}\QCoh(\Op_{\check{G}}^{\reg})$ by $D^{\Hecke_{\fz}}_{\crit}(\Gr_G)$. The map $D^{\Hecke_{\fz}}_{\crit}(\Gr_G)\rightarrow\widehat{\fg}_{\crit}\mod_{\regnaive}$ will be denoted by $\Gamma^{\Heckenaive}$.

We now want to lift $\Gamma^{\Heckenaive}$ to a functor $\Gamma^{\Hecke}:D^{\Hecke_{\fz}}_{\crit}(\Gr_G)\rightarrow\widehat{\fg}_{\crit}\mod_{\reg}$. First, we will show that $D^{\Hecke_{\fz}}_{\crit}(\Gr_G)$ is compactly generated. As $\Op_{\check{G}}^{\reg}\rightarrow\mathbb{B}\check{G}$ is affine, we have a pair of continuous $\on{Rep}(\check{G})$-equivariant adjoint functors $\on{Rep}(\check{G})\rightleftarrows\QCoh(\Op_{\check{G}}^{\reg})$. These induce a pair of adjoint functors
\[
\on{ind}^{\Hecke_{\fz}}: D_{\crit}(\Gr_G)\rightleftarrows D^{\Hecke_{\fz}}_{\crit}(\Gr_G):\on{Oblv}^{\Hecke_{\fz}}.
\]

In particular, $\on{ind}^{\Hecke_{\fz}}$ sends compact objects to compact objects. Furthermore, we have:

\begin{lemma}
The objects $\on{ind}^{\Hecke_{\fz}}(\sF)$, for $\sF\in D_{\crit}(\Gr_G)$ compact, generate $D^{\Hecke_{\fz}}_{\crit}(\Gr_G)$.
\end{lemma}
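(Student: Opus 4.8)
The plan is to exploit the adjunction $(\on{ind}^{\Hecke_{\fz}}, \on{Oblv}^{\Hecke_{\fz}})$ together with the fact that $D_{\crit}(\Gr_G)$ is compactly generated. First I would recall the standard criterion: to show that the objects $\on{ind}^{\Hecke_{\fz}}(\sF)$ (with $\sF$ compact in $D_{\crit}(\Gr_G)$) generate $D^{\Hecke_{\fz}}_{\crit}(\Gr_G)$ under colimits, it suffices to show that $\on{Oblv}^{\Hecke_{\fz}}$ is conservative. Indeed, if $M \in D^{\Hecke_{\fz}}_{\crit}(\Gr_G)$ satisfies $\on{Hom}(\on{ind}^{\Hecke_{\fz}}(\sF), M) \cong 0$ for all compact $\sF$, then by adjunction $\on{Hom}(\sF, \on{Oblv}^{\Hecke_{\fz}}(M)) \cong 0$ for all compact $\sF$; since $D_{\crit}(\Gr_G)$ is compactly generated this forces $\on{Oblv}^{\Hecke_{\fz}}(M) \cong 0$, and conservativity of $\on{Oblv}^{\Hecke_{\fz}}$ then gives $M \cong 0$.

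So the real content is the conservativity of $\on{Oblv}^{\Hecke_{\fz}}: D^{\Hecke_{\fz}}_{\crit}(\Gr_G) \to D_{\crit}(\Gr_G)$. Here I would unwind the construction: $D^{\Hecke_{\fz}}_{\crit}(\Gr_G) = D_{\crit}(\Gr_G) \otimes_{\on{Rep}(\check{G})} \QCoh(\Op_{\check{G}}^{\reg})$, and the adjoint pair $\on{ind}^{\Hecke_{\fz}} \rightleftarrows \on{Oblv}^{\Hecke_{\fz}}$ is obtained by tensoring the adjoint pair $\on{Rep}(\check{G}) \rightleftarrows \QCoh(\Op_{\check{G}}^{\reg})$ over $\on{Rep}(\check{G})$ with $D_{\crit}(\Gr_G)$. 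The key input is that $\Op_{\check{G}}^{\reg} \to \mathbb{B}\check{G}$ is an affine morphism, so the right adjoint $\QCoh(\Op_{\check{G}}^{\reg}) \to \on{Rep}(\check{G})$ — that is, pushforward along an affine map — is conservative. Conservativity of a right adjoint between module categories is preserved under tensoring over the base (the same observation already used in the proof of Lemma \ref{flattstructure}: a right adjoint $F$ is conservative iff its image generates under colimits, and generation is preserved under $-\otimes_{A\mod}$). Tensoring over $\on{Rep}(\check{G})$ with $D_{\crit}(\Gr_G)$ therefore yields that $\on{Oblv}^{\Hecke_{\fz}}$ is conservative.

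The main obstacle is purely bookkeeping: one must be careful that the relevant $\on{Rep}(\check{G})$-module structures are the ones making the tensor-product formation legitimate, and that "conservativity of a right adjoint is stable under base change of module categories" applies in this monoidal setting exactly as in Lemma \ref{flattstructure}. Once that is in place, the argument is formal. I do not expect any genuine difficulty, and in particular no geometric input beyond the affineness of $\Op_{\check{G}}^{\reg} \to \mathbb{B}\check{G}$, which was already noted in the discussion preceding the statement.
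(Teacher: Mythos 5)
Your proposal is correct and follows essentially the same route as the paper: both reduce, via compact generation of $D_{\crit}(\Gr_G)$ and the adjunction, to showing that $\on{Rep}(\check{G})\to\QCoh(\Op_{\check{G}}^{\reg})$ generates under colimits (equivalently, that its right adjoint is conservative), which holds by affineness of $\Op_{\check{G}}^{\reg}\to\mathbb{B}\check{G}$. The paper phrases the stability under tensor product in terms of triviality of the quotient category (Lemma \ref{quotientcriterion}) rather than conservativity of the right adjoint, but by that same lemma these are equivalent formulations of one argument.
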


\begin{proof}
As $D_{\crit}(\Gr_G)$ is compactly generated, it suffices to show that the essential image of $\on{ind}^{\Hecke_{\fz}}$ generates $D^{\Hecke_{\fz}}_{\crit}(\Gr_G)$ under colimits. It follows from Lemma \ref{quotientcriterion} that this property is stable under tensor product, so it suffices to show that the pullback map $\on{Rep}(\check{G})\rightarrow\QCoh(\Op_{\check{G}}^{\reg})$ generates under colimits. This follows as the map $\Op_{\check{G}}^{\reg}\rightarrow\mathbb{B}\check{G}$ is affine.
\end{proof}

\begin{corollary}
The functor $\Gamma^{\Heckenaive}$ sends compact objects of $D^{\Hecke_{\fz}}_{\crit}(\Gr_G)$ to objects in $\widehat{\fg}_{\crit}\mod_{\reg}^c$.
\end{corollary}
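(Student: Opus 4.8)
The plan is to reduce to the compact generators of $D^{\Hecke_{\fz}}_{\crit}(\Gr_G)$ furnished by the preceding lemma, identify $\Gamma^{\Heckenaive}$ on those generators with convolution of the vacuum module, and then prove directly that such convolutions are compact. In detail: the objects $\on{ind}^{\Hecke_{\fz}}(\sF)$ with $\sF\in D_{\crit}(\Gr_G)$ compact form a set of compact generators of $D^{\Hecke_{\fz}}_{\crit}(\Gr_G)$ (they generate by the lemma, and they are compact since $\on{ind}^{\Hecke_{\fz}}$ preserves compactness), so every compact object lies in the thick subcategory they span. Since $\widehat{\fg}_{\crit}\mod_{\reg}^c$ is the preimage of the thick subcategory $\widehat{\fg}_{\crit}\mod^c\subseteq\widehat{\fg}_{\crit}\mod$ under the exact functor $i_*$, it is itself thick in $\widehat{\fg}_{\crit}\mod_{\regnaive}$; hence it suffices to show $\Gamma^{\Heckenaive}(\on{ind}^{\Hecke_{\fz}}(\sF))\in\widehat{\fg}_{\crit}\mod_{\reg}^c$ for such $\sF$.

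Unwinding the construction, the composite $\Gamma^{\Heckenaive}\circ\on{ind}^{\Hecke_{\fz}}:D_{\crit}(\Gr_G)\to\widehat{\fg}_{\crit}\mod_{\regnaive}$ is the $G((t))$-equivariant functor corresponding, under $\on{Hom}_{G((t))}(D_{\crit}(\Gr_G),\widehat{\fg}_{\crit}\mod_{\regnaive})\cong\widehat{\fg}_{\crit}\mod_{\regnaive}^{G[[t]]}$, to the tautological regular lift of $\mathbb{V}_{\crit}$; that is, it is $\sF\mapsto\sF\overset{G[[t]]}{\star}\mathbb{V}_{\crit}$, the action being the one furnished by (\ref{tensormap}) composed with acting by $\sO_{\Op_{\check{G}}^{\reg}}$. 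Since $i_*$ is $G((t))$-equivariant and carries that lift to $\mathbb{V}_{\crit}\in\widehat{\fg}_{\crit}\mod$, we obtain $i_*\Gamma^{\Heckenaive}(\on{ind}^{\Hecke_{\fz}}(\sF))\cong\sF\star\mathbb{V}_{\crit}$, now computed in $\widehat{\fg}_{\crit}\mod$. Thus membership in $\widehat{\fg}_{\crit}\mod_{\reg}^c$ reduces to the assertion that $\sF\star\mathbb{V}_{\crit}$ is compact in $\widehat{\fg}_{\crit}\mod$ whenever $\sF\in D_{\crit}(\Gr_G)$ is compact.

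This last point is the heart of the matter, and I expect it to be the main obstacle; the content is already visible in the case $\sF=\delta_{eG[[t]]}$, which gives $\mathbb{V}_{\crit}=\on{ind}_{\fg[[t]]}^{\widehat{\fg}}\mathbb{C}$ — not obviously compact, since $G[[t]]$ is not pro-unipotent and so $\mathbb{V}_{\crit}$ is not one of the tautological generators of $\widehat{\fg}_{\crit}\mod^c$. To handle it I would resolve $\mathbb{C}$, viewed as a $\fg[[t]]$-module through $\fg[[t]]\twoheadrightarrow\fg$, by the Bernstein--Gelfand--Gelfand resolution, whose terms are finite direct sums of Verma modules $M(w\cdot 0)\cong\on{ind}_{\fri}^{\fg[[t]]}\mathbb{C}_{w\cdot 0}$ with $\fri$ the Iwahori Lie algebra; applying the exact functor $\on{ind}_{\fg[[t]]}^{\widehat{\fg}}$ and using transitivity of induction presents $\mathbb{V}_{\crit}$ as a finite complex of modules $\on{ind}_{\fri}^{\widehat{\fg}}\mathbb{C}_{\mu}$. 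Writing $\fri=\fri_0\rtimes\ft$ with $\fri_0$ the pro-unipotent radical, the one-dimensional $\fri$-module $\mathbb{C}_\mu$ is the quotient of $\on{ind}_{\fri_0}^{\fri}\mathbb{C}\cong\Sym\ft$ by the maximal ideal at $\mu$, hence (as $\Sym\ft$ is regular) admits a finite Koszul resolution by sums of $\on{ind}_{\fri_0}^{\fri}\mathbb{C}$; applying $\on{ind}_{\fri}^{\widehat{\fg}}$ once more exhibits each $\on{ind}_{\fri}^{\widehat{\fg}}\mathbb{C}_\mu$, and therefore $\mathbb{V}_{\crit}$, as built by finitely many cones and shifts from $\on{ind}_{\fri_0}^{\widehat{\fg}}\mathbb{C}$, which lies in $\widehat{\fg}_{\crit}\mod^c$ by definition ($I_0$ being pro-unipotent). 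For a general compact $\sF$ one reduces to the standard generators $j_{w,!}\mathbb{C}$ attached to the $I_0$-orbits on $\Gr_G$ (affine spaces); each $j_{w,!}\mathbb{C}\star\mathbb{V}_{\crit}$ is a $!$-induction along a pro-unipotent compact open subgroup of a $G((t))$-translate of $\mathbb{V}_{\crit}$, and the same resolution argument — run with the relevant conjugates of $\fri$ and $\fri_0$ — shows it is compact; equivalently, convolution against the compact $G[[t]]$-equivariant object $\mathbb{V}_{\crit}$ has continuous right adjoint and so preserves compactness. (One may also cite the compactness of $\sF\star\mathbb{V}_{\crit}$ from \cite{raskinbeiber}.) Combined with the reductions above, this proves the corollary.
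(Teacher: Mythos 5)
Your overall route is the paper's: reduce via the preceding lemma to the generators $\on{ind}^{\Hecke_{\fz}}(\sF)$, identify $\Gamma^{\Heckenaive}(\on{ind}^{\Hecke_{\fz}}(\sF))$ with $\sF\star\mathbb{V}_{\crit}$, and then check compactness of $i_*(\sF\star\mathbb{V}_{\crit})\cong\sF\star i_*\mathbb{V}_{\crit}$ in $\widehat{\fg}_{\crit}\mod$ by separating it into compactness of $i_*\mathbb{V}_{\crit}$ and preservation of compactness by $\sF\star-$. Your BGG-plus-Koszul resolution of $\mathbb{V}_{\crit}$ down to $\on{ind}_{\fri_0}^{\widehat{\fg}}\mathbb{C}$ is a genuine addition: the paper simply asserts that $i_*\mathbb{V}_{\crit}$ is compact, and your argument is the standard justification of that assertion.

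The one step that would fail as primarily stated is your treatment of a general compact $\sF$: the objects $j_{w,!}\mathbb{C}$ attached to the $I_0$-orbits do \emph{not} generate $D_{\crit}(\Gr_G)^c$ under finite colimits, shifts and retracts (nor even generate $D_{\crit}(\Gr_G)$ under all colimits). Already on a single orbit $\cong\bA^n$, the thick subcategory generated by the constant sheaf consists of lisse objects, so for instance delta D-modules at points of the orbit are excluded; recovering all of $D(\Gr_G)$ from $I_0$-equivariant objects requires convolving against all of $D(G((t))/I_0)$, not merely taking colimits and retracts. Fortunately your parenthetical ``equivalently'' clause is the correct argument and is exactly what the paper does: for compact $\sF$, the convolution functor $\sF\star-$ out of $G[[t]]$-invariants has the \emph{continuous} right adjoint $\on{inv}\mathbb{D}\sF\star-$ (Proposition 22.10.1 of \cite{FG2}), hence preserves compact objects, and applying this to the compact object $i_*\mathbb{V}_{\crit}$ closes the proof. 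So the proposal is correct provided you discard the $j_{w,!}$ reduction and rely on the right-adjoint argument --- which does require the explicit identification of that adjoint (or the citation), not just the bare assertion that compactness of $\mathbb{V}_{\crit}$ implies it.
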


\begin{proof}
By the preceding lemma, it suffices to show that, for any compact $\sF\in D_{\crit}(\Gr_G)$, the object $\Gamma^{\Heckenaive}(\on{ind}^{\Hecke_{\fz}}(\sF))$ lies in $\widehat{\fg}_{\crit}\mod_{\reg}^c$. Unwinding the definitions, we find that
\[
\Gamma^{\Heckenaive}(\on{ind}^{\Hecke_{\fz}}(\sF))\cong\sF\star\mathbb{V}_{\crit}\in\widehat{\fg}_{\crit}\mod_{\reg}.
\]

We would like to show that $i_*(\sF\star\mathbb{V}_{\crit})\cong\sF\star i_*\mathbb{V}_{\crit}$ is a compact object in $\widehat{\fg}_{\crit}\mod$. As $i_*\mathbb{V}_{\crit}$ is compact, it suffices to show that $\sF\star-:\widehat{\fg}_{\crit}\mod\rightarrow \widehat{\fg}_{\crit}\mod$ preserves compactness, or equivalently, that its right adjoint is continuous.

This right adjoint is explicitly identified in Proposition 22.10.1 of \cite{FG2}. Let $\mathbb{D}\sF$ denote the Verdier dual of $\sF$, and let $\on{inv}\mathbb{D}\sF\in D(G[[t]]\backslash G((t)))$ denote the pullback of $\mathbb{D}\sF$ along the inversion map. Then, for any $G((t))$-category $\sC$, the functors $\sF\star-:\sC\rightarrow \sC^{G[[t]]}$ and $\on{inv}\mathbb{D}\sF\star-:\sC^{G[[t]]}\rightarrow \sC$ form an adjoint pair. As convolution preserves colimits, we arrive at the desired result.
\end{proof}

We thus get a natural functor $D^{\Hecke_{\fz}}_{\crit}(\Gr_G)^c\rightarrow\widehat{\fg}_{\crit}\mod_{\reg}^c$. Define $\Gamma^{\Hecke}$ to be its ind-extension. By definition, we have a commutative triangle

\begin{equation*}
\begin{tikzcd}
D^{\Hecke_{\fz}}_{\crit}(\Gr_G) \arrow[r, "\Gamma^{\Hecke}"] \arrow{rd}[swap]{\Gamma^{\Heckenaive}} & \widehat{\fg}_{\crit}\mod_{\reg} \arrow[d]\\
& \widehat{\fg}_{\crit}\mod_{\regnaive}.
\end{tikzcd}
\end{equation*}

Let us recall some facts about $\Gamma^{\Hecke}$. 

\begin{theorem}[\cite{FG2}, \cite{raskinbeiber}]
The functor $\Gamma^{\Hecke}$ is fully faithful.
\end{theorem}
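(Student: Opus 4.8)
The plan is to check full faithfulness on compact objects and then, via the tautological adjunctions, to reduce everything to a single comparison of convolution kernels on $\Gr_G$ controlled by the Feigin--Frenkel center. Since $\Gamma^{\Hecke}$ is by construction the ind-extension of a functor between subcategories of compact objects $D^{\Hecke_{\fz}}_{\crit}(\Gr_G)^c\to\widehat{\fg}_{\crit}\mod_{\reg}^c$, and both $D^{\Hecke_{\fz}}_{\crit}(\Gr_G)$ and $\widehat{\fg}_{\crit}\mod_{\reg}$ are compactly generated, it suffices to show $\Gamma^{\Hecke}$ is fully faithful on compacts. As $\widehat{\fg}_{\crit}\mod_{\reg}^c$ is by definition a full subcategory of $\widehat{\fg}_{\crit}\mod_{\regnaive}$ and $p\circ\Gamma^{\Hecke}=\Gamma^{\Heckenaive}$, the relevant target $\Hom$-spaces are computed inside $\widehat{\fg}_{\crit}\mod_{\regnaive}$. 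Since the objects $\on{ind}^{\Hecke_{\fz}}(\sF)$, $\sF\in D_{\crit}(\Gr_G)$ compact, generate $D^{\Hecke_{\fz}}_{\crit}(\Gr_G)$ under colimits and are compact with compact images, a standard reduction leaves us to prove, for all compact $\sF,\sG\in D_{\crit}(\Gr_G)$, that
\[
\Hom_{D_{\crit}(\Gr_G)}\!\bigl(\sF,\on{Oblv}^{\Hecke_{\fz}}\on{ind}^{\Hecke_{\fz}}(\sG)\bigr)\longrightarrow\Hom_{\widehat{\fg}_{\crit}\mod_{\regnaive}}\!\bigl(\sF\star\mathbb{V}_{\crit},\sG\star\mathbb{V}_{\crit}\bigr)
\]
is an equivalence, using the $(\on{ind}^{\Hecke_{\fz}},\on{Oblv}^{\Hecke_{\fz}})$-adjunction on the left and $\Gamma^{\Heckenaive}\on{ind}^{\Hecke_{\fz}}(-)\cong(-)\star\mathbb{V}_{\crit}$ on the right.

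Next I would interpret both sides as $\Hom_{D_{\crit}(\Gr_G)}$ of $\sF$ against the value on $\sG$ of a $G((t))$-equivariant endofunctor of $D_{\crit}(\Gr_G)=D(G((t))/G[[t]])$, hence against $\sG$ convolved with an object of $\on{End}_{G((t))}(D_{\crit}(\Gr_G))\cong D(G[[t]]\backslash G((t))/G[[t]])$. On the left, unwinding $\on{ind}^{\Hecke_{\fz}}$ as the base change of $\pi^*\colon\on{Rep}(\check{G})\to\QCoh(\Op_{\check{G}}^{\reg})$ (for $\pi\colon\Op_{\check{G}}^{\reg}\to\mathbb{B}\check{G}$) and applying the projection formula, the monad $\on{Oblv}^{\Hecke_{\fz}}\on{ind}^{\Hecke_{\fz}}$ becomes right convolution with the geometric Satake sheaf $\sS_{\pi_*\sO_{\Op_{\check{G}}^{\reg}}}$ of the $\check{G}$-representation $\pi_*\sO_{\Op_{\check{G}}^{\reg}}$. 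On the right, the relevant endofunctor is $(-\star\mathbb{V}_{\crit})^R\circ(-\star\mathbb{V}_{\crit})$; since $(-\star\mathbb{V}_{\crit})\colon D_{\crit}(\Gr_G)\to\widehat{\fg}_{\crit}\mod_{\regnaive}$ is $G((t))$-equivariant, so is its right adjoint, and the convolution adjunction $\sF'\star-\dashv\on{inv}\mathbb{D}\sF'\star-$ recalled in the excerpt exhibits this right adjoint explicitly, so the endofunctor is right convolution with the ``matrix-coefficient'' sheaf $\sA:=(-\star\mathbb{V}_{\crit})^R(\mathbb{V}_{\crit})\in D(G[[t]]\backslash G((t))/G[[t]])$. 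The theorem thus reduces to the single claim that the tautological map $\sS_{\pi_*\sO_{\Op_{\check{G}}^{\reg}}}\to\sA$ is an isomorphism.

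To prove this I would test against the compact objects $\sS_V$, $V\in\on{Rep}(\check{G})$ finite-dimensional: by geometric Satake and the definition of $\sA$,
\[
\Hom\!\bigl(\sS_V,\sA\bigr)\cong\Hom_{\widehat{\fg}_{\crit}\mod_{\regnaive}}\!\bigl(\sS_V\star\mathbb{V}_{\crit},\mathbb{V}_{\crit}\bigr),
\]
and the Beilinson--Drinfeld isomorphism $\sS_V\star\mathbb{V}_{\crit}\cong\sP_V\otimes\mathbb{V}_{\crit}$ turns this into $\Hom_{\widehat{\fg}_{\crit}\mod_{\regnaive}}(\sP_V\otimes\mathbb{V}_{\crit},\mathbb{V}_{\crit})$. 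Using that $\widehat{\fg}_{\crit}\mod_{\regnaive}$ is a $\QCoh(\Op_{\check{G}}^{\reg})$-module category, that $\sP_V$ is finite locally free, and the precise Feigin--Frenkel statement that the $\QCoh(\Op_{\check{G}}^{\reg})$-enriched self-endomorphisms of $\mathbb{V}_{\crit}$ are $\sO_{\Op_{\check{G}}^{\reg}}$ with vanishing higher self-$\Ext$, this reads $\Gamma(\Op_{\check{G}}^{\reg},\sP_V^{\vee})$, which agrees with $\Hom_{\on{Rep}(\check{G})}(V,\pi_*\sO_{\Op_{\check{G}}^{\reg}})\cong\Hom(\sS_V,\sS_{\pi_*\sO_{\Op_{\check{G}}^{\reg}}})$. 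One then checks this chain is induced by the tautological map and that the $\sS_V$ suffice to detect an equivalence between the two objects at hand, both of which lie in the subcategory they generate.

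The main obstacle is this last input: the $\QCoh(\Op_{\check{G}}^{\reg})$-linear Feigin--Frenkel theorem --- $\on{End}_{\widehat{\fg}_{\crit}\mod_{\regnaive}}(\mathbb{V}_{\crit})\cong\sO(\Op_{\check{G}}^{\reg})$ with all higher self-$\Ext$ groups vanishing, compatibly with the $\QCoh(\Op_{\check{G}}^{\reg})$-action --- which, together with the Beilinson--Drinfeld isomorphism, is the substance of \cite{FG2} (and is revisited in \cite{raskinbeiber}). The remaining points are technical bookkeeping, also handled there: tracking the renormalization $\widehat{\fg}_{\crit}\mod_{\reg}$ versus $\widehat{\fg}_{\crit}\mod_{\regnaive}$ so that the reduction to $\Hom$'s of compacts is legitimate; the fact that $\Op_{\check{G}}^{\reg}$ is an affine scheme of infinite type and $\pi_*\sO_{\Op_{\check{G}}^{\reg}}$ is only an ind-object of $\on{Rep}(\check{G})$, so all $\Hom$-computations must be commuted past filtered colimits using compactness of $\sF$, $\sG$ and of the $\sS_V$; and the continuity of the right adjoints entering the monad presentation above.
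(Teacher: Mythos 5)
The paper does not prove this statement itself---it defers entirely to \cite{FG2} (Theorem 8.7.1) and \cite{raskinbeiber} (Theorem 7.16.1)---and your sketch is a correct reconstruction of the Frenkel--Gaitsgory argument those references contain: reduction to the compact generators $\on{ind}^{\Hecke_{\fz}}(\sF)$, comparison of the monad $\on{Oblv}^{\Hecke_{\fz}}\on{ind}^{\Hecke_{\fz}}$ with the matrix-coefficient kernel via the Beilinson--Drinfeld isomorphism, and the derived Feigin--Frenkel computation $\on{End}(\mathbb{V}_{\crit})\cong\sO(\Op_{\check{G}}^{\reg})$ with vanishing higher self-Exts as the irreducible input. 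You correctly isolate that last computation (together with the renormalization and right-adjoint continuity bookkeeping you flag at the end) as the substance that must be imported from the references, which is exactly where the paper's citation places it.
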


\begin{proof}\label{fullyfaithful}
This is essentially Theorem 8.7.1 of \cite{FG2}, though they do not work with $\infty$-categories. With our categorical setup, this is stated as Theorem 7.16.1 of \cite{raskinbeiber}, which also gives a new proof.
\end{proof}

For the next result, we need a t-structure on $D^{\Hecke_{\fz}}_{\crit}(\Gr_G)$. Set $D^{\Hecke_{\fz}}_{\crit}(\Gr_G)^{\leq 0}$ to be the full subcategory generated under colimits by the $\on{ind}^{\Hecke_{\fz}}(\sF)$, for $\sF\in D_{\crit}(\Gr_G)^{\leq 0}$. By Proposition 1.4.4.11 of \cite{HA}, this uniquely defines a t-structure.

\begin{theorem}\label{righttexact}
With respect to the previously defined t-structures on $D^{\Hecke_{\fz}}_{\crit}(\Gr_G)$ and $\widehat{\fg}_{\crit}\mod_{\reg}$, $\Gamma^{\Hecke}$ is right t-exact.
\end{theorem}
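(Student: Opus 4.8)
The plan is to reduce the claim to a statement about generators and then to identify the relevant convolution explicitly. By construction, $D^{\Hecke_{\fz}}_{\crit}(\Gr_G)^{\leq 0}$ is generated under colimits by the objects $\on{ind}^{\Hecke_{\fz}}(\sF)$ with $\sF \in D_{\crit}(\Gr_G)^{\leq 0}$. Since $\Gamma^{\Hecke}$ is continuous and colimits in $\widehat{\fg}_{\crit}\mod_{\reg}$ are compatible with the t-structure (right completeness), it suffices to show that $\Gamma^{\Hecke}(\on{ind}^{\Hecke_{\fz}}(\sF)) \in \widehat{\fg}_{\crit}\mod_{\reg}^{\leq 0}$ for all such $\sF$. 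As computed in the corollary preceding the theorem statement, we have $\Gamma^{\Hecke}(\on{ind}^{\Hecke_{\fz}}(\sF)) \cong \sF \star \mathbb{V}_{\crit}$, so the task becomes: the convolution functor $- \star \mathbb{V}_{\crit} : D_{\crit}(\Gr_G) \to \widehat{\fg}_{\crit}\mod_{\reg}$ is right t-exact.

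First I would reduce from $\widehat{\fg}_{\crit}\mod_{\reg}$ to $\widehat{\fg}_{\crit}\mod_{\regnaive}$: by Theorem \ref{eventuallyequivalence} the functor $p$ is t-exact, and by the theorem asserting that $i_*$ is t-exact and comonadic, right t-exactness in $\widehat{\fg}_{\crit}\mod_{\reg}$ can be checked after applying $i_* : \widehat{\fg}_{\crit}\mod_{\regnaive} \to \widehat{\fg}_{\crit}\mod$ (a t-exact conservative functor). Since $i_*$ is $G((t))$-equivariant, $i_*(\sF \star \mathbb{V}_{\crit}) \cong \sF \star (i_*\mathbb{V}_{\crit})$ as objects of $\widehat{\fg}_{\crit}\mod$. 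So it remains to prove that $- \star (i_*\mathbb{V}_{\crit}) : D_{\crit}(\Gr_G) \to \widehat{\fg}_{\crit}\mod$ is right t-exact, where now everything lives in the ambient, better-understood category.

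Next I would further reduce to a finite-dimensional statement. The category $D_{\crit}(\Gr_G)^{\leq 0}$ is generated under colimits by objects of the form $\delta_{\overline{\Gr^{\lambda}}} \star D_{\crit}(G[[t]]/K)$-type generators, i.e. by pushforwards from finite-dimensional Schubert cells and their $K$-equivariant thickenings for $K \subset G[[t]]$ a congruence subgroup; more efficiently, $D_{\crit}(\Gr_G)^{\leq 0}$ is generated under colimits by $j_{!}$ of connective D-modules on finite type subschemes $\on{Gr}_G^{\leq n}$ pulled back appropriately. For such a generator $\sF$ supported on a finite-dimensional subscheme, $\sF \star (i_*\mathbb{V}_{\crit})$ is computed as the $\star$-pushforward along a proper (ind-proper, but with finite-dimensional fibers here) map, composed with the connective object $i_*\mathbb{V}_{\crit} \in \widehat{\fg}_{\crit}\mod^{\leq 0}$ (which is connective since $\mathbb{V}_{\crit} = \on{ind}_{\fg[[t]]}^{\widehat{\fg}_{\crit}}\mathbb{C}$ lies in degree $0$). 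The key input is that convolution $D_{\crit}(\Gr_G)^{\leq 0} \otimes \widehat{\fg}_{\crit}\mod^{\leq 0} \to \widehat{\fg}_{\crit}\mod^{\leq 0}$ respects connectivity, which follows from: (i) the $*$-pullback along the projection $G((t)) \to \Gr_G$ is t-exact up to a shift accounted for by the conventions, (ii) $!$-tensor of connective D-modules is connective, and (iii) pushforward along the ind-proper action map $G((t)) \times^{G[[t]]} \widehat{\fg}_{\crit}\mod \to \widehat{\fg}_{\crit}\mod$ is right t-exact since it is, locally on finite-dimensional pieces, a proper pushforward of D-modules which is right t-exact.

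The main obstacle I anticipate is bookkeeping the cohomological shifts in the convolution formalism: the functor $\overset{G[[t]]}{\star}$ involves a $*$-pullback followed by a shifted pushforward, and one must check that the critical twist and the (semi-infinite) dimension conventions in the definition of $D_{\crit}(\Gr_G)$ and of the $G((t))$-action on $\widehat{\fg}_{\crit}\mod$ are normalized so that $\mathbb{V}_{\crit}$ genuinely sits in the heart and so that convolution with a connective D-module is connective rather than connective-up-to-a-shift. Concretely, one should verify that under the chosen normalization $\delta_{1} \star \mathbb{V}_{\crit} \cong \mathbb{V}_{\crit}$ (which is essentially the unit axiom) and that convolution by $\delta_{\Gr^{\lambda}}$, or by $*$-extensions of connective sheaves from Schubert cells, is right t-exact; this last point can be extracted from the analysis in \cite{FG2} Section 22 and the description of the right adjoint $\on{inv}\mathbb{D}\sF \star -$ recalled in the corollary above. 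Once the normalization is pinned down, the rest of the argument is a routine dévissage over the generators.
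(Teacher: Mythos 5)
The paper itself does not reprove this statement: it simply cites \cite{raskinbeiber}, Corollary 7.15.3. Your reduction steps are fine as far as they go: checking right t-exactness on the generators $\on{ind}^{\Hecke_{\fz}}(\sF)$, identifying $\Gamma^{\Hecke}(\on{ind}^{\Hecke_{\fz}}(\sF))\cong\sF\star\mathbb{V}_{\crit}$, and detecting connectivity after applying $p$ and $i_*$ (for $p$ you need to note it is conservative on coconnective objects via Theorem \ref{eventuallyequivalence}, since $p$ is not conservative outright — a minor point). The problem is the final, crucial step.

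The claim that ``convolution respects connectivity because proper pushforward of D-modules is right t-exact'' is false, and it is false in exactly the direction that matters. De Rham pushforward along a proper map of relative dimension $d$ has two-sided amplitude $[-d,d]$ (already $\Gamma_{dR}(\mathbb{P}^1,-)$ produces cohomology in degree $2$), so it is neither left nor right t-exact; and in any case the functor actually being computed here is not a de Rham pushforward but the $\sO$-module (ind-coherent) global sections of the localized module on $\Gr_G$, i.e.\ $\Gamma^{\IndCoh}(\Gr_G,-)$. For a projective stratum of dimension $d$ this functor has amplitude $[0,d]$: left t-exactness is the formal consequence of (ind-)properness, whereas right t-exactness is precisely the vanishing of higher cohomology — the affine analogue of the cohomology-vanishing half of Beilinson–Bernstein. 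That vanishing is a genuine theorem of Frenkel–Gaitsgory (accessed in this paper through \cite{raskinbeiber}, Corollary 7.15.2, asserting that $\Gamma^{\IndCoh}(\Gr_G,-)$ is t-exact), whose proof uses specific structure of the critical twist and of $\Gr_G$ (e.g.\ the semi-infinite stratification), not a formal d\'evissage over finite-dimensional pieces. Note also that general convolution of connective objects is \emph{not} connective — the paper only ever asserts left t-exactness up to shift for convolution (Lemma \ref{boundedconvolution}) — so the special role of $\mathbb{V}_{\crit}$ cannot be bypassed. As written, your argument establishes at most the easy (left t-exactness) direction and leaves the actual content of the theorem unproved.
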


\begin{proof}
This is Corollary 7.15.3 of \cite{raskinbeiber}.
\end{proof}

\subsection{Localization theorem}

The following is the main theorem of this paper. It confirms one of the main conjectures (more precisely, Main Conjecture 8.5.2) of \cite{FG2}.

\begin{theorem}\label{main}
The functor $\Gamma^{\Hecke}$ is a t-exact equivalence.
\end{theorem}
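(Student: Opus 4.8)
The plan is to combine the three results already in hand about $\Gamma^{\Hecke}$ --- namely that it is fully faithful (Theorem after \ref{fullyfaithful}), right t-exact (Theorem \ref{righttexact}), and that its essential image contains all of $\widehat{\fg}_{\crit}\mod_{\reg}^c$ --- with the depth-filtration vanishing machinery of Corollary \ref{vanishingcriterion}. The only thing left to prove is essential surjectivity, and given fully faithfulness this is equivalent to showing that the quotient category $\sD := \widehat{\fg}_{\crit}\mod_{\reg} / D^{\Hecke_{\fz}}_{\crit}(\Gr_G)$ vanishes. The t-exactness statement should then come out along the way: once $\Gamma^{\Hecke}$ is an equivalence, being right t-exact plus the identification of hearts (both are the abelian category of discrete regular critical representations) forces t-exactness, with the left t-exactness extracted from the fact that $\Gamma^{\Hecke}$ already identifies the two compatible t-structures on eventually coconnective subcategories via Theorem \ref{eventuallyequivalence}.

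The main work is to apply Corollary \ref{vanishingcriterion} to $\sD$. This requires (a) endowing $\sD$ with a genuine $G((t))$-action, (b) checking $\sD^{K_{x,r+},\circ} \cong 0$ for all $x$ and all $r > 0$, and (c) checking $\sD^{I_0} \cong 0$. For (a), the subtlety flagged in the introduction is that $\widehat{\fg}_{\crit}\mod_{\reg}$ is not obviously a $G((t))$-category --- only $\widehat{\fg}_{\crit}\mod_{\regnaive}$ manifestly is. I would work instead with the naive category: $\widehat{\fg}_{\crit}\mod_{\regnaive}$ is a $G((t))$-category by construction, $D^{\Hecke_{\fz}}_{\crit}(\Gr_G)$ is one by construction, and $\Gamma^{\Heckenaive}$ is $G((t))$-equivariant, so its cofiber $\sD^{\mathrm{naive}}$ is a $G((t))$-category; then relate $\sD$ and $\sD^{\mathrm{naive}}$ using Theorem \ref{eventuallyequivalence} (they agree on eventually coconnective objects, which is enough to transport vanishing statements since Moy-Prasad invariants of these categories are controlled by bounded-below parts). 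For (b), the key input is that $\widehat{\fg}_{\crit}\mod_{\widehat{\reg}}$ has depth $0$ (the almost-theorem of \cite{dhy}): this should give $\sD^{K_{x,r+},\circ} \cong 0$ for $r>0$ directly, since the $\circ$-locus for $r>0$ is exactly where depth $\geq r$ content lives, and one needs to check that passing from $\widehat{\fg}_{\crit}\mod_{\widehat{\reg}}$ to $\widehat{\fg}_{\crit}\mod_{\reg}$ and then to the quotient $\sD$ does not reintroduce positive-depth content --- here fully faithfulness of $\Gamma^{\Hecke}$ and the fact that $D^{\Hecke_{\fz}}_{\crit}(\Gr_G)$ is generated by $G[[t]]$-equivariant-ish objects (the $\on{ind}^{\Hecke_{\fz}}(\sF)$) should do it. For (c), $\sD^{I_0} \cong 0$ is precisely the Iwahori-equivariant form of the localization theorem, which is known: this is \cite{FGi0} (reproved in \cite{raskinbeiber}), so I would cite that essential surjectivity on Iwahori invariants and deduce $\sD^{I_0}\cong 0$ from fully faithfulness of $\Gamma^{\Hecke}$.

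Concretely the steps in order: (1) set up $\sD^{\mathrm{naive}}$ as a $G((t))$-category and reduce the theorem to $\sD^{\mathrm{naive}} \cong 0$ via fully faithfulness and the naive-vs-honest comparison; (2) establish that the forgetful-type functors interact well enough with Moy-Prasad invariants that depth statements for $\widehat{\fg}_{\crit}\mod_{\widehat{\reg}}$ (from \cite{dhy}) transport to $\sD^{\mathrm{naive}}$, yielding $\sD^{K_{x,r+},\circ}\cong 0$ for all $r>0$; (3) invoke Frenkel-Gaitsgory's Iwahori-equivariant localization to get $\sD^{I_0}\cong 0$; (4) apply Corollary \ref{vanishingcriterion} to conclude $\sD^{\mathrm{naive}} \cong 0$, hence $\Gamma^{\Hecke}$ is essentially surjective, hence an equivalence; (5) deduce t-exactness from right t-exactness (Theorem \ref{righttexact}) together with the equivalence on eventually coconnective subcategories and the identification of hearts. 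The hard part will be step (2): the category $\widehat{\fg}_{\crit}\mod_{\reg}$ is ``finicky'' exactly in the ways the introduction describes --- its forgetful functor is not conservative and it is the regular (not completed-regular) locus --- so the real content is controlling how the $\circ$-invariants $(-)^{K_{x,r+},\circ}$ behave under the passage $\widehat{\fg}_{\crit}\mod_{\widehat{\reg}} \rightsquigarrow \widehat{\fg}_{\crit}\mod_{\regnaive} \rightsquigarrow \sD^{\mathrm{naive}}$, and showing that no positive-depth phenomena survive in the quotient. One expects to need a careful analysis of how $i^!$ and $i_*$ (and the oper-sheaf structure on $\Op_{\check G}^{\reg}$) interact with convolution and with Moy-Prasad subgroups, together with the observation that the difference between $\widehat{\reg}$ and $\reg$ is supported at cohomological $-\infty$ and therefore invisible to the eventually-coconnective truncations that govern the relevant invariants.
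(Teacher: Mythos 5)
Your overall philosophy matches the paper's announced strategy (reduce everything to $I_0$-invariants plus $(K_{x,r+},\circ)$-invariants via Corollary \ref{vanishingcriterion}), and your steps (2)--(4) are essentially the paper's proof of its Lemma \ref{generation}. But there are two genuine gaps. First, the reduction in step (1) is too strong: the image of $\Gamma^{\Heckenaive}$ does not generate all of $\widehat{\fg}_{\crit}\mod_{\regnaive}$ under colimits, only the subcategory $\widehat{\fg}_{\crit}\widetilde{\mod}_{\regnaive}$ generated by eventually coconnective objects (this is exactly why the paper introduces that subcategory), so your quotient $\sD^{\mathrm{naive}}$ need not vanish. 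Relatedly, you list ``the essential image contains all of $\widehat{\fg}_{\crit}\mod_{\reg}^c$'' among things already in hand; what is actually known is only that $\Gamma^{\Heckenaive}$ sends compacts \emph{to} compacts -- that all compacts are hit is a large part of what must be proved. And even granting generation of the tilde category, essential surjectivity of $\Gamma^{\Hecke}$ onto the \emph{renormalized} category $\widehat{\fg}_{\crit}\mod_{\reg}$ does not follow formally, because the comparison functor $p$ is neither fully faithful nor essentially surjective and the two categories genuinely differ at $-\infty$; your suggestion that Moy-Prasad invariants are ``controlled by bounded-below parts'' is not true (the $\circ$-invariants are defined by tensoring, which does not commute with truncation), and this naive-vs-renormalized passage is precisely where the remaining work lives.

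Second, and more seriously, t-exactness is an \emph{input} to the equivalence, not a corollary of it. The deduction of Theorem \ref{main} (via \cite{raskinbeiber} Section 7.17) requires three independent statements: generation (your steps 2--4), t-exactness of $\Gamma^{\Heckenaive}$ (Lemma \ref{exactness}), and left t-exactness up to shift of $\Gamma^{\Hecke}$ on $K$-invariants for every compact open $K$ (Lemma \ref{boundedness}). A right t-exact equivalence need not be t-exact (shift one t-structure), and the ``identification of hearts'' you invoke is not available a priori -- the heart of $D^{\Hecke_{\fz}}_{\crit}(\Gr_G)$ is defined by a generated t-structure, and identifying it with regular modules is essentially the content of t-exactness. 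Each of the two missing lemmas requires its own analogue of the vanishing criterion (Lemmas \ref{exactnessreductionstep} and \ref{boundednessreductionstep}), and these are substantially harder than Corollary \ref{vanishingcriterion}: one cannot simply pass to the quotient category, so the paper must rerun the entire depth induction for the properties ``left t-exact'' and ``left t-exact up to shift,'' using the finite-dimensional Beilinson-Bernstein localization (Lemma \ref{finitebeiber}), the finiteness of unstable $L_x$-orbits (Lemma \ref{finiteorbits}), the orbit-stabilizer equivalence, and Grothendieck-Cousin arguments. None of this is present in your outline, and without it the proof does not close.
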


For $G$ of rank $1$, this was proven in \cite{raskinbeiber}. More generally, for any $G$, \emph{loc. cit}. reduces Theorem \ref{main} to three auxiliary statements, which appear below as Lemmas \ref{generation}, \ref{exactness}, and \ref{boundedness}. All three are proven in \cite{raskinbeiber} only when $G$ has rank $1$; our contribution is to prove them for general $G$. 

\begin{lemma}\label{generation}
Let $\widehat{\fg}_{\crit}\widetilde{\mod}_{\on{reg,naive}}\subseteq\widehat{\fg}_{\crit}\mod_{\on{reg,naive}}$ be the full subcategory generated by $\widehat{\fg}_{\crit}\mod_{\on{reg,naive}}^+$ under colimits. Then the essential image of $\Gamma^{\on{Hecke,naive}}$ lies in $\widehat{\fg}_{\crit}\widetilde{\mod}_{\on{reg,naive}}$ and generates it under colimits.
\end{lemma}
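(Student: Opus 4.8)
The plan is to deduce both assertions from properties of $\Gamma^{\Heckenaive}$ together with the interplay between the two "regular" categories. First I would establish the containment of the essential image of $\Gamma^{\Heckenaive}$ in $\widehat{\fg}_{\crit}\widetilde{\mod}_{\on{reg,naive}}$. Recall that $\Gamma^{\Heckenaive}$ is, by construction, the ind-extension of a functor $D^{\Hecke_{\fz}}_{\crit}(\Gr_G)^c\to\widehat{\fg}_{\crit}\mod_{\reg}^c$; since $\widehat{\fg}_{\crit}\mod_{\reg}\to\widehat{\fg}_{\crit}\mod_{\regnaive}$ is t-exact (Theorem \ref{eventuallyequivalence}), it carries $\widehat{\fg}_{\crit}\mod_{\reg}^c\subseteq\widehat{\fg}_{\crit}\mod_{\reg}^+$ into $\widehat{\fg}_{\crit}\mod_{\regnaive}^+$. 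Thus the image of every compact object under $\Gamma^{\Heckenaive}$ lies in $\widehat{\fg}_{\crit}\mod_{\regnaive}^+\subseteq\widehat{\fg}_{\crit}\widetilde{\mod}_{\on{reg,naive}}$, and since $\widehat{\fg}_{\crit}\widetilde{\mod}_{\on{reg,naive}}$ is closed under colimits and $D^{\Hecke_{\fz}}_{\crit}(\Gr_G)$ is compactly generated, the whole essential image lands there.

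Next I would prove the generation statement. It suffices to show that $\widehat{\fg}_{\crit}\mod_{\regnaive}^+$, or even just the heart $\widehat{\fg}_{\crit}\mod_{\regnaive}^{\heartsuit}$, is generated under colimits by the essential image of $\Gamma^{\Heckenaive}$. Here I would use that $\Gamma^{\Heckenaive}$ receives the induction functor: $\Gamma^{\Heckenaive}\circ\on{ind}^{\Hecke_{\fz}}(\sF)\cong\sF\star\mathbb{V}_{\crit}$ for $\sF\in D_{\crit}(\Gr_G)$. So the essential image contains all convolutions $\sF\star\mathbb{V}_{\crit}$, and in particular, taking $\sF$ supported on $G[[t]]$-orbits (equivalently, applying Hecke operators $\sS_V$ to the unit) together with the $\QCoh(\Op_{\check{G}}^{\reg})$-action, it contains $\sP\otimes\mathbb{V}_{\crit}$ for $\sP$ a vector bundle on $\Op_{\check{G}}^{\reg}$, as well as $G((t))$-translates thereof. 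The point is that $\mathbb{V}_{\crit}$ — and its translates and twists — generate $\widehat{\fg}_{\crit}\mod_{\regnaive}$ under colimits: any discrete critical module with regular central character admits a surjection from a direct sum of modules induced from compact open pro-unipotent subgroups, and induction from $K\subset G[[t]]$-type subgroups can be resolved in terms of the vacuum module via the bar/Chevalley-Eilenberg complex, all of which stay in $\widehat{\fg}_{\crit}\mod_{\regnaive}$. Combined with the first paragraph (so that these generators genuinely lie in the $+$-part, hence in $\widehat{\fg}_{\crit}\widetilde{\mod}_{\on{reg,naive}}$), this gives that $\Gamma^{\Heckenaive}$ generates $\widehat{\fg}_{\crit}\widetilde{\mod}_{\on{reg,naive}}$ under colimits.

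The main obstacle, I expect, is the second step: showing that $\mathbb{V}_{\crit}$ and its $G((t))$-translates/$\QCoh(\Op^{\reg}_{\check G})$-twists really do generate $\widehat{\fg}_{\crit}\widetilde{\mod}_{\on{reg,naive}}$ under colimits — equivalently, that no nonzero object of $\widehat{\fg}_{\crit}\mod_{\regnaive}$ is right-orthogonal to all of them. Concretely one wants: if $\Hom(\sF\star\mathbb{V}_{\crit},M)=0$ for all compact $\sF$, then $M=0$. By adjunction this says $\on{inv}\mathbb{D}\sF\star M=0$ in the appropriate sense for all $\sF$, i.e. all "matrix coefficients" of $M$ against the vacuum vanish; one must then argue that this forces $M$ itself to vanish, using that $\widehat{\fg}_{\crit}\mod_{\regnaive}$ is built (via the cotensor product defining it) from $\widehat{\fg}_{\crit}\mod$, where the analogous generation by vacuum-type modules is standard, and that passing to the regular locus $\Op^{\reg}_{\check G}\hookrightarrow\Op_{\check G}$ via $i^!$ is conservative on the relevant subcategory. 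This last conservativity, and checking it interacts correctly with the t-structure so that one stays inside the $+$-part, is where the real work lies; I would isolate it as a separate lemma and prove it by reducing to $G[[t]]$-equivariant objects, where the claim is exactly the Frenkel-Gaitsgory generation result already available in \cite{FG2}.
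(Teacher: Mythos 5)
Your first paragraph is fine: the containment of the essential image in $\widehat{\fg}_{\crit}\widetilde{\mod}_{\regnaive}$ does follow from compact generation of $D^{\Hecke_{\fz}}_{\crit}(\Gr_G)$ together with the fact that $\Gamma^{\Heckenaive}$ sends compact objects into $\widehat{\fg}_{\crit}\mod_{\regnaive}^+$ (compact objects of $\widehat{\fg}_{\crit}\mod$ are cohomologically bounded and $i_*$ is t-exact and conservative). This is exactly the content of Lemma \ref{generationfactor}, which the paper simply cites from \cite{raskinbeiber}.

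The second half of your argument has a genuine gap, and it sits precisely where the real content of the lemma lies. You reduce generation to the claim that if $\Hom(\sF\star\mathbb{V}_{\crit},M)=0$ for all compact $\sF$ then $M=0$ (for $M$ in the subcategory generated by $\widehat{\fg}_{\crit}\mod_{\regnaive}^+$), and you propose to handle this by ``reducing to $G[[t]]$-equivariant objects, where the claim is the Frenkel--Gaitsgory generation result.'' No such reduction is available: by adjunction, right-orthogonality to all $\sF\star\mathbb{V}_{\crit}$ says that certain $G[[t]]$-averages of $M$ vanish, and concluding $M=0$ from this is exactly the generation statement, not a step toward it. For a general $G((t))$-category there is no reason its $G[[t]]$-invariants (or even its $I_0$-invariants) should generate it; that is a depth-zero property which must be \emph{proved} for $\widehat{\fg}_{\crit}\mod_{\regnaive}$. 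Likewise your suggested bar/Chevalley--Eilenberg resolution of $\on{ind}_{\fk}^{\widehat\fg}\bC$ does not express it as a colimit of convolutions of $\mathbb{V}_{\crit}$: the terms of such a resolution are inductions of free (non-integrable) $\fg[[t]]$-modules, not twists of the vacuum module, so they do not manifestly lie in the subcategory generated by the image of $\Gamma^{\Heckenaive}$. The paper's route is different and is where the actual work happens: it applies the Moy--Prasad reduction (Lemma \ref{generationreductionstep}, i.e.\ Corollary \ref{vanishingcriterion} applied to the quotient category), which splits the problem into (a) triviality of all positive-depth components $(-)^{K_{x,r+},\circ}$ of source and target, and (b) generation on $I_0$-invariants. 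Step (a) for the target is Lemma \ref{kmtildedepthzero}, resting on Theorem \ref{kmdepthzero} and ultimately on Theorem 4.3 of \cite{dhy} computing the depth filtration of $\widehat{\fg}_{\crit}\mod$; only step (b) is the Frenkel--Gaitsgory input (Theorem \ref{i0invariants} combined with Lemma \ref{tildeinvariants}). Your proposal supplies an analogue of (b) but nothing in place of (a), so the argument does not close.
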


\begin{lemma}\label{exactness}
The functor $\Gamma^{\on{Hecke,naive}}$ is t-exact.
\end{lemma}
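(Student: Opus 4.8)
The plan is to prove $t$-exactness by combining the already-known right $t$-exactness of $\Gamma^{\Hecke}$ (Theorem \ref{righttexact}) with the Moy-Prasad vanishing machinery of Section 2, applied to the ``failure of left $t$-exactness'' category. Since $\Gamma^{\Heckenaive}$ factors as $\Gamma^{\Hecke}$ followed by the $t$-exact functor $p:\widehat{\fg}_{\crit}\mod_{\reg}\to\widehat{\fg}_{\crit}\mod_{\regnaive}$ (Theorem \ref{eventuallyequivalence}), and $\Gamma^{\Hecke}$ is right $t$-exact, $\Gamma^{\Heckenaive}$ is right $t$-exact; so the content is left $t$-exactness. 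Equivalently, writing the cone construction, I would consider an object $M\in D^{\Hecke_{\fz}}_{\crit}(\Gr_G)^{\geq 0}$ and aim to show $\Gamma^{\Heckenaive}(M)\in\widehat{\fg}_{\crit}\mod_{\regnaive}^{\geq 0}$; because $\Gamma^{\Hecke}$ is fully faithful (Theorem \ref{fullyfaithful}), it is enough to control the negative cohomologies, and the obstruction to their vanishing defines a $G((t))$-stable piece one can hope to kill using Corollary \ref{vanishingcriterion}.

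Concretely, the key steps, in order: (1) Reduce to showing that the functor $\Gamma^{\Heckenaive}$ has vanishing ``left-error'': form the $G((t))$-category $\sE$ measuring the discrepancy between the two $t$-structures transported across $\Gamma^{\Hecke}$ — e.g., $\sE$ could be taken as an appropriate subquotient built from $\tau^{<0}\Gamma^{\Hecke}(D^{\Hecke_{\fz}}_{\crit}(\Gr_G)^{\geq 0})$ — and observe it carries a strong $G((t))$-action since all the input categories and functors do (here using that $\widehat{\fg}_{\crit}\mod_{\regnaive}$, unlike $\widehat{\fg}_{\crit}\mod_{\reg}$, visibly has a $G((t))$-action). (2) Apply Corollary \ref{vanishingcriterion}: it suffices to check $\sE^{K_{x,r+},\circ}\cong 0$ for all $x$ and all $r>0$, and $\sE^{I_0}\cong 0$. (3) For the depth-zero part, $\sE^{I_0}$ is computed from Iwahori-integrable objects, and there the statement is exactly the $t$-exactness of Iwahori-equivariant affine localization established by Frenkel--Gaitsgory (via \cite{FGi0}); this is where the reduction to Iwahori invariants pays off. (4) For the positive-depth parts, use Theorem \ref{unrefined} together with the structure of $\widehat{\fg}_{\crit}\mod_{\regnaive}$ as an $\IndCoh^*(\Op_{\check G}^{\reg})$-comodule: on $K_{x,r+}$-invariants one can relate the $t$-structure to a geometric one on the oper side, and the semistable locus $(K_{x,r}/K_{x,r+})^{*,\circ}$ is killed because the relevant Whittaker-type or oper-side invariants are concentrated in the expected degrees. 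Here Lemmas \ref{finiteorbits} and \ref{geometricinput} let us run an induction on $L_x$-orbits in the unstable locus to reduce each $\sE^{K_{x,r+},\circ}$ to a finite-type reductive-group computation governed by Lemma \ref{finitebeiber} (whose $\Gamma$ is $t$-exact).

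The main obstacle I expect is step (4): making precise the claim that on $K_{x,r+},\circ$-invariants the discrepancy category $\sE$ vanishes. The difficulty is twofold — first, one must verify that the depth-$r$ Moy-Prasad invariants of $\widehat{\fg}_{\crit}\mod_{\regnaive}$ (and of $D^{\Hecke_{\fz}}_{\crit}(\Gr_G)$) are themselves computable in terms of the oper scheme in a way compatible with $t$-structures, which is where Lemma \ref{flattstructure} and the flatness properties of $\Op_{\check G}^{\reg}$ over its affinization enter; second, one must control the interaction between taking $(K_{x,r+},\circ)$-invariants (an operation that commutes with limits and colimits, but not obviously with truncation functors) and the $t$-structure. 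I anticipate that the cleanest route is to bootstrap from the already-proven fully faithfulness and right $t$-exactness of $\Gamma^{\Hecke}$ to reduce the whole left $t$-exactness claim to the single statement $\sE\cong 0$, and then feed $\sE$ into Corollary \ref{vanishingcriterion}, isolating all the genuinely new work into the positive-depth vanishing $\sE^{K_{x,r+},\circ}\cong 0$, which should follow from the finite-dimensional localization input of Lemma \ref{finitebeiber} applied orbit-by-orbit via Lemma \ref{finiteorbits}.
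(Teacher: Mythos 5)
Your proposal gets the easy half right (right t-exactness from Theorems \ref{righttexact} and \ref{eventuallyequivalence}) and correctly identifies the endpoints of the reduction: the Iwahori case comes from Frenkel--Gaitsgory and the positive-depth case from depth-zero-ness. But the mechanism you propose for the reduction --- forming a ``discrepancy category'' $\sE$ out of $\tau^{<0}\Gamma^{\Hecke}(D^{\Hecke_{\fz}}_{\crit}(\Gr_G)^{\geq 0})$ and feeding it into Corollary \ref{vanishingcriterion} --- does not work, and this is the genuine gap. Left t-exactness of a functor is not the vanishing of any naturally defined $G((t))$-category: the truncation functors $\tau^{<0}$ are not $G((t))$-equivariant (convolution is not t-exact), so the subcategory you describe carries no evident strong $G((t))$-action; and even if one could rig up such an $\sE$, its $I_0$- and $(K_{x,r+},\circ)$-invariants would not compute the left t-exactness of $f^{I_0}$ and $f^{K_{x,r+},\circ}$. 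The paper is explicit that only Lemma \ref{generation} admits a direct application of Corollary \ref{vanishingcriterion}; for exactness one must instead prove a separate \emph{functor-level} criterion, Lemma \ref{exactnessreductionstep}, which asserts that a $G((t))$-equivariant functor is left t-exact iff its restrictions to $I_0$-invariants and to all $(K_{x,r+},\circ)$-invariants are. Its proof reworks the entire depth induction of the vanishing criterion at the level of t-structures: the depth-zero step uses the t-exactness clause of Lemma \ref{finitebeiber} together with the conservative t-exact comparison functors of Lemmas \ref{compatiblelemma}, \ref{flattstructure}, and \ref{compatibletensor}; the positive-depth step uses the excision triangle for the open/closed decomposition of $(K_{x,r}/K_{x,r+})^*$, Lemmas \ref{finiteorbits}, \ref{geometricinput}, and \ref{orbitstabilizerequivalence}, and the tensor-product t-exactness results of the appendix. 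That lemma is the real content of the argument and is absent from your proposal.

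Your step (4) also misplaces where the remaining difficulty lies. Once the reduction lemma is in hand, the positive-depth condition is \emph{vacuous}: the source category satisfies $D^{\Hecke_{\fz}}_{\crit}(\Gr_G)^{K_{x,r+},\circ}\cong 0$ for all $x$ and $r>0$ by Theorem \ref{grdepthzero}, so there is nothing to check there --- no oper-side computation, no Whittaker-type invariants, and no orbit-by-orbit analysis of $\widehat{\fg}_{\crit}\mod_{\regnaive}$ is needed at this stage. The only nontrivial input at the level of invariant categories is Theorem \ref{i0invariants}.
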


\begin{lemma}\label{boundedness}
For every compact open subgroup $K\subseteq G((t))$, the composition
\[
D^{\Hecke_{\fz}}_{\crit}(\Gr_G)^K\rightarrow D^{\Hecke_{\fz}}_{\crit}(\Gr_G)\rightarrow\widehat{\fg}_{\crit}\mod_{\reg}
\]
is left t-exact up to shift.
\end{lemma}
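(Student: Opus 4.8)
The plan is to reduce the statement to a finite-dimensional Beilinson-Bernstein--type bound, using compactness of $\mathbb{V}_{\crit}$ and Moy-Prasad theory to control the behavior of invariants as $K$ shrinks. First I would observe that it suffices to prove the claim for a cofinal family of compact open subgroups $K$, since if $K' \subseteq K$ then $D^{\Hecke_{\fz}}_{\crit}(\Gr_G)^{K}$ is a full subcategory of $D^{\Hecke_{\fz}}_{\crit}(\Gr_G)^{K'}$ and left t-exactness up to shift is inherited; so I may assume $K = K_{x,r+}$ for $x \in X_*(T)\otimes\mathbb{Q}$ and $r \geq 0$ as small as I like, or, in the depth-zero case, that $K = gI_0g^{-1}$ is a conjugate of the Iwahori unipotent radical. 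Next, unwinding the definition of $\Gamma^{\Hecke}$ on compact objects, the composite in question sends $\on{ind}^{\Hecke_{\fz}}(\sF)$ to $\sF \star \mathbb{V}_{\crit}$, and more generally any object of $D^{\Hecke_{\fz}}_{\crit}(\Gr_G)^K$ is built from $K$-equivariant $\sF \in D_{\crit}(\Gr_G)$ convolved with the vacuum; so the question becomes one of bounding the cohomological amplitude of the functor $\sF \mapsto \sF \overset{K}{\star} \mathbb{V}_{\crit}$ from below, uniformly in $K$-equivariant $\sF$, i.e. showing $D_{\crit}(\Gr_G)^{K} \to \widehat{\fg}_{\crit}\mod_{\reg}$ is left t-exact up to a shift depending only on $K$.

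The key step is then to package this as a statement about a single object: the functor $\sF \mapsto \sF \overset{K}\star \mathbb{V}_{\crit}$ is corepresented, after passing to $K$-invariants, by the object $\on{inv}\mathbb{D}(\delta_{K}) \star \mathbb{V}_{\crit}$ — more precisely, by the discussion of adjoints in Section \ref{ss:gamma-hecke}, its right adjoint is a convolution functor, so left t-exactness up to shift of $\sF\star-$ is equivalent to the image of the $K$-invariant vacuum, or rather the relevant averaged module $\on{Av}_*^{K}\mathbb{V}_{\crit} \in \widehat{\fg}_{\crit}\mod_{\reg}^{K}$, being bounded above in cohomological degree; this is the content one actually needs, and it holds because $\mathbb{V}_{\crit}$ is compact, hence bounded, and the averaging functor $\on{Av}_*^{K}$ along a pro-unipotent $K$ (or along the pro-unipotent radical after an Iwahori reduction) has bounded cohomological amplitude, controlled by the cohomology of the pro-unipotent group $K$ with its adjoint action — by \cite{dhy} or directly by the structure theory, this cohomological dimension is finite for each fixed $K$. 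Concretely: write $\mathbb{V}_{\crit} = \on{ind}_{\fg[[t]]}^{\widehat{\fg}}\mathbb{C}$, reduce along the chain $K \subseteq K_{x,0+} \subseteq \cdots$ using the Moy-Prasad filtration, and at each stage the relative invariants/coinvariants are computed by a finite-dimensional Lie algebra (co)homology, which shifts by a bounded amount.

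The main obstacle, as I see it, is the interaction with the \emph{renormalized} t-structure on $\widehat{\fg}_{\crit}\mod_{\reg}$: a priori the object $\sF\star\mathbb{V}_{\crit}$ lives most naturally in $\widehat{\fg}_{\crit}\mod$, and one must transport the boundedness through the comonadic functor $i_*$ and through the equivalence of Theorem \ref{eventuallyequivalence} on eventually coconnective objects — so I would first establish the bound for $i_*(\sF\star\mathbb{V}_{\crit}) = \sF \star i_*\mathbb{V}_{\crit}$ in $\widehat{\fg}_{\crit}\mod$, where $i_*\mathbb{V}_{\crit}$ is compact hence cohomologically bounded and the averaging estimate is cleanest, then use that $i_*$ is t-exact and conservative to descend the bound to $\widehat{\fg}_{\crit}\mod_{\regnaive}$, and finally invoke t-exactness of $p$ (Theorem \ref{eventuallyequivalence}) together with the fact that $\Gamma^{\Hecke}$ already lands in $\widehat{\fg}_{\crit}\mod_{\reg}^c$ on compacts to conclude. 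The only genuinely delicate point in this last move is that one needs the bound to be \emph{uniform over all compact objects} of $D^{\Hecke_{\fz}}_{\crit}(\Gr_G)^K$, not just the generators $\on{ind}^{\Hecke_{\fz}}(\sF)$; this follows because left t-exactness up to a fixed shift is closed under colimits, and $D^{\Hecke_{\fz}}_{\crit}(\Gr_G)^{K,\leq 0}$ is generated under colimits by such generators with $\sF \in D_{\crit}(\Gr_G)^{K,\leq 0}$, which in turn reduces to the uniform amplitude bound for the finite-dimensional averaging functors above.
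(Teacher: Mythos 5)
Your proposal has the right outer shell (work with a cofinal family of small $K$, and handle the renormalization by first working in $\widehat{\fg}_{\crit}\mod_{\regnaive}$ and then lifting to $\widehat{\fg}_{\crit}\mod_{\reg}$), but it contains two genuine gaps, and the second is exactly where all of the work in the paper lives.

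First, the passage from the generators $\on{ind}^{\Hecke_{\fz}}(\sF)$ to arbitrary objects of $D^{\Hecke_{\fz}}_{\crit}(\Gr_G)^{K,\geq 0}$ does not follow from ``$D^{\Hecke_{\fz}}_{\crit}(\Gr_G)^{K,\leq 0}$ is generated under colimits by such generators'': generation of the \emph{connective} part under colimits controls right t-exactness, not left t-exactness. To bound a functor on $\sC^{\geq 0}$ one must resolve objects of the \emph{heart} by objects one understands, and coconnective objects are not colimits of coconnective generators for formal reasons. The paper obtains such a resolution only for $K=I_0$, from the structure theorem of \cite{FGi0} (every object of $D^{\Hecke_{\fz}}_{\crit}(\Gr_G)^{I_0,\heartsuit}$ is a filtered colimit of finitely-filtered objects with subquotients induced from $D_{\crit}(\Gr_G)^{I_0,\heartsuit}$ tensored with coherent sheaves), combined with perfection of coherent sheaves on $\Op_{\check{G}}^{\reg}$; this same input supplies the eventual coconnectivity needed to transport the bound from $\widehat{\fg}_{\crit}\mod_{\regnaive}$ back to $\widehat{\fg}_{\crit}\mod_{\reg}$ via Theorem \ref{eventuallyequivalence}. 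Your alternative --- that $\Gamma^{\Hecke}$ lands in compacts --- applies only to compact objects, and the heart objects one must bound need not be compact. No analogue of the \cite{FGi0} structure theorem is known for general $K_{x,r+}$, which is precisely why the paper routes everything through $I_0$.

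Second, and more seriously, the reduction from general small $K=K_{x,r+}$ to the Iwahori case is not a formal consequence of bounded amplitude of averaging functors. As $K$ shrinks, $\sC^{K}$ grows, and its heart acquires objects that are not produced from $I_0$-equivariant ones by any bounded-amplitude operation ``along the Moy-Prasad filtration''; the relative averaging functors go the wrong way (they map the smaller invariant category into itself rather than resolving it by the larger group's invariants). Supplying this reduction is the entire content of Lemma \ref{boundednessreductionstep}: positive depth is handled using the vanishing $D^{\Hecke_{\fz}}_{\crit}(\Gr_G)^{K_{x,r+},\circ}\cong 0$ (Theorem \ref{grdepthzero}) together with an induction over the finitely many $L_x$-orbits in the GIT-unstable locus and a Grothendieck--Cousin argument, and depth zero is handled by finite-dimensional Beilinson--Bernstein localization for the Levi $L_x$ (Lemma \ref{finitebeiber}), which is what allows passage from $N_x$-invariants (a conjugate of $I_0$-invariants) to all of $\sC^{K_{x,0+}}$. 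Finally, a convolution functor $\sF\mapsto\sF\star\mathbb{V}_{\crit}$ is not corepresented by a single object, so its left t-exactness up to shift is not equivalent to cohomological boundedness of one averaged vacuum module; that step of your argument does not go through as stated.
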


We refer to \cite{raskinbeiber} Section 7.17 for the deduction 
of Theorem \ref{main} from these lemmas.

%\begin{proof}[Proof of Theorem \ref{main}]
%We sketch, following \cite{raskinbeiber}, how the combination of the above three lemmas implies Theorem \ref{main}. First we show that $\Gamma^{\Hecke}$ is t-exact. Theorem \ref{righttexact} guarantees the right t-exactness, so it suffices to treat left t-exactness.
%
%Any object $\sF\in D^{\Hecke_{\fz}}_{\crit}(\Gr_G)$ can be written as $\on{colim}\on{Oblv}\on{Av}_*^K(\sF)$. It follows that it suffices to show left t-exactness of $D^{\Hecke_{\fz}}_{\crit}(\Gr_G)^K\rightarrow\widehat{\fg}_{\crit}\mod_{\reg}$. By Lemma \ref{boundedness}, we have a containment
%\[
%\Gamma^{\Hecke}((D^{\Hecke_{\fz}}_{\crit}(\Gr_G)^K)_{\geq 0})\subseteq (\widehat{\fg}_{\crit}\mod_{\reg})^+.
%\]
%Theorem \ref{eventuallyequivalence} now shows that it suffices to prove that
%\[
%\Gamma^{\Heckenaive}((D^{\Hecke_{\fz}}_{\crit}(\Gr_G)^K)_{\geq 0})\subseteq (\widehat{\fg}_{\crit}\mod_{\regnaive})_{\geq 0},
%\]
%which is true by Lemma \ref{exactness}.
%
%Now we deduce Theorem \ref{main}. As Theorem \ref{fullyfaithful} tells us that $\Gamma^{\Hecke}$ is fully faithful, it suffices to show that $\Gamma^{\Hecke}$ is essentially surjective. Lemma \ref{generation}, combined with t-exactness of $\Gamma^{\Hecke}$ and Theorem \ref{eventuallyequivalence}, implies that $(\widehat{\fg}_{\crit}\mod_{\reg})_{\geq 0}$ lies in the essential image of $\Gamma^{\Hecke}$. As $\widehat{\fg}_{\crit}\mod_{\reg}$ is generated by objects in $\widehat{\fg}_{\crit}\mod_{\reg}^+$, this implies that $\Gamma^{\Hecke}$ is essentially surjective, as desired.
%\end{proof}

Let us explain some philosophy behind the proofs of Lemmas \ref{generation}, \ref{exactness}, and \ref{boundedness}. Each of these Lemmas involves proving some property P of a functor $f:\sC\rightarrow \sD$. In Lemmas \ref{generation} and \ref{exactness}, both $\sC$ and $\sD$ have $G((t))$-actions, while in Lemma \ref{boundedness}, only $\sC$ has a $G((t))$-action. Assume for simplicity that $\sC$ and $\sD$ are both $G((t))$-categories.

Motivated by Corollary \ref{vanishingcriterion}, we will show that $f$ satisfies $P$ if we know that, for all $x\in X_*(T)\otimes\mathbb{R}$ and $r>0$,
\[
f^{K_{x,r+},\circ}:\sC^{K_{x,r+},\circ}\rightarrow D^{K_{x,r+},\circ}
\]
satisfies $P$, as well as that
\[
f^{I_0}:\sC^{I_0}\rightarrow \sD^{I_0}
\]
satisfies $P$. Luckily, each of the involved invariant categories is well-understood, and it will be (relatively) straightforward to show that each of these simpler functors satisfies $P$, proving the desired Lemma.

\begin{remark}
In \cite{raskinbeiber}, a similar line of attack was used, with the invariant categories $\sC^{K_{x,r+},\circ}$ replaced with the category $\Whit(\sC)$ of Whittaker invariants. For $G=PGL_2$, any $G((t))$-category is generated (under the $G((t))$-action) by its $I_0$-invariants and its Whittaker invariants. One impetus for the current paper was the observation (our Corollary \ref{vanishingcriterion}) that this generation statement holds for any group $G$ once one replaces $\Whit(\sC)$ with the categories $\sC^{K_{x,r+},\circ}$.
\end{remark}

Lemma \ref{generation} is easiest, and can be shown with a direct application of Corollary \ref{vanishingcriterion}. The proofs of the other two Lemmas will require reworking each step of the proof of \ref{vanishingcriterion} to apply in a new setting. 

\section{Invariant subcategories}

The goal of this section is to collect some results on invariant subcategories of $D^{\Hecke_{\fz}}_{\crit}(\Gr_G)$ and $\widehat{\fg}_{\crit}\mod_{\on{reg,naive}}$. These results form the concrete input needed for the proof of Theorem \ref{main}.

\subsection{$I_0$-invariants}

The following theorem is the main fact we shall need about the behavior of $\Gamma^{\on{Hecke,naive}}$ on $I_0$-equivariant objects. It is essentially the main theorem of \cite{FGi0}. However, \cite{FGi0} does not state their result in terms of derived categories, so instead we use the phrasing of \cite{raskinbeiber}.

\begin{theorem}[\cite{raskinbeiber} Theorem 8.2.1, \cite{FGi0} Theorem 1.7]\label{i0invariants}
The functor $\Gamma^{\on{Hecke,naive}}$ induces a t-exact equivalence
\[
D^{\Hecke_{\fz}}_{\crit}(\Gr_G)^{I_0,+}\rightarrow\widehat{\fg}_{\crit}\mod_{\on{reg,naive}}^{I_0,+}
\]
on eventually coconnective $I_0$-equivariant categories.
\end{theorem}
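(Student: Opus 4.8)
The plan is to reduce Theorem \ref{i0invariants} to the known statement of \cite{FGi0}, which identifies the abelian category of $I_0$-equivariant regular critical-level modules with a category of $I$-equivariant D-modules (or perverse sheaves) on the affine flag variety, compatibly with the Hecke structure. The issue, as flagged in the excerpt, is purely one of ``packaging'': \cite{FGi0} works with abelian categories and $\on{Ext}$-computations rather than with the $\infty$-categorical renormalized categories $D^{\Hecke_{\fz}}_{\crit}(\Gr_G)$ and $\widehat{\fg}_{\crit}\mod_{\on{reg,naive}}$. So the real content is bookkeeping about t-structures and eventually-coconnective truncations, and the mathematical heart is borrowed wholesale.

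First I would record that both sides carry t-structures for which the relevant forgetful/realization functors are t-exact: on the Kac-Moody side this is the t-structure making $i_*\colon \widehat{\fg}_{\crit}\mod_{\on{reg,naive}}\to\widehat{\fg}_{\crit}\mod$ t-exact (hence the $I_0$-equivariant category inherits one, with heart the usual abelian category of discrete $I_0$-integrable regular modules), and on the geometric side the t-structure on $D^{\Hecke_{\fz}}_{\crit}(\Gr_G)$ defined just before Theorem \ref{righttexact}, restricted to $I_0$-equivariants. Next I would observe that $\Gamma^{\on{Hecke,naive}}$ is right t-exact (Theorem \ref{righttexact}, together with the fact that passing to $I_0$-invariants is t-exact and commutes with the relevant functors) and that its restriction to hearts is an equivalence of abelian categories --- this is exactly the Frenkel--Gaitsgory result of \cite{FGi0}, reformulated as in \cite{raskinbeiber} Section 8. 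The key point is that on hearts the functor is $t$-exact and an equivalence; combined with right $t$-exactness on the whole category, one gets that $\Gamma^{\on{Hecke,naive}}$ restricted to eventually coconnective objects is fully faithful and essentially surjective.

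More precisely, the argument I would run is: both eventually coconnective categories are generated under uniformly bounded-below colimits and finite limits by objects of the heart (since the t-structures are left complete on the bounded-below part and the hearts generate); $\Gamma^{\on{Hecke,naive}}$ is right t-exact, so it sends $D^{\Hecke_{\fz}}_{\crit}(\Gr_G)^{I_0,\geq n}$ into $\widehat{\fg}_{\crit}\mod_{\on{reg,naive}}^{I_0,\geq n}$ (after checking it has bounded cohomological amplitude on the heart, which follows because it is an equivalence there); a standard d\'evissage using the equivalence on hearts and the five lemma in each cohomological degree upgrades this to full faithfulness on $D^{\Hecke_{\fz}}_{\crit}(\Gr_G)^{I_0,+}$ and then to essential surjectivity onto $\widehat{\fg}_{\crit}\mod_{\on{reg,naive}}^{I_0,+}$. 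Finally, full faithfulness on eventually coconnective objects plus right t-exactness plus the equivalence on hearts forces $\Gamma^{\on{Hecke,naive}}$ to be left t-exact on this subcategory, giving the claimed t-exact equivalence.

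I expect the main obstacle to be the verification that $\Gamma^{\on{Hecke,naive}}$ has \emph{bounded} cohomological amplitude when restricted to the $I_0$-equivariant subcategory --- equivalently, that the higher cohomologies (higher derived global sections / $\on{Ext}$-groups between the relevant eigensheaves and the vacuum module) vanish in a uniform range. This is where one genuinely needs input from \cite{FGi0}: their computation that the functor is an equivalence of abelian categories implicitly controls these $\on{Ext}$'s, and one must extract exactly that from \emph{loc.\ cit.} and translate it into a statement about the renormalized $\infty$-categorical functor. The remaining steps --- compatibility of t-structures with $I_0$-invariance, left completeness on bounded-below subcategories, and the d\'evissage --- are routine once this amplitude bound and the equivalence on hearts are in hand, and I would cite \cite{raskinbeiber} Section 8 for the precise packaging rather than reproving it.
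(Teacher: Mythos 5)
The paper gives no proof of this statement: it is imported verbatim as \cite{raskinbeiber} Theorem 8.2.1, whose content in turn is the main theorem of \cite{FGi0} repackaged in the renormalized $\infty$-categorical setting. Your sketch — reduce to the Frenkel--Gaitsgory equivalence on hearts, then d\'evissage using right t-exactness and the amplitude bound extracted from \emph{loc.\ cit.} — is exactly the packaging carried out in \cite{raskinbeiber} Section 8, which you also end up citing, so your proposal is consistent with (and no less self-contained than) what the paper does.
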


For the proof of Lemma \ref{boundedness}, we will need a related statement adapted to $\widehat{\fg}_{\crit}\mod_{\on{reg}}$. Because of the lack of an a priori $G((t))$-action, we cannot speak directly of $I_0$-invariant objects in $\widehat{\fg}_{\crit}\mod_{\on{reg}}$. Even if we could, there would be no convolution functor. Instead, as a proxy, we will look at the images of convolutions with $I_0$-invariant objects in $D^{\Hecke_{\fz}}_{\crit}(\Gr_G)$.

\begin{lemma}\label{i0boundedness}
For any compact object $\sG\in D(G((t))/I_0)$, the functor
\[
\Gamma^{\Hecke}(\sG\star-):D^{\Hecke_{\fz}}_{\crit}(\Gr_G)^{I_0}\rightarrow\widehat{\fg}_{\crit}\mod_{\on{reg}}
\]
is left t-exact up to shift.
\end{lemma}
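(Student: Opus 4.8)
The plan is to reduce the statement to the already-established Theorem \ref{i0invariants}, using the commutative triangle relating $\Gamma^{\Hecke}$ and $\Gamma^{\Heckenaive}$ together with Theorem \ref{eventuallyequivalence}. The key point is that left t-exactness ``up to shift'' is a boundedness-below condition, and on eventually coconnective objects the functors $p$ and $i_*$ are well-understood: $p$ is an equivalence on eventually coconnective subcategories (Theorem \ref{eventuallyequivalence}), $i_*$ is t-exact and conservative, and $\Gamma^{\Heckenaive}$ is t-exact after passing to eventually coconnective $I_0$-equivariant categories (Theorem \ref{i0invariants}). So the only real content is controlling how far $\Gamma^{\Hecke}(\sG \star -)$ can drop below the heart, i.e. showing the shift is uniform over the connective part of $D^{\Hecke_{\fz}}_{\crit}(\Gr_G)^{I_0}$.

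First I would record that convolution by a fixed compact $\sG \in D(G((t))/I_0)$ is left t-exact up to a shift as a functor $D^{\Hecke_{\fz}}_{\crit}(\Gr_G)^{I_0} \to D^{\Hecke_{\fz}}_{\crit}(\Gr_G)$: this is a standard boundedness property of convolution on the Hecke category at the level of D-modules on $\Gr_G$, since a compact $\sG$ is supported on finitely many $I_0$-orbits and convolution with a single such cell shifts the t-structure by a bounded amount; the $\QCoh(\Op_{\check G}^{\reg})$-tensoring used to build $D^{\Hecke_{\fz}}_{\crit}(\Gr_G)$ is t-exact by Lemma \ref{flattstructure}-type reasoning, so it does not affect the estimate. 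Hence it suffices to prove the statement with $\sG \star -$ replaced by the identity, i.e. to show that the composite $D^{\Hecke_{\fz}}_{\crit}(\Gr_G)^{I_0} \to D^{\Hecke_{\fz}}_{\crit}(\Gr_G) \xrightarrow{\Gamma^{\Hecke}} \widehat{\fg}_{\crit}\mod_{\reg}$ is left t-exact up to shift — wait, more carefully: I would instead directly estimate $\Gamma^{\Hecke}(\sG \star M)$ for $M \in D^{\Hecke_{\fz}}_{\crit}(\Gr_G)^{I_0,\geq 0}$.

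The main step: for such $M$, the object $\sG \star M$ lies in $D^{\Hecke_{\fz}}_{\crit}(\Gr_G)^{\geq -n}$ for a fixed $n$ depending only on $\sG$, and moreover $\sG \star M$ is again built from finitely many $I_0$-cells so it remains in the $I_0$-equivariant (or at least $K$-equivariant for some compact open $K$) world. Now I apply the commutative triangle: $p \circ \Gamma^{\Hecke} \cong \Gamma^{\Heckenaive}$, and $\Gamma^{\Heckenaive}$ restricted to eventually coconnective $I_0$-equivariant objects is t-exact by Theorem \ref{i0invariants}. Therefore $i_* \Gamma^{\Heckenaive}(\sG \star M) = i_* p\, \Gamma^{\Hecke}(\sG \star M)$ lies in $\widehat{\fg}_{\crit}\mod^{\geq -n}$. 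Since $i_*$ is t-exact and $p$ is t-exact and an equivalence on eventually coconnective subcategories, and $\Gamma^{\Hecke}$ is right t-exact (Theorem \ref{righttexact}) so that $\Gamma^{\Hecke}(\sG \star M)$ is already eventually coconnective once we know it is bounded below via its image under $p$, we conclude $\Gamma^{\Hecke}(\sG \star M) \in \widehat{\fg}_{\crit}\mod_{\reg}^{\geq -n}$. The shift $n$ is uniform, giving left t-exactness up to shift. The subtle point I would be most careful about is the transition between $\widehat{\fg}_{\crit}\mod_{\reg}$ and $\widehat{\fg}_{\crit}\mod_{\regnaive}$: a priori $\Gamma^{\Hecke}(\sG\star M)$ could have a component ``at $-\infty$'' killed by $p$, so I must argue that $\Gamma^{\Hecke}$ of a compact-in-$D^{\Hecke_{\fz}}_{\crit}(\Gr_G)$ object (which $\sG \star M$ is, when $M$ is, by the Corollary on compacts preceding the definition of $\Gamma^{\Hecke}$) lands in $\widehat{\fg}_{\crit}\mod_{\reg}^c$, where $i_*$ detects the t-structure faithfully; then extend to all connective $M$ by writing $M$ as a colimit of compacts and using that the t-structure estimate is closed under colimits. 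The main obstacle is precisely this last bookkeeping — ensuring the bound below survives passage from compacts to filtered colimits and is not corrupted by the pathological $-\infty$ behavior of $\widehat{\fg}_{\crit}\mod_{\reg}$ — but since left t-exactness up to shift is a condition on each object and connective objects are colimits of connective compacts, and $\Gamma^{\Hecke}$ preserves colimits, this goes through.
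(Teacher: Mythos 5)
Your overall skeleton matches the paper's: reduce, via the commutative triangle $p\circ\Gamma^{\Hecke}\cong\Gamma^{\Heckenaive}$, the estimate $\Gamma^{\Heckenaive}(\sG\star\sF)\in{}^{\geq r}$ coming from Theorem \ref{i0invariants} together with Lemma \ref{boundedconvolution}, and Theorem \ref{eventuallyequivalence}, to the single remaining claim that $\Gamma^{\Hecke}(\sG\star\sF)$ is \emph{eventually coconnective}. That reduction is correct and is exactly how the paper proceeds.

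The gap is in how you establish eventual coconnectivity. Your plan is: (i) for $M$ compact, $\sG\star M$ is compact, $\Gamma^{\Hecke}$ sends compacts into $\widehat{\fg}_{\crit}\mod_{\reg}^c$, whose objects are eventually coconnective; (ii) extend to all coconnective $M$ by writing $M$ as a filtered colimit of compacts. Step (ii) does not go through. First, the compact objects $M_i$ in an arbitrary presentation $M\cong\on{colim}M_i$ need not be coconnective, so the uniform bound $\Gamma^{\Heckenaive}(\sG\star M_i)\in{}^{\geq -n}$ (which is what you need to pass to the colimit) does not apply to them; and truncating destroys compactness, so you cannot repair this by replacing $M_i$ with $\tau^{\geq 0}M_i$. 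Second, eventual coconnectivity \emph{without} a uniform bound is not closed under filtered colimits, so knowing each $\Gamma^{\Hecke}(\sG\star M_i)$ lies in $\widehat{\fg}_{\crit}\mod_{\reg}^c$ tells you nothing about the colimit. What you would need is that every object of $D^{\Hecke_{\fz}}_{\crit}(\Gr_G)^{I_0,\heartsuit}$ is a filtered colimit of objects built in a controlled way from coconnective compacts, and that is a genuine theorem, not formal bookkeeping: the paper invokes Lemma 3.6 and Proposition 3.18 of \cite{FGi0} to write $\sF\in D^{\Hecke_{\fz}}_{\crit}(\Gr_G)^{I_0,\heartsuit}$ as a filtered colimit of objects with finite filtrations whose subquotients are $\on{ind}^{\Hecke_{\fz}}(\sF_{i,j})\otimes_{\Op_{\check G}^{\reg}}\sH_{i,j}$ with $\sF_{i,j}$, $\sH_{i,j}$ in the respective hearts; after reducing $\sH_{i,j}$ to the structure sheaf, the required bound is then checked directly via the identification $\Gamma^{\Hecke}(\sG\star\on{ind}^{\Hecke_{\fz}}(\sF_{i,j}))\cong\Gamma^{\IndCoh}(\Gr_G,\sG\star\sF_{i,j})$ and the t-exactness of $\Gamma^{\IndCoh}(\Gr_G,-)$. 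This structural input from \cite{FGi0} is the missing ingredient in your argument.
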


\begin{remark}
The case of trivial $\sG$ is Lemma 8.2.2 of \cite{raskinbeiber}. The proof we give below is an adaptation of the one in \emph{loc. cit}. to our case.
\end{remark}

\begin{proof}

Choose an integer $r$ for which the conclusion of Lemma \ref{boundedconvolution} is satisfied. We will show that $\Gamma^{\Hecke}(\sG\star-)[r]$ is left t-exact. Because $D^{\Hecke_{\fz}}_{\crit}$ is right complete, it suffices to show that, for $\sF\in D^{\Hecke_{\fz}}_{\crit}(\Gr_G)^{I_0,\heartsuit}$, we have
\[
\Gamma^{\Hecke}(\sG\star\sF)\in(\widehat{\fg}_{\crit}\mod_{\on{reg}})^{\geq r}.
\]

Note that if we replace $\Gamma^{\Hecke}$ with $\Gamma^{\Heckenaive}$, we have
\[
\Gamma^{\Heckenaive}(\sG\star\sF)\cong\sG\star\Gamma^{\Heckenaive}(\sF)\in(\widehat{\fg}_{\crit}\mod_{\regnaive})^{\geq r}
\]
by Theorem \ref{i0invariants} and Lemma \ref{boundedconvolution}. Thus, by Theorem \ref{eventuallyequivalence}, it suffices to show that $\Gamma^{\Hecke}(\sG\star\sF)$ is eventually coconnective.

By Lemma 3.6 and Proposition 3.18 of \cite{FGi0}, $\sF$ can be written as a filtered colimit of objects $\sF_i\in D^{\Hecke_{\fz}}_{\crit}(\Gr_G)^{I_0,\heartsuit}$ admitting finite filtrations with subquotients of the form $\on{ind}^{\Hecke_{\fz}}(\sF_{i,j})\otimes_{\Op_{\check{G}}^{\reg}}\sH _{i,j}$ for $\sF_{i,j}\in D_{\crit}(\Gr_G)^{I_0,\heartsuit}$ and $\sH _{i,j}\in\QCoh(\Op_{\check{G}}^{\reg})^{\heartsuit}$. It thus suffices to show that, for any $i$ and $j$, we have
\[
\Gamma^{\Hecke}(\sG\star\on{ind}^{\Hecke_{\fz}}(\sF_{i,j})\underset{\Op_{\check{G}}^{\reg}}{\otimes}\sH _{i,j})\in(\widehat{\fg}_{\crit}\mod_{\on{reg}})^{\geq r},
\] 
or equivalently, that $\Gamma^{\Hecke}(\sG\star\on{ind}^{\Hecke_{\fz}}(\sF_{i,j})\otimes_{\Op_{\check{G}}^{\reg}}\sH _{i,j})$ is eventually coconnective.

As $\QCoh(\Op_{\check{G}}^{\reg})^{\heartsuit}$ is compactly generated, we can immediately reduce to the case of $\sH _{i,j}$ coherent. And as $\Op_{\check{G}}^{\reg}$ is an infinite-dimensional affine space, any coherent sheaf is perfect. Therefore, to prove eventual coconnectivity, the case of $\sH _{i,j}$ coherent further reduces to the case where $\sH _{i,j}$ is equivalent to the structure sheaf $\sO_{\Op_{\check{G}}^{\reg}}$.

So we just need to show that
\begin{align*}
\Gamma^{\Hecke}(\sG\star\on{ind}^{\Hecke_{\fz}}(\sF_{i,j}))&\cong\Gamma^{\Hecke}(\on{ind}^{\Hecke_{\fz}}(\sG\star\sF_{i,j}))\\
&\cong\Gamma^{\IndCoh}(\Gr_G,\sG\star\sF_{i,j})
\end{align*}
is eventually coconnective. Applying Lemma \ref{boundedconvolution} again, we see that $\sG\star\sF_{i,j}\in D_{\crit}(\Gr_G)^{\geq r}$. By Corollary 7.15.2 of \cite{raskinbeiber}, $\Gamma^{\IndCoh}(\Gr_G,-)$ is t-exact, giving the desired statement.
\end{proof}

\subsection{$(K_{x,r+},\circ)$-invariants}

It will turn out that both $D^{\Hecke_{\fz}}_{\crit}(\Gr_G)$ and $\widehat{\fg}_{\crit}\mod_{\on{reg,naive}}$ are purely of depth zero, i.e., are equal to their depth $\leq 0$ truncations. By Theorem \ref{unrefined}, this is equivalent to saying that for all $x$ and $r$, their $(K_{x,r+},\circ)$-invariant categories are trivial.

The following lemma will help show this in both cases.

\begin{lemma}\label{tensordepthzero}
\begin{enumerate}
\item Let $\sC$ be a $G((t))$-category purely of depth zero, with a right action of a monoidal category $\sA$ that commutes with the $G((t))$-action. Let $\sB$ be an $\sA$-module category. Then the category
\[
\sC\underset{\sA}{\otimes} \sB
\]
is purely of depth zero.
\item Let $\sC$ be a $G((t))$-category purely of depth zero, with a right coaction of a comonoidal category $\sA$ that commutes with the $G((t))$-action. Let $\sB$ be an $\sA$-comodule category. Then the category
\[
\sC\overset{\sA}{\otimes}\sB
\]
is purely of depth zero.
\end{enumerate}
\end{lemma}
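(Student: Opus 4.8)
The plan is to reduce both parts to Theorem~\ref{unrefined}, which characterizes purely depth-zero categories by the vanishing of $(K_{x,r+},\circ)$-invariants for all $x$ and $r>0$. So I need to show that, for any $G((t))$-category $\sD$ of the form $\sC\otimes_\sA \sB$ (respectively $\sC \overset{\sA}{\otimes}\sB$) with $\sC$ purely of depth zero, one has $\sD^{K_{x,r+},\circ}\cong 0$ for every $x$ and every $r>0$. The first observation is that the operation of taking $K_{x,r+}$-invariants, and then further restricting to the semistable locus $(K_{x,r}/K_{x,r+})^{*,\circ}$, is given by tensoring over $\DGCat$ with a fixed category built from $D(G((t)))$ — explicitly, $\sD^{K_{x,r+},\circ}\cong \sD\otimes_{G((t))} D(G((t)))^{K_{x,r+},\circ}$, as is used inside the proof of Theorem~\ref{unrefined}. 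In particular this functor commutes with colimits in $\sD$ and, being a relative tensor product, commutes with taking tensor (respectively cotensor) products over $\sA$ in the variable $\sD$, since the $\sA$-action commutes with the $G((t))$-action by hypothesis.

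For part (1): since the $G((t))$-action on $\sC$ commutes with the right $\sA$-action, the functor $(-)^{K_{x,r+},\circ}$ applied to $\sC\otimes_\sA\sB$ may be computed by first applying it to $\sC$. Concretely, $(\sC\otimes_\sA\sB)^{K_{x,r+},\circ}\cong \sC^{K_{x,r+},\circ}\otimes_\sA\sB$, using that $\sC^{K_{x,r+},\circ}$ inherits a right $\sA$-action (the $\sA$-action preserved the $G((t))$-action, hence preserves $K_{x,r+}$-invariants and the semistable-locus restriction) and that relative tensor products commute with one another. But $\sC$ is purely of depth zero, so by Theorem~\ref{unrefined} $\sC^{K_{x,r+},\circ}\cong 0$ for every $x$ and $r>0$; hence $\sC^{K_{x,r+},\circ}\otimes_\sA\sB\cong 0\otimes_\sA\sB\cong 0$, and again by Theorem~\ref{unrefined} (in the converse direction) $\sC\otimes_\sA\sB$ is purely of depth zero. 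Part (2) is identical after replacing the relative tensor product by the cotensor product: the cotensor product $\sB'\overset{\sA}{\otimes}\sB$ is a totalization of a cosimplicial diagram built from $\sB'\otimes\sA^{\otimes n}\otimes\sB$, and $(-)^{K_{x,r+},\circ}$, being given by tensoring over $G((t))$ with a fixed category, commutes with this totalization term by term and hence with the cotensor product; so $(\sC\overset{\sA}{\otimes}\sB)^{K_{x,r+},\circ}\cong \sC^{K_{x,r+},\circ}\overset{\sA}{\otimes}\sB\cong 0$ since $\sC^{K_{x,r+},\circ}\cong 0$.

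The main obstacle is the commutation of $(-)^{K_{x,r+},\circ}$ with the relative tensor/cotensor product — i.e., justifying that $(\sC\otimes_\sA\sB)^{K_{x,r+},\circ}\cong \sC^{K_{x,r+},\circ}\otimes_\sA\sB$. For the tensor-product case this follows formally once $(-)^{K_{x,r+},\circ}$ is realized as $(-)\otimes_{G((t))}D(G((t)))^{K_{x,r+},\circ}$, since relative tensor products over distinct bases commute and the hypothesis that the $\sA$-action commutes with the $G((t))$-action makes $\sC^{K_{x,r+},\circ}$ genuinely an $\sA$-module compatibly. For the cotensor case one must additionally check that the functor $(-)^{K_{x,r+},\circ}$ preserves the relevant totalization; this holds because tensoring with a fixed DG category preserves all limits and colimits of DG categories that are computed levelwise in $\DGCat$, and the cosimplicial diagram defining the cotensor product is such a diagram. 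Once these commutations are in place, the rest is a direct invocation of Theorem~\ref{unrefined} in both directions, so I do not expect further difficulty.
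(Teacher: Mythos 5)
Your proposal is correct and is essentially the paper's own argument: the proof there consists precisely of the observation that $(-)^{K_{x,r+},\circ}$ commutes with tensor and cotensor products over $\sA$, so that $(\sC\otimes_{\sA}\sB)^{K_{x,r+},\circ}\cong \sC^{K_{x,r+},\circ}\otimes_{\sA}\sB\cong 0$, together with the characterization of pure depth zero via Theorem \ref{unrefined}. Your extra justifications (realizing the invariants functor as tensoring over $G((t))$ with a fixed category, and commuting it with the totalization defining the cotensor product) are exactly the points the paper records in passing when it notes that $(K_{x,r+},\circ)$-invariants commute with both limits and colimits.
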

\begin{proof}
Let us first treat the case of $\sA$ monoidal. We would like to show that
\[
(\sC\underset{\sA}{\otimes} \sB)^{K_{x,r+},\circ}
\]
is trivial for any choices of $x$ and $r$. But the operation of $(K_{x,r+},\circ)$-invariants commutes with tensor products, so we have
\begin{align*}
(\sC\underset{\sA}{\otimes} \sB)^{K_{x,r+},\circ}&\cong \sC^{K_{x,r+},\circ}\underset{\sA}{\otimes} \sB\\
&\cong 0\underset{\sA}{\otimes} \sB\\
&\cong 0.
\end{align*}
as desired.

The case of $\sA$ comonoidal proceeds similarly, as taking $(K_{x,r+},\circ)$-invariants also commutes with cotensor products.
\end{proof}

\begin{theorem}\label{grdepthzero}
The category $D^{\Hecke_{\fz}}_{\crit}(\Gr_G)$ is purely of depth zero.
\end{theorem}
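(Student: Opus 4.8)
The plan is to reduce the claim---that $(D^{\Hecke_{\fz}}_{\crit}(\Gr_G))^{K_{x,r+},\circ} \cong 0$ for all $x$ and all $r > 0$, which is exactly the condition for being purely of depth zero by Theorem \ref{unrefined}---to the same statement about $D_{\crit}(\Gr_G)$ itself, using Lemma \ref{tensordepthzero}. Recall that by definition $D^{\Hecke_{\fz}}_{\crit}(\Gr_G) = D_{\crit}(\Gr_G) \otimes_{\on{Rep}(\check{G})} \QCoh(\Op_{\check{G}}^{\reg})$, where the right $\on{Rep}(\check{G})$-action on $D_{\crit}(\Gr_G)$ comes from geometric Satake and hence is built from endofunctors that are $G((t))$-equivariant, so it commutes with the $G((t))$-action. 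Thus part (1) of Lemma \ref{tensordepthzero}, applied with $\sC = D_{\crit}(\Gr_G)$, $\sA = \on{Rep}(\check{G})$, and $\sB = \QCoh(\Op_{\check{G}}^{\reg})$, reduces everything to showing that $D_{\crit}(\Gr_G)$ is purely of depth zero.

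For the latter, I would invoke Theorem \ref{unrefineddepthzero}: the depth $\leq 0$ subcategory of any $G((t))$-category equals its $I_0$-generated subcategory, so it suffices to show $D_{\crit}(\Gr_G)$ is $I_0$-generated, or more simply that it is generated under the $G((t))$-action by its $G[[t]]$-equivariant (or $I_0$-equivariant) objects. But $D_{\crit}(\Gr_G) = D_{\crit}(G((t))/G[[t]])$, and $G[[t]]$ is a compact open subgroup containing a conjugate of $I_0$; since $D_{\crit}(\Gr_G) \cong D(G((t)))^{G[[t]]}$ as a $G((t))$-category via convolution, and $D(G((t)))$ is visibly generated by the image of $D(G((t)))^{I_0}$ (indeed $D(G((t)))$ is $K$-generated for any compact open $K$, as the twisted product description gives $D(G((t))) \cong D(G((t))/I_0) \otimes_{D(I_0 \backslash G((t))/I_0)} D(I_0 \backslash G((t)))$ by Lemma \ref{bgolemma}), we conclude $D_{\crit}(\Gr_G)$ is $I_0$-generated. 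Hence $D_{\crit}(\Gr_G)_{\leq 0} = D_{\crit}(\Gr_G)$, i.e. it is purely of depth zero.

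The main obstacle I anticipate is bookkeeping about which action commutes with which: one must confirm that the $\on{Rep}(\check{G})$-action used to form $D^{\Hecke_{\fz}}_{\crit}(\Gr_G)$ genuinely arises from $G((t))$-equivariant endofunctors of $D_{\crit}(\Gr_G)$ (this is the content of geometric Satake producing a map $\on{Rep}(\check{G}) \to \on{End}_{G((t))}(D_{\crit}(\Gr_G))$, which is cited in the excerpt), so that Lemma \ref{tensordepthzero}(1) applies on the nose. Once that compatibility is in place, the argument is essentially formal. One could alternatively bypass the reduction to $D_{\crit}(\Gr_G)$ and instead cite directly that $D_{\crit}(\Gr_G)$ is generated by $D_{\crit}(\Gr_G)^{G[[t]]}$ under the $G((t))$-action---an elementary fact about the affine Grassmannian---and then apply Theorem \ref{unrefineddepthzero} after observing $G[[t]] \supseteq$ a conjugate of $I_0$; but routing through Lemma \ref{tensordepthzero} makes the structure of the proof uniform with the parallel statement for $\widehat{\fg}_{\crit}\mod_{\regnaive}$ that follows.

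\begin{proof}
By Theorem \ref{unrefined}, it is equivalent to show that $(D^{\Hecke_{\fz}}_{\crit}(\Gr_G))^{K_{x,r+},\circ} \cong 0$ for every $x \in X_*(T)\otimes\mathbb{R}$ and every $r > 0$.

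We first claim that $D_{\crit}(\Gr_G)$ is itself purely of depth zero. By Theorem \ref{unrefineddepthzero}, it suffices to show that $D_{\crit}(\Gr_G) = D_{\crit}(\Gr_G)_{I_0\gen}$. Since $\Gr_G = G((t))/G[[t]]$ and $G[[t]]$ is a compact open subgroup of $G((t))$, convolution gives a $G((t))$-equivariant equivalence $D_{\crit}(\Gr_G) \cong D(G((t)))^{G[[t]]}$, so it is enough to know that $D(G((t)))$ is $I_0$-generated. This is immediate from Lemma \ref{bgolemma} applied with $K_{x,r+} = I_0$: the functor $a_{x,r}$ for this choice is an equivalence onto its image, and its source is $D(G((t))/I_0)\otimes_{D(I_0\backslash G((t))/I_0)} D(G((t)))^{I_0}$, which maps onto all of $D(G((t)))$ since the orbits of $I_0$ on $G((t))$ exhaust it. Hence $D(G((t)))_{I_0\gen} = D(G((t)))$, and passing to $G[[t]]$-invariants we obtain $D_{\crit}(\Gr_G)_{I_0\gen} = D_{\crit}(\Gr_G)$, i.e. $D_{\crit}(\Gr_G)$ is purely of depth zero.

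Now recall that $D^{\Hecke_{\fz}}_{\crit}(\Gr_G) = D_{\crit}(\Gr_G) \underset{\on{Rep}(\check{G})}{\otimes} \QCoh(\Op_{\check{G}}^{\reg})$, where the right action of $\on{Rep}(\check{G})$ on $D_{\crit}(\Gr_G)$ is the one coming from geometric Satake, via the monoidal functor $\on{Rep}(\check{G}) \to D_{\crit}(\Gr_G)^{G[[t]]} \cong \on{End}_{G((t))}(D_{\crit}(\Gr_G))$. In particular this action is by $G((t))$-equivariant endofunctors, so it commutes with the $G((t))$-action on $D_{\crit}(\Gr_G)$. Applying part (1) of Lemma \ref{tensordepthzero} with $\sC = D_{\crit}(\Gr_G)$, $\sA = \on{Rep}(\check{G})$, and $\sB = \QCoh(\Op_{\check{G}}^{\reg})$, we conclude that $D^{\Hecke_{\fz}}_{\crit}(\Gr_G)$ is purely of depth zero.
\end{proof}
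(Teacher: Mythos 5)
Your overall strategy matches the paper's: both proofs first establish that the untwisted category $D(\Gr_G)$ is purely of depth zero and then apply part (1) of Lemma \ref{tensordepthzero} to the presentation $D^{\Hecke_{\fz}}_{\crit}(\Gr_G)\cong D(\Gr_G)\otimes_{\on{Rep}(\check{G})}\QCoh(\Op_{\check{G}}^{\reg})$; your second half is essentially verbatim the paper's.

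The difference, and the one place where your argument has a real hole, is the first half. You deduce that $D_{\crit}(\Gr_G)$ is purely of depth zero from the fact that $D(G((t)))$ is $I_0$-generated by ``passing to $G[[t]]$-invariants.'' But it is not automatic that pure depth zero (equivalently, $I_0$-generation, by Theorem \ref{unrefineddepthzero}) descends along taking invariants for a commuting right action: $\sC\mapsto\sC^{G[[t]]}$ is a limit, and $\sC_{I_0\gen}$ is defined by a colimit-type construction, so the two operations do not obviously commute. This is precisely the kind of step Lemma \ref{tensordepthzero} exists to control, and the paper sidesteps it by working with \emph{coinvariants} over a finite type group: it observes that $D(G((t))/K_{0,0+})$ is tautologically $K_{0,0+}$-generated, writes $D(\Gr_G)\cong D(G((t))/K_{0,0+})\otimes_{D(G)}\on{Vect}$ using $G\cong K_{0,0}/K_{0,0+}$, and applies Lemma \ref{tensordepthzero}(1). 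Your step can be repaired in either of two ways: (i) replace invariants by coinvariants, i.e.\ use $D(\Gr_G)\cong D(G((t)))\otimes_{D(G[[t]])}\on{Vect}$ (or the paper's version over $D(G)$) and quote Lemma \ref{tensordepthzero}(1); or (ii) keep invariants but justify the commutation by noting that $(-)^{K_{x,r+},\circ}$ commutes with limits, hence with the totalization computing $(-)^{G[[t]]}$, so $(D(G((t)))^{G[[t]]})^{K_{x,r+},\circ}\cong(D(G((t)))^{K_{x,r+},\circ})^{G[[t]]}\cong 0$ for $r>0$ --- this is in effect the cotensor-product case, part (2) of Lemma \ref{tensordepthzero}. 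With either fix the proof is complete; as written, the assertion is unproved.
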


\begin{proof}
Start by noting that $D(G((t))/K_{0,0+})$ is tautologically $K_{0,0+}$-generated and hence purely of depth zero. As $K_{0,0+}$ is a normal subgroup of $K_{0,0}\cong G[[t]]$, with quotient $G$, we have
\[
D(\Gr_G)\cong D(G((t))/G[[t]])\cong D(G((t))/K_{0,0+})^G\cong D(G((t))/K_{0,0+})\underset{D(G)}{\otimes}\on{Vect}.
\]

Thus, by the first part of Lemma \ref{tensordepthzero}, $D(\Gr_G)$ is purely of depth zero. As
\[
D^{\Hecke_{\fz}}_{\crit}(\Gr_G)\cong D(\Gr_G)\underset{\on{Rep}(\check{G})}{\otimes}\QCoh(\Op_{\check{G}}^{\on{reg}}),
\]
another application of Lemma \ref{tensordepthzero} shows that $D^{\Hecke_{\fz}}_{\crit}(\Gr_G)$ is purely of depth zero, as desired.
\end{proof}

Now we treat the case of $\widehat{\fg}_{\crit}\mod_{\on{reg,naive}}$. We will need the following special case of Theorem 4.3 of \cite{dhy}, which computes the depth filtration on $\widehat{\fg}_{\crit}\mod$.

\begin{theorem}[\cite{dhy} Theorem 4.3, Case $r=0$]
There is an equivalence of subcategories
\[
\widehat{\fg}_{\crit}\mod_{\leq 0}\cong\widehat{\fg}_{\crit}\mod\overset{\IndCoh^*(\Op_{\check{\fg}})}{\otimes}\IndCoh^*(\widehat{\Op_{\check{\fg}}^{\leq 0}}),
\]
where $\widehat{\Op_{\check{\fg}}^{\leq 0}}$ is the formal completion of $\Op_{\check{\fg}}^{\leq 0}$ inside $\Op_{\check{\fg}}$.
\end{theorem}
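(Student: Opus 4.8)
The statement is the $r=0$ specialization of \cite{dhy} Theorem 4.3, so in the paper one may simply invoke that result; here is the shape of the argument, which is somewhat more transparent at depth zero. By Theorem \ref{unrefineddepthzero} we have $\widehat{\fg}_{\crit}\mod_{\leq 0}=\widehat{\fg}_{\crit}\mod_{I_0\gen}$, so it is equivalent to identify the $I_0$-generated subcategory with $\sC':=\widehat{\fg}_{\crit}\mod\overset{\IndCoh^*(\Op_{\check{\fg}})}{\otimes}\IndCoh^*(\widehat{\Op_{\check{\fg}}^{\leq 0}})$. Since the $\IndCoh^*(\Op_{\check{\fg}})$-coaction on $\widehat{\fg}_{\crit}\mod$ is compatible with the $G((t))$-action, cotensoring the adjunction $(i_*,i^!)$ along the formal completion $\widehat{\Op_{\check{\fg}}^{\leq 0}}\hookrightarrow\Op_{\check{\fg}}$ exhibits $\sC'$ as a full $G((t))$-subcategory of $\widehat{\fg}_{\crit}\mod$, with $i_*$ fully faithful and $i^!$ its continuous $G((t))$-equivariant right adjoint. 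It then remains to prove the two inclusions $\widehat{\fg}_{\crit}\mod_{I_0\gen}\subseteq\sC'$ and $\sC'\subseteq\widehat{\fg}_{\crit}\mod_{\leq 0}$.

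For the first inclusion: by Lemma \ref{smallestsubcat} it is enough to check $\widehat{\fg}_{\crit}\mod^{I_0}\subseteq\sC'$, i.e.\ that the center $\fZ$ acts on every $I_0$-equivariant module through functions supported on $\Op_{\check{\fg}}^{\leq 0}$. This is the Feigin--Frenkel computation of central characters of tamely ramified modules underlying \cite{FG2}: $I_0$-equivariance forces the oper support to lie in the ``tamely ramified'' locus $\Op_{\check{\fg}}^{\leq 0}$ (regular singularity with nilpotent residue).

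For the second inclusion, argue as in the proof of Corollary \ref{vanishingcriterion}: passing to the quotient of $\sC'$ by its depth $\leq 0$ part and applying Theorem \ref{unrefined} together with parts (1) and (2) of Proposition \ref{depthproperties}, it suffices to show $(\sC')^{K_{x,r+},\circ}\cong 0$ for all $x\in X_*(T)\otimes\mathbb{R}$ and all $r>0$. Since $(K_{x,r+},\circ)$-invariants commute with cotensor products (cf.\ the proof of Lemma \ref{tensordepthzero}),
\[
(\sC')^{K_{x,r+},\circ}\cong\widehat{\fg}_{\crit}\mod^{K_{x,r+},\circ}\overset{\IndCoh^*(\Op_{\check{\fg}})}{\otimes}\IndCoh^*(\widehat{\Op_{\check{\fg}}^{\leq 0}}),
\]
so the content is that, for $r>0$, the $\IndCoh^*(\Op_{\check{\fg}})$-comodule $\widehat{\fg}_{\crit}\mod^{K_{x,r+},\circ}$ is set-theoretically supported over $\Op_{\check{\fg}}$ away from $\Op_{\check{\fg}}^{\leq 0}$. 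This support statement is the real obstacle: it is precisely the local analysis of \cite{dhy} matching the Moy--Prasad depth filtration with the filtration of $\Op_{\check{\fg}}$ by depth of singularity, and it rests on the Feigin--Frenkel isomorphism together with the explicit Moy--Prasad slice computations of that paper. Granting it, the remaining steps — stability of generation under cotensor products and the quotient argument above — are formal.
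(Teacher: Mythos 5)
The paper offers no proof of this statement: it is imported verbatim from \cite{dhy} (Theorem 4.3, case $r=0$), so simply invoking that result is exactly what the paper does. Your added skeleton --- identifying $\widehat{\fg}_{\crit}\mod_{\leq 0}$ with the $I_0$-generated subcategory via Theorem \ref{unrefineddepthzero} and reducing the two inclusions to (i) the Frenkel--Gaitsgory support statement for $I_0$-equivariant modules and (ii) the positive-depth vanishing of the $(K_{x,r+},\circ)$-invariants of the cotensor product, the latter being precisely the content of the cited analysis in \cite{dhy} --- is consistent with the surrounding machinery (Lemma \ref{smallestsubcat}, Theorem \ref{unrefined}, Corollary \ref{vanishingcriterion}, Lemma \ref{tensordepthzero}) and correctly isolates where the real work lies, though by your own admission it is a citation with scaffolding rather than an independent proof.
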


Properly equipped, let us proceed to the promised theorem.

\begin{theorem}\label{kmdepthzero}
The category $\widehat{\fg}_{\crit}\mod_{\on{reg,naive}}$ is purely of depth zero.
\end{theorem}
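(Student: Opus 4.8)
The plan is to deduce this from the depth-zero computation of $\widehat{\fg}_{\crit}\mod$ just quoted, together with Lemma \ref{tensordepthzero}(2). Recall that $\widehat{\fg}_{\crit}\mod_{\regnaive}$ is defined as the cotensor product
\[
\widehat{\fg}_{\crit}\mod\overset{\IndCoh^*(\Op_{\check{G}})}{\otimes}\IndCoh^*(\Op_{\check{G}}^{\reg}),
\]
and this cotensor is formed along a coaction that commutes with the $G((t))$-action. So if $\widehat{\fg}_{\crit}\mod$ itself were purely of depth zero, we could immediately apply Lemma \ref{tensordepthzero}(2) and be done. It is \emph{not}; but the quoted Theorem 4.3 (case $r=0$) tells us exactly what the failure is: $\widehat{\fg}_{\crit}\mod_{\leq 0}$ is the cotensor of $\widehat{\fg}_{\crit}\mod$ with $\IndCoh^*(\widehat{\Op_{\check{\fg}}^{\leq 0}})$, the formal completion of $\Op_{\check{\fg}}^{\leq 0}$ in $\Op_{\check{\fg}}$.

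So the key step is to identify the relevant "depth-zero locus" $\Op_{\check{G}}^{\leq 0}$ of opers and check that the regular opers $\Op_{\check{G}}^{\reg}$ are already contained in it — in fact contained in the formal completion $\widehat{\Op_{\check{\fg}}^{\leq 0}}$. Geometrically, this should be a statement that regular opers have "slope $\leq 0$" in the sense controlling the Moy–Prasad filtration on the Kac–Moody side, which matches the fact that $\mathbb{V}_{\crit}$, being induced from $\fg[[t]] = \fk_{0,0}$, is visibly $K_{0,0+} = G_1[[t]]$-generated and hence of depth $\leq 0$. Concretely, I would argue: since $\Op_{\check{G}}^{\reg} \hookrightarrow \Op_{\check{G}}$ factors through $\widehat{\Op_{\check{\fg}}^{\leq 0}}$ (the regular locus being a closed subscheme of $\Op_{\check{\fg}}^{\leq 0}$, hence certainly inside its formal completion), the restriction functor $\IndCoh^*(\Op_{\check{G}}) \to \IndCoh^*(\Op_{\check{G}}^{\reg})$ factors through $\IndCoh^*(\widehat{\Op_{\check{\fg}}^{\leq 0}})$. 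Therefore
\[
\widehat{\fg}_{\crit}\mod_{\regnaive}\cong \widehat{\fg}_{\crit}\mod_{\leq 0}\overset{\IndCoh^*(\widehat{\Op_{\check{\fg}}^{\leq 0}})}{\otimes}\IndCoh^*(\Op_{\check{G}}^{\reg}),
\]
realizing $\widehat{\fg}_{\crit}\mod_{\regnaive}$ as a cotensor product with $\widehat{\fg}_{\crit}\mod_{\leq 0}$ — which \emph{is} purely of depth zero — in the first slot, the coaction still commuting with the $G((t))$-action. Then Lemma \ref{tensordepthzero}(2) applies directly and gives that $\widehat{\fg}_{\crit}\mod_{\regnaive}$ is purely of depth zero.

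The main obstacle I expect is the bookkeeping in the previous paragraph: verifying that the closed embedding $\Op_{\check{G}}^{\reg} \to \Op_{\check{G}}$ genuinely factors through the formal completion $\widehat{\Op_{\check{\fg}}^{\leq 0}}$ and that this factorization is $G((t))$-equivariantly compatible with the coactions, so that the cotensor products can legitimately be re-associated as claimed. This amounts to identifying $\Op_{\check{\fg}}^{\leq 0}$ (or at least showing $\Op_{\check{G}}^{\reg}$ sits inside it) — a description of which opers are "depth $\leq 0$," presumably those regular on the disc, with $\Op_{\check{G}}^{\reg}$ the reduced such locus — and then invoking that $\IndCoh^*$ of a formal completion absorbs $\IndCoh^*$ of any closed subscheme of the completed locus. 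Once that geometric input is in hand, the rest is a formal consequence of Lemma \ref{tensordepthzero}(2), exactly parallel to the proof of Theorem \ref{grdepthzero}.
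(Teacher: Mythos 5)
Your proposal is correct and follows essentially the same route as the paper: the paper's proof likewise uses the containment $\Op_{\check{\fg}}^{\reg}\subseteq\widehat{\Op_{\check{\fg}}^{\leq 0}}$ to view $\IndCoh^*(\Op_{\check{\fg}}^{\reg})$ as a comodule over $\IndCoh^*(\widehat{\Op_{\check{\fg}}^{\leq 0}})$, re-associates the cotensor product to exhibit $\widehat{\fg}_{\crit}\mod_{\regnaive}$ as a cotensor over $\widehat{\fg}_{\crit}\mod_{\leq 0}$, and concludes by Lemma \ref{tensordepthzero}(2). The "obstacle" you flag is treated as immediate in the paper, since $\Op_{\check{G}}^{\reg}$ is a closed subscheme of $\Op_{\check{\fg}}^{\leq 0}$ and hence of its formal completion.
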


\begin{proof}
Note that $\IndCoh^*(\widehat{\Op_{\check{\fg}}^{\leq 0}})$ has a natural coalgebra structure. Furthermore, as $\Op_{\check{\fg}}^{\on{reg}}\subseteq\widehat{\Op_{\check{\fg}}^{\leq 0}}, \IndCoh^*(\Op_{\check{\fg}}^{\on{reg}})$ is naturally a comodule over $\IndCoh^*(\widehat{\Op_{\check{\fg}}^{\leq 0}})$. Thus, we can write:
\begin{align*}
&\widehat{\fg}_{\crit}\mod_{\on{reg,naive}}\\
\cong&\widehat{\fg}_{\crit}\mod\overset{\IndCoh^*(\Op_{\check{\fg}})}{\otimes}\IndCoh^*(\Op_{\check{\fg}}^{\on{reg}})\\
\cong&(\widehat{\fg}_{\crit}\mod\overset{\IndCoh^*(\Op_{\check{\fg}})}{\otimes}\IndCoh^*(\widehat{\Op_{\check{\fg}}^{\leq 0}}))\overset{\IndCoh^*(\widehat{\Op_{\check{\fg}}^{\leq 0}})}{\otimes}\IndCoh^*(\Op_{\check{\fg}}^{\on{reg}})\\
\cong&\widehat{\fg}_{\crit}\mod_{\leq 0}\overset{\IndCoh^*(\widehat{\Op_{\check{\fg}}^{\leq 0}})}{\otimes}\IndCoh^*(\Op_{\check{\fg}}^{\on{reg}}).
\end{align*}

The second part of Lemma \ref{tensordepthzero} now implies the desired statement.

\end{proof}

\section{Generation}

\subsection{Reduction step}

As shorthand, we say that a (not necessarily fully faithful) functor $f:\sC\rightarrow \sD$ between categories generates under colimits if the essential image of $f$ generates $\sD$ under colimits.

\begin{lemma}\label{quotientcriterion}
Let $\sC$ and $\sD$ be two categories and let $f:\sC\rightarrow \sD$ be a functor between them. Then $f$ generates under colimits if and only if the quotient category, i.e., the colimit of the pushout diagram
\begin{equation*}
\begin{tikzcd}
\sC \arrow[r] \arrow[d] & \sD\\
0, &
\end{tikzcd}
\end{equation*}
is trivial.
\end{lemma}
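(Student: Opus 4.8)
The plan is to prove both directions via the universal property of the quotient (pushout) category $\sD/f(\sC)$, which by definition comes with a functor $q:\sD\to\sD/f(\sC)$ such that $q\circ f\cong 0$, and is universal with this property among colimit-preserving functors out of $\sD$. Concretely, $\sD/f(\sC)$ can be realized as the full subcategory of $\sD$ of objects right orthogonal to the essential image of $f$, i.e., the kernel of $q$, which is itself a DG category; but it is cleaner to argue abstractly. I would first recall that a colimit-preserving functor $g:\sD\to\sE$ factors through $q$ if and only if $g\circ f\cong 0$, and that $q$ is itself essentially surjective (being a localization).

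For the forward direction, suppose $f$ generates $\sD$ under colimits. Since $q$ preserves colimits and $q\circ f\cong 0$, the essential image of $q$ is generated under colimits by the zero object, hence $q$ is the zero functor. But $q$ is essentially surjective, so $\sD/f(\sC)\cong 0$. For the reverse direction, suppose $\sD/f(\sC)\cong 0$; I want to show that the smallest cocomplete full subcategory $\sD'\subseteq\sD$ containing the essential image of $f$ is all of $\sD$. The inclusion $\sD'\hookrightarrow\sD$ admits a right adjoint (as $\sD'$ is presentable and the inclusion preserves colimits), so we get a colocalization; equivalently, consider the quotient $\sD/\sD'$. Since $f$ factors through $\sD'$, composing with $\sD\to\sD/\sD'$ kills $f$, so by the universal property there is a factorization $\sD/f(\sC)\to\sD/\sD'$; but one checks directly that $\sD\to\sD/\sD'$ already satisfies the universal property defining $\sD/f(\sC)$ (any colimit-preserving functor killing $f(\sC)$ kills all of $\sD'$ since that functor's kernel is a cocomplete subcategory containing $f(\sC)$), so $\sD/f(\sC)\cong\sD/\sD'$. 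Hence $\sD/\sD'\cong 0$, which forces $\sD'\cong\sD$, i.e., $f$ generates under colimits.

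The one genuinely substantive point is the claim that $\sD\to\sD/\sD'$ has the universal property of the quotient by $f(\sC)$ — i.e., that killing the essential image of $f$ under a colimit-preserving functor automatically kills the cocomplete closure $\sD'$. This is where I would be careful: if $g:\sD\to\sE$ is colimit-preserving with $g\circ f\cong 0$, then the full subcategory of $\sD$ on which $g$ vanishes is closed under colimits (as $g$ preserves them and $0$ is closed under colimits in $\sE$) and contains the image of $f$, hence contains $\sD'$. This is the main (and essentially only) obstacle, and it is routine once phrased this way; the rest is formal manipulation of pushouts in $\DGCat$ and the universal property of localizations. I would present the argument in roughly this order: recall the universal property, prove ($\Rightarrow$) in one line, then set up $\sD'$ and its quotient for ($\Leftarrow$), prove the key kernel-closure claim, and conclude.

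\begin{proof}
Denote the quotient category by $\sD/f(\sC)$, with its canonical colimit-preserving functor $q:\sD\to\sD/f(\sC)$ satisfying $q\circ f\cong 0$. By the universal property of the pushout, for any DG category $\sE$, precomposition with $q$ identifies colimit-preserving functors $\sD/f(\sC)\to\sE$ with colimit-preserving functors $g:\sD\to\sE$ such that $g\circ f\cong 0$; moreover $q$ is essentially surjective.

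Suppose first that $f$ generates $\sD$ under colimits. The essential image of $q\circ f$ is the zero object, so the full subcategory of $\sD$ consisting of objects sent by $q$ to $0$ is closed under colimits and contains the essential image of $f$; hence it is all of $\sD$. Thus $q$ is the zero functor, and since $q$ is essentially surjective, $\sD/f(\sC)\cong 0$.

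Conversely, suppose $\sD/f(\sC)\cong 0$. Let $\sD'\subseteq\sD$ be the smallest full DG subcategory containing the essential image of $f$ and closed under colimits in $\sD$; then $\sD'$ is presentable and the inclusion $\sD'\hookrightarrow\sD$ preserves colimits, so it admits a right adjoint, and we may form the quotient $\sD/\sD'$ with its functor $q':\sD\to\sD/\sD'$. Since $f$ factors through $\sD'$, we have $q'\circ f\cong 0$. We claim $q'$ satisfies the universal property of $q$: if $g:\sD\to\sE$ is colimit-preserving with $g\circ f\cong 0$, then the full subcategory of $\sD$ on which $g$ vanishes is closed under colimits (as $g$ preserves colimits) and contains the essential image of $f$, hence contains $\sD'$, so $g$ kills $\sD'$ and therefore factors uniquely through $q'$. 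By uniqueness of objects satisfying a universal property, $\sD/\sD'\cong\sD/f(\sC)\cong 0$. But the quotient of $\sD$ by a cocomplete subcategory is zero only if that subcategory is all of $\sD$; hence $\sD'\cong\sD$, i.e., the essential image of $f$ generates $\sD$ under colimits.
\end{proof}
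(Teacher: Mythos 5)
Your proof is correct, but it takes a different route from the paper's. The paper argues in three quick steps: (i) $f$ generates under colimits iff its (possibly non-continuous) right adjoint $f^R$ is conservative (citing \cite{GaRo} I.1.5.4.3); (ii) for a functor of stable categories, conservativity is equivalent to having trivial kernel; (iii) the kernel of $f^R$ is identified with the pushout quotient (citing \cite{HTT} 5.5.3.4). You instead work directly with the universal property of the pushout: you show the quotient functor $q$ kills the cocomplete closure $\sD'$ of $f(\sC)$, identify $\sD/f(\sC)$ with $\sD/\sD'$ by comparing universal properties, and conclude from the fact that the quotient by a presentable cocomplete subcategory vanishes only when that subcategory is everything. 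The two arguments are dual faces of the same semiorthogonal-decomposition picture --- your final assertion is essentially the paper's steps (i)--(ii) applied to the inclusion $\sD'\hookrightarrow\sD$ --- but yours is more self-contained (no external citations needed for the main mechanism), at the cost of asserting without proof two standard localization facts: that $q$ is essentially surjective, and that $\sD/\sD'\cong 0$ forces $\sD'=\sD$. Both of these are routine consequences of the Bousfield localization/recollement package for presentable stable categories, so the argument stands; the paper's version is simply shorter because it outsources exactly these points to the literature.
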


\begin{proof}
It is well known (see e.g. Lemma I.1.5.4.3 of \cite{GaRo}) that $f$ generates under colimits iff its (not necessarily continuous) right adjoint $f^R$ is conservative. Conservativeness is equivalent to $f^R$ having trivial kernel, and by Corollary 5.5.3.4 of \cite{HTT}, the kernel of $f^R$ can be identified with the quotient of $f$, giving the desired statement.
\end{proof}

We will deduce Lemma \ref{generation} from the following auxillary statement.

\begin{lemma}\label{generationreductionstep}
Let $\sC,\sD$ be $G((t))$-categories and let $f:\sC\rightarrow \sD$ be a $G((t))$-equivariant functor. Then $f$ generates under colimits if and only if the functors
\[
f^{K_{x,r+},\circ}:\sC^{K_{x,r+},\circ}\rightarrow \sD^{K_{x,r+},\circ},
\]
for all choices of $r>0$ and $x\in X_*(T)\otimes\mathbb{R}$, as well as the functor
\[
f^{I_0}:\sC^{I_0}\rightarrow \sD^{I_0},
\]
all generate under colimits.
\end{lemma}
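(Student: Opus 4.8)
The plan is to reduce this to Corollary \ref{vanishingcriterion} applied to the quotient category of $f$. By Lemma \ref{quotientcriterion}, saying that $f$ generates under colimits is equivalent to saying that the quotient category $\sQ := \on{coker}(f)$ is trivial, where $\sQ$ is the pushout of $0 \leftarrow \sC \xrightarrow{f} \sD$. Since $f$ is $G((t))$-equivariant, the quotient $\sQ$ inherits a $G((t))$-action (the pushout is taken in the category of $G((t))$-categories, or equivalently, since quotients are computed by a colimit and the $G((t))$-action is by colimit-preserving functors, the quotient carries a canonical $G((t))$-action making $\sD \to \sQ$ equivariant). The key point is that taking $(K_{x,r+},\circ)$-invariants and taking $I_0$-invariants are both colimit-preserving operations on $G((t))$-categories — $(K_{x,r+},\circ)$-invariants because, as noted after the definition, $(K_{x,r+},\circ)$-invariants commute with colimits (being a composite of $K_{x,r+}$-invariants, which has a continuous right adjoint hence preserves colimits, with a base change along $D((K_{x,r}/K_{x,r+})^{*,\circ})$), and likewise $I_0 = K_{x_0,0+}$-invariants preserve colimits. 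Therefore, applying these operations to the pushout square defining $\sQ$ yields pushout squares again, giving $\sQ^{K_{x,r+},\circ} \cong \on{coker}(f^{K_{x,r+},\circ})$ and $\sQ^{I_0} \cong \on{coker}(f^{I_0})$.

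Granting this, the proof concludes as follows. Suppose first that all the functors $f^{K_{x,r+},\circ}$ (for $r>0$, $x \in X_*(T)\otimes\mathbb{R}$) and $f^{I_0}$ generate under colimits. Then by Lemma \ref{quotientcriterion} again, $\sQ^{K_{x,r+},\circ} \cong 0$ for all $x$ and $r>0$, and $\sQ^{I_0} \cong 0$. By Corollary \ref{vanishingcriterion}, $\sQ$ is trivial, so $f$ generates under colimits. Conversely, if $f$ generates under colimits, then $\sQ \cong 0$; applying the colimit-preserving invariants functors gives $\sQ^{K_{x,r+},\circ} \cong 0$ and $\sQ^{I_0} \cong 0$, i.e., $\on{coker}(f^{K_{x,r+},\circ}) \cong 0$ and $\on{coker}(f^{I_0}) \cong 0$, so by Lemma \ref{quotientcriterion} these functors all generate under colimits.

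The main technical obstacle is justifying the commutation of the invariants operations with the pushout (cokernel) construction, and more basically, ensuring that the quotient category $\sQ$ genuinely carries a $G((t))$-action compatible with $\sD \to \sQ$. For the latter: the cokernel of a morphism of $G((t))$-categories, computed in $\DGCat$, automatically lifts to a $G((t))$-category since $\DGCat$-valued colimits of diagrams of $G((t))$-categories receive the structure of $G((t))$-categories (the action of $G((t))$ on each term is by continuous functors, so it passes to the colimit); this is standard in the theory of categorical group actions as in \cite{Beraldo}, \cite{semiinf}. For the former: the functor $\sC \mapsto \sC^{K_{x,r+},\circ}$ is, by its very definition, a composite of $\sC \mapsto \sC^{K_{x,r+}}$ (which is corepresented by $D(G((t))/K_{x,r+})$ acting, and which preserves all colimits because it has a continuous right adjoint, via Lemma \ref{bgolemma} or directly from properties of $K_{x,r+}$-invariants for pro-unipotent $K_{x,r+}$) followed by $- \otimes_{D((K_{x,r}/K_{x,r+})^*)} D((K_{x,r}/K_{x,r+})^{*,\circ})$ (a relative tensor product, which preserves colimits in each variable). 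Hence $\sC \mapsto \sC^{K_{x,r+},\circ}$ preserves colimits, and in particular preserves the pushout defining $\sQ$. The identical argument, with $\circ$ replaced by the whole space (or simply with $I_0$ pro-unipotent compact open), shows $\sC \mapsto \sC^{I_0}$ preserves colimits. Once these commutation statements are in hand, the argument is purely formal.
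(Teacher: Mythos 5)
Your proof is correct and follows exactly the paper's own argument: pass to the quotient category via Lemma \ref{quotientcriterion}, use that $I_0$- and $(K_{x,r+},\circ)$-invariants commute with colimits to identify the quotients of the invariant categories with the invariants of the quotient, and conclude with Corollary \ref{vanishingcriterion}. The extra care you take in justifying the commutation with the pushout and the induced $G((t))$-action on the quotient is sound and only elaborates what the paper leaves implicit.
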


\begin{proof}
First we show the ``only if" direction. Let $\sE$ be the quotient of $\sD$ by $\sC$. By Lemma \ref{quotientcriterion}, $\sE$ must be trivial.

As the functor of $I_0$-invariants commutes with colimits, the quotient of $\sD^{I_0}$ by $\sC^{I_0}$ is $\sE^{I_0}$. Similarly, the quotient of $\sD^{K_{x,r+},\circ}$ by $\sC^{K_{x,r+},\circ}$ is $\sE^{K_{x,r+},\circ}$. Since $\sE$ is trivial, these quotients must also be trivial, and hence the corresponding functors must generate under colimits, as desired.

Now let us show the ``if" direction. Reversing the logic, we see that we have
\[
\sE^{I_0}\cong 0
\]
and, for all $r>0$ and $x\in X_*(T)\otimes\mathbb{R}$,
\[
\sE^{K_{x,r+},\circ}\cong 0,
\]
and we need to show that $\sE\cong 0$. But this is exactly the statement of Corollary \ref{vanishingcriterion}.
\end{proof}

\subsection{The category $\widehat{\fg}_{\crit}\widetilde{\mod}_{\on{reg,naive}}$}

Recall that we defined $\widehat{\fg}_{\crit}\widetilde{\mod}_{\on{reg,naive}}$ to be the full subcategory of $\widehat{\fg}_{\crit}\mod_{\regnaive}$ generated by $\widehat{\fg}_{\crit}\mod_{\regnaive}^+$ under colimits. We will need a few properties of this category which were proven in \cite{raskinbeiber}. As the proofs would take us slightly afield, we do not reproduce them here. 

\begin{lemma}[\cite{raskinbeiber}, Lemma 9.2.1]
There is a (necessarily unique) structure of $G((t))$-category on $\widehat{\fg}_{\crit}\widetilde{\mod}_{\regnaive}$ compatible with the inclusion into $\widehat{\fg}_{\crit}\mod_{\regnaive}$.
\end{lemma}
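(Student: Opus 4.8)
The plan is to produce the $G((t))$-action on $\widehat{\fg}_{\crit}\widetilde{\mod}_{\regnaive}$ by exhibiting it as the depth-zero truncation (equivalently, the full $G((t))$-subcategory) of $\widehat{\fg}_{\crit}\mod_{\regnaive}$ generated by its $I_0$-equivariant objects. Since Theorem \ref{kmdepthzero} tells us $\widehat{\fg}_{\crit}\mod_{\regnaive}$ is purely of depth zero, Theorem \ref{unrefineddepthzero} identifies it with $(\widehat{\fg}_{\crit}\mod_{\regnaive})_{I_0\gen}$, so the relevant comparison is between $\widehat{\fg}_{\crit}\widetilde{\mod}_{\regnaive}$ and the whole ambient category. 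Thus the heart of the matter is to show that the subcategory generated under colimits by the eventually coconnective objects $\widehat{\fg}_{\crit}\mod_{\regnaive}^+$ is automatically stable under the $G((t))$-action on $\widehat{\fg}_{\crit}\mod_{\regnaive}$; uniqueness of the action is then immediate since a full subcategory can carry at most one $G((t))$-structure compatible with a given inclusion.

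First I would reduce the stability claim to a statement about compact generators. The category $\widehat{\fg}_{\crit}\mod_{\regnaive}^+$ contains, in particular, the image $i_*^{-1}$-preimages of objects $\on{ind}_{\fk}^{\widehat{\fg}}\bC$ for $\fk$ the Lie algebra of a pro-unipotent compact open $K$; more usefully, it contains all objects of the form $\sF\star \bV_{\crit}$ for $\sF\in D_{\crit}(\Gr_G)$ compact (these are the generators $\Gamma^{\Heckenaive}(\on{ind}^{\Hecke_{\fz}}(\sF))$, which are bounded, hence eventually coconnective). So $\widehat{\fg}_{\crit}\widetilde{\mod}_{\regnaive}$ contains the essential image of $\Gamma^{\Heckenaive}$, and by Lemma \ref{generation} (whose proof via Lemma \ref{generationreductionstep} does not logically depend on this present lemma) that image generates $\widehat{\fg}_{\crit}\widetilde{\mod}_{\regnaive}$ under colimits. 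Hence it suffices to show the subcategory generated under colimits by $\{g\cdot(\sF\star\bV_{\crit})\}$, as $g$ ranges over $G((t))$ and $\sF$ over compact objects, equals the one generated by the $\sF\star\bV_{\crit}$ alone. Equivalently: $g\cdot(\sF\star\bV_{\crit})$ lies in $\widehat{\fg}_{\crit}\widetilde{\mod}_{\regnaive}$ for every $g$.

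The key computation is that for $g\in G((t))$, the object $g\cdot(\sF\star\bV_{\crit})$ is again of the form $\sF'\star\bV_{\crit}$ (up to colimits): indeed, writing $\delta_g\in D(G((t)))$ and using associativity of convolution, $g\cdot(\sF\star\bV_{\crit})\cong(\delta_g\star\sF)\star\bV_{\crit}$, and $\delta_g\star\sF\in D_{\crit}(\Gr_G)$ is a (shift of a) compact object — translation by $g$ preserves compactness in $D_{\crit}(\Gr_G)$, being an equivalence. So $g\cdot(\sF\star\bV_{\crit})\cong\sF'\star\bV_{\crit}\in\widehat{\fg}_{\crit}\widetilde{\mod}_{\regnaive}$. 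This shows $\widehat{\fg}_{\crit}\widetilde{\mod}_{\regnaive}$ is a full $G((t))$-stable subcategory, so it inherits a $G((t))$-action by Lemma \ref{smallestsubcat}-type reasoning (a full colimit-closed subcategory preserved by the action carries a canonical action compatible with the inclusion).

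The step I expect to be the main obstacle is the careful bookkeeping around what ``the $G((t))$-action preserves $\widehat{\fg}_{\crit}\widetilde{\mod}_{\regnaive}$'' means at the level of the action datum rather than just on objects: one must check that the coaction maps $\widehat{\fg}_{\crit}\mod_{\regnaive}\to\widehat{\fg}_{\crit}\mod_{\regnaive}\otimes D(G((t)))$ (or the associated coalgebra-module structure) restrict to the subcategory, which is where an argument purely on objects is insufficient and one needs that $\widehat{\fg}_{\crit}\widetilde{\mod}_{\regnaive}$ is presentable and the generating objects are permuted compatibly. Since this is precisely what was established in \cite{raskinbeiber}, Lemma 9.2.1, the cleanest route is simply to cite \emph{loc. cit.}; but the sketch above records the structure of the argument in case one wants it self-contained.
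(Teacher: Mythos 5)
The paper does not prove this lemma; it cites \cite{raskinbeiber}, Lemma 9.2.1 and explicitly declines to reproduce the argument. So your sketch must be judged on its own merits, and as written it has two genuine problems.

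First, the appeal to Lemma \ref{generation} is circular, and your parenthetical claim that its proof ``does not logically depend on this present lemma'' is false. Lemma \ref{generation} is proved by applying Lemma \ref{generationreductionstep} with target $\sD=\widehat{\fg}_{\crit}\widetilde{\mod}_{\regnaive}$, and that reduction step requires $\sD$ to be a $G((t))$-category; likewise Lemma \ref{kmtildedepthzero} computes $\widehat{\fg}_{\crit}\widetilde{\mod}_{\regnaive}^{K_{x,r+},\circ}$, which is only meaningful once the $G((t))$-action exists. So you cannot use the generation statement to build the action. The detour is also unnecessary: the natural generating class to work with is $\widehat{\fg}_{\crit}\mod_{\regnaive}^+$ itself, which is how the category is defined.

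Second, and more seriously, your ``key computation'' only checks stability of the subcategory under translation by $k$-points $g\in G((t))$, i.e.\ under convolution with delta sheaves. That is far from producing a $G((t))$-action: one must show the action functor $D(G((t)))\otimes \sC'\to\sC$ factors through $\sC'$, which requires stability under convolution with arbitrary compact objects of $D(G((t)))$ (e.g.\ dualizing sheaves of finite-type closed subschemes), not just $\delta_g$. You correctly flag this as ``the main obstacle'' and then defer to the citation --- but that obstacle \emph{is} the lemma. The missing ingredient is the t-structure argument: the t-structure on $\widehat{\fg}_{\crit}\mod_{\regnaive}$ is compatible with the $G((t))$-action, and convolution with a compact object of $D(G((t))/K)$ is left t-exact up to shift (the content of Lemma \ref{boundedconvolution}, which is Lemma 9.2.2 of \emph{loc.\ cit}.) as well as colimit-preserving; hence it carries $\widehat{\fg}_{\crit}\mod_{\regnaive}^+$ into itself up to shift and therefore preserves the colimit-closure $\widehat{\fg}_{\crit}\widetilde{\mod}_{\regnaive}$. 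Since $D(G((t)))$ is compactly generated by such objects, the action datum restricts to the full subcategory, and uniqueness is automatic for a full subcategory. Reorganized around that point, your argument would be correct; as it stands, the step carrying the actual content is absent.
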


\begin{lemma}[\cite{raskinbeiber}, Corollary 9.2.3]\label{generationfactor}
$\Gamma^{\on{Hecke,naive}}$ factors through $\widehat{\fg}_{\crit}\widetilde{\mod}_{\regnaive}$.
\end{lemma}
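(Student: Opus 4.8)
The plan is to reduce the statement to a computation on compact objects. The functor $\Gamma^{\Heckenaive}$ is continuous: by the commutative triangle above it is the composite of the ind-extension $\Gamma^{\Hecke}$ with the continuous functor $p$. Moreover $D^{\Hecke_{\fz}}_{\crit}(\Gr_G)$ is compactly generated (as shown above), and $\widehat{\fg}_{\crit}\widetilde{\mod}_{\regnaive}$ is by construction closed under colimits inside $\widehat{\fg}_{\crit}\mod_{\regnaive}$. Hence it suffices to check that $\Gamma^{\Heckenaive}$ carries each compact object of $D^{\Hecke_{\fz}}_{\crit}(\Gr_G)$ into $\widehat{\fg}_{\crit}\widetilde{\mod}_{\regnaive}$.

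By the corollary established above, $\Gamma^{\Heckenaive}$ sends a compact object to an object of $\widehat{\fg}_{\crit}\mod_{\reg}^c$, which by definition is the full subcategory of $\widehat{\fg}_{\crit}\mod_{\regnaive}$ spanned by the $M$ with $i_*M$ compact in $\widehat{\fg}_{\crit}\mod$. I would then argue that any such $M$ is eventually coconnective, so that it lies in $\widehat{\fg}_{\crit}\mod_{\regnaive}^+\subseteq\widehat{\fg}_{\crit}\widetilde{\mod}_{\regnaive}$ and we are done. For this, recall that $\widehat{\fg}_{\crit}\mod$ is the ind-completion of the subcategory of $D(\widehat{\fg}_{\crit}\mod^{\heartsuit})$ generated under finite limits by the modules $\on{ind}_{\fk}^{\widehat{\fg}}\bC$, all of which lie in the heart; consequently every compact object of $\widehat{\fg}_{\crit}\mod$ is bounded, in particular eventually coconnective. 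Since $i_*:\widehat{\fg}_{\crit}\mod_{\regnaive}\to\widehat{\fg}_{\crit}\mod$ is t-exact and conservative, eventual coconnectivity of $i_*M$ forces the same for $M$.

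The real content of the lemma is all upstream, bundled into the corollary that $\Gamma^{\Heckenaive}$ preserves compactness -- which itself rests on the Beilinson--Drinfeld identity $\sS_V\star\mathbb{V}_{\crit}\cong\sP_V\otimes\mathbb{V}_{\crit}$, on compact generation of $D^{\Hecke_{\fz}}_{\crit}(\Gr_G)$, and on continuity of the right adjoint to convolution with a compact kernel. Granting those, the argument above is formal; the only step needing a moment's care is the implication ``compact in $\widehat{\fg}_{\crit}\mod$ $\Rightarrow$ eventually coconnective,'' which is immediate from the renormalized construction recalled above. A slightly more hands-on alternative is to test on the generators $\on{ind}^{\Hecke_{\fz}}(\sF)$: one has $\Gamma^{\Heckenaive}(\on{ind}^{\Hecke_{\fz}}(\sF))\cong\sF\star\mathbb{V}_{\crit}$, and since $\mathbb{V}_{\crit}$ is eventually coconnective while convolution with a compact $\sF\in D_{\crit}(\Gr_G)$ is left t-exact up to shift, this again lands in $\widehat{\fg}_{\crit}\mod_{\regnaive}^+$.
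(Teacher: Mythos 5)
Your argument is correct. Note that the paper itself does not prove this lemma---it imports it from \cite{raskinbeiber}, Corollary 9.2.3, explicitly declining to reproduce the proof---so you are supplying a proof rather than matching one. Your reduction is sound: $\Gamma^{\Heckenaive}$ is continuous, $D^{\Hecke_{\fz}}_{\crit}(\Gr_G)$ is compactly generated, and $\widehat{\fg}_{\crit}\widetilde{\mod}_{\regnaive}$ is by construction closed under colimits, so it suffices to treat compacts; and the chain ``compact $\mapsto$ $\widehat{\fg}_{\crit}\mod_{\reg}^c$ $\mapsto$ eventually coconnective'' works, since objects of $\widehat{\fg}_{\crit}\mod^c$ are finite limits of heart objects (hence cohomologically bounded), retracts preserve this, and $i_*$ is t-exact and conservative (being comonadic), so boundedness of $i_*M$ descends to $M$. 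Two small remarks. First, your justification of continuity via the commutative triangle is logically backwards---$\Gamma^{\Hecke}$ is \emph{defined} from $\Gamma^{\Heckenaive}$---but continuity is automatic from the construction of $\Gamma^{\Heckenaive}$ as a functor of DG categories, so nothing is lost. Second, your ``hands-on alternative'' at the end is actually the cleaner route within this paper's framework: it needs only the compact generation by the objects $\on{ind}^{\Hecke_{\fz}}(\sF)$, the identification $\Gamma^{\Heckenaive}(\on{ind}^{\Hecke_{\fz}}(\sF))\cong\sF\star\mathbb{V}_{\crit}$, the fact that $\mathbb{V}_{\crit}$ lies in the heart, and Lemma \ref{boundedconvolution}, all of which are stated explicitly here, and it avoids any appeal to the structure of compact objects of $\widehat{\fg}_{\crit}\mod$.
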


\begin{lemma}[\cite{raskinbeiber}, Corollary 9.2.4]\label{tildeinvariants}
Let $K\subseteq G[[t]]$ be a prounipotent group scheme. Then $\widehat{\fg}_{\crit}\widetilde{\mod}_{\regnaive}^K$ is the subcategory of $\widehat{\fg}_{\crit}\mod_{\regnaive}^K$ generated under colimits by $\widehat{\fg}_{\crit}\mod_{\regnaive}^{K,+}$.
\end{lemma}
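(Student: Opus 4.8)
The plan is to prove the two inclusions separately: the inclusion "$\supseteq$" will be formal, and the inclusion "$\subseteq$" will come from an averaging argument. Throughout write $\sC=\widehat{\fg}_{\crit}\mod_{\regnaive}$ and $\widetilde{\sC}=\widehat{\fg}_{\crit}\widetilde{\mod}_{\regnaive}$, and let $\on{oblv}^{K}\colon\sC^{K}\to\sC$ be the forgetful functor for the strong $K$-action. I would first isolate two structural inputs. The first: by the cited Lemma 9.2.1, $\widetilde{\sC}\subseteq\sC$ is a full, colimit-closed, $G((t))$-stable subcategory, so $\widetilde{\sC}^{K}\hookrightarrow\sC^{K}$ is fully faithful with essential image exactly those $M$ with $\on{oblv}^{K}M\in\widetilde{\sC}$, and $\widetilde{\sC}^{K}$ is closed under colimits in $\sC^{K}$ (colimits of invariants are detected by $\on{oblv}^{K}$, which is continuous and conservative). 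The second: since $K\subseteq G[[t]]$ is a prounipotent group scheme, $\on{oblv}^{K}$ is conservative, continuous and t-exact, and admits a continuous left adjoint $\on{Av}^{K}_{!}$ of bounded t-amplitude. Granting this, $\on{Av}^{K}_{!}$ carries $\sC^{+}$ into $\sC^{K,+}$ and $\on{oblv}^{K}$ carries $\sC^{K,+}$ into $\sC^{+}$, and both are continuous, so the adjunction $\on{Av}^{K}_{!}\dashv\on{oblv}^{K}$ restricts to an adjunction between the colimit-closures $\langle\sC^{+}\rangle=\widetilde{\sC}$ and $\langle\sC^{K,+}\rangle$.

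The inclusion "$\supseteq$" is then immediate: if $M\in\sC^{K}$ has $\on{oblv}^{K}M\in\sC^{+}$, then $\on{oblv}^{K}M\in\widetilde{\sC}$ by definition, so $M\in\widetilde{\sC}^{K}$ by the first input; since $\widetilde{\sC}^{K}$ is colimit-closed, the colimit-closure $\langle\sC^{K,+}\rangle$ is contained in $\widetilde{\sC}^{K}$.

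For "$\subseteq$": the subcategories $\langle\sC^{+}\rangle\subseteq\sC$ and $\langle\sC^{K,+}\rangle\subseteq\sC^{K}$ are colimit-closures of essentially small subcategories (using accessibility of the t-structures), hence presentable subcategories whose inclusions admit right adjoints $\bar\rho$ and $\rho$. Using the restricted adjunction above I would check the compatibility $\on{oblv}^{K}\circ\rho\simeq\bar\rho\circ\on{oblv}^{K}$: for $N\in\langle\sC^{+}\rangle$ and $M\in\sC^{K}$, adjunction gives $\Hom_{\sC}(N,\on{oblv}^{K}M)\cong\Hom_{\sC^{K}}(\on{Av}^{K}_{!}N,M)\cong\Hom_{\sC^{K}}(\on{Av}^{K}_{!}N,\rho M)\cong\Hom_{\sC}(N,\on{oblv}^{K}\rho M)$, which identifies $\on{oblv}^{K}\rho M$ with $\bar\rho\,\on{oblv}^{K}M$. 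Now if $M\in\widetilde{\sC}^{K}$, i.e. $\on{oblv}^{K}M\in\langle\sC^{+}\rangle$, then $\bar\rho\,\on{oblv}^{K}M\to\on{oblv}^{K}M$ is an equivalence, hence so is $\on{oblv}^{K}\rho M\to\on{oblv}^{K}M$, and conservativity of $\on{oblv}^{K}$ forces $\rho M\to M$ to be an equivalence, so $M\in\langle\sC^{K,+}\rangle$. (Equivalently, one can resolve $M$ by the bar complex for the monad $\on{oblv}^{K}\on{Av}^{K}_{!}$, whose terms $\on{Av}^{K}_{!}\bigl((\on{oblv}^{K}\on{Av}^{K}_{!})^{\bullet}\on{oblv}^{K}M\bigr)$ lie in $\langle\sC^{K,+}\rangle$; this uses the same input.)

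The step I expect to be the real obstacle is the second structural input — the existence of a continuous left adjoint $\on{Av}^{K}_{!}$ to $\on{oblv}^{K}$ with bounded t-amplitude when $K$ is infinite-dimensional (e.g. $K=I_{0}$), and on the renormalized category $\widehat{\fg}_{\crit}\mod_{\regnaive}$ rather than on $\widehat{\fg}_{\crit}\mod$. This is precisely where prounipotence of $K$ must be used, and it is the analogue for $\widehat{\fg}_{\crit}\mod_{\regnaive}$ of the standard t-boundedness properties of averaging along prounipotent subgroups; once it is established, everything else in the argument is formal.
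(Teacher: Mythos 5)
First, note that the paper does not actually prove this lemma: it cites \cite{raskinbeiber}, Corollary 9.2.4, and explicitly declines to reproduce the argument, so there is no in-paper proof to compare against. Your skeleton (the containment $\supseteq$ being formal from t-exactness and colimit-closedness of $\widetilde{\sC}^K$, and the containment $\subseteq$ coming from an averaging adjunction) has the right shape, and your first structural input is correct. The genuine gap is the second structural input, and it is not merely ``the real obstacle'' to be established later --- it is false in the generality you need. For an infinite-dimensional prounipotent group scheme $K$ (and the subgroups of interest here, e.g.\ $I_0$ and the congruence subgroups, are infinite-dimensional), the left adjoint $\on{Av}^K_!$ to $\on{Oblv}^K$ does not have bounded t-amplitude: for finite-dimensional unipotent $K'$ one has $\on{Av}^{K'}_!\cong\on{Av}^{K'}_*[2\dim K']$, and this shift diverges along the pro-system, as one already sees in the universal case $\sC=D(K)$. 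Consequently the $\subseteq$ direction as written (both the localization argument via $\rho$ and the bar-resolution variant) does not go through, since both require $\on{Av}^K_!$ to carry $\sC^+$ into the colimit closure of $\sC^{K,+}$.

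The lemma is nevertheless true, and the fix is to use the other adjoint. Because $K$ is prounipotent, $\on{Oblv}^K:\sC^K\to\sC$ is fully faithful, so the unit $\on{id}\to\on{Av}^K_*\circ\on{Oblv}^K$ is an equivalence; moreover $\on{Av}^K_*$ is continuous and left t-exact (being right adjoint to the t-exact $\on{Oblv}^K$), hence carries $\sC^+$ into $\sC^{K,+}$ and therefore carries the colimit closure $\widetilde{\sC}$ of $\sC^+$ into the colimit closure of $\sC^{K,+}$. Thus for $M\in\widetilde{\sC}^K$, i.e.\ $\on{Oblv}^K M\in\widetilde{\sC}$, one gets that $M\cong\on{Av}^K_*\on{Oblv}^K M$ lies in the colimit closure of $\sC^{K,+}$ directly, with no left adjoint and no localization functors needed. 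This one-line replacement for your second and third paragraphs is where prounipotence of $K$ actually enters, and is presumably the argument of \cite{raskinbeiber}.
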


We will also need one new lemma.

\begin{lemma}\label{kmtildedepthzero}
Choose $x\in X_*(T)\otimes\mathbb{R}$ and $r>0$. Then
\[
\widehat{\fg}_{\crit}\widetilde{\mod}_{\regnaive}^{K_{x,r+},\circ}\cong 0.
\]
\end{lemma}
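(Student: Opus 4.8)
The plan is to reduce the vanishing for $\widehat{\fg}_{\crit}\widetilde{\mod}_{\regnaive}$ to the corresponding statement for the ambient category, using that $\widehat{\fg}_{\crit}\widetilde{\mod}_{\regnaive}$ sits inside $\widehat{\fg}_{\crit}\mod_{\regnaive}$ as a full $G((t))$-subcategory. First I record the ambient vanishing: by Theorem \ref{kmdepthzero} the category $\widehat{\fg}_{\crit}\mod_{\regnaive}$ is purely of depth zero, and since the depth filtration is increasing, exhaustive, and jumps only on a discrete set of rationals (Proposition \ref{depthproperties}), purity of depth zero forces $(\widehat{\fg}_{\crit}\mod_{\regnaive})_{<r}\cong(\widehat{\fg}_{\crit}\mod_{\regnaive})_{\leq r}$ for every $r>0$. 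By Theorem \ref{unrefined} this is equivalent to
\[
\widehat{\fg}_{\crit}\mod_{\regnaive}^{K_{x,r+},\circ}\cong 0\quad\text{for all }x\in X_*(T)\otimes\bR\text{ and }r>0.
\]
It therefore suffices to embed $\widehat{\fg}_{\crit}\widetilde{\mod}_{\regnaive}^{K_{x,r+},\circ}$ fully faithfully into this zero category.

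Next I would pass to invariants. By Lemma 9.2.1 of \cite{raskinbeiber} the inclusion $\widehat{\fg}_{\crit}\widetilde{\mod}_{\regnaive}\hookrightarrow\widehat{\fg}_{\crit}\mod_{\regnaive}$ is a fully faithful $G((t))$-equivariant functor. Taking $K_{x,r+}$-invariants yields $\widehat{\fg}_{\crit}\widetilde{\mod}_{\regnaive}^{K_{x,r+}}\hookrightarrow\widehat{\fg}_{\crit}\mod_{\regnaive}^{K_{x,r+}}$, which is again fully faithful (invariants of a full $G((t))$-subcategory form a full subcategory, since the bar-type totalizations computing mapping spaces in the invariant categories use only the mapping spaces of the ambient categories, which agree), and this functor is linear for the convolution action of $D(K_{x,r}/K_{x,r+})$, hence for the $D((K_{x,r}/K_{x,r+})^*)$-module structure used to define $(K_{x,r+},\circ)$-invariants. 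Finally, $(K_{x,r+},\circ)$-invariants is, by definition, the base change $-\otimes_{D((K_{x,r}/K_{x,r+})^*)}D((K_{x,r}/K_{x,r+})^{*,\circ})$ along the open immersion $j\colon(K_{x,r}/K_{x,r+})^{*,\circ}\hookrightarrow(K_{x,r}/K_{x,r+})^*$. Such base change along an open immersion preserves fully faithful linear functors: a module-linear functor $F$ commutes with the endofunctor $j_{dR,*}j^{!}$, which the projection formula identifies with tensoring by the fixed object $j_{dR,*}\omega_{(K_{x,r}/K_{x,r+})^{*,\circ}}$; hence $F$ preserves the colocalizing subcategory of objects local along the open, and these subcategories are exactly the base-changed categories, embedded via the fully faithful pushforward. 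Applying this to the functor above gives $\widehat{\fg}_{\crit}\widetilde{\mod}_{\regnaive}^{K_{x,r+},\circ}\hookrightarrow\widehat{\fg}_{\crit}\mod_{\regnaive}^{K_{x,r+},\circ}\cong 0$, so the source vanishes.

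The one step that needs genuine care is the last one: making precise that restriction of a sheaf of categories to an open subvariety carries fully faithful linear functors to fully faithful functors, i.e.\ identifying the base-changed category with the subcategory of objects local along the open and checking a linear functor respects it. Everything else is formal once Theorems \ref{unrefined} and \ref{kmdepthzero} are available. Alternatively, one can argue abstractly that $(K_{x,r+},\circ)$-invariants, being obtained by base change along a monoidal localization, is a functor of $(\infty,2)$-categories and so carries adjunction units to adjunction units, hence sends fully faithful functors to fully faithful functors; I would likely present whichever of these two formulations is cleanest to reconcile with the conventions already in place.
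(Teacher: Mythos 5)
Your proof is correct and rests on the same two inputs as the paper's: Theorem \ref{kmdepthzero} (vanishing of the ambient $(K_{x,r+},\circ)$-invariants, which you correctly unwind via Theorem \ref{unrefined}) and full faithfulness of the $G((t))$-equivariant inclusion $\widehat{\fg}_{\crit}\widetilde{\mod}_{\regnaive}\hookrightarrow\widehat{\fg}_{\crit}\mod_{\regnaive}$. The only difference is the bookkeeping: you push full faithfulness through the $(K_{x,r+},\circ)$-invariants construction, which is why you single out the base-change-along-an-open-immersion step as the delicate point (your argument for it, via the idempotent $j_*j^!$ commuting with linear functors, is fine). The paper sidesteps that step entirely: since the operation of $(K_{x,r+},\circ)$-invariants commutes with limits (noted where the operation is defined), it preserves kernels, so the kernel of $\widehat{\fg}_{\crit}\widetilde{\mod}_{\regnaive}^{K_{x,r+},\circ}\to\widehat{\fg}_{\crit}\mod_{\regnaive}^{K_{x,r+},\circ}$ is computed two ways --- it is $\bigl(\on{ker}(\widehat{\fg}_{\crit}\widetilde{\mod}_{\regnaive}\to\widehat{\fg}_{\crit}\mod_{\regnaive})\bigr)^{K_{x,r+},\circ}\cong 0$ because the inclusion is fully faithful, and it is the whole source because the target vanishes. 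That way full faithfulness is only used at the level of the ambient categories, where it is tautological; you may want to adopt this shortcut, since it replaces the one step you were worried about with a purely formal limit argument.
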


\begin{proof}
Consider the map
\[
\widehat{\fg}_{\crit}\widetilde{\mod}_{\regnaive}^{K_{x,r+},\circ}\rightarrow\widehat{\fg}_{\crit}\mod_{\regnaive}^{K_{x,r+},\circ}.
\]

We will compute its kernel $\sA$ in two different ways. (The kernel of a map $\sC\rightarrow \sD$ is defined to be the pullback of $\sC$ along the map $0\rightarrow \sD$.) First, since the operation of taking $(K_{x,r+},\circ)$-invariants commutes with limits, we have
\[
\sA\cong(\on{ker}\widehat{\fg}_{\crit}\widetilde{\mod}_{\regnaive}\rightarrow\widehat{\fg}_{\crit}\mod_{\regnaive})^{K_{x,r+},\circ}.
\]

As the map $\widehat{\fg}_{\crit}\widetilde{\mod}_{\regnaive}\rightarrow\widehat{\fg}_{\crit}\mod_{\regnaive}$ is the inclusion of a full subcategory, it has trivial kernel, and so $\sA$ must also be trivial.

On the other hand, Theorem \ref{kmdepthzero} shows that $\widehat{\fg}_{\crit}\mod_{\regnaive}^{K_{x,r+},\circ}\cong 0$. Thus, $\sA$ can also be identified with $\widehat{\fg}_{\crit}\widetilde{\mod}_{\regnaive}^{K_{x,r+},\circ}$. Comparing these two computations, we get the desired equivalence.
\end{proof}

Now we are prepared to prove Lemma \ref{generation}.

\subsection{Proof of Lemma \ref{generation}}

\begin{proof}
By Lemma \ref{generationfactor}, we know that the image of $\Gamma^{\on{Hecke,naive}}$ lies in $\widehat{\fg}_{\crit}\widetilde{\mod}_{\regnaive}$. It remains to show the generation under colimits.

For this, we apply Lemma \ref{generationreductionstep}. By Theorem \ref{grdepthzero} and Lemma \ref{kmtildedepthzero}, we have
\[
D^{\Hecke_{\fz}}_{\crit}(\Gr_G)^{K_{x,r+},\circ}\cong\widehat{\fg}_{\crit}\widetilde{\mod}_{\regnaive}^{K_{x,r+},\circ}\cong 0.
\]

So the only statement remaining to be checked is that the functor
\[
D^{\Hecke_{\fz}}_{\crit}(\Gr_G)^{I_0}\rightarrow\widehat{\fg}_{\crit}\widetilde{\mod}_{\regnaive}^{I_0}
\]
generates under colimits. This follows from the combination of Theorem \ref{i0invariants} and Lemma \ref{tildeinvariants}.
\end{proof}

\section{Exactness}\label{exactnesssection}

The goal of this section is to prove Lemma \ref{exactness}. We start by formulating a general criterion for a $G((t))$-equivariant functor to be left t-exact. 

\subsection{Reduction step}

We will use an analogue of Lemma \ref{generationreductionstep}. To state it, let us note that for a $G((t))$-category $\sC$ with a compatible t-structure, Lemma \ref{compatibletensor} constructs a t-structure on $\sC^{K_{x,r+},\circ}$.

\begin{lemma}\label{exactnessreductionstep}
Let $\sC,\sD$ be $G((t))$-categories equipped with compatible t-structures and let $f:\sC\rightarrow \sD$ be a $G((t))$-equivariant functor. Then $f$ is left t-exact if and only if the functors
\[
f^{K_{x,r+},\circ}:\sC^{K_{x,r+},\circ}\rightarrow \sD^{K_{x,r+},\circ},
\]
for all choices of $r>0$ and $x\in X_*(T)\otimes\mathbb{R}$, as well as the functor
\[
f^{I_0}:\sC^{I_0}\rightarrow \sD^{I_0},
\]
are all left t-exact.
\end{lemma}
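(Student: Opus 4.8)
The "only if" direction is essentially formal: if $f$ is left t-exact, then for any $M \in \sC^{K_{x,r+},\circ,\geq 0}$, we need $f^{K_{x,r+},\circ}(M) \in \sD^{K_{x,r+},\circ,\geq 0}$, and similarly for $I_0$. By construction (Lemma \ref{compatibletensor}), the t-structure on $\sC^{K_{x,r+},\circ}$ is the one for which a suitable forgetful/pushforward functor to $\sC^{K_{x,r+}}$ (and then into $\sC$) detects coconnectivity, and these invariance operations are compatible with $f$ since $f$ is $G((t))$-equivariant; so coconnectivity of $f^{K_{x,r+},\circ}(M)$ can be checked after pushing forward, where it reduces to left t-exactness of $f$ together with t-exactness of the forgetful functors. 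I would first carefully recall from Lemma \ref{compatibletensor} exactly which functor detects the t-structure on $\sC^{K_{x,r+},\circ}$, and then observe that the relevant square commutes. The same argument handles $I_0$-invariants, using that $\sC^{I_0} \to \sC$ is t-exact (or t-exact up to nothing, being a prounipotent invariants functor).

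For the "if" direction, which is the substance, the strategy mirrors the proof of Lemma \ref{generationreductionstep}: one wants to reduce a global statement about $f$ to the invariant pieces using Corollary \ref{vanishingcriterion}. The plan is to consider the functor $f$ and form, for each $n \in \bZ$, the "obstruction to left t-exactness in degree $n$": concretely, I would look at the right t-exact part and isolate the failure. More precisely, since all t-structures here are right complete and compatible with filtered colimits, left t-exactness of $f$ is equivalent to the vanishing, for every $M \in \sC^{\geq 0}$, of $\tau^{<0} f(M)$. I would like to package these failures into an auxiliary $G((t))$-category $\sE$ — the kernel, or better a cofiber-type construction — to which Corollary \ref{vanishingcriterion} applies. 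The cleanest approach: let $\sC^{\geq 0} \subseteq \sC$ be the coconnective part; this is not a DG subcategory, so instead I would work with the functor on the level of a single object or, following \cite{raskinbeiber}, build a $G((t))$-equivariant category measuring the discrepancy. Since $G((t))$-invariance operations $(-)^{K_{x,r+},\circ}$ and $(-)^{I_0}$ commute with the relevant limits and colimits, and since the t-structures on the invariant categories are compatible with $f$ in the above sense, the hypotheses say precisely that this discrepancy category has trivial $(K_{x,r+},\circ)$- and $I_0$-invariants, whence it is trivial by Corollary \ref{vanishingcriterion}, giving left t-exactness of $f$.

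The main obstacle — and the point requiring the most care — is that "left t-exactness" is not manifestly a property that can be encoded by the vanishing of a $G((t))$-category, the way "generates under colimits" was encoded by the vanishing of the quotient category in Lemma \ref{quotientcriterion}. The t-structure is extra data not visible to the $G((t))$-action in an obvious functorial way. So I expect the real work to be in setting up the right $G((t))$-equivariant "failure category" $\sE$ together with a t-structure, such that (a) $\sE \cong 0$ iff $f$ is left t-exact, (b) $\sE^{K_{x,r+},\circ}$ and $\sE^{I_0}$ are the corresponding failure categories for $f^{K_{x,r+},\circ}$ and $f^{I_0}$, and (c) forming $\sE$ is compatible with the invariance functors. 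One natural candidate is to use the fact (to be invoked from \cite{raskinbeiber}, or proven via Lemma \ref{flattstructure}-style arguments) that a $G((t))$-equivariant functor between $G((t))$-categories with compatible t-structures is left t-exact iff a certain associated functor — e.g. the one on $\leq 0$ or $\geq 0$ hearts, or the right adjoint's behavior — vanishes appropriately; then let $\sE$ be built from the non-coconnective part of the image. I would expect to lean on the analogous reduction already carried out in \cite{raskinbeiber} Section 9 for the Whittaker-based version, adapting it so that the role played there by $\Whit(-)$ is played here by the family $\sC^{K_{x,r+},\circ}$ together with $\sC^{I_0}$, exactly as in the philosophy paragraph following Theorem \ref{main}.
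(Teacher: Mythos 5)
Your ``only if'' direction is fine and matches the paper's. The problem is the ``if'' direction: you correctly identify the central obstacle --- that left t-exactness, unlike generation under colimits, is \emph{not} encoded by the vanishing of some quotient $G((t))$-category, so Corollary \ref{vanishingcriterion} cannot be applied off the shelf --- but your proposal then stops at the point of hoping that a suitable ``failure category'' $\sE$ exists. No such category is constructed in the paper, and constructing one is precisely the difficulty you name; the t-structure is extra data invisible to the $G((t))$-action, and there is no evident DG category whose triviality is equivalent to $\tau^{<0}f(M)\cong 0$ for all coconnective $M$. So the substance of the lemma is missing from the proposal.

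What the paper actually does is rerun the \emph{induction underlying} Corollary \ref{vanishingcriterion} at the level of the property itself, rather than reducing to a vanishing statement. One shows by induction on the depth $r$ that every $f^{K_{x,r+}}$ is left t-exact; this suffices because $f(c)\cong \on{colim}\on{Oblv}(f^{K_{0,r+}}(\on{Av}_*^{K_{0,r+}}c))$ and $\on{Av}_*$ is left t-exact. The base case $r=0$ reduces $K_{x,0+}$-invariants to $K_{x+\epsilon\check\rho,0+}$-invariants (a conjugate of $I_0$) via a finite-dimensional statement for the reductive quotient $L_x$: if $g^{N_H}$ is left t-exact then $g$ is. Proving that requires the categorical Beilinson--Bernstein Lemma \ref{finitebeiber}, and the delicate point is that $\on{Loc}$ is only right t-exact, so one uses the identity $\Gamma\circ\tau^{\geq 0}\circ\on{Loc}\cong\tau^{\geq 0}\circ\Gamma\circ\on{Loc}\cong\on{id}$ on coconnective objects to transport left t-exactness across the localization. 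The inductive step for $r>0$ uses the recollement triangle $i_*i^!\to\on{id}\to j_*j^!$ attached to the semistable/unstable decomposition of $(K_{x,r}/K_{x,r+})^*$: the open (semistable) piece is handled by the hypothesis on $f^{K_{x,r+},\circ}$, while the closed (unstable) piece is stratified into finitely many $L_x$-orbits (Lemma \ref{finiteorbits}); each orbit is related to a single point by Lemma \ref{orbitstabilizerequivalence}, and the point-supported invariants are identified with $\sC^{K_{y,r}}=\sC^{K_{y,(r-\epsilon)+}}$ for a strictly smaller depth, where the inductive hypothesis applies. None of these steps is formal, and none of them appears in your proposal.
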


\begin{proof}

Let us quickly dispatch of the only if case. The left t-exactness of $f^{I_0}$ is clear, and the left t-exactness of $f^{K_{x,r+},\circ}$ follows from the commutativity of the diagram:
\begin{equation*}
\begin{tikzcd}
\sC^{K_{x,r+},\circ}\arrow[r]\arrow[d] & \arrow[d] \sD^{K_{x,r+},\circ}\\
\sC^{K_{x,r+}}\otimes D((K_{x,r}/K_{x,r+})^{*,\circ})\arrow[r] & \sD^{K_{x,r+}}\otimes D((K_{x,r}/K_{x,r+})^{*,\circ}).
\end{tikzcd}
\end{equation*}
Indeed, the vertical arrows are conservative and t-exact by Lemma \ref{compatibletensor}, and the bottom arrow is left t-exact by Proposition \ref{cgtensor1}.

Now we treat the (much harder) if direction. We will show that for any choice of $x\in X_*(T)\otimes\mathbb{R}$ and $r\geq 0$, the functor $f^{K_{x,r+}}:\sC^{K_{x,r+}}\rightarrow \sD^{K_{x,r+}}$ is left t-exact. This implies that $f$ is left t-exact. Indeed, for any object $c\in \sC^{\geq 0}$, we have
\begin{align*}
f(c)&\cong\on{colim}\on{Oblv}(\on{Av}_*^{K_{0,r+}}f(c))\\
&\cong\on{colim}\on{Oblv}(f^{K_{0,r+}}(\on{Av}_*^{K_{0,r+}}c))
\end{align*}
and the result follows from left t-exactness of $\on{Av}_*$.

Our proof will be by induction. First we deal with the base case.

\subsubsection{Depth zero}\label{exactnessdepthzero}

Assume $r=0$. We will emulate the proof of Theorem \ref{unrefineddepthzero}. Recall from that proof that any sufficiently small $\epsilon>0$ gives the same subgroup $K_{x+\epsilon\check{\rho},0+}$, and that $K_{x+\epsilon\check{\rho},0+}$ is a conjugate of $I_0$. In particular, the functor $f^{K_{x+\epsilon\check{\rho},0+}}$ is left t-exact.

Let $N_x$ denote the subgroup $(K_{x,0+}(P_x\cap N((t))))/K_{x,0+}$ of $L_x$. As in the proof of Theorem \ref{unrefineddepthzero}, $N_x$ is a maximal unipotent subgroup of $L_x$. We have an equivalence $\sC^{K_{x+\epsilon\check{\rho},0+}}\cong (\sC^{K_{x,0+}})^{N_x}$. The left t-exactness of $f^{K_{x,0+}}$ now follows from the following purely finite-dimensional lemma (taking $H$ to be $L_x$ and $g$ to be $f^{K_{x,0+}}$):

\begin{lemma}
Let $H$ be a reductive algebraic group over $k$, with Borel, Cartan, etc. subgroups denoted by $B_H, T_H$, etc. Let $\sA$ and $\sB$ be two $H$-categories equipped with compatible t-structures, and let $g:\sA\rightarrow \sB$ be a $H$-equivariant functor. If $g^{N_H}:\sA^{N_H}\rightarrow \sB^{N_H}$ is left t-exact, then $g$ is left t-exact.
\end{lemma}

\begin{proof}
We argue via a sequence of commutative diagrams. They will implicitly introduce some functors $g_i$, which are all induced by $g$. First, examine the diagram
\begin{equation*}
\begin{tikzcd}
(\sA^{N_H})^{T_H,w} \arrow[r, "g_2"] \arrow[d] & (\sB^{N_H})^{T_H,w} \arrow[d]\\
\sA^{N_H} \arrow[r, "g_1"] & \sB^{N_H}.
\end{tikzcd}
\end{equation*}
The vertical arrows are conservative and t-exact by part (4) of Lemma \ref{compatiblelemma}. As $g_1$ is left t-exact by assumption, we deduce that $g_2$ is left t-exact.

Next, we have:
\begin{equation*}
\begin{tikzcd}
(\sA^{N_H})^{T_H,w}\underset{\QCoh(\ft_{\fh}^*)}{\otimes} \QCoh(\ft_{\fh}^{*,\circ})\arrow[r, "g_3"] \arrow[d] & (\sB^{N_H})^{T_H,w}\underset{\QCoh(\ft_{\fh}^*)}{\otimes}\QCoh(\ft_{\fh}^{*,\circ}) \arrow[d]\\
(\sA^{N_H})^{T_H,w} \arrow[r, "g_2"] & (\sB^{N_H})^{T_H,w}.
\end{tikzcd}
\end{equation*}
By Lemma \ref{flattstructure}, the vertical arrows are conservative and t-exact. From the previous diagram, we learned that $g_2$ is left t-exact. Hence, $g_3$ is left t-exact.

Now we use Lemma \ref{finitebeiber}. Compatibility of $\Gamma$ with $H$-equivariant functors gives commutativity of the diagram
\begin{equation*}
\begin{tikzcd}
(\sA^{N_H})^{T_H,w}\underset{\QCoh(\ft_{\fh}^*)}{\otimes}\QCoh(\ft_{\fh}^{*,\circ})\arrow[r, "g_3"] \arrow[d, "\Gamma"] & (\sB^{N_H})^{T_H,w}\underset{\QCoh(\ft_{\fh}^*)}{\otimes}\QCoh(\ft_{\fh}^{*,\circ}) \arrow[d, "\Gamma"]\\
\sA^{H,w}\underset{\QCoh(\ft_{\fh}^*//W_H)}{\otimes}\QCoh(\ft_{\fh}^{*,\circ})\arrow[r, "g_4"] & \sB^{H,w}\underset{\QCoh(\ft_{\fh}^*//W_H)}{\otimes}\QCoh(\ft_{\fh}^{*,\circ}).
\end{tikzcd}
\end{equation*}
Let $x$ be an object in $(\sA^{H,w}\otimes_{\QCoh(\ft_{\fh}^*//W_H)}\QCoh(\ft_{\fh}^{*,\circ}))^{\geq 0}$. Then if we take $y$ to be $\tau^{\geq 0}\on{Loc} x$, we have
\begin{align*}
\Gamma(y)&\cong\Gamma\circ\tau^{\geq 0}\circ\on{Loc}(x)\\
&\cong\tau^{\geq 0}\circ\Gamma\circ\on{Loc}(x)\\
&\cong\tau^{\geq 0} x\\
&\cong x.
\end{align*}

We can use this to calculate that $g_4(x)\cong g_4(\Gamma(y))\cong\Gamma(g_3(y))$. As $g_3$ and $\Gamma$ are both left t-exact, we see that
\[
g_4(x)\in (\sB^{H,w}\underset{\QCoh(\ft_{\fh}^*//W_H)}{\otimes}\QCoh(\ft_{\fh}^{*,\circ}))^{\geq 0}
\]
so $g_4$ is also left t-exact, as desired.

The next diagram we consider is:
\begin{equation*}
\begin{tikzcd}
\sA^{H,w}\arrow[r, "g_5"] \arrow[d] & \sB^{H,w}\arrow[d]\\
\sA^{H,w}\underset{\QCoh(\ft_{\fh}^*)}{\otimes}\QCoh(\ft_{\fh}//W_H)\arrow[r, "g_4"] & \sB^{H,w}\underset{\QCoh(\ft_{\fh}^*)}{\otimes} \QCoh(\ft_{\fh}//W_H).
\end{tikzcd}
\end{equation*}
Again, we just showed that $g_4$ is left t-exact. We showed in the proof of Corollary \ref{n-generation} that the vertical arrows are conservative, and they are t-exact by Lemma \ref{flattstructure}. So $g_5$ is left t-exact.

The final diagram:
\begin{equation*}
\begin{tikzcd}
\sA \arrow[r, "g"] \arrow[d] & \sB \arrow[d]\\
\sA^{H,w} \arrow[r, "g_5"] & \sB^{H,w}.
\end{tikzcd}
\end{equation*}
The vertical arrows are conservative and t-exact by Lemma \ref{compatiblelemma}. Thus the left t-exactness of $g_5$ implies the left t-exactness of $g:\sA\rightarrow \sB$, as desired.
\end{proof}

\subsubsection{Positive depth}

Now we treat the inductive step. Assume that for all $s < r$ and $x\in X_*(T)\otimes\mathbb{R}$, we know that $f^{K_{x,s+}}$ is left t-exact. Now, fixing a choice of $x$, we want to show that $f^{K_{x,r+}}$ is left t-exact.

There are two natural adjoint pairs
\[
i_{\sC,*}: \sC^{K_{x,r+},us}\rightleftarrows \sC^{K_{x,r+}}: i_{\sC}^!
\]
and
\[
j_{\sC}^!: \sC^{K_{x,r+}}\rightleftarrows \sC^{K_{x,r+},\circ}: j_{\sC,*}.
\]
For any object $c\in \sC^{K_{x,r+}}$, we have an exact triangle
\[
i_{\sC,*}i_{\sC}^!c\rightarrow c\rightarrow j_{\sC,*}j_{\sC}^!c\rightarrow i_{\sC,*}i_{\sC}^!c[1].
\]

We have analogous functors $i_{\sD,*}$, etc. As $f$ is $G((t))$-equivariant, $f^{K_{x,r+}}$ intertwines the functors associated to $\sC$ and $\sD$. Thus, we get an exact triangle
\[
i_{\sD,*}f^{K_{x,r+},us}(i_{\sC}^!c)\rightarrow f^{K_{x,r+}}(c)\rightarrow j_{\sD,*}f^{K_{x,r+},\circ}(j_{\sC}^!c)\rightarrow i_{\sD,*}f^{K_{x,r+},us}(i_{\sC}^!c)[1].
\]
In particular, to show that $f$ is left t-exact, it suffices to show the same for $i_{\sD,*}\circ f\circ i_{\sC}^!$ and $j_{\sD,*}\circ f\circ j_{\sC}^!$. By Lemma \ref{exactsequence}, the functors $i_*$ and $j^!$ are t-exact, and so their right adjoints $i^!$ and $j_*$ must be left t-exact. Thus, it suffices to show that $f^{K_{x,r+},\circ}$ and $f^{K_{x,r+},us}$ are left t-exact. We assumed that $f^{K_{x,r+},\circ}$ is left t-exact, so we can restrict our attention to the case of $f^{K_{x,r+},us}$.

More generally, for $V$ a quasi-projective variety mapping to $(K_{x,r}/K_{x,r+})^*$, we can define a category
\[
\sC^{K_{x,r+},V}\cong \sC^{K_{x,r+}}\underset{D((K_{x,r}/K_{x,r+})^*)}{\otimes} D(V)
\]
and a functor
\[
f^{K_{x,r+},V}:\sC^{K_{x,r+},V}\rightarrow \sD^{K_{x,r+},V}
\]
via tensor product. Let $U\subseteq V$ be an open subvariety with complement $Z$. The above argument then shows that if $f^{K_{x,r+},U}$ and $f^{K_{x,r+},Z}$ are left t-exact, then so is $f^{K_{x,r+},V}$.

By Lemma \ref{finiteorbits}, $(K_{x,r}/K_{x,r+})^{*,us}$ is the union of a finite number of $L_x$-orbits. We therefore can reduce the left t-exactness of $f^{K_{x,r+},us}$ to the left t-exactness of the functors $f^{K_{x,r+},O}$ for $O$ an $L_x$-orbit in $(K_{x,r}/K_{x,r+})^{*,us}$. Fix such an orbit $O$.

Now, Lemma \ref{geometricinput} tells us that there exists a point $p\in O$ and $y\in X_*(T)\otimes\mathbb{R}$ such that $K_{x,r+}\subseteq K_{y,r}\subseteq K_{x,r}$ and $p\in(K_{y,r}/K_{x,r+})^{\perp}$. Note that we can identify $\sC^{K_{x,r+},(K_{y,r}/K_{x,r+})^{\perp}}$ with $\sC^{K_{y,r}}$. Indeed, we compute:
\begin{align*}
\sC^{K_{x,r+}}\underset{D((K_{x,r}/K_{x,r+})^*)}{\otimes} D((K_{y,r}/K_{x,r+})^{\perp})&\cong \sC^{K_{x,r+}}\underset{D((K_{y,r}/K_{x,r+})^*)}{\otimes} \on{Vect}\\
&\cong \sC^{K_{x,r+}}\underset{D(K_{y,r}/K_{x,r+})}{\otimes} \on{Vect}\\
&\cong \sC^{K_{y,r}}.
\end{align*}
This leads to a commutative diagram:
\begin{equation*}
\begin{tikzcd}
\sC^{K_{x,r+},p} \arrow[r] \arrow[d] & \sD^{K_{x,r+},p} \arrow[d]\\
\sC^{K_{y,r}}\arrow[r] & \sD^{K_{y,r}}.
\end{tikzcd}
\end{equation*}

As $K_{y,r}=K_{y,(r-\epsilon)+}$ for sufficiently small $\epsilon>0$, the inductive hypothesis implies that the bottom arrow is left t-exact. The vertical arrows are fully faithful and t-exact by Lemma \ref{exactsequence}. Therefore, the top arrow, which can be identified with $f^{K_{x,r+},p}$, is necessarily left t-exact.

Let $S_p$ be the stabilizer group of $p$ with respect to the action of $P_x/K_{x,r+}$ on $O$. Then we claim that there is a canonical t-exact equivalence
\[
D(P_x/K_{x,r+})\underset{S_p}{\otimes} \sC^{K_{x,r+},p}\cong \sC^{K_{x,r+},O}.
\]
Indeed, by Lemma \ref{orbitstabilizerequivalence}, for any $P_x/K_{x,r+}$-category $\sE$, we have a t-exact equivalence
\[
D(P_x/K_{x,r+})\underset{S_p}{\otimes} (\sE\underset{K_{x,r}/K_{x,r+}}{\otimes} \QCoh(p))\cong \sE\underset{K_{x,r}/K_{x,r+}}{\otimes} \QCoh(O).
\]

To finish, consider the following commutative square.
\begin{equation*}
\begin{tikzcd}
\sC^{K_{x,r+},O} \arrow[r] \arrow[d] & \sD^{K_{x,r+},O} \arrow[d] \\
\sC^{K_{x,r+},p}\otimes D(P_x/K_{x,r+})\arrow[r] & \sD^{K_{x,r+},p}\otimes D(P_x/K_{x,r+})
\end{tikzcd}
\end{equation*}

The vertical arrows are conservative and t-exact by Lemma \ref{compatibletensor} and the bottom arrow is left t-exact by Proposition \ref{cgtensor1}. Thus the top arrow, which is given by $f^{K_{x,r+},O}$, is left t-exact, as desired.

\end{proof}

In the above proof, we used the following lemma to relate $\sC^{K_{x,r+},O}$ and $\sC^{K_{x,r+},p}$:

\begin{lemma}\label{orbitstabilizerequivalence}
Let $B$ be a finite type algebraic group and let $A\subseteq B$ be a normal subgroup which is isomorphic to the algebraic group underlying a $k$-vector space (which, by abuse of notation, we also denote by $A$). Let $\sE$ be a category acted on by $B$ and let $p\in A^*$ be an additive character of $A$. If $O$ denotes the $B$-orbit of $p$ inside $A^*$ and $S_p\subseteq B$ denotes the stabilizer of $p$, then we have a canonical equivalence
\[
\sE\underset{A}{\otimes} D(O)\cong (D(p)\underset{A}{\otimes} \sE)\underset{S_p}{\otimes} D(B).
\]
Furthermore, if $\sE$ is equipped with a t-structure compatible with the $B$ action, then the above equivalence is t-exact.
\end{lemma}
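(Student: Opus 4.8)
The plan is to remove $\sE$ from the problem, reducing it to a categorical form of orbit-stabilizer, and then to prove that by descent along the orbit map. Since $A$ is a finite-dimensional vector group (we are in characteristic $0$), the Fourier-Deligne transform identifies $D(A)$ with $D(A^*)$ as monoidal categories, as recalled earlier, so the $A$-action on $\sE$ is exactly a $D(A^*)$-module structure; moreover $B$ acts on $A$ through $\on{GL}(A)$, so $A^*$ carries a compatible linear $B$-action and this $D(A^*)$-module structure on $\sE$ is $B$-equivariant. Under the transform $D(p)$ becomes the skyscraper $D(A^*)$-module at $p\in A^*$, so $D(p)\underset{A}{\otimes}\sE$ is the fiber $\sE|_p$ of $\sE$ at $p$ --- an $S_p$-category since $S_p$ stabilizes $p$ --- while $D(O)$ is the category of $D$-modules on the locally closed subvariety $O=B\cdot p\subseteq A^*$. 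The assertion thus reads
\[
\sE\underset{D(A^*)}{\otimes}D(O)\;\cong\;\big(\sE|_p\big)\underset{D(S_p)}{\otimes}D(B)\;=\;\on{Ind}_{S_p}^{B}\big(\sE|_p\big),
\]
and it would follow by tensoring $\sE$ over $D(A^*)$ with the $\sE$-free equivalence $D(O)\cong\on{Ind}_{S_p}^{B}\big(D(p)\big)$ of $B$-equivariant $D(A^*)$-modules --- ``$D$-modules on the homogeneous space $O\cong B/S_p$ are induced from the point $\{p\}$, compatibly with $O\hookrightarrow A^*$'' --- provided one can commute $\sE\underset{D(A^*)}{\otimes}(-)$ past the induction.

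Both of these --- the $\sE$-free equivalence and the commutation --- I would establish by descent along the orbit map $q\colon B\to O$, $b\mapsto b\cdot p$, which is an $S_p$-torsor for right translation. Thus $D(O)\cong D(B)\underset{D(S_p)}{\otimes}\on{Vect}$ with its residual left $B$-action, and the $D(A^*)$-module structure coming from $j\colon O\hookrightarrow A^*$ is precisely the one obtained from the skyscraper $D(p)$ by $B$-translation; this is the $\on{Ind}_{S_p}^{B}\big(D(p)\big)$ above. Tensoring with $\sE$ over $D(A^*)$ and again using that $q$ is a torsor, $\sE\underset{D(A^*)}{\otimes}D(O)$ becomes the $S_p$-coinvariants of $\sE\underset{D(A^*)}{\otimes}D(B)$, where $D(B)$ is now a $D(A^*)$-module via the orbit map $a_p=j\circ q\colon B\to A^*$; the Cartesian square presenting $a_p$ as a base change of the (smooth, since isomorphic to a projection) action map $B\times A^*\to A^*$ along $\{p\}\hookrightarrow A^*$ identifies this $\sE\underset{D(A^*)}{\otimes}D(B)$ with the free $B$-category $D(B)\otimes\big(\sE|_p\big)$ --- the point being that $B$-equivariance of the $D(A^*)$-module structure on $\sE$ makes its fiber over $b\cdot p$ the $b$-translate of $\sE|_p$. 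Passing to $S_p$-coinvariants, this identifies $\sE\underset{D(A^*)}{\otimes}D(O)$ with $\on{Ind}_{S_p}^{B}\big(\sE|_p\big)$, and by construction the equivalence is $B$-equivariant.

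For the t-exactness: when $\sE$ carries a $B$-compatible t-structure, both sides inherit t-structures from it via Lemma \ref{compatibletensor} (together with the flat base change construction of Lemma \ref{flattstructure}), each being the unique t-structure for which the canonical conservative forgetful functor to $\big(\sE|_p\big)\otimes D(B)$ is t-exact; since the equivalence constructed above intertwines those forgetful functors it is automatically t-exact. Alternatively this is visible directly, the equivalence being assembled from the Fourier transform (t-exact on $D(A^*)$), $!$-restriction and \v{C}ech descent along the smooth torsor $q$ (t-exact once the relative-dimension shifts, which cancel in the descent, are accounted for), and convolution with the finite-type group $B$ (t-exact by Proposition \ref{cgtensor1}). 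The one genuinely delicate point is the matching of the two descriptions in the second step --- equivalently, checking that the $D(A^*)$-module structures, i.e.\ the ``spectral supports toward $A^*$,'' agree after inducing from $S_p$ to $B$ --- which is precisely the orbit-stabilizer content and the only place where $B$-equivariance of the action on $\sE$ is used essentially; granting it, both the equivalence and its t-exactness are formal.
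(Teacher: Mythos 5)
Your argument is correct, and its geometric core -- the categorical orbit--stabilizer identification $O\cong B/S_p$ upgraded through the Fourier transform -- is the same as the paper's; but the packaging is genuinely different. The paper first reduces to the universal case $\sE\cong D(B)$ (both sides of the asserted equivalence are obtained from that case by applying $-\otimes_{D(B)}\sE$), and then everything becomes concrete: the equivalence is induced by the explicit isomorphism of varieties $B\times^{S_p}B\cong B\times O$, $(b_1,b_2)\mapsto(b_1b_2,b_2^{-1}\cdot p)$, and one checks that the two fully faithful embeddings into $D(B)\otimes D(O)$ and $D(B)\otimes_{S_p}D(B)$ have matching essential images; t-exactness is then immediate because every functor in sight is t-exact. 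You instead keep $\sE$ general and run descent along the $S_p$-torsor $q:B\to O$, using $B$-equivariance of the $D(A^*)$-module structure to trivialize $\sE\otimes_{D(A^*)}D(B)$ as $D(B)\otimes\sE|_p$. This buys a more conceptual statement (``equivariant sheaves of categories trivialize over an orbit'') at the cost of having to carry the $S_p$-equivariance data through the descent -- exactly the ``delicate point'' you flag, which in the paper's version collapses to a pointwise check on $D$-modules on finite type varieties. Two small imprecisions worth fixing: the map $a_p$ is not literally a base change of the action map along $\{p\}\hookrightarrow A^*$; rather $a_p=\on{act}\circ(\on{id},p)$ and the trivialization comes from restricting the equivariance isomorphism $\on{act}^!\sE\cong\on{pr}_2^!\sE$ to $B\times\{p\}$. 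And for t-exactness, the t-structure on $\sE\otimes_A D(O)$ supplied by Lemma \ref{compatibletensor} is characterized via the forgetful functor to $\sE\otimes D(O)$, not directly to $\sE|_p\otimes D(B)$, so your ``both sides are characterized by the same forgetful functor'' needs one more comparison step; your alternative argument (assembling the equivalence from t-exact pieces) is the cleaner route and matches the paper's.
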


\begin{proof}
It suffices to consider the universal case of $\sE\cong D(B)$. We have a diagram
\begin{equation*}
\begin{tikzcd}
D(B)\underset{A}{\otimes} D(O) \arrow[d] & (D(p)\underset{A}{\otimes} D(B))\underset{S_p}{\otimes} D(B)\arrow[d]\\
D(B)\otimes D(O) \arrow[r] & D(B)\underset{S_p}{\otimes} D(B)
\end{tikzcd}
\end{equation*}
where the bottom arrow is the equivalence induced by the isomorphism
\[
B\times^{S_p}B\cong B\times O
\]
sending $(b_1,b_2)$ to $(b_1b_2, (b_2)^{-1}\cdot p)$. The vertical arrows are fully faithful and it is straightforward to check that their essential images agree, giving us the desired equivalence. As every arrow in the above diagram is t-exact, the t-exactness of said equivalence also follows.
\end{proof}

\subsection{Proof of Lemma \ref{exactness}}

Now that we have Lemma \ref{exactnessreductionstep}, it will not take much to deduce Lemma \ref{exactness}.

\begin{proof}

Combining Theorem \ref{righttexact} with Theorem \ref{eventuallyequivalence}, we see that $\Gamma^{\Heckenaive}$ is right t-exact. So it suffices to prove left t-exactness, which we do via Lemma \ref{exactnessreductionstep}.

We need to show left t-exactness of the maps $(\Gamma^{\on{Hecke,naive}})^{I_0}$ and, for any $x\in X_*(T)\otimes\mathbb{R}$ and $r > 0$, $(\Gamma^{\on{Hecke,naive}})^{K_{x,r+},\circ}$. The case of $(K_{x,r+},\circ)$-invariants is trivial because
\[
D^{\Hecke_{\fz}}_{\crit}(\Gr_G)^{K_{x,r+},\circ}\cong 0
\]
by Theorem \ref{grdepthzero}. On the other hand, for $I_0$-invariants, the desired left t-exactness follows from Theorem \ref{i0invariants}. 

\end{proof}

\section{Boundedness}

This section is mainly concerned with Lemma \ref{boundedness}. The proofs will be similar to those in Section \ref{exactnesssection}. First we will prove two helpful lemmas.

\subsection{Two auxilliary lemmas}

\subsubsection{Boundedness of convolution}

The following lemma shows (under appropriate hypotheses) that convolution is left t-exact up to shift. We actually will not use it in this section (though it was used earlier in this paper.) However, it explains why the statement of Lemma \ref{boundednessreductionstep} is reasonable.

\begin{lemma}\label{boundedconvolution}
Let $K\subseteq G((t))$ be a compact open subgroup and let $\sG$ be a compact object of $D(G((t))/K)$. Then there is some integer $r$ such that, for any $G((t))$-category $\sC$ with a compatible t-structure, the convolution functor
\[
\sG\star-:\sC\rightarrow \sC
\]
sends $\sC^{\geq 0}$ to $\sC^{\geq r}$.
\end{lemma}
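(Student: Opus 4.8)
The plan is to reduce to a situation where $\sG$ is supported on a single $K$-coset and then use the fact that an open $K$-orbit in $G((t))/K$ is finite-dimensional, so that convolution with a compact sheaf there is a finite colimit of shifts of pullback--pushforward functors along finite-dimensional maps.

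First I would invoke the fact that a compact object $\sG \in D(G((t))/K)$ is set-theoretically supported on a finite-dimensional (hence quasi-compact) closed subscheme of $G((t))/K$. Since $G((t))/K$ is an increasing union of finite-dimensional $K$-invariant closed subschemes (e.g.\ the preimages of Schubert cells in a partial affine flag variety), $\sG$ is pushed forward from $D(Y)$ for some such $Y$, where $Y \hookrightarrow G((t))/K$ is a finite-dimensional $K$-scheme. The key point is that $\sG$ being compact in $D(G((t))/K)$ forces $\sG$, viewed in $D(Y)$, to be \emph{coherent} (bounded, with coherent cohomologies) — this is where finite-dimensionality of $Y$ is used. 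I would then stratify $Y$ by $K$-orbits: since $\sG$ is coherent, there is a finite filtration of $\sG$ (using the recollement for the stratification, as in the positive-depth part of the proof of Lemma~\ref{exactnessreductionstep}) whose subquotients are extensions from locally closed strata, each of which is a single $K$-orbit $Kg/K \cong K/(K \cap gKg^{-1})$, a finite-dimensional smooth scheme. Because exact triangles and shifts only change the bound $r$ by a bounded amount and there are finitely many strata, it suffices to prove the claim for $\sG$ supported (as a $*$-extension) on a single orbit $O = Kg/K$.

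For a single orbit, I would write the convolution $\sG \star -$ as a composition of geometric operations on the diagram
\[
\sC^{K} \xleftarrow{\ \on{Oblv}\ } \sC^{K \cap gKg^{-1}} \xrightarrow{g\cdot} \sC^{g^{-1}Kg \cap K} \xrightarrow{\on{Av}_*} \cdots
\]
More concretely, convolution with $\sG$ supported on $O$ factors through $\on{Av}_*^K$ applied to a translate, composed with pullback along the (finite-dimensional, smooth) projection $K \to O$ and tensoring with the (coherent, hence bounded) $!$-restriction of $\sG$ to $O$. Each of these operations — translation by a group element, pullback along a smooth morphism of finite-dimensional $K$-schemes, tensoring with a bounded complex of D-modules, and the left t-exact functor $\on{Av}_*$ — is left t-exact up to an explicit shift depending only on the dimensions involved (and not on $\sC$). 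Composing, we get a single $r$, depending only on $\sG$ and $G$, with $\sG \star \sC^{\geq 0} \subseteq \sC^{\geq r}$. The universality in $\sC$ is automatic: all these functors are induced by the $G((t))$-action on D-modules on finite-dimensional spaces, so the bound is determined in the universal case $\sC = D(G((t)))$ and transported along the action.

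\textbf{The main obstacle} I anticipate is the bookkeeping in the orbit stratification step: one must control that the filtration of a compact $\sG$ by orbit-supported pieces exists with \emph{finitely} many terms and uniformly bounded cohomological amplitude, which requires knowing that $\sG$ restricted to a finite-dimensional $Y$ is genuinely coherent (not merely an object of $D(Y)$ with some boundedness). This is a standard but slightly delicate compatibility between compactness in the ind-scheme $D(G((t))/K)$ and coherence on finite type subschemes — it should follow from the description of compact objects in $D(-)$ of an ind-scheme from \cite{dmodinf}, together with the observation that, on a fixed finite-dimensional smooth $Y$, the compact objects of $D(Y)$ are exactly the coherent complexes. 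Once this is in hand, the rest is a routine (if tedious) accounting of shifts, and the uniformity in $\sC$ needs only the remark that every functor in sight is manufactured from the $G((t))$-action and D-modules on finite-dimensional spaces.
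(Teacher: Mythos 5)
Your overall strategy---restrict to the finite-type support, use coherence of compact objects there, and reduce to operations of bounded cohomological amplitude---is the right shape, but it diverges from the paper's proof in its details, and the divergence is exactly where a gap opens. The paper does not stratify by $K$-orbits at all: it observes that a compact $\sG$ supported on a finite type $S\subseteq G((t))/K$ admits a \emph{finite resolution by objects $\on{ind}(\sG_i)$ induced from bounded-below ind-coherent sheaves} $\sG_i\in\IndCoh(S)^+$, then uses the canonical identification of \emph{strong} convolution with $\on{ind}(\sG')$ with \emph{weak} convolution with $\sG'$, and finally invokes the t-exactness of weak convolution with a coconnective ind-coherent sheaf (Lemma 10.16.1 of \cite{semiinf}, i.e.\ compatibility of the t-structure with the weak action). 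That comparison of strong and weak convolution is the entire content of the lemma.

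Your proof has no substitute for that step. After reducing to a single orbit $O\cong K/(K\cap gKg^{-1})$, you assert that convolution with a coherent complex on $O$ is ``translation, smooth pullback, tensoring with the coherent $!$-restriction of $\sG$, and $\on{Av}_*$,'' each left t-exact up to shift. For the dualizing or constant sheaf on $O$ this is fine, since then the convolution really is $\on{Oblv}$ followed by a translate and $\on{Av}_*$ up to shift. But for a general coherent D-module on $O$, ``tensoring with $\sG|_O$'' is not an endofunctor of the abstract category $\sC^{K\cap gKg^{-1}}$; it only becomes meaningful after coinducting to something like $D(K/K')\otimes\sC^{K'}$, and bounding the t-amplitude of the resulting composite for \emph{arbitrary} $\sC$ with compatible t-structure is precisely the assertion being proved. (Your closing remark that the bound is ``determined in the universal case $\sC=D(G((t)))$ and transported along the action'' begs the same question: t-exactness bounds do not automatically pass to module categories; this is what the appendix results such as Proposition \ref{cgtensor2} are for, and they require hypotheses.) To close the gap you would, even on a single orbit, resolve $\sG|_O$ by induced objects and run the weak-versus-strong convolution comparison---at which point the orbit stratification is revealed to be unnecessary, since the same argument applies directly on all of $S$, as in the paper.
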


\begin{proof}
This is essentially Lemma 9.2.2 of \cite{raskinbeiber}. We reproduce the proof here.

By compactness of $\sG$, it is supported on some (finite type) subscheme $S\subseteq G((t))/K$. Then $\sG$ has a finite resolution by inductions $\on{ind}(\sG_i)$ of bounded below ind-coherent sheaves $\sG_i\in\IndCoh(S)^+$. So we can assume that $\sG$ is of the form $\on{ind}(\sG')$. Without loss of generality, we assume that $\sG'\in\IndCoh(S)^{\geq 0}$. 

The functors of strong convolution with $\on{ind}(\sG')$ and weak convolution with $\sG'$ are canonically isomorphic. By Lemma 10.16.1 of \cite{semiinf}, weak convolution with $\sG'$ is left exact, as desired.
\end{proof}

\subsubsection{Boundedness and invariant categories}

The other Lemma we prove will be used below in the proof of Lemma \ref{boundednessreductionstep}. It will allow us ``deequivariantize" boundedness statements on categories of weak invariants.

\begin{lemma}\label{boundedinvariants}
Let $H$ be a finite type algebraic group and let $\sC$ be a category with a weak $H$ action and a compatible t-structure. Assume that we are given a functor $f:\sC\rightarrow \sD$ with $\sD$ a category with a t-structure. Then $f$ is left t-exact up to shift if and only if $f\circ\on{Oblv}:\sC^{H,w}\rightarrow \sD$ is left t-exact up to shift.
\end{lemma}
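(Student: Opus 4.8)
The statement to prove is Lemma~\ref{boundedinvariants}: for a finite type algebraic group $H$ acting weakly on $\sC$ (with compatible t-structure) and any functor $f : \sC \to \sD$ to a category with t-structure, $f$ is left t-exact up to shift if and only if $f \circ \on{Oblv} : \sC^{H,w} \to \sD$ is.

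\subsection*{Proof proposal}

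The plan is to exploit the monadic adjunction $\on{Oblv} : \sC^{H,w} \rightleftarrows \sC : \on{Av}^w_*$, where $\on{Oblv}$ is the forgetful functor and $\on{Av}^w_*$ its (continuous) right adjoint. The key structural inputs are: (i) $\on{Oblv}$ is conservative and t-exact (this is part of the compatibility package for weak actions, as used throughout the paper, e.g.\ Lemma~\ref{compatiblelemma}); (ii) $\on{Av}^w_*$ is left t-exact up to a \emph{uniform} shift, namely by the cohomological amplitude of $C^\bullet(H, -)$, which is finite since $H$ is of finite type; and (iii) the monad $\on{Oblv} \circ \on{Av}^w_*$ is, up to that same shift, left t-exact, and every object of $\sC$ is recovered from its image under $\on{Av}^w_*$ via the bar resolution.

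For the \emph{only if} direction: if $f$ is left t-exact up to shift by $r$, then since $\on{Oblv}$ is t-exact, $f \circ \on{Oblv}$ is also left t-exact up to shift by $r$. This direction is immediate.

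For the \emph{if} direction, which is the substantive one: suppose $f \circ \on{Oblv}$ is left t-exact up to shift by some integer $s$. Let $c \in \sC^{\geq 0}$. I want to bound $f(c)$ below. The idea is to write $c$ as the totalization of the cobar/comonadic resolution
\[
c \;\cong\; \on{Tot}\Bigl( \on{Oblv}\,\on{Av}^w_*(c) \rightrightarrows \on{Oblv}\,\on{Av}^w_*\on{Oblv}\,\on{Av}^w_*(c) \cdots \Bigr),
\]
coming from the comonad $\on{Oblv} \circ \on{Av}^w_*$ on $\sC$; since $\on{Oblv}$ is conservative and continuous, this resolution is valid (Barr--Beck--Lurie). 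Each term in this cosimplicial object is of the form $\on{Oblv}(x_n)$ for $x_n = \on{Av}^w_* \on{Oblv} \cdots \on{Av}^w_*(c) \in \sC^{H,w}$. Now $\on{Av}^w_*$ is left t-exact up to the fixed shift $a := $ (amplitude of $H$), so $x_0 = \on{Av}^w_*(c) \in (\sC^{H,w})^{\geq -a}$, and inductively — using that $\on{Oblv}$ is t-exact — each $x_n \in (\sC^{H,w})^{\geq -a}$ (the shift does \emph{not} accumulate, because $\on{Oblv}$ is exactly t-exact and only the outermost $\on{Av}^w_*$ contributes; more carefully, $\on{Av}^w_* \on{Oblv}$ restricted to connective objects lands in $\geq -a$, and iterating this composite keeps the bound at $-a$). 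Applying $f \circ \on{Oblv}$, which is left t-exact up to shift by $s$, each term $f(\on{Oblv}(x_n)) = (f \circ \on{Oblv})(x_n) \in \sD^{\geq -a-s}$. Finally, $f(c) = \on{Tot}_n\, (f\circ \on{Oblv})(x_n)$, and a totalization of a cosimplicial object all of whose terms lie in $\sD^{\geq -a-s}$ lies in $\sD^{\geq -a-s-1}$ — here one uses that $\sD$ has a t-structure compatible with filtered colimits and is right complete, so the totalization (a limit) can only drop connectivity by at most one degree past the uniform bound when combined with the fact that partial totalizations of an $N$-truncated cosimplicial object are built from finitely many shifted terms and the tower of partial totalizations converges (and again one invokes right completeness/left-completeness as needed). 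Thus $f(c) \in \sD^{\geq -a-s-1}$, so $f$ is left t-exact up to shift by $a + s + 1$.

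The main obstacle I anticipate is controlling the connectivity of the totalization in the last step: \emph{a priori} an infinite limit of uniformly coconnective objects need not be uniformly coconnective without a convergence/finiteness input. The clean way to handle this is to observe that the cosimplicial object $[n] \mapsto f(\on{Oblv}(x_n))$ is the image under $f$ of the comonadic resolution, and to instead argue that $f$ commutes with the relevant totalization \emph{after} noting that, because $H$ is of finite type (so $BH$ has finite cohomological dimension for weak actions), the resolution is effectively finite in each degree range — i.e.\ for computing $\tau^{\leq N} f(c)$ only finitely many terms of the cosimplicial diagram matter. Concretely, one uses that $\on{Oblv}\,\on{Av}^w_*$ has bounded amplitude, so the associated comonad resolution converges in the t-structure in a controlled way (this is the weak-invariants analogue of the finiteness that makes $\on{Av}^w_*$ left t-exact up to shift in the first place). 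Alternatively, and perhaps more robustly, one can bypass totalizations entirely: it suffices to check left t-exactness up to shift on a generating family, and by conservativity of $\on{Oblv}$ together with the projection formula $\on{Av}^w_*(c) \otimes^{!}(\text{something})$ one reduces the claim for general $c \in \sC^{\geq 0}$ to $c$ of the form $\on{Oblv}(x)$ with $x \in (\sC^{H,w})^{\geq -a}$, for which the hypothesis on $f \circ \on{Oblv}$ applies directly. I would present the argument in this second form if the totalization bound proves delicate to state cleanly.
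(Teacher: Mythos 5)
Your ``only if'' direction is fine, and your instinct that the totalization step is the weak point is correct --- but the gap there is real and neither of your proposed fixes closes it. First, the resolution $c \cong \on{Tot}\bigl(\on{Oblv}\on{Av}^w_*(c) \rightrightarrows \cdots\bigr)$ is not available for a general $c \in \sC$: the coface maps of a cobar construction require a comodule structure on $c$ over the comonad $\on{Oblv}\circ\on{Av}^w_*$, i.e.\ a lift of $c$ to $\sC^{H,w}$, which is precisely what a general object lacks. (What does exist for objects of $\sC$ is the bar resolution, a geometric realization --- but a colimit of uniformly coconnective objects need not be coconnective, since each skeleton can drop connectivity by one.) Second, even granting a limit presentation of $c$, the identity $f(c) = \on{Tot}_n\,(f\circ\on{Oblv})(x_n)$ is unjustified: $f$ is merely an exact continuous functor and does not commute with totalizations. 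Your fallback of ``checking on a generating family'' also fails for \emph{left} t-exactness, because $\sD^{\geq -N}$ is not closed under the colimits used to generate; and the appeal to the projection formula is not an argument as written.

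The missing idea --- and what the paper actually does --- is to finitely resolve the \emph{identity functor} rather than the object. Identify $\on{Oblv}\circ\on{Av}^w_*$ with the convolution $\sO_H\star-$. After normalizing shifts, the hypothesis gives $f(\sO_H\star\sF)\in\sD^{\geq 0}$ for $\sF\in\sC^{\geq 0}$, hence $f(\sM\star\sF)\in\sD^{\geq 0}$ for every projective coherent sheaf $\sM$ on $H$, since such $\sM$ is a direct summand of a finite free $\sO_H$-module. Because $H$ is smooth and of finite type, the skyscraper sheaf $1_H$ at the identity admits a \emph{finite} resolution by projective coherent sheaves, and $1_H\star-\cong\on{id}_{\sC}$; therefore $f(\sF)\cong f(1_H\star\sF)$ is a finite iterated extension of shifts (bounded by the length of the resolution) of objects of $\sD^{\geq 0}$, hence lies in $\sD^{\geq -\dim H}$ uniformly in $\sF$. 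This finite resolution of $1_H$ is exactly the uniform finiteness input (``finite cohomological dimension'') that you gestured at but did not pin down, and it replaces the infinite cosimplicial resolution by a finite complex of functors, each term of which is covered by the hypothesis.
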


\begin{proof}
The only if case follows immediately from Lemma \ref{compatiblelemma}. Let us treat the if case. Taking an appropriate shift, we may assume that $f$ is left t-exact.

Let $\sF$ be an object in $\sC^{\geq 0}$. Identifying $\on{Oblv}\circ\on{Av}^{H,w}$ with the convolution functor $\sO_H\star-:\sC\rightarrow \sC$, we have
\[
f(\sO_H\star\sF)\cong f(\on{Oblv}(\on{Av}^{H,w}\sF))\in \sD^{\geq 0},
\]
using Lemma \ref{compatiblelemma} again. It follows that for any projective coherent sheaf $\sM$ on $H$, we again have
\[
f(\sM\star\sF)\in \sD^{\geq 0}.
\]

As $H$ is smooth, any coherent sheaf on $H$ has a finite resolution by projective coherent sheaves. In particular, the skyscraper sheaf at the identity (which we denote by $1_H$) admits such a resolution. Thus, we can conclude that
\[
f(\sF)\cong f(1_H\star\sF)\in \sD^+
\]
as desired.
\end{proof}

\subsection{Reduction step}

Before stating the key lemma, let us introduce a convenient notational shorthand. Assume we have a functor $f:\sC\rightarrow \sD$, where $\sC$ is a $G((t))$-category and $\sD$ is any category. For $K\subseteq G((t))$ a compact open subgroup and $\sG$ a compact object of $D(G((t))/K)$, we will write $f_{\sG}|_{K}$ for the functor $f(\sG\star-): \sC^K\rightarrow \sD$. When $\sG$ is the skyscraper sheaf at the identity, $f_{\sG}|_K$ can be identified with the composition $\sC^K\rightarrow \sC\rightarrow \sD$ and will be denoted simply by $f|_K$.

\begin{lemma}\label{boundednessreductionstep}
Let $\sC$ be a $G((t))$-category with a compatible t-structure, and let $\sD$ be a category with a t-structure. Assume that $f:\sC\rightarrow \sD$ is a functor satisfying the following two properties.
\begin{itemize}
\item For any compact object $\sG$ in $D(G((t))/I_0)$, $f_{\sG}|_{I_0}$ is left t-exact up to shift.
\item For any point $x\in\mathbb{X}_*(T)\otimes\mathbb{R}$, real number $r>0$, and compact object $\sG\in D(G((t))/K_{x,r+})$, the restriction of $f_{\sG}|_{K_{x,r+}}$ to $\sC^{K_{x,r+},\circ}$ is left t-exact up to shift.
\end{itemize}
Then for any compact open subgroup $K\subseteq G((t))$ and any compact object $\sG\subseteq D(G((t))/K)$, $f_{\sG}|_K$ is left t-exact up to shift.
\end{lemma}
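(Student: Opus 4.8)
The plan is to mirror the induction in the proof of Lemma~\ref{exactnessreductionstep}, replacing the equivariant functors $f^{K_{x,r+}}$ there by the convolution-twisted functors $f_{\sG}|_{K_{x,r+}}$, and weakening ``left t-exact'' to ``left t-exact up to shift.'' Two adjustments are forced on us. First, $\sD$ carries no $G((t))$-action, so the $G((t))$-equivariance of $f$ --- which in \emph{loc.\ cit.}\ automatically transported exactness through all the relevant functors on the $\sD$-side --- is unavailable; in its place one uses Lemma~\ref{boundedinvariants}, which moves boundedness statements into and out of weak-invariant categories. Second, the weakening to ``up to shift'' is a genuine simplification, since the shift slack lets crude operations (finite resolutions, convolution with finite-dimensional-group kernels, Corollary~\ref{n-generation}) substitute for the delicate finite-dimensional Beilinson--Bernstein chase of Section~\ref{exactnessdepthzero}. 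I expect the main difficulty to be purely organizational: at each reduction one must exhibit a new \emph{compact} kernel on the relevant homogeneous space realizing the functor in play as a convolution, so that the appropriate instance of one of the hypotheses or of the inductive hypothesis becomes applicable.

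\textbf{Reduction to $K = K_{x,r+}$.} Every compact open subgroup of $G((t))$ contains some $K_{x,r+}$ (the Moy--Prasad subgroups are cofinal), and after conjugating --- and conjugating the hypotheses accordingly --- we may take $x\in X_*(T)\otimes\mathbb{R}$. The forgetful functor $\sC^{K}\to\sC^{K_{x,r+}}$ is fully faithful and t-exact, and $\sG$ pulled back along $G((t))/K_{x,r+}\to G((t))/K$ is a compact $\sG'\in D(G((t))/K_{x,r+})$ with $\sG'\star c\cong\sG\star c$ up to a fixed shift for $c\in\sC^{K}$; hence $f_{\sG}|_{K}$ is, up to shift, $f_{\sG'}|_{K_{x,r+}}$ precomposed with a t-exact functor. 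So it suffices to prove that $f_{\sG}|_{K_{x,r+}}$ is left t-exact up to shift for all $x$, all $r\geq 0$, and all compact $\sG\in D(G((t))/K_{x,r+})$. I would prove this by induction on $r$, legitimate since the depths that arise lie in the discrete set of Proposition~\ref{depthproperties}(1).

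\textbf{Base case $r=0$.} From the proof of Theorem~\ref{unrefineddepthzero}, for small $\epsilon>0$ one has $K_{x,0+}\subseteq K':=K_{x+\epsilon\check\rho,0+}\subseteq P_x$ with $K'/K_{x,0+}=N_x$ a maximal unipotent subgroup of $L_x$, and $K'=gI_0g^{-1}$ for some $g$; so, conjugating the first hypothesis, $f_{\sH}|_{K'}:\sC^{K'}\to\sD$ is left t-exact up to shift for every compact $\sH$. By Corollary~\ref{n-generation} applied to the $L_x$-category $\sC^{K_{x,0+}}$, $\sC^{K_{x,0+}}\cong D(L_x/N_x)\otimes_{D(N_x\backslash L_x/N_x)}\sC^{K'}$, so every compact object of $\sC^{K_{x,0+}}$ is a finite colimit of objects $\sP\overset{N_x}{\star}c'$ with $\sP\in D(L_x/N_x)$ and $c'\in\sC^{K'}$ compact. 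A direct manipulation of convolutions identifies $\sG\star(\sP\overset{N_x}{\star}c')$ with $\sH_{\sG,\sP}\star c'$ for a suitable compact $\sH_{\sG,\sP}\in D(G((t))/K')$; applying $f$ and the first hypothesis, $f(\sG\star(\sP\overset{N_x}{\star}c'))$ is left t-exact up to shift in $c'$. A standard reduction to such compact objects --- using that the t-structure on $\sC^{K_{x,0+}}$ is right complete and compatible with filtered colimits --- then shows $f_{\sG}|_{K_{x,0+}}$ is left t-exact up to shift.

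\textbf{Inductive step $r>0$.} Fix $x$ and a compact $\sG$, and assume the claim below depth $r$. The recollement on $\sC^{K_{x,r+}}$ attached to the decomposition of $(K_{x,r}/K_{x,r+})^*$ into its open semistable and closed unstable loci (Lemma~\ref{exactsequence}) gives, for $\sF\in\sC^{K_{x,r+}}$, an exact triangle $i_{\sC,*}i_{\sC}^!\sF\to\sF\to j_{\sC,*}j_{\sC}^!\sF\to[1]$. Applying the exact functor $f(\sG\star-)$, and using that $i_{\sC}^!$ is left t-exact and $j_{\sC}^!$ is t-exact, it suffices that $f(\sG\star j_{\sC,*}(-)):\sC^{K_{x,r+},\circ}\to\sD$ and $f(\sG\star i_{\sC,*}(-)):\sC^{K_{x,r+},us}\to\sD$ be left t-exact up to shift; the former is the second hypothesis. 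For the latter, Lemma~\ref{finiteorbits} stratifies the unstable locus into finitely many $L_x$-orbits, and iterating the same recollement argument leaves a single orbit $O$. By Lemma~\ref{geometricinput} choose $p\in O$ and $y$ with $K_{x,r+}\subseteq K_{y,r}\subseteq K_{x,r}$ and $p\in(K_{y,r}/K_{x,r+})^\perp$. As in Section~\ref{exactnesssection} there is a fully faithful t-exact $\iota_p:\sC^{K_{x,r+},p}\hookrightarrow\sC^{K_{y,r}}$ under which $i_{p,*}:\sC^{K_{x,r+},p}\to\sC^{K_{x,r+}}$ becomes $\on{Oblv}^{K_{y,r}}\circ\iota_p$ up to shift, whence $f(\sG\star i_{p,*}(-))$ agrees up to shift with $f((\pi'_!\sG)\star(-))\circ\iota_p$, where $\pi':G((t))/K_{x,r+}\to G((t))/K_{y,r}$ and $\pi'_!\sG$ is compact; since $K_{y,r}=K_{y,(r-\epsilon)+}$ has depth $<r$, the inductive hypothesis makes $f((\pi'_!\sG)\star-)$ left t-exact up to shift, hence so is $f(\sG\star i_{p,*}(-))$. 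Finally Lemma~\ref{orbitstabilizerequivalence} identifies $\sC^{K_{x,r+},O}$ with $D(P_x/K_{x,r+})\otimes_{S_p}\sC^{K_{x,r+},p}$, and --- arguing with convolution exactly as in the base case, or via two applications of Lemma~\ref{boundedinvariants} along the finite-type groups $P_x/K_{x,r+}$ and $S_p$ --- the bound transfers from $\sC^{K_{x,r+},p}$ to $\sC^{K_{x,r+},O}$, which closes the induction. The place where care is most needed is the identification of $\sG\star(-)$ composed with $\overset{N_x}{\star}$ (resp.\ with induction along $P_x/K_{x,r+}$) as again a convolution by a compact kernel out of a smaller invariant category, together with the verification that $D$-module convolution is insensitive up to shift to which congruence subgroup one works over.
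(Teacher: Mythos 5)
The genuine gap is in your depth-zero base case. You propose to deduce left t-exactness up to shift of $f_{\sG}|_{K_{x,0+}}$ from the $I_0$-hypothesis purely via Corollary \ref{n-generation}: every compact object of $\sC^{K_{x,0+}}$ is built from convolutions $\sP\overset{N_x}{\star}c'$ with $c'\in\sC^{K'}$, and a ``standard reduction'' handles the rest. This does not work. Generation under colimits carries no information about cohomological degrees: an object $\sF\in(\sC^{K_{x,0+}})^{\geq 0}$ is a colimit of objects $\sP_{\alpha}\overset{N_x}{\star}c'_{\alpha}$ in which the $c'_{\alpha}$ need not be bounded below, let alone uniformly so, while your hypothesis only controls $f(\sH\star c')$ for $c'$ coconnective (and likewise, compact objects need not be coconnective, so knowing $f_{\sG}$ on compacts says nothing about left t-exactness). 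What is actually needed is a t-structure-aware refinement of Corollary \ref{n-generation}, and producing it is the entire content of the paper's base case: one passes to weak $L_x$-invariants, uses the Beilinson--Bernstein adjunction $(\on{Loc},\Gamma)$ of Lemma \ref{finitebeiber} so that $\Gamma\circ\tau^{\geq 0}\circ\on{Loc}\cong\on{id}$ on coconnective objects (this is how coconnectivity is threaded through the reduction to $N_x$-invariants), then a Grothendieck--Cousin resolution of $\sO_{\ft^*_{\fh}//W_H}$ together with faithful flatness of $\ft_{\fh}^{*,\circ}\to\ft^*_{\fh}//W_H$ to reduce to skyscrapers admitting uniformly bounded projective resolutions, and finally Lemma \ref{boundedinvariants} to deequivariantize. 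Your guiding premise --- that the ``up to shift'' slack lets crude generation statements replace the localization machinery --- is exactly backwards here: the paper's boundedness base case uses all of the exactness machinery of Section \ref{exactnesssection} and more.

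Your reduction to $K=K_{x,r+}$ and your inductive step do follow the paper's proof closely (recollement along the semistable/unstable decomposition, reduction to a single $L_x$-orbit via Lemma \ref{finiteorbits}, the point $p$ and subgroup $K_{y,r}$ from Lemma \ref{geometricinput}, and Lemma \ref{orbitstabilizerequivalence}). But the final transfer from the point $p$ to the orbit $O$ suffers from the same defect as the base case: it is not ``two applications of Lemma \ref{boundedinvariants}'' (that lemma concerns weak invariants for a finite type group, not the strong $S_p$-coinvariants appearing here), nor a formal convolution manipulation. The paper first handles $\sC^{K_{x,r+},p}\otimes D(P_x/K_{x,r+})\to\sD$ via the generation statement of Lemma 11.2.4 of \cite{raskinbeiber} (which does respect boundedness, because $\on{id}\otimes\Gamma_{\on{IndCoh}}(P_x/K_{x,r+},-)$ is left t-exact), and then runs a second Grothendieck--Cousin argument on $O$, combined with the $\on{Oblv}/\on{Av}_*$ adjunction for $S_p$, to control delta-sheaves at the points of $O$ uniformly. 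Any correct write-up has to supply these degree-controlled generation statements; they are the substance of the lemma.
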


\begin{proof}

As every compact open subgroup contains some $K_{x,r+}$, it suffices to show that each $f_{\sG}|_{K_{x,r+}}$ is left t-exact up to shift, for any $x\in X_*(T)\otimes\mathbb{R}$ and $r\geq 0$. We prove this by induction. At each step of the induction, it suffices to show left t-exactness up to shift of $f|_{K_{x,r+}}$. Indeed, the case of $f_{\sG}|_{K_{x,r+}}$ follows by replacing $f$ with $f_{\sG}$.

Again, we will treat the base case and inductive steps separately.

\subsubsection{Depth zero}

For the base case, we use the same setup as in Section \ref{exactnessdepthzero}. In that notation, we need to show the left t-exactness up to shift of $(\sC^{K_{x,0+}})^{N_x}\rightarrow \sD$. Once again, taking $H$ to be $L_x$ and $g$ to be $f|_{K_{x,0+}}$,this follows from a finite-dimensional lemma.

\begin{lemma}
Let $H$ be a reductive algebraic group over $k$, with Borel, Cartan, etc. subgroups denoted by $B_H,T_H$, etc. Let $\sA$ be a $H$-category with a compatible t-structure, $\sB$ be a category with a t-structure, and $g$ be a functor $\sA\rightarrow \sB$. If $g_{\sG}|_{N_H}$ is left t-exact up to shift for any compact $\sG\in D(H/N_H)$, then $g$ itself is left t-exact up to shift.
\end{lemma}

\begin{proof}
We claim that we have a commutative diagram
\begin{equation*}
\begin{tikzcd}
(\sA^{H,w}\underset{\QCoh(\ft^*_{\fh}//W_H)}{\otimes} \QCoh(\ft_{\fh}^{*,\circ}))^{\geq 0} \arrow[d,"\tau^{\geq 0}\circ\on{Loc}"] & \\
(\sA^{N_H})^{T_H,w}\underset{\QCoh(\ft^*_{\fh})}{\otimes} \QCoh(\ft_{\fh}^{*,\circ}) \arrow[d,"\Gamma"] \arrow[r] & (\sA^{N_H})^{T_H,w} \arrow[d]\\
\sA^{H,w}\underset{\QCoh(\ft^*_{\fh}//W_H)}{\otimes} \QCoh(\ft_{\fh}^{*,\circ}) \arrow[r] & \sA\arrow[d,"g"] \\
& \sB
\end{tikzcd}
\end{equation*}
where the functor from $(\sA^{N_H})^{T_H,w}$ to $\sA$ is given by $x\mapsto(\sD_{H/N_H}\overset{N_H}{\star}x)^{T_H}$. Indeed, to check the commutativity of the middle square it suffices to consider the universal case of $\sA\cong D(H)$, where the commutativity follows by unwinding the definition of $\Gamma$.

Note that we have $\Gamma\circ\tau^{\geq 0}\circ\on{Loc}\cong\tau^{\geq 0}\circ\Gamma\circ\on{Loc}\cong\tau^{\geq 0}\cong\on{id}$ on $(\sA^{H,w}\otimes_{\QCoh(\ft^*_{\fh}//W_H)}\QCoh(\ft_{\fh}^{*,\circ}))^{\geq 0}$. In particular, to show left t-exactness up to shift of the composition
\[
\sA^{H,w}\underset{\QCoh(\ft^*_{\fh}//W_H)}{\otimes} \QCoh(\ft_{\fh}^{*,\circ})\rightarrow \sA\rightarrow \sB,
\]
it suffices to show that the composed arrow from top left to bottom right lands in $\sB^+$. As $\tau^{\geq 0}\circ\on{Loc}$ is left t-exact by definition and $(\sA^{N_H})^{T_H,w}\otimes_{\QCoh(\ft^*_{\fh})}\QCoh(\ft_{\fh}^{*,\circ}) \rightarrow(\sA^{N_H})^{T_H,w}$ is t-exact by Lemma \ref{flattstructure}, we are reduced to showing that $(\sA^{N_H})^{T_H,w}\rightarrow \sB$ is left t-exact up to shift.

This functor is given by $x\mapsto g((\sD_{H/N_H}\overset{N_H}{\star}x)^{T_H})$. As $T_H$ is a torus, this is a direct summand of the functor $x\mapsto g(\sD_{H/N_H}\overset{N_H}{\star} x)$, which is left t-exact up to shift by assumption.

Next, recall that Lemma \ref{flattstructure} gives us a t-exact adjoint pair
\[
p^*:\sA^{H,w}\rightleftarrows \sA^{H,w}\underset{\QCoh(\ft^*_{\fh}//W_H)}{\otimes} \QCoh(\ft_{\fh}^{*,\circ}):p_*.
\]
Let us use this to show that $\sA^{H,w}\rightarrow \sB$ is left t-exact up to shift.

The Grothendieck-Cousin complex gives a finite resolution
\[
\sO_{\ft^*_{\fh}//W_H}\rightarrow\displaystyle\bigoplus_{x\in\ft^*_{\fh}//W_H\mid\on{codim}x=0}\sE_x\rightarrow\displaystyle\bigoplus_{x\in\ft^*_{\fh}//W_H\mid\on{codim}x=1}\sE_x\rightarrow\cdots.
\]
Here, for $x$ a (not necessarily closed) point of $\ft^*_{\fh}//W_H$, the sheaf $\sE_x$ is defined to be the colimit
\[
\on{colim}i_{k,*}i_k^!\sO_{\ft^*_{\fh}//W_H}[\on{codim} x]
\]
over the $k$th order neighborhood maps $i_k$.

Thus, for any object $\sF\in (\sA^{H,w})^{\geq 0}$, to show that $g(\sF)$ is bounded below, it suffices to show that the $g(\sE_x\otimes\sF)$ are bounded below uniformly in $x$. Using the above expression of $\sE_x$ as a filtered colimit, we see that it suffices to show the same for each $g(i_{k,*}i_k^!\sO_{\ft^*_{\fh//W_H}}[\on{codim} x]\otimes\sF)$. And as each coherent sheaf $i_{k,*}i_k^!\sO_{\ft^*_{\fh//W_H}}[\on{codim} x]$ is an iterated extension of skyscraper sheaves at $x$, it in fact suffices to show that the $g(k_x\otimes\sF)$ are uniformly bounded below.

Because $\ft_{\fh}^{*,\circ}\rightarrow\ft^*_{\fh}//W_H$ is faithfully flat, for any $x\in\ft^*_{\fh}//W_H$ there is a $y\in\ft_{\fh}^{*,\circ}$ mapping to $x$. Fix such $x$ and $y$. Then $k_x$ is a direct summand of the pushforward of $k_y$, and thus $k_x\otimes\sF$ is a direct summand of $p_*(k_y\otimes p^*\sF)$. Therefore, it suffices to show that the $g(p_*(k_y\otimes p^*\sF))$ are uniformly bounded below. Since we previously showed that $g\circ p_*$ is left t-exact up to shift, this follows from the $k_y\otimes p^*\sF$ being uniformly bounded below, which in turns follows from the existence of bounded length projective resolutions of $k_y$.

Finally, we can apply Lemma \ref{boundedinvariants} to deduce that $g$ is left t-exact up to shift, as desired.
\end{proof}

\subsubsection{Positive depth}

Assume that for all $s<r$ and $x\in X_*(T)\otimes\mathbb{R}$, each $f_{\sG}|_{K_{x,s+}}$ is left t-exact up to shift. We need to show, for some fixed such $x$, that $f|_{K_{x,r+}}$ is left t-exact up to shift. By replacing $f$ with $f_{\sG}$, our argument will also show the left t-exactness up to shift of $f_{\sG}|_{K_{x,r+}}$.

For any quasi-projective variety $V$ mapping to $(K_{x,r}/K_{x,r+})^*$, there is a functor
\[
f|_{K_{x,r+},V}:\sC^{K_{x,r+}}\underset{D((K_{x,r}/K_{x,r+})^*)}{\otimes} D(V)\rightarrow \sD.
\]
Assume that $U\subseteq V$ is an open subvariety with complement $Z$. Then, as in the proof of Lemma \ref{exactnessreductionstep}, the left t-exactness up to shift of $f|_{K_{x,r+},V}$ is reduced to the same property for $f|_{K_{x,r+},U}$ and $f|_{K_{x,r+},Z}$. Once again, this reduces the desired statement to showing that $f|_{K_{x,r+},O}$ is left t-exact up to shift for any $L_x$-orbit $O$ in $(K_{x,r}/K_{x,r+})^{*,us}$.

By Lemma \ref{geometricinput}, there is a point $p\in O$ and a $y\in X_*(T)\otimes\mathbb{R}$ such that $K_{x,r+}\subseteq K_{y,r}\subseteq K_{x,r}$ and $p\in(K_{y,r}/K_{x,r+})^{\perp}$. Then $f|_{K_{x,r+},p}$ can be identified with the composition
\[
\sC^{K_{x,r+},p}\rightarrow \sC^{K_{x,r+},(K_{y,r}/K_{x,r+})^{\perp}}\cong \sC^{K_{y,r}}\rightarrow \sD,
\]
which is left t-exact up to shift by Lemma \ref{exactsequence} and the inductive hypothesis.

Let $S_p$ be the stabilizer group of $p$ with respect to the action of $P_x/K_{x,r+}$ on $O$. By Lemma \ref{orbitstabilizerequivalence}, we have a t-exact equivalence
\[
\sC^{K_{x,r+},O}\cong \sC^{K_{x,r+},p}\underset{S_p}{\otimes} D(P_x/K_{x,r+}).
\]
Thus, it suffices to show that $\sC^{K_{x,r+},p}\otimes_{S_p}D(P_x/K_{x,r+})\rightarrow \sD$ is left t-exact up to shift.

First we show this for $\sC^{K_{x,r+},p}\otimes D(P_x/K_{x,r+})\rightarrow \sD$. Let $\sF$ be an object of $(\sC^{K_{x,r+},p}\otimes D(P_x/K_{x,r+}))^+$. By Lemma 11.2.4 of \cite{raskinbeiber}, $\sF$ lies in the subcategory generated under finite colimits and direct summands by the object
\[
(\on{id}_{\sC^{K_{x,r+},p}}\otimes\Gamma_{\on{IndCoh}}(P_x/K_{x,r+},-))(\sF)\boxtimes D_{P_x/K_{x,r+}}.
\]

It thus suffices to show that
\[
f_{D_{P_x/K_{x,r+}}}((\on{id}_{\sC^{K_{x,r+},p}}\otimes\Gamma_{\on{IndCoh}}(P_x/K_{x,r+},-))(\sF))\in \sD^+,
\]
or, from the inductive hypothesis, that
\[
(\on{id}_{\sC^{K_{x,r+},p}}\otimes\Gamma_{\on{IndCoh}}(P_x/K_{x,r+},-))(\sF)\in (\sC^{K_{x,r+},p})^+.
\]
This is true as $\on{id}_{\sC^{K_{x,r+},p}}\otimes\Gamma_{\on{IndCoh}}(P_x/K_{x,r+},-)$ is left t-exact, because $\Gamma_{\on{IndCoh}}(P_x/K_{x,r+},-)$ is left t-exact and admits a left adjoint.

Now let $\sG$ be an object in $(\sC^{K_{x,r+},O})_{\geq 0}$. We would like to show that $f(\sG)\in \sD^+$. To do so, we use a Grothendieck-Cousin argument as in the depth zero case. Recall that we have a resolution
\[
\sO_{O}\rightarrow\displaystyle\bigoplus_{y\in O\mid\on{codim}y=0}\sE_y\rightarrow\displaystyle\bigoplus_{y\in O\mid\on{codim}y=1}\sE_y\rightarrow\cdots.
\]
This time, we will use the interpretation of $\sE_y$ as the D-module of delta functions at $y$. Evidently, it suffices to show that the $f(\sG\otimes\sE_y)$ are bounded below uniformly in $y$.

Recall that we have adjoint functors
\[
\on{Oblv}:\sC^{K_{x,r+},p}\underset{S_p}{\otimes} D(P_x/K_{x,r+})\rightleftarrows \sC^{K_{x,r+},p}\otimes D(P_x/K_{x,r+}):\on{Av}_*,
\]
with $\on{Oblv}$ t-exact. Pick some point $z\in P_x/K_{x,r+}$ with $z\cdot p=y$. Then $\sG\otimes\sE_y$ is a direct summand of $\on{Av}_*(\on{Oblv}\sG\otimes\sE_z)$, and so $f(\sG\otimes\sE_y)$ is a direct summand of $f(\on{Av}_*(\on{Oblv}\sG\otimes\sE_z))$. As we previously showed that $f\circ\on{Av}_*$ is left t-exact up to shift, we are reduced to showing that the $\on{Oblv}\sG\otimes\sE_z$ are bounded below.

For this, we invoke Lemma 11.2.4 of \cite{raskinbeiber} again. We see that $\on{Oblv}\sG$ lies in the subcategory generated under finite colimits and direct summands by the object
\[
(\on{id}_{\sC^{K_{x,r+},p}}\otimes\Gamma_{\on{IndCoh}}(P_x/K_{x,r+},-))(\on{Oblv}\sG)\boxtimes D_{P_x/K_{x,r+}},
\]
 so $\on{Oblv}\sG\otimes\sE_z$ lies in the subcategory generated under finite colimits and direct summands by the object
\[
(\on{id}_{\sC^{K_{x,r+},p}}\otimes\Gamma_{\on{IndCoh}}(P_x/K_{x,r+},-))(\on{Oblv}\sG)\boxtimes (D_{P_x/K_{x,r+}}\otimes\sE_z).
\]
But as $(D_{P_x/K_{x,r+}}\otimes\sE_z)$ is uniformly bounded below, so is this object, and thus so is $\on{Oblv}\sG\otimes\sE_z$, as desired.

\end{proof}

\subsection{Proof of Lemma \ref{boundedness}}

Lemma \ref{boundedness} now follows.

\begin{proof}
We need to check the conditions of Lemma \ref{boundednessreductionstep}. The statement on $(K_{x,r+},\circ)$-invariants follows immediately from Theorem \ref{grdepthzero}. And the statement on $I_0$-invariants is just Lemma \ref{i0boundedness}.
\end{proof}

\appendix

\section{Tensor products and t-structures}

As in the rest of this paper, all categories will be assumed presentable and all t-structures will be assumed accessible, right complete, and compatible with filtered colimits. We will often implicitly invoke 1.4.4.11 of \cite{HA}, which states that the data of an accessible t-structure on a presentable category $\sC$ is equivalent to the data of a full subcategory $\sC^{\leq 0}$ which is presentable, closed under small colimits, and closed under extensions.\footnote{To avoid confusion, we note that we use the cohomological degree convention, while \cite{HA} uses the homological convention. In particular, the category we call $\sC^{\leq 0}$ is instead referred to as $\sC_{\geq 0}$ in \emph{loc. cit}.} Furthermore, for any small collection of objects of $\sC$, the category $\sC^{\leq 0}$ generated by them under colimits and extensions is presentable (and thus defines a t-structure.)

\subsection{Tensor product}

Let $\sC$ and $\sD$ be two categories equipped with t-structures. Then we can equip $\sC\otimes \sD$ with a t-structure by defining $(\sC\otimes \sD)^{\leq 0}$ to be generated (under colimits and extensions) by objects of the form $c\boxtimes d$, with $\sC$ in $\sC^{\leq 0}$ and $d$ in $\sD^{\leq 0}$. This t-structure will be right complete and compatible with filtered colimits by Theorem C.4.2.1 in \cite{SAG}.

\begin{lemma}\label{rightexacttensor}
Let $\sC$ and $\sC'$ be categories equipped with t-structures and let $f:\sC\rightarrow \sC'$ be a right t-exact functor. Then, for any category $\sD$ with a t-structure,
\[
f\otimes \on{id}_{\sD}:\sC\otimes \sD\rightarrow \sC'\otimes \sD
\]
is right t-exact.
\end{lemma}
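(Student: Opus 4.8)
The plan is to check right t-exactness directly on the generators of the target t-structure. Recall that, by construction, $(\sC\otimes\sD)^{\leq 0}$ is the smallest full subcategory of $\sC\otimes\sD$ that contains all objects $c\boxtimes d$ with $c\in\sC^{\leq 0}$ and $d\in\sD^{\leq 0}$ and is closed under small colimits and extensions. So it suffices to exhibit a full subcategory $\sE\subseteq\sC\otimes\sD$ with these three closure/containment properties such that $(f\otimes\on{id}_\sD)(\sE)\subseteq(\sC'\otimes\sD)^{\leq 0}$; the natural candidate is to take $\sE$ to be the full preimage of $(\sC'\otimes\sD)^{\leq 0}$ under $f\otimes\on{id}_\sD$.

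First I would note that $f\otimes\on{id}_\sD$ is continuous and exact. Continuity is implicit in even forming the functor: it is the image of $f$ under the symmetric monoidal structure on $\DGCat$, which is defined on continuous functors; exactness is automatic since it is a functor of stable categories. Consequently $f\otimes\on{id}_\sD$ preserves small colimits and carries cofiber sequences to cofiber sequences. Since $(\sC'\otimes\sD)^{\leq 0}$ is by definition closed under small colimits and extensions, its preimage $\sE$ inherits both properties. It remains to see that $\sE$ contains the generating objects: for $c\in\sC^{\leq 0}$ and $d\in\sD^{\leq 0}$ we have $(f\otimes\on{id}_\sD)(c\boxtimes d)\cong f(c)\boxtimes d$, and right t-exactness of $f$ gives $f(c)\in\sC'^{\leq 0}$, so $f(c)\boxtimes d$ is one of the generators of $(\sC'\otimes\sD)^{\leq 0}$ and in particular lies there. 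Hence $c\boxtimes d\in\sE$, and therefore $(\sC\otimes\sD)^{\leq 0}\subseteq\sE$, which is precisely the claim.

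There is no real obstacle here; the proof is a formal consequence of the defining properties of the tensor-product t-structure together with the fact (recalled from \cite{HA} 1.4.4.11) that connective subcategories of stable categories are closed under colimits and extensions. The only point worth stating explicitly is that closure of $\sE$ under extensions uses exactness of $f\otimes\on{id}_\sD$.
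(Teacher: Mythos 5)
Your proof is correct and is essentially the paper's argument: both reduce to checking that the generators $c\boxtimes d$ with $c\in\sC^{\leq 0}$, $d\in\sD^{\leq 0}$ are sent into $(\sC'\otimes\sD)^{\leq 0}$, which follows from right t-exactness of $f$. You merely spell out the justification for the reduction (continuity and exactness of $f\otimes\on{id}_{\sD}$, hence closure of the preimage under colimits and extensions) that the paper leaves implicit.
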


\begin{proof}
We need to show that $f\otimes \sD$ sends $(\sC\otimes \sD)^{\leq 0}$ to $(\sC'\otimes \sD)^{\leq 0}$. It suffices to check that, for $c$ in $\sC^{\leq 0}$ and $d$ in $\sD^{\leq 0}$,
\[
(f\otimes \on{id}_{\sD})(c\boxtimes d)\cong f(c)\boxtimes d\in(\sC'\otimes \sD)^{\leq 0}.
\]
But this follows from the right t-exactness of $f$.
\end{proof}

The stability of left t-exactness under tensor product is more subtle. We start with a simple observation.

\begin{lemma}\label{adjointtensor}
Let $\sC$ and $\sC'$ be categories equipped with t-structures and let $f:\sC\rightarrow \sC'$ be a left t-exact functor with a left adjoint $f^L$. Then, for any category $\sD$ with a t-structure,
\[
f\otimes \on{id}_{\sD}:\sC\otimes \sD\rightarrow \sC'\otimes \sD
\]
is left t-exact.
\end{lemma}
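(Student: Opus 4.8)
The plan is to deduce this from Lemma \ref{rightexacttensor} by passing to left adjoints. The mechanism is the standard adjunction-theoretic reformulation of left t-exactness: for any continuous functor $g \colon \sX \to \sY$ between categories with t-structures admitting a left adjoint $g^L$, the functor $g$ is left t-exact if and only if $g^L$ is right t-exact. One direction of this is all I need, but both follow the same way: for $x \in \sX^{\geq 0}$ and $y \in \sY^{\leq -1}$ one has $\Hom_{\sY}(y, g(x)) \cong \Hom_{\sX}(g^L(y), x)$, and since an object of a category with a t-structure lies in $\sX^{\leq -1}$ exactly when it admits no nonzero maps to objects of $\sX^{\geq 0}$, and lies in $\sY^{\geq 0}$ exactly when it admits no nonzero maps from objects of $\sY^{\leq -1}$, the containment $g(\sX^{\geq 0}) \subseteq \sY^{\geq 0}$ is equivalent to $g^L(\sY^{\leq -1}) \subseteq \sX^{\leq -1}$, i.e.\ (after an evident shift) to right t-exactness of $g^L$.

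With this in hand the proof is immediate. First, I observe that $f \otimes \on{id}_{\sD}$ admits $f^L \otimes \on{id}_{\sD}$ as a left adjoint: forming left adjoints is compatible with the functor $- \otimes \sD$ on $\DGCat$, and $f^L$, being a left adjoint between stable presentable categories, is continuous and exact, hence a morphism in $\DGCat$. Next, since $f$ is left t-exact by hypothesis, the reformulation above shows $f^L$ is right t-exact. Then Lemma \ref{rightexacttensor} applies to $f^L$ and yields that $f^L \otimes \on{id}_{\sD}$ is right t-exact. Finally, applying the reformulation in the reverse direction to $g = f \otimes \on{id}_{\sD}$, whose left adjoint we have just shown to be right t-exact, we conclude that $f \otimes \on{id}_{\sD}$ is left t-exact.

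I do not anticipate a real obstacle here; the content is entirely formal, and the only points to verify are the bookkeeping ones flagged above --- compatibility of left adjoints with $- \otimes \sD$, and the fact that the tensor-product t-structure of the previous subsection is right complete and accessible, so that the $\Hom$-vanishing characterizations of the coconnective and connective parts are legitimate. Both are already in place. I note in passing that a more hands-on argument checking directly that $f \otimes \on{id}_{\sD}$ preserves coconnective objects, using the generating objects $c \boxtimes d$, looks awkward precisely because coconnectivity of an object cannot be tested on a set of generators; this is exactly why routing through the left adjoint --- where the relevant condition \emph{is} a containment amenable to generators, as in Lemma \ref{rightexacttensor} --- is the efficient path.
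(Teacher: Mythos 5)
Your proof is correct and is essentially identical to the paper's: the paper likewise observes that left t-exactness of $f$ is equivalent to right t-exactness of $f^L$, applies Lemma \ref{rightexacttensor} to conclude $f^L\otimes\on{id}_{\sD}$ is right t-exact, and then passes back to the right adjoint $f\otimes\on{id}_{\sD}$. Your extra remarks (the $\Hom$-vanishing characterization and the compatibility of $-\otimes\sD$ with adjunctions) are just the standard justifications the paper leaves implicit.
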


\begin{proof}
Recall that left t-exactness of $f$ is then equivalent to right t-exactness of $f^L$. By Lemma \ref{rightexacttensor}, $f^L\otimes\on{id}_{\sD}$ is right t-exact, so its right adjoint $f\otimes\on{id}_{\sD}$ must be left t-exact, as desired.
\end{proof}

We will prove two other criteria for a tensor product to be left t-exact. Both revolve around the following notion.

\begin{definition}
Let $\sD$ be a category equipped with a t-structure. We say that the t-structure is compactly generated if $\sD^{\leq 0}$ is generated under colimits by $\sD^{\leq 0}\cap \sD^c$.
\end{definition}

\begin{example}
Let $X$ be a quasiseparated scheme of finite type over a field. Then the natural t-structures on $\on{QCoh}(X)$, $\on{IndCoh}(X)$, and $D(X)$ are all compactly generated. For $\IndCoh(X)$, this is because $\on{Coh}(X)$ is closed under truncations, and the case of $D(X)$ follows as $\on{ind}(\IndCoh(X)_{\leq 0})$ generates $D(X)$. For $\QCoh(X)$, this follows from a slight modification of the arguments in \cite{ThoTro}.
\end{example}

For $\sD$ a DG category and $\sF \in \sD^c$ a compact object,
we let $\bD \sF:\sD \to \on{Vect}$ denote the induced functor
$\Hom(\sF,-)$.

\begin{lemma}\label{l:hom-left-pres}

Let $\sC$ and $\sD$ be DG categories with t-structures. Suppose 
$\sF \in \sD^c \cap \sD^{\leq 0}$. 
Then the induced functor:
\[
\on{id}_{\sC} \otimes \bD \sF : \sC \otimes \sD \to \sC \otimes \on{Vect} = 
\sC
\]

\noindent is left t-exact.

\end{lemma}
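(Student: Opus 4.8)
The plan is to realize $\on{id}_\sC\otimes\bD\sF$ as the right adjoint of a right t-exact functor, after which left t-exactness is automatic, since the right adjoint of a right t-exact functor is always left t-exact (by the standard $\Hom$-vanishing argument). Since $\sF$ is compact, the functor $\bD\sF=\Hom(\sF,-):\sD\to\on{Vect}$ is continuous, and it admits a left adjoint, namely the copower functor $g:\on{Vect}\to\sD$, $V\mapsto V\otimes_k\sF$. The hypothesis $\sF\in\sD^{\leq 0}$ is exactly what makes $g$ right t-exact, or equivalently $\bD\sF$ left t-exact: for $d\in\sD^{\geq 0}$ and $i<0$ one has $d[i]\in\sD^{\geq 1}$, so $\Hom(\sF,d)^{i}=\Hom(\sF,d[i])=0$ by the vanishing $\Hom(\sD^{\leq 0},\sD^{\geq 1})=0$; hence $\Hom(\sF,d)\in\on{Vect}^{\geq 0}$.

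Next I would tensor with $\sC$. The equivalence $\sC\otimes\on{Vect}\cong\sC$ is t-exact, and the braiding $\sC\otimes\sD\cong\sD\otimes\sC$ is t-exact because the t-structure on a tensor product of DG categories is defined symmetrically in terms of the objects $c\boxtimes d$ (see the start of Appendix~A). Applying Lemma \ref{rightexacttensor} to the right t-exact functor $g$, with the auxiliary category in that lemma taken to be $\sC$, and transporting along these t-exact equivalences, shows that $\on{id}_\sC\otimes g:\sC\cong\sC\otimes\on{Vect}\to\sC\otimes\sD$ is right t-exact. Since $g$ and $\bD\sF$ are both continuous, the adjunction $g\dashv\bD\sF$ is carried by the functor $\sC\otimes(-)$ on $\DGCat$ to an adjunction $\on{id}_\sC\otimes g\dashv\on{id}_\sC\otimes\bD\sF$. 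As $\on{id}_\sC\otimes g$ is right t-exact, its right adjoint $\on{id}_\sC\otimes\bD\sF$ is left t-exact, which is the assertion.

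There is no genuine obstacle here: the argument is entirely formal once the left adjoint $g$ of $\bD\sF$ has been identified. The only points meriting a moment's care are that the right t-exactness of $g$ is nothing but a restatement of $\sF\in\sD^{\leq 0}$, that the braiding is t-exact (so that Lemma \ref{rightexacttensor} may be applied with $\on{Vect}$ in the distinguished slot rather than $\sC$), and that the adjunction can be tensored up because both functors appearing in it are continuous.
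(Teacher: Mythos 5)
Your proof is correct and is essentially the paper's argument: the paper likewise observes that $-\otimes\sF \dashv \bD\sF$ is an adjunction internal to $\DGCat$, deduces left t-exactness of $\bD\sF$ from $\sF\in\sD^{\leq 0}$, and then cites Lemma \ref{adjointtensor} (whose proof is exactly your tensoring-the-adjunction step via Lemma \ref{rightexacttensor}). You have merely inlined that lemma rather than quoting it.
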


\begin{proof}

Note that the functors $- \otimes \sF:\on{Vect} \rightleftarrows
\sD: \bD \sF$ are adjoint in the symmetric monoidal 
2-category $\DGCat$. As $-\otimes\sF$ is right t-exact, we see that $\bD \sF$ is left t-exact. The result now follows from Lemma \ref{adjointtensor}.

\end{proof}

Our first left t-exactness criterion appears, e.g., as Lemma B.6.2. of \cite{Whit}, but we reproduce it here for the reader's convenience.

\begin{proposition}\label{cgtensor1}
Let $\sC$ and $\sC'$ be categories equipped with t-structures and let $f:\sC\rightarrow \sC'$ be a left t-exact functor. Then, for any category $\sD$ with a compactly generated t-structure,
\[
f\otimes \on{id}_{\sD}:\sC\otimes \sD\rightarrow \sC'\otimes \sD
\]
is left t-exact.
\end{proposition}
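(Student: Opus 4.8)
The plan is to prove left t-exactness of $f\otimes\on{id}_{\sD}$ by detecting coconnectivity in $\sC'\otimes\sD$ using the left t-exact functors furnished by Lemma \ref{l:hom-left-pres}. The first step is to record the following \emph{detection principle}: for $N\in\sC'\otimes\sD$, one has $N\in(\sC'\otimes\sD)^{\geq 0}$ if and only if $(\on{id}_{\sC'}\otimes\bD\sF)(N)\in(\sC')^{\geq 0}$ for every $\sF\in\sD^c\cap\sD^{\leq 0}$. Granting this, the Proposition is formal: given $M\in(\sC\otimes\sD)^{\geq 0}$ and any such $\sF$, functoriality of $\otimes$ in $\DGCat$ identifies $(\on{id}_{\sC'}\otimes\bD\sF)\circ(f\otimes\on{id}_{\sD})$ with $f\otimes\bD\sF\cong f\circ(\on{id}_{\sC}\otimes\bD\sF)$; by Lemma \ref{l:hom-left-pres} the object $(\on{id}_{\sC}\otimes\bD\sF)(M)$ lies in $\sC^{\geq 0}$, and left t-exactness of $f$ then places its image under $f$ in $(\sC')^{\geq 0}$. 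By the detection principle, $(f\otimes\on{id}_{\sD})(M)\in(\sC'\otimes\sD)^{\geq 0}$, as needed.

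It remains to prove the detection principle. The ``only if'' direction is exactly Lemma \ref{l:hom-left-pres}. For ``if'', recall from the construction of the tensor-product t-structure that $(\sC'\otimes\sD)^{\leq -1}=(\sC'\otimes\sD)^{\leq 0}[1]$ is generated under colimits and extensions by the objects $c'\boxtimes d$ with $c'\in(\sC')^{\leq -1}$ and $d\in\sD^{\leq 0}$, and that $N$ is coconnective precisely when $\on{Hom}_{\sC'\otimes\sD}(X,N)=0$ for every such generator $X$ (the class of $X$ with vanishing $\on{Hom}$ into $N$ is closed under colimits, extensions and shifts). Fix $c'\in(\sC')^{\leq -1}$. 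Since $c'\boxtimes-$ preserves colimits and $\on{Hom}(-,N)$ sends them to limits, the class of $d\in\sD$ with $\on{Hom}_{\sC'\otimes\sD}(c'\boxtimes d,N)=0$ is closed under colimits; by compact generation of the t-structure on $\sD$ it therefore suffices to treat $d=\sF\in\sD^c\cap\sD^{\leq 0}$. For such $\sF$, the adjunction $-\otimes\sF:\on{Vect}\rightleftarrows\sD:\bD\sF$ in $\DGCat$ tensors up to $\on{id}_{\sC'}\otimes(-\otimes\sF):\sC'\rightleftarrows\sC'\otimes\sD:\on{id}_{\sC'}\otimes\bD\sF$, whose left adjoint carries $c'$ to $c'\boxtimes\sF$; hence $\on{Hom}_{\sC'\otimes\sD}(c'\boxtimes\sF,N)\cong\on{Hom}_{\sC'}(c',(\on{id}_{\sC'}\otimes\bD\sF)(N))$, which vanishes because $c'\in(\sC')^{\leq -1}$ and $(\on{id}_{\sC'}\otimes\bD\sF)(N)\in(\sC')^{\geq 0}$ by hypothesis.

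The genuine content of the argument is concentrated entirely in the detection principle, and within that, in the single step where the hypothesis on $\sD$ enters: reducing the test objects $d\in\sD^{\leq 0}$ to compact ones, which is precisely what compact generation of the t-structure buys. Everything else is bookkeeping with adjunctions in $\DGCat$ and the defining generators of the tensor-product t-structure, so I do not anticipate a substantive obstacle; the difficulty has effectively been front-loaded into Lemma \ref{l:hom-left-pres}.
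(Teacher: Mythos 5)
Your proof is correct and follows essentially the same route as the paper's: reduce the test objects $d\in\sD^{\leq 0}$ to compact ones using compact generation of the t-structure, transport the $\on{Hom}$ computation to $\sC'$ via the adjunction $-\otimes\sF\dashv\bD\sF$, and conclude from Lemma \ref{l:hom-left-pres} together with left t-exactness of $f$. Packaging the argument as a two-sided ``detection principle'' is a mild reorganization of the same computation, not a different method.
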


\begin{proof}
Let $x$ be an object of $(\sC\otimes \sD)^{\geq 0}$. We would like to show that $(f\otimes\on{id}_{\sD})(x)\in(\sC'\otimes \sD)^{\geq 0}$, or equivalently, that
\[
\on{Hom}_{\sC'\otimes \sD}(c\otimes d[1], (f\otimes\on{id}_{\sD})(x))\cong 0
\]
for all $c\in \sC^{\leq 0}, d\in \sD^{\leq 0}$.

Because the t-structure on $\sD$ is compactly generated, we can assume that $d$ is compact. We compute:
\begin{align*}
&\on{Hom}_{\sC'\otimes \sD}(c\otimes d[1], (f\otimes\on{id}_{\sD})(x))\\
\cong&\on{Hom}_{\sC'}(c[1],(\on{id}_{\sC} \otimes \bD \sF)((f\otimes\on{id}_{\sD})(x)))\\
\cong&\on{Hom}_{\sC'}(c[1],f((\on{id}_{\sC} \otimes \bD \sF)(x))).\\
\end{align*}

Combining Lemma \ref{l:hom-left-pres} with the left t-exactness of $f$, we see that this Hom space is trivial, as desired.

\end{proof}

The other left t-exactness criterion appears below as Proposition \ref{cgtensor2}. We first need a preliminary lemma.

\begin{lemma}\label{l:pro}

Suppose $\sD$ is a DG category with a compactly generated
t-structure. 

Let $\sD^{\vee,\prime} \subset \sD^{\vee}$ denote the 
subcategory generated under colimits by objects of the form
$\bD \sF$ for $\sF \in \sD^c \cap \sD^{\leq 0}$.

Then $\lambda \in \sD^{\vee}$ lies in 
$\sD^{\vee,\prime}$ if and only if the functor $\lambda:\sD \to \on{Vect}$ is 
left t-exact. 

\end{lemma}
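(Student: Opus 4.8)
The plan is to prove the statement by a duality/adjunction argument, reducing the "only if" direction (the substantive one) to an explicit description of the subcategory $\sD^{\vee,\prime}$ as a localization. First I would record the easy "if" direction: the functors $\bD\sF$ for $\sF\in\sD^c\cap\sD^{\leq 0}$ are left t-exact by the argument of Lemma \ref{l:hom-left-pres} (as $-\otimes\sF:\on{Vect}\to\sD$ is right t-exact, its right adjoint $\bD\sF$ is left t-exact), and left t-exact functors $\sD\to\on{Vect}$ are closed under colimits computed in $\sD^{\vee}$ — here one uses that a colimit of functors $\lambda_i$ is left t-exact iff each $\lambda_i$ is, since for $d\in\sD^{\geq 1}$ the space $(\on{colim}_i\lambda_i)(d)=\on{colim}_i\lambda_i(d)$ is a colimit of connective spectra hence connective. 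Therefore $\sD^{\vee,\prime}$ is contained in the full subcategory $\sD^{\vee}_{\geq 0}$ of left t-exact functionals.

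For the converse, the key point is to exhibit $\sD^{\vee,\prime}$ as the essential image of a colocalization of $\sD^{\vee}$. Since the t-structure on $\sD$ is compactly generated, $\sD^{\leq 0}$ is generated under colimits by $\sG:=\sD^c\cap\sD^{\leq 0}$, which is a small set of generators closed under the relevant operations. The inclusion $\iota:\sD^{\vee,\prime}\hookrightarrow\sD^{\vee}$ is a colimit-preserving functor between presentable categories, hence admits a right adjoint $\iota^R$; I want to show $\iota$ is fully faithful and identify its essential image. Fully faithfulness amounts to checking that the objects $\bD\sF$, $\sF\in\sG$, behave like "corepresentables" on $\sD^{\vee,\prime}$; concretely, for $\lambda\in\sD^{\vee}$ one has $\on{Hom}_{\sD^{\vee}}(\bD\sF,\lambda)\cong\lambda(\sF)$ (pairing a compact object of $\sD$ against $\sD^{\vee}$), which is the defining co-Yoneda computation. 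It follows that $\iota^R\lambda$ is the colimit over the category of pairs $(\sF\in\sG,\ \sF\to \text{(something computing }\lambda))$ — more cleanly, $\iota^R\lambda = \on{colim}_{\sF\in\sG,\ \eta\in\lambda(\sF)}\bD\sF$, and the unit $\iota^R\lambda\to\lambda$ is an equivalence precisely when $\lambda$ is "generated in connective degrees" in the appropriate sense.

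The crux, then, is: $\lambda$ is left t-exact $\iff$ the canonical map $\iota\iota^R\lambda\to\lambda$ is an equivalence. One direction is immediate since the source is always in $\sD^{\vee,\prime}\subseteq\sD^{\vee}_{\geq 0}$. For the other, suppose $\lambda$ is left t-exact; I would test the cofiber $\mu:=\on{cofib}(\iota\iota^R\lambda\to\lambda)$ against the generators. Since $\sD^c\cap\sD^{\leq 0}$ generates $\sD^{\leq 0}$ under colimits, and both $\iota\iota^R\lambda$ and $\lambda$ are left t-exact, $\mu$ is left t-exact; moreover by construction $\mu(\sF)=0$ for every $\sF\in\sG$ (this is exactly the statement that $\iota^R$ "saturates" against the generators — the co-unit induces an iso on $\on{Hom}(\bD\sF,-)=\on{ev}_{\sF}$). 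Now a left t-exact functional $\mu$ with $\mu(\sF)=0$ for all $\sF\in\sD^c\cap\sD^{\leq 0}$ must vanish: for arbitrary $d\in\sD$, write $\tau^{\leq n}d$ as a colimit of shifts of objects of $\sG$ (using compact generation of each truncation), deduce $\mu(\tau^{\leq n}d)=0$ for all $n$, and conclude $\mu(d)=0$ by left-completeness of the t-structure together with left t-exactness of $\mu$ controlling the limit $\lim_n\mu(\tau^{\leq n}d)$. Hence $\mu=0$, so $\lambda\cong\iota\iota^R\lambda\in\sD^{\vee,\prime}$, as desired.

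I expect the main obstacle to be the last vanishing step: carefully justifying that a left t-exact functional killing all connective compacts is zero. The subtlety is that $\sD$ itself is not assumed left-complete, so one cannot naively recover $d$ from its truncations; the argument must instead run entirely on the level of connective objects (where compact generation of truncations is available) and use that $\mu$, being left t-exact, is determined up to arbitrarily high Postnikov stage by $\{\mu(\tau^{\leq n}d)\}$. Concretely, $\mu(d)\in\on{Vect}$ has $\pi_{-n}\mu(d)$ detected by $\mu(\tau^{\leq n-1}d)$ via the fiber sequence coming from $\tau^{\leq n-1}d\to \tau^{\leq n}d\to (\tau^{\geq n}\tau^{\leq n}d)$ and left t-exactness; since all these vanish, each homotopy group of $\mu(d)$ vanishes. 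Everything else — the adjunction $\iota\dashv\iota^R$, the co-Yoneda identification $\on{Hom}(\bD\sF,\lambda)\cong\lambda(\sF)$, and the closure of left t-exact functionals under colimits — is formal.
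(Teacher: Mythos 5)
Your argument is correct in its essentials but takes a genuinely different, more elaborate route than the paper. The paper works directly with the identification $\sD^{\vee}\cong\on{Ind}(\sD^{c,op})$ and checks the concrete membership criterion for $\on{Ind}(\sD^{c,op}\cap\sD^{\leq 0,op})$: a map $\bD\sF\to\lambda$ with $\sF$ compact is a point of $\Omega^{\infty}\lambda(\sF)\cong\Omega^{\infty}\lambda(\tau^{\leq 0}\sF)$ (by left t-exactness), and writing $\tau^{\leq 0}\sF$ as a filtered colimit of connective compacts produces the required factorization $\bD\sF\to\bD\sF_0\to\lambda$ at a finite stage. You instead build the colocalization $\iota\dashv\iota^R$, observe that the counit induces equivalences on $\Hom(\bD\sF,-)=\on{ev}_{\sF}$ for the generators, and kill the cofiber $\mu$ via your detection lemma: an (almost) left t-exact functional vanishing on $\sD^c\cap\sD^{\leq 0}$ vanishes on each $\sD^{\leq n}$, hence by the triangle $\tau^{\leq n}d\to d\to\tau^{\geq n+1}d$ satisfies $\mu(d)\in\on{Vect}^{\geq n}$ for all $n$, so $\mu(d)=0$. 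That lemma, and your observation that only left-separatedness of $\on{Vect}$ rather than left-completeness of $\sD$ is needed, are correct; both proofs ultimately hinge on the same move of replacing a compact test object by its connective truncation. Your route is longer but isolates a reusable vanishing criterion; the paper's is a short cofinality check.

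Two caveats. First, your claim that left t-exact functionals are closed under arbitrary colimits in $\sD^{\vee}$ is false: cofibers introduce a shift (already $\bD\sF[1]=\bD(\sF[-1])$ fails to be left t-exact), so the class is closed only under filtered colimits. This is harmless because $\sD^{\vee,\prime}$ must in any case be read, as the paper's own proof makes explicit, as $\on{Ind}(\sD^{c,op}\cap\sD^{\leq 0,op})$, i.e.\ the closure under filtered colimits; with a literal all-colimits reading the ``if'' direction of the lemma would fail (e.g.\ for $\sD=\on{Vect}$, $\sF=k$). Correspondingly your $\mu$ is only left t-exact up to a shift $[1]$, which still suffices for the vanishing argument. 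Second, the explicit formula $\iota^R\lambda=\on{colim}_{(\sF,\eta)}\bD\sF$ over the comma category is neither needed nor obviously a filtered colimit (the $\bD\sF$ are not closed under finite colimits, since $\sD^c\cap\sD^{\leq 0}$ is not closed under finite limits); all your argument actually uses is the formal statement that the counit is an equivalence after applying $\Hom(\bD\sF,-)$, which holds for any colocalization onto a full subcategory containing the $\bD\sF$.
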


\begin{proof}

Recall that the functor $\sD^{c,op} \xar{\sF \mapsto \bD \sF} \sD^{\vee}$ 
extends to an equivalence $\on{Ind}(\sD^{c,op}) \cong \sD^{\vee}$.
We wish to show that our left t-exact $\lambda$ lies in 
the subcategory $\on{Ind}(\sD^{c,op} \cap \sD^{\leq 0,op}) \subset 
\on{Ind}(\sD^{c,op}) \cong \sD^{\vee}$.
Concretely, this amounts to showing that for any 
$\sF \in \sD^c$ and any map $\alpha:\bD\sF \to \lambda \in \sD^{\vee}$,
there exists $\sF_0 \in \sD^c \cap \sD^{\leq 0}$ and a map
$\beta:\sF_0 \to \sF$ such that the map $\alpha$ factors
as
\[
\bD \sF \xar{\bD \beta} \bD \sF_0 \to \lambda.
\]

For this, recall the map $\bD \sF \to \lambda$ amounts
to a point $\alpha \in \Omega^{\infty} \lambda(\sF)$.
As $\lambda$ is left $t$-exact, the map
$\Omega^{\infty} \lambda(\tau^{\leq 0} \sF) \to \Omega^{\infty}
\lambda(\sF)$ is an isomorphism. Now write
$\tau^{\leq 0} \sF$ as a filtered colimit $\on{colim}_i \sF_i$
with $\sF_i \in \sD^c \cap \sD^{\leq 0}$;
we may do so because the t-structure is compactly generated.
Then the point $\alpha$ comes from a point
$\alpha_i \in \Omega^{\infty} \lambda(\sF_i)$ for some $i$;
taking $\sF_0 \coloneqq \sF_i$ yields the claim.

\end{proof}

\begin{corollary}\label{c:lambda-left-pres}

Suppose $\sD$ is a DG category with a compactly generated
t-structure and $\sC$ is a DG category with a t-structure.
Suppose $\lambda:\sD \to \on{Vect}$ is left t-exact.

Then the induced functor
\[
(\on{id}_{\sC} \otimes \lambda): \sC \otimes \sD \to \sC \otimes \on{Vect} =
\sC
\]

\noindent is left t-exact.

\end{corollary}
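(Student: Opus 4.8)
The plan is to exhibit $\lambda$ as a \emph{filtered} colimit of functors of the form $\bD\sF$ with $\sF\in\sD^c\cap\sD^{\leq 0}$, and then to reduce to Lemma \ref{l:hom-left-pres}.

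First, by Lemma \ref{l:pro} the left t-exact functor $\lambda$ lies in $\sD^{\vee,\prime}$; moreover, the proof of that lemma shows more precisely that $\lambda$ lies in the essential image of $\on{Ind}(\sD^{c,op}\cap\sD^{\leq 0,op})$ inside $\sD^\vee\cong\on{Ind}(\sD^{c,op})$. Concretely, this produces a filtered diagram $i\mapsto\sF_i$ with $\sF_i\in\sD^c\cap\sD^{\leq 0}$ together with a compatible family of maps $\bD\sF_i\to\lambda$ realizing $\lambda\cong\on{colim}_i\bD\sF_i$ in $\sD^\vee$. Since this colimit is filtered and colimits in $\sD^\vee$ are computed pointwise, we get $\lambda(d)\cong\on{colim}_i\bD\sF_i(d)$ for every $d\in\sD$.

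Next I would promote this to an identification of functors out of $\sC\otimes\sD$. The maps $\bD\sF_i\to\lambda$ induce natural transformations $\on{id}_\sC\otimes\bD\sF_i\to\on{id}_\sC\otimes\lambda$, hence a canonical map $\on{colim}_i(\on{id}_\sC\otimes\bD\sF_i)\to\on{id}_\sC\otimes\lambda$, where the left-hand colimit is formed pointwise in $\on{Fun}(\sC\otimes\sD,\sC)$ (a pointwise colimit of continuous functors is continuous). Evaluated on a pure tensor $c\boxtimes d$, this map is the canonical equivalence $\on{colim}_i\bigl(c\otimes_k\bD\sF_i(d)\bigr)\xar{\sim}c\otimes_k\on{colim}_i\bD\sF_i(d)=c\otimes_k\lambda(d)$, using that $c\otimes_k(-)$ preserves colimits. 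As $\sC\otimes\sD$ is generated under colimits by pure tensors and both sides are continuous, the map is an equivalence, so $\on{id}_\sC\otimes\lambda\cong\on{colim}_i(\on{id}_\sC\otimes\bD\sF_i)$ pointwise.

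It then remains to evaluate on $x\in(\sC\otimes\sD)^{\geq 0}$: we get $(\on{id}_\sC\otimes\lambda)(x)\cong\on{colim}_i(\on{id}_\sC\otimes\bD\sF_i)(x)$, a filtered colimit whose terms lie in $\sC^{\geq 0}$ by Lemma \ref{l:hom-left-pres} (applicable since $\sF_i\in\sD^c\cap\sD^{\leq 0}$). As the t-structure on $\sC$ is compatible with filtered colimits, $\sC^{\geq 0}$ is closed under filtered colimits, so $(\on{id}_\sC\otimes\lambda)(x)\in\sC^{\geq 0}$, i.e.\ $\on{id}_\sC\otimes\lambda$ is left t-exact. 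The one point deserving care is that one needs a \emph{filtered} colimit presentation of $\lambda$---so that the colimit is pointwise, is preserved by $c\otimes_k(-)$, and preserves coconnectivity---which is exactly why the Ind-category refinement extracted from the proof of Lemma \ref{l:pro} is used, rather than merely the statement that $\lambda$ is generated under colimits by the $\bD\sF$; everything else is a formal manipulation of continuous functors.
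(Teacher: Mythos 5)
Your proposal is correct and follows essentially the same route as the paper: invoke Lemma \ref{l:pro} to present $\lambda$ as a filtered colimit of functionals $\bD\sF_i$ with $\sF_i\in\sD^c\cap\sD^{\leq 0}$, apply Lemma \ref{l:hom-left-pres} termwise, and conclude by compatibility of the t-structure with filtered colimits. The only difference is that you spell out the (correct, and worth noting) verification that $\on{id}_{\sC}\otimes\lambda$ is the pointwise filtered colimit of the $\on{id}_{\sC}\otimes\bD\sF_i$, a step the paper's proof leaves implicit.
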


\begin{proof}

By Lemma \ref{l:hom-left-pres}, the result is true in the special case
$\lambda = \bD \sF$ with $\sF \in \sD^c \cap \sD^{\leq 0}$.
By Lemma \ref{l:pro}, any left $t$-exact $\lambda$ can 
be written as a filtered colimit of functors of this form.
As our t-structures are assumed compatible with filtered
colimits, left t-exact functors are preserved under filtered colimits.
This yields the claim.

\end{proof}

\begin{proposition}\label{cgtensor2}

Suppose $\sD_1$ and $\sD_2$ are DG categories with compactly 
generated t-structures and let $\sC$ be a DG category with a t-structure.

Suppose $F:\sD_1 \to \sD_2$ is left t-exact.
Then the induced functor:
\[
\on{id}_{\sC} \otimes F: \sC \otimes \sD_1 \to \sC \otimes \sD_2
\]

\noindent is left t-exact. 

\end{proposition}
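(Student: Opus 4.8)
The plan is to argue as in the proof of Proposition \ref{cgtensor1}, testing connectivity against compact generators on the $\sD$-side. Fix $x \in (\sC \otimes \sD_1)^{\geq 0}$; we must show $(\on{id}_{\sC} \otimes F)(x) \in (\sC \otimes \sD_2)^{\geq 0}$. By the definition of the tensor-product t-structure, this is equivalent to the vanishing
\[
\on{Hom}_{\sC \otimes \sD_2}\big(c \otimes d[1],\ (\on{id}_{\sC} \otimes F)(x)\big) \cong 0
\]
for all $c \in \sC^{\leq 0}$ and $d \in \sD_2^{\leq 0}$, and since the t-structure on $\sD_2$ is compactly generated it suffices to treat $d = \sF$ with $\sF \in \sD_2^c \cap \sD_2^{\leq 0}$.

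I would then invoke the adjunction $(- \otimes \sF) \dashv \bD\sF$ in $\DGCat$ together with the compatibility of the Lurie tensor product with composition to rewrite this Hom space as
\[
\on{Hom}_{\sC}\big(c[1],\ (\on{id}_{\sC} \otimes \bD\sF)((\on{id}_{\sC} \otimes F)(x))\big) \cong \on{Hom}_{\sC}\big(c[1],\ (\on{id}_{\sC} \otimes \lambda)(x)\big),
\]
where $\lambda \coloneqq \bD\sF \circ F : \sD_1 \to \on{Vect}$. The functor $\bD\sF = \on{Hom}_{\sD_2}(\sF, -)$ is left t-exact, being right adjoint to the right t-exact functor $- \otimes \sF$ (this is where $\sF \in \sD_2^{\leq 0}$ enters), and $F$ is left t-exact by hypothesis; hence the composite $\lambda$ is left t-exact.

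Finally I would apply Corollary \ref{c:lambda-left-pres} to the left t-exact functor $\lambda : \sD_1 \to \on{Vect}$ and the category $\sC$ — this is the step that consumes compact generation of the t-structure on $\sD_1$, via Lemma \ref{l:pro} — to conclude that $\on{id}_{\sC} \otimes \lambda : \sC \otimes \sD_1 \to \sC$ is left t-exact. Therefore $(\on{id}_{\sC} \otimes \lambda)(x) \in \sC^{\geq 0}$, and since $c[1] \in \sC^{\leq -1}$ the displayed Hom space vanishes, which finishes the argument.

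I do not expect a genuine obstacle here: the substantive work has already been carried out in Lemma \ref{l:hom-left-pres}, Lemma \ref{l:pro}, and Corollary \ref{c:lambda-left-pres}. The only points that require a moment's care are the identification $(\on{id}_{\sC} \otimes \bD\sF) \circ (\on{id}_{\sC} \otimes F) \cong \on{id}_{\sC} \otimes (\bD\sF \circ F)$, which is functoriality of $\otimes$, and the reduction to compact $d$, which is exactly the hypothesis that $\sD_2$ carries a compactly generated t-structure (note, by contrast with Proposition \ref{cgtensor1}, that here both $\sD_1$ and $\sD_2$ must be compactly generated).
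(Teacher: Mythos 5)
Your argument is correct and is essentially the paper's own proof: reduce to testing against $\sH \boxtimes \sF$ with $\sF \in \sD_2^c \cap \sD_2^{\leq 0}$ (using compact generation of the t-structure on $\sD_2$), rewrite via the adjunction $(-\otimes\sF) \dashv \bD\sF$ to reduce to left t-exactness of $\on{id}_{\sC}\otimes(\bD\sF\circ F)$, and conclude by Corollary \ref{c:lambda-left-pres} (which is where compact generation on $\sD_1$ enters). Your write-up is if anything slightly more explicit than the paper's about which hypothesis is consumed where.
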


\begin{proof}

Suppose $\sG \in (\sC \otimes \sD_1)^{>0}$ is given. We wish
to show that 
\[
(\on{id}_{\sC} \otimes F)(\sG) \in (\sC \otimes \sD_2)^{>0}.
\]

\noindent Equivalently, as the t-structure on $\sD_2$ is compactly
generated, it suffices to show that for 
$\sF \in \sD_2^c \cap \sD_2^{\leq 0}$ and $\sH \in \sC^{\leq 0}$,
we have
\[
\Hom_{\sC \otimes \sD_2}(
\sH \boxtimes \sF,
(\on{id}_{\sC} \otimes F)(\sG)) = 0.
\]

\noindent As in the proof of Lemma \ref{l:hom-left-pres}, we may rewrite
the left hand side as
\[
\Hom_{\sC}(\sH,
(\on{id}_{\sC} \otimes \bD \sF) \circ (\on{id}_{\sC} \otimes F)(\sG)) =
\Hom_{\sC}(\sH,
F^{\vee}(\bD\sF)(\sG)).
\]

\noindent Observe that $F^{\vee}(\bD \sF) = \bD \sF \circ F:\sD_1 \to \on{Vect}$ is
left t-exact as $F$ is left t-exact and $\sF$ is connective.
Therefore, $F^{\vee}(\bD\sF)(\sG) \in \sC^{>0}$ by assumption on 
$\sG$. Therefore, as $\sH \in \sC^{\leq 0}$, the above
term vanishes as desired. 

\end{proof}

\subsection{Compatible t-structures}

Let $H$ be an affine algebraic group (in particular, of finite type). Following Appendix B of \cite{Whit} and Sections 10.9-13 of \cite{semiinf}, we will introduce a notion of compatibility between a t-structure on $\sC$ and a $H$-action (strong or weak) on $\sC$. No originality is claimed for any of the results in this section.

The key lemma is the following:

\begin{lemma}[\cite{Whit}]\label{compatiblelemma}
Let $\sC$ be a category with a weak action of $H$. Assume we have a t-structure on $\sC$. Then the following conditions are equivalent:
\begin{enumerate}
\item The functor $\on{Oblv}\circ\on{Av}_*^w:\sC\rightarrow \sC$ is t-exact.
\item The functor $\on{coact}:\sC\rightarrow\QCoh(H)\otimes \sC$ is t-exact.
\item The functor $\on{act}:\QCoh(H)\otimes \sC\rightarrow \sC$ is t-exact.
\item The category $\sC^{H,w}$ admits a t-structure for which $\on{Oblv}$ and $\on{Av}_*^w$ are t-exact.
\item The $\QCoh(H)$-linear equivalence
\[
\QCoh(H)\otimes \sC\rightarrow\QCoh(H)\otimes \sC
\]
induced by $\on{coact}$ is t-exact.
\end{enumerate}
\end{lemma}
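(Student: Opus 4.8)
The plan is to establish the five conditions in three groups: the ``formal'' chain $(2)\Leftrightarrow(3)\Leftrightarrow(5)$, which manipulates only $\on{coact}$, $\on{act}$ and the shearing equivalence of $(5)$; the pair $(1)\Leftrightarrow(4)$, which is an instance of descending a t-structure along a t-exact comonad; and finally the bridge $(1)\Leftrightarrow(2)$ between the two groups. The elementary inputs used throughout are: since $H$ is affine, the pushforward $p_*\colon\QCoh(H)\to\on{Vect}$ along $p\colon H\to\on{pt}$ is t-exact and detects connectivity; since $p$ is an fppf cover, $p^*\colon\on{Vect}\to\QCoh(H)$ is t-exact; the multiplication $\QCoh(H)\otimes\QCoh(H)\to\QCoh(H)$ (pullback along the diagonal) is t-exact; and each of these properties is preserved after applying $-\otimes\sC$, because the tensor t-structure on $\QCoh(H)\otimes\sC$ is computed through the forgetful functor to $\sC$.

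For $(2)\Leftrightarrow(3)\Leftrightarrow(5)$, write $\Phi\colon\QCoh(H)\otimes\sC\to\QCoh(H)\otimes\sC$ for the equivalence of $(5)$, namely $\Phi=(\text{mult}\otimes\on{id}_\sC)\circ(\on{id}_{\QCoh(H)}\otimes\on{coact})$. One checks directly that $\on{coact}=\Phi\circ(p^*\otimes\on{id}_\sC)$ and $\on{act}=(p_*\otimes\on{id}_\sC)\circ\Phi$. Then $(2)\Rightarrow(5)$ because $\Phi$ is assembled from the t-exact functors $\on{coact}$ and $\text{mult}$; $(5)\Rightarrow(2)$ because $p^*\otimes\on{id}_\sC$ is t-exact; $(5)\Rightarrow(3)$ because $p_*\otimes\on{id}_\sC$ is t-exact; and $(3)\Rightarrow(5)$ because $p_*\otimes\on{id}_\sC$ detects connectivity, so that t-exactness of $\on{act}=(p_*\otimes\on{id}_\sC)\circ\Phi$ forces that of $\Phi$. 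This closes up $(2)\Leftrightarrow(5)\Leftrightarrow(3)$.

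For $(1)\Leftrightarrow(4)$: the implication $(4)\Rightarrow(1)$ is immediate, since $\on{Oblv}$ and $\on{Av}_*^w$ are then both t-exact and $\on{Oblv}\circ\on{Av}_*^w$ is their composite. For $(1)\Rightarrow(4)$, recall that $\on{Oblv}\colon\sC^{H,w}\to\sC$ is comonadic by Barr--Beck--Lurie, with comonad $T\coloneqq\on{Oblv}\circ\on{Av}_*^w$; granting $(1)$ that $T$ is t-exact, one sets $(\sC^{H,w})^{\leq 0}\coloneqq\on{Oblv}^{-1}(\sC^{\leq 0})$ and checks by the standard argument that this defines an accessible, right complete t-structure, compatible with filtered colimits, for which $\on{Oblv}$ and hence $\on{Av}_*^w$ are t-exact.

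The substantive point is the bridge $(1)\Leftrightarrow(2)$. One expresses $\on{Oblv}\circ\on{Av}_*^w(c)$ as the totalization of the cobar complex $c\to\QCoh(H)\otimes c\to\QCoh(H)^{\otimes 2}\otimes c\to\cdots$, whose initial coface map is $\on{coact}$ and whose remaining structure maps are built from the multiplication of $\QCoh(H)$ and the counit $p_*$; every term is t-exact in $c$. Thus if $\on{coact}$ is t-exact, all structure maps are t-exact, and the totalization, being a limit, is automatically \emph{left} t-exact. The main obstacle is \emph{right} t-exactness: a totalization of connective functors need not be connective. Here one exploits that $k$ has characteristic $0$ and $H$ is a smooth affine group, so the diagonal $H\hookrightarrow H\times H$ is a regular embedding of codimension $\dim H$; this allows the cobar complex to be replaced by a complex of length $\dim H$ (a Koszul-type resolution), so that $\on{Oblv}\circ\on{Av}_*^w$ has cohomological amplitude bounded by $\dim H$, and granting $(2)$ the ``higher'' terms of this finite complex are seen to vanish against $\sC^{\leq 0}$. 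Hence $\on{Oblv}\circ\on{Av}_*^w$ is t-exact, giving $(1)$. Conversely, $(1)\Rightarrow(2)$ is obtained by reading this description backwards --- extracting t-exactness of $\on{coact}$, which occurs inside the cobar complex, from t-exactness of the totalization (equivalently, applying the argument to the free weak action and transporting the t-structure produced in $(4)$) --- exactly as in \cite{Whit}. Combining all three groups gives the equivalence of $(1)$ through $(5)$.
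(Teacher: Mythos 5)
Your grouping $(2)\Leftrightarrow(3)\Leftrightarrow(5)$, $(4)\Rightarrow(1)$ is fine in outline, but the two places where you locate the work are the two places where the argument breaks.

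The bridge $(1)\Leftrightarrow(2)$ is not where the difficulty lies, and your argument for it is wrong. The composite $\on{Oblv}\circ\on{Av}_*^w$ is \emph{not} the totalization of the cobar complex $c\to\QCoh(H)\otimes c\to\cdots$; that totalization computes $\on{Av}_*^w(c)$ as an object of the limit category $\sC^{H,w}$, whereas the comonad $\on{Oblv}\circ\on{Av}_*^w$ is a single composition with no limit in sight: it is canonically isomorphic to $p_{2,*}\circ\on{coact}$ (equivalently $\sO_H\star-$). Once you have this identity, $(1)\Leftrightarrow(2)$ is immediate from the fact that $p_{2,*}=p_*\otimes\on{id}_{\sC}$ is t-exact and conservative, hence detects both connectivity and coconnectivity --- a fact you already use in your $(3)\Rightarrow(5)$ step. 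The Koszul/regular-embedding/characteristic-zero digression is both unnecessary and non-probative: bounded cohomological amplitude is much weaker than t-exactness, and ``the higher terms vanish against $\sC^{\leq 0}$'' is not an argument. Separately, your claim that the contraction $\Delta^*:\QCoh(H)\otimes\QCoh(H)\to\QCoh(H)$ is t-exact is false (it has Tor terms, e.g.\ two skyscrapers at the same point), so the factorization $\Phi=(\mathrm{mult}\otimes\on{id})\circ(\on{id}\otimes\on{coact})$ does not let you deduce $(5)$ from $(2)$ factor by factor; the paper instead gets $(2)\Rightarrow(3)$ by adjunction plus a generation argument, and then $(3)\Rightarrow(5)$ from generation of $(\QCoh(H)\otimes\sC)^{\leq 0}$ by the image of $\on{coact}$ on $\sC^{\leq 0}$.

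The implication $(1)\Rightarrow(4)$ is where the paper puts essentially all of the real work, and you have hidden it inside ``the standard argument.'' Declaring $(\sC^{H,w})^{\leq 0}:=\on{Oblv}^{-1}(\sC^{\leq 0})$ gives right t-exactness of $\on{Oblv}$ for free, but \emph{left} t-exactness is exactly the nontrivial point, and in the $\infty$-categorical setting the way to get it is to realize $\sC^{H,w}$ as the totalization of the cosimplicial category $\sC\rightrightarrows\QCoh(H)\otimes\sC\rightthreearrow\cdots$ and check that \emph{every} coface map is t-exact: not just $p_2^*$ and $\on{coact}$, but also $\on{id}_{\sC}\otimes\Delta_*$ and all the tensored-up versions in higher cosimplicial degrees. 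Left t-exactness of a functor is not preserved under $-\otimes\sD$ in general; one needs either a left adjoint (Lemma \ref{adjointtensor}, for $\Delta_*$) or compact generation of the t-structure on the tensoring factor (Proposition \ref{cgtensor1}, for $\QCoh(H)$). These appendix inputs are the actual content of the implication, and your proposal never engages with them.
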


\begin{proof}
This is Lemma B.3.1 of \cite{Whit}. Let us reproduce the proof.

First, note that we have an adjoint pair
\[
p_2^*: \sC\rightleftarrows\QCoh(H)\otimes \sC :p_{2,*}
\]
induced by the pullback and pushforward maps between $\on{Vect}\cong\QCoh(\on{Spec} k)$ and $\QCoh(H)$. By Lemma \ref{rightexacttensor}, both $p_2^*$ and $p_{2,*}$ are right t-exact. The adjunction then implies that $p_{2,*}$ is also left t-exact, hence t-exact. Noting that $p_{2,*}$ is conservative and that $p_{2,*}\circ p_2^*\cong\sO_H\otimes-$ is t-exact, we see that $p_2^*$ is t-exact as well.

To prove conditions (1) through (4) are equivalent, we use natural isomorphisms
\[
p_{2,*}\circ\on{coact}\cong\on{act}\circ p_2^*\cong\on{Oblv}\circ\on{Av}_*^w.
\]
To see these isomorphisms, note that by a standard tensor product argument, it suffices to check the universal case of $\sC\cong\QCoh(H)$, where the desired isomorphisms follow as from the base change formula.

$(1)\iff(2):$ As $p_{2,*}$ is t-exact and conservative, we see that $\on{coact}$ is t-exact if and only if $p_{2,*}\circ\on{coact}\cong\on{Oblv}\circ\on{Av}_*^w$ is t-exact, as desired.

$(2)\implies(3):$ As $\on{act}$ is right adjoint to $\on{coact}$, we see that $\on{act}$ is left t-exact. To show right t-exactness, it suffices to note that $p_2^*$, viewed as a functor from $\sC^{\leq 0}$ to $(\QCoh(H)\otimes \sC)^{\leq 0}$, generates under colimits.

$(3)\implies(1):$ Clear.

$(3)\iff(5):$ The equivalence of (5) intertwines $p_2^*$ and $\on{coact}$, so $(5)$ implies $(3)$. On the other hand, if $\on{coact}$ is t-exact, it suffices to show that $\on{coact}$, restricted to a functor $\sC^{\leq 0}\rightarrow (\QCoh(H)\otimes \sC)^{\leq 0}$, generates under colimits. By adjunction, this would follow if we knew $\on{act}$ to be conservative and t-exact. The t-exactness follows from $(2)\implies(3)$, and to check conservativeness it suffices to consider the universal case of $\sC\cong\QCoh(H)$, where it is obvious.

$(4)\implies(1):$ Clear.

$(1)+(2)\implies(4):$ This is the hardest implication to prove. Recall that $\sC^{H,w}$ is the totalization of the cosimplicial category
\[
\sC\rightrightarrows \QCoh(H)\otimes \sC \rightthreearrow \QCoh(H)\otimes\QCoh(H)\otimes \sC\cdots.
\]
To construct a t-structure on $\sC^{H,w}$, it suffices to prove that all of the transition maps are t-exact. Consider a face map $f:\sC\otimes(\QCoh(H))^n\rightarrow \sC\otimes(\QCoh(H))^{n+1}$. We split into cases depending on the value of $n$.

For $n=0$ there are two face maps, $p_2^*$ and $\on{coact}$. By our assumptions, both are t-exact.

Next, for $n=1$, there are three face maps, $p_2^*\otimes\on{id}_{\QCoh(H)}, \on{coact}\otimes\on{id}_{\QCoh(H)}$, and $\on{id}_{\sC}\otimes\Delta_*$, where $\Delta_*$ is pushforward along the diagonal map $\Delta:H\rightarrow H\times H$. The first two maps are t-exact by Proposition \ref{cgtensor1}, and $\on{id}_{\sC}\otimes\Delta_*$ is t-exact by Lemma \ref{adjointtensor}.

Finally, if $n>1$, then $f$ is of the form $g\otimes\on{id}_{\QCoh(H)}$ for some face map $g:\sC\otimes(\QCoh(H))^{n-1}\rightarrow \sC\otimes(\QCoh(H))^n$. Using Proposition \ref{cgtensor1} again, this case follows by induction.

By construction, $\on{Oblv}$ is t-exact with respect to this t-structure. As $\on{Oblv}$ is conservative and $\on{Oblv}\circ\on{Av}_*^w$ is t-exact, we see that $\on{Av}_*^w$ is t-exact, as desired.
\end{proof}

\begin{definition}
Let $\sC$ be a category with a weak action of $H$. A t-structure on $\sC$ is said to be compatible with the $H$ action if it satisfies the equivalent conditions of Lemma \ref{compatiblelemma}. If $\sC$ is instead endowed with a strong $H$-action, we say that a t-structure on $\sC$ is compatible with the $H$ action if it is compatible with the underlying weak $H$-action.
\end{definition}

In the case of a strong action on $\sC$, note that the (strong) coaction functor $\sC\rightarrow \sC\otimes D(H)$ is still t-exact, up to shift. Indeed, the composition $\sC\rightarrow \sC\otimes D(H)\rightarrow \sC\otimes\QCoh(H)$ is, up to shift, the weak coaction functor, which is t-exact by assumption. On the other hand, as $D(H)\rightarrow\QCoh(H)$ is exact, conservative, and has a right adjoint, the same properties hold for $\sC\otimes D(H)\rightarrow \sC\otimes\QCoh(H)$, which together imply that $\sC\rightarrow \sC\otimes D(H)$ is t-exact, as desired. This allows us to show:

\begin{lemma}\label{compatibleinvariants}
Let $\sC$ be a category with a strong action of $H$ and a compatible t-structure. Then the invariant category $\sC^H$ has a natural t-structure with $\on{Oblv}:\sC^H\rightarrow \sC$ t-exact.
\end{lemma}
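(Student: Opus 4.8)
The plan is to realize $\sC^H$ as a totalization and to transport t-structures from its terms after a uniform cohomological renormalization. A strong $H$-action presents $\sC$ as the fiber at $\on{pt}\to BH$ of a sheaf of DG categories on $BH$, and \v{C}ech descent identifies $\sC^H$ with the totalization of the cosimplicial DG category $\sC^\bullet$ with $\sC^n=\sC\otimes D(H)^{\otimes n}=\sC\otimes D(H^n)$, whose coface and codegeneracy maps are $!$-pullbacks along the face and degeneracy maps of the \v{C}ech nerve of $\on{pt}\to BH$, combined with the coaction functor. Under this identification $\on{Oblv}\colon\sC^H\to\sC$ is the projection to the $0$-th term $\sC^0=\sC$.

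Now I would renormalize. Equip $\sC^n$ with the tensor-product t-structure coming from the standard t-structure on D-modules, shifted by $[-nd]$, where $d\coloneqq\dim H$. The key point is that, before renormalizing, every coface map $\sC^n\to\sC^{n+1}$ is t-exact up to the single shift $d$: the coface maps coming from projections $H^{n+1}\to H^n$ (and, more generally, from the smooth relative-dimension-$d$ face maps of the \v{C}ech nerve) are $!$-pullbacks along smooth morphisms of relative dimension $d$, hence t-exact up to shift $d$ by the basic theory of D-modules, while the coface involving the coaction $\on{coact}\colon\sC\to\sC\otimes D(H)$ is t-exact up to shift $d$ by the discussion preceding this lemma. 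After the renormalization by $[-nd]$ all coface maps therefore become t-exact; one also checks that the codegeneracies are compatible, which is the one point where the D-module bookkeeping is slightly delicate, since these are $!$-pullbacks along closed embeddings rather than along smooth morphisms.

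It remains to observe that a totalization of a cosimplicial diagram of DG categories with t-exact structure maps carries a canonical t-structure whose connective part is $\lim_n(\sC^n)^{\leq 0}$ for the renormalized t-structures and for which every projection to a term is t-exact. Indeed, t-exact functors commute with truncation, so for an object $x=(x_n)$ the termwise truncations $\tau^{\leq 0}x_n$ and $\tau^{\geq 1}x_n$ again assemble into objects of the totalization, giving the required triangle; the vanishing $\on{Hom}(\tau^{\leq 0}x,\tau^{\geq 1}y)=0$ is computed termwise, and the connective subcategory is presentable since it is a limit over the small category $\Delta$ of presentable categories, closed under colimits and extensions in $\sC^H$. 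As the t-structure on $\sC^0=\sC$ is the original one (because $0\cdot d=0$), the projection $\on{Oblv}\colon\sC^H\to\sC$ is t-exact, which is the assertion. The main obstacle is the second paragraph: lining the cohomological shifts up consistently across all of the coface maps — the content of the remark preceding this lemma is exactly that the coaction carries the same shift $d=\dim H$ as the geometric coface maps — and disposing of the codegeneracies.
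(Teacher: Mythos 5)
Your proof follows essentially the same route as the paper's: realize $\sC^H$ as the totalization of a cosimplicial diagram with terms $\sC\otimes D(H^n)$, check that the coface functors are t-exact after a uniform shift normalization, and endow the limit with the termwise t-structure, $\on{Oblv}$ being evaluation at $[0]$. The paper phrases this as ``the same logic as $(1)+(2)\Rightarrow(4)$ of Lemma \ref{compatiblelemma}'', using the cobar model whose new middle coface is $\on{id}_{\sC}\otimes\Delta_*$ rather than your smooth $!$-pullbacks along the \v{C}ech nerve; that is a presentational difference, not a different argument.

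One step is glossed over, and it is precisely the step the paper singles out as the only point requiring justification. Knowing that $\partial_i^{!}[-d]:D(H^n)\to D(H^{n+1})$ is t-exact does not by itself give t-exactness of $\on{id}_{\sC}\otimes\partial_i^{!}[-d]$ on $\sC\otimes D(H^n)$: right t-exactness passes through tensor products for free (Lemma \ref{rightexacttensor}), but left t-exactness does not, which is the entire reason the appendix proves Propositions \ref{cgtensor1} and \ref{cgtensor2}. Your ``by the basic theory of D-modules'' must be supplemented by Proposition \ref{cgtensor2}, which applies because the t-structures on the $D(H^n)$ are compactly generated; the same remark applies when you tensor the coaction coface with $\on{id}_{D(H^n)}$. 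By contrast, your worry about the codegeneracies is a non-issue: the semi-cosimplicial subcategory is initial in $\Delta$, so the totalization, and hence the induced t-structure, only sees the cofaces — the paper's proof of Lemma \ref{compatiblelemma} likewise checks only the face maps.
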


\begin{proof}
This follows by the same logic as in the proof of $(1)+(2)\implies (4)$ in Lemma \ref{compatiblelemma}. The only point requiring justification is the left t-exactness of $\on{id}_{\sC}\otimes\Delta_*: \sC\otimes D(H)\rightarrow\sC\otimes D(H\times H)$, which is true by Proposition \ref{cgtensor2}.
\end{proof}

As a consequence of Lemma \ref{compatibleinvariants}, we can construct t-structures on $H$-tensor products. 

\begin{lemma}\label{compatibletensor}
Let $\sC$ be a category with a right $H$-action and let $\sD$ be a category with a left $H$-action. Assume both $\sC$ and $\sD$ are equipped with compatible t-structures. Then $\sC\otimes_H \sD$ has a natural t-structure, and there is a natural t-exact functor $\sC\otimes_H \sD\rightarrow \sC\otimes \sD$.
\end{lemma}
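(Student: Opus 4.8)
The plan is to deduce this from Lemma~\ref{compatibleinvariants} by exhibiting $\sC\otimes_H\sD$ as the strong $H$-invariants of a diagonal action on $\sC\otimes\sD$. First convert the right $H$-action on $\sC$ into a left action via inversion; this is harmless, since inversion is an automorphism of $H$ and hence preserves all t-exactness properties. The group homomorphism $\Delta\colon H\to H\times H$ then lets one restrict the external product of the two left actions to a single strong $H$-action on $\sC\otimes\sD$, and the standard formalism of strong group actions (\cite{Beraldo}, \cite{semiinf}) identifies $(\sC\otimes\sD)^H$ with $\sC\otimes_H\sD$ in such a way that the forgetful functor $\on{Oblv}\colon(\sC\otimes\sD)^H\to\sC\otimes\sD$ becomes the sought-after functor $\sC\otimes_H\sD\to\sC\otimes\sD$. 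Equip $\sC\otimes\sD$ with the tensor-product t-structure of the preceding subsection. Once we know this t-structure is compatible with the diagonal $H$-action, Lemma~\ref{compatibleinvariants} immediately yields a t-structure on $(\sC\otimes\sD)^H=\sC\otimes_H\sD$ for which $\on{Oblv}$ is t-exact, which is exactly the assertion.

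Thus everything reduces to the claim that the diagonal $H$-action on $\sC\otimes\sD$ is compatible with the tensor-product t-structure; this is the main obstacle. I would check this via condition~(2) of Lemma~\ref{compatiblelemma}, namely t-exactness of the weak coaction $\sC\otimes\sD\to\QCoh(H)\otimes\sC\otimes\sD$. After restricting along $\Delta$, this coaction is assembled from the individual weak coactions $\sC\to\QCoh(H)\otimes\sC$ and $\sD\to\QCoh(H)\otimes\sD$ (each t-exact, since the given actions are compatible) together with the structure maps of $\QCoh(H)$ fusing the two $\QCoh(H)$-factors. The key mechanical point is that tensoring a coaction functor with an identity functor remains left t-exact: the coaction functor has a left adjoint, namely the action functor, so Lemma~\ref{adjointtensor} applies; right t-exactness is Lemma~\ref{rightexacttensor}. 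The remaining transition maps (pushforward along the closed immersion $\Delta$, unit insertion, and the like) are t-exact, and t-exactness survives tensoring with identity functors by the same adjoint-functor argument and by Propositions~\ref{cgtensor1}--\ref{cgtensor2}, whose hypotheses are met because $\QCoh(H)$, $\QCoh(H\times H)$ and $D(H)$ have compactly generated t-structures ($H$ being of finite type). One must also pass between the strong and weak diagonal coactions exactly as in the discussion preceding Lemma~\ref{compatibleinvariants}, using that the forgetful functor $D(H)\to\QCoh(H)$ is t-exact, conservative, and admits a right adjoint.

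The delicate part, and where I expect most care to be needed, is pinning down precisely which structure maps of $\QCoh(H)$ and $D(H)$ enter the diagonal coaction and verifying each is genuinely t-exact, with no uncompensated cohomological shift. Once that bookkeeping is complete, the construction of the t-structure on the totalization of the cobar cosimplicial category computing $(\sC\otimes\sD)^H$, and the t-exactness of its projection onto the $0$-th term $\sC\otimes\sD$, follow verbatim from the proof of the implication $(1)+(2)\Rightarrow(4)$ in Lemma~\ref{compatiblelemma} together with Lemma~\ref{compatibleinvariants}.
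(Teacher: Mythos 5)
Your overall architecture coincides with the paper's: convert the right action to a left one by inversion, identify $\sC\otimes_H\sD$ with the diagonal invariants $(\sC\otimes\sD)^H$ inside the $H\times H$-category $\sC\otimes\sD$, verify that the tensor-product t-structure on $\sC\otimes\sD$ is compatible with the diagonal action, and invoke Lemma \ref{compatibleinvariants}. The difference --- and the place where your argument as written would fail --- is the compatibility check. You propose to verify condition (2) of Lemma \ref{compatiblelemma} by decomposing the diagonal weak coaction into the two individual coactions followed by the map that ``fuses'' the two $\QCoh(H)$-factors. That fusing map is $\Delta^*\otimes\on{id}:\QCoh(H\times H)\otimes\sC\otimes\sD\rightarrow\QCoh(H)\otimes\sC\otimes\sD$: restricting the action along $\Delta$ inserts $\Delta_*$ into the action functor, so the coaction, being its left adjoint, picks up $\Delta^*$. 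But $\Delta^*$ is pullback along a closed immersion of codimension $\dim H$; it is only right t-exact (for instance, $\Delta^*\Delta_*\sO_H$ is a Koszul complex with cohomology in degrees $-\dim H,\dots,0$). So not all the pieces of your decomposition are t-exact, and left t-exactness of the composite coaction cannot be deduced by assembling them. The composite is in fact t-exact --- in the universal case it is pullback along the flat action map --- but that is precisely the verification you defer, and the piecewise strategy does not supply it.

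The paper sidesteps this by checking condition (3) of Lemma \ref{compatiblelemma} instead: the action functor for the full $H\times H$-action on $\sC\otimes\sD$ is the tensor product of the two individual action functors $\QCoh(H)\otimes\sC\rightarrow\sC$ and $\QCoh(H)\otimes\sD\rightarrow\sD$, each t-exact and a right adjoint, so their tensor product is t-exact by Lemmas \ref{rightexacttensor} and \ref{adjointtensor}; compatibility with the diagonal action follows because the diagonal action functor factors through $\Delta_*\otimes\on{id}$, and $\Delta_*$ (unlike $\Delta^*$) is t-exact and admits a left adjoint. If you swap your condition-(2) verification for this condition-(3) verification, the rest of your outline goes through as stated.
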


\begin{proof}
Composing with the inversion map $D(H)\rightarrow D(H)$, we can make $\sC$ into a left $H$-category instead. Then $\sC\otimes \sD$ has a left $H\times H$ action. We claim that there is a canonical equivalence $\sC\otimes_H \sD\cong (\sC\otimes \sD)^H$, where the $H$-invariants on the RHS is taken with respect to the diagonal copy of $H$ inside $H\times H$. Indeed, there is an obvious such equivalence for $\sC\cong \sD\cong D(H)$, and the general case follows by tensoring.

Thus to equip $\sC\otimes_H \sD$ with a t-structure satisfying the desired properties, it suffices to show that the natural t-structure on $\sC\otimes \sD$ is compatible with the diagonal $H$ action. In fact, it is compatible with the entire $H\times H$ action. To see this, we need to show that the functor
\[
\QCoh(H)\otimes \sC\otimes\QCoh(H)\otimes \sD\rightarrow \sC\otimes \sD
\]
is t-exact. By assumption, the functors $\QCoh(H)\otimes \sC\rightarrow \sC$ and $\QCoh(H)\otimes \sD\rightarrow \sD$ are t-exact. As they are also right adjoints, Lemma \ref{rightexacttensor} and Lemma \ref{adjointtensor} tell us that their tensor product is t-exact, as desired.
\end{proof}

Finally, following Sections 10.9-13 of \cite{semiinf}, let us say something about the case where $H$ is, rather than a finite type algebraic group, a Tate group indscheme with prounipotent tail (e.g., $G((t))$.) As we will make only cursory use of this material, our treatment will only be a sketch. In this case, compatibility with a weak $H$-action on $\sC$ is defined to mean t-exactness of the equivalence
\[
\IndCoh^*(H)\otimes \sC\rightarrow\IndCoh^*(H)\otimes \sC
\]
intertwining $\on{act}$ and $p_{2,*}$.

If $\sC$ comes with a strong $H$-action, we impose one more requirement, namely, that for all (or equivalently, one) prounipotent compact subgroups $K\subseteq H$, $\sC^K$ is closed under truncations. This is equivalent to the existence of a t-structure on $\sC^K$ such that $\sC^K\rightarrow \sC$ is t-exact.

\subsection{Case of additive groups}

Now take $H$ to be a vector space $V$, treated as an algebraic group with group structure given by addition. By the Fourier-Deligne transform for $D$-modules, there is a t-exact equivalence $D(V)\cong D(V^*)$ intertwining the convolution monoidal structure on the left with the $\otimes^!$ monoidal structure on the right. In particular, if $X$ is a variety mapping to $V^*$, there is a natural $H$-action on $D(X)$.

We will be interested in categories of the form $\sC\otimes_A D(X)$, for $\sC$ a $H$-category. Assume that $\sC$ comes with a compatible t-structure. Then Lemma \ref{compatibletensor} gives a t-structure on $\sC\otimes_H D(X)$. Furthermore, the proof of the lemma shows that this t-structure is compatible with the $H$-action.

The main result of this section is the following.

\begin{lemma}\label{exactsequence}
Assume that $H$ corresponds to a vector space $V$, and let $\sC$ be a $H$-category with a t-structure compatible with the $H$-action. Let $X$ be a quasi-projective variety mapping to $V^*$.
\begin{enumerate}
\item If $i:Z\rightarrow X$ is a closed embedding, then $\on{id}_{\sC}\otimes_{H}i_*:\sC\otimes_H D(Z)\rightarrow \sC\otimes_H D(X)$ is t-exact.
\item If $j:U\rightarrow X$ is an open immersion, then $\on{id}_{\sC}\otimes_H j^*:\sC\otimes_H D(X)\rightarrow \sC\otimes_H D(U)$ is t-exact.
\end{enumerate}
\end{lemma}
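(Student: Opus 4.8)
The plan is to deduce both statements from the analogous (and easier) facts about the plain tensor product over $\on{Vect}$, transferred along the forgetful functor to non-equivariant objects.

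First I would record what is needed about the D-module functors. For the closed embedding $i:Z\to X$, the pushforward $i_*$ is t-exact and admits a continuous left adjoint $i^*$; for the open immersion $j:U\to X$, the functor $j^*=j^!$ is t-exact and admits a continuous left adjoint $j_!$. Since $Z$ and $U$ map to $V^*$ through $X\to V^*$, and since $i_*$ (resp.\ $j^*$) satisfies the projection formula for the $\otimes^!$-monoidal structure, both $i_*$ and $j^*$ are $D(V^*)$-linear, hence $H$-equivariant after transport along the Fourier--Deligne equivalence $D(V)\cong D(V^*)$. Thus $\on{id}_{\sC}\otimes i_*:\sC\otimes D(Z)\to\sC\otimes D(X)$ and $\on{id}_{\sC}\otimes j^*:\sC\otimes D(X)\to\sC\otimes D(U)$ are $(H\times H)$-equivariant, and restricting to the invariants for the diagonal copy $H_\Delta\subseteq H\times H$ recovers the functors $\on{id}_{\sC}\otimes_H i_*$ and $\on{id}_{\sC}\otimes_H j^*$ of the statement; here I use, from the proof of Lemma \ref{compatibletensor}, the identification $\sC\otimes_H D(-)\cong(\sC\otimes D(-))^{H_\Delta}$ and the t-exactness of the forgetful functor $\on{Oblv}:\sC\otimes_H D(-)\to\sC\otimes D(-)$.

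Next, by Lemma \ref{rightexacttensor} the functors $\on{id}_{\sC}\otimes i_*$ and $\on{id}_{\sC}\otimes j^*$ are right t-exact (since $i_*$ and $j^*$ are), and by Lemma \ref{adjointtensor} they are left t-exact (each of $i_*$, $j^*$ being left t-exact with a left adjoint), hence t-exact for the tensor-product t-structures. Finally, the forgetful functor $\on{Oblv}:\sC\otimes_H D(-)\to\sC\otimes D(-)$ is t-exact and, since $H=V$ is unipotent, conservative; a conservative t-exact functor reflects both connectivity and coconnectivity (if $\on{Oblv}(b)$ is coconnective then $\on{Oblv}(\tau^{<0}b)=\tau^{<0}\on{Oblv}(b)=0$, so $\tau^{<0}b=0$, and dually for connectivity). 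As $\on{Oblv}$ intertwines $\on{id}_{\sC}\otimes_H i_*$ with $\on{id}_{\sC}\otimes i_*$, and likewise for $j^*$, it follows that $\on{id}_{\sC}\otimes_H i_*$ and $\on{id}_{\sC}\otimes_H j^*$ are t-exact, which is the claim.

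The proof is essentially bookkeeping; the only points requiring attention are the $D(V^*)$-linearity of $i_*$ and $j^*$ (so that these functors genuinely descend to the $H$-tensor products --- this is the projection-formula input) and the conservativity of $\on{Oblv}$ for the unipotent group $V$ (used to push t-exactness up from the non-equivariant tensor product). I do not anticipate a genuine obstacle.
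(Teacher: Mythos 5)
Your proof is correct, and its skeleton is identical to the paper's: both arguments pass through the commutative square comparing $\sC\otimes_H D(-)$ with $\sC\otimes D(-)$, use that the forgetful functor is t-exact and conservative (so it reflects both connectivity and coconnectivity), and thereby reduce to t-exactness of $\on{id}_{\sC}\otimes i_*$ and $\on{id}_{\sC}\otimes j^*$ on the plain tensor product, with right t-exactness in both cases coming from Lemma \ref{rightexacttensor}. The one genuine divergence is the input for left t-exactness: the paper invokes Proposition \ref{cgtensor2}, whose hypothesis is that the t-structures on $D(Z)$, $D(X)$, $D(U)$ are compactly generated (as recorded in the appendix Example), whereas you invoke Lemma \ref{adjointtensor}, whose hypothesis is that $i_*$ and $j^*$ admit continuous left adjoints ($i^*$, resp.\ $j_!$). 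Both hypotheses hold here --- $i_*$ preserves limits since its essential image $D_Z(X)=\ker(j^*)$ is closed under limits, and $j_!$ is the standard left adjoint of $j^!=j^*$ for an open embedding of finite type schemes --- so your route is a legitimate alternative; it trades the compact-generation input for a recollement-type input, and is marginally more self-contained in that it avoids Proposition \ref{cgtensor2} entirely, at the cost of having to certify the existence and continuity of $i^*$ and $j_!$ in the non-holonomic D-module setting. Your preliminary remarks on $D(V^*)$-linearity of $i_*$ and $j^*$ (so that the functors descend to the $H$-tensor products) make explicit a point the paper leaves implicit, which is fine.
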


\begin{proof}
Let us first treat the case of closed embeddings. We have a commutative diagram
\begin{equation*}
\begin{tikzcd}
\sC\underset{H}{\otimes} D(Z)\arrow[r] \arrow[d] & \sC\underset{H}{\otimes} D(X)\arrow[d] \\
\sC\otimes D(Z) \arrow[r] & \sC\otimes D(X)
\end{tikzcd}
\end{equation*}
with vertical arrows conservative and t-exact. Thus, to show that the top arrow is t-exact, it suffices to show that the bottom arrow $\on{id}_{\sC}\otimes i_*$ is t-exact. But this follows from Lemma \ref{rightexacttensor} and Proposition \ref{cgtensor2}.

Now we consider the case of open immersions. Again, we are reduced to showing that $\on{id}_{\sC}\otimes j^*$ is t-exact. And again, right t-exactness is automatic by Lemma \ref{rightexacttensor} and left t-exactness holds because of Proposition \ref{cgtensor2}.
\end{proof}

\printbibliography

\end{document}